\documentclass[reqno]{amsart}

\usepackage{amsfonts}
\usepackage{amssymb}
\usepackage{amsmath}
\usepackage{mathrsfs}
\usepackage{amsthm}
\usepackage[colorlinks,linkcolor=blue,citecolor=blue]{hyperref}
\usepackage{enumitem}

\usepackage{mathtools}
\usepackage{soul}

\allowdisplaybreaks[4] % 允许公式环境分页

\newtheorem{theorem}{Theorem}[section]

\newtheorem{lemma}[theorem]{Lemma}
\newtheorem{proposition}[theorem]{Proposition}

\theoremstyle{remark}
\newtheorem{remark}[theorem]{Remark}
\renewcommand{\H}{{\mathbb{H}}}

\newcommand{\pd}[2]{\frac{\partial {#1}}{\partial {#2}}}

\theoremstyle{definition}
\newtheorem{definition}[theorem]{Definition}

\numberwithin{equation}{section}

\begin{document}

\title[Inverse curvature flow]
{Inverse Hessian Curvature Flow in Minkowski Space II: The Dirichlet problem at infinity}

\author[D. Li]{Dake Li}
\address[D. Li]{School of Mathematical Science\\
Fudan University\\
200433 Shanghai\\
P.R. China}
\email{dkli25@m.fudan.edu.cn}

\author[Z. Wang]{Zhizhang Wang}
\address[Z. Wang]{School of Mathematical Science\\
Fudan University\\
200433 Shanghai\\
P.R. China}
\email{zzwang@fudan.edu.cn}

\author[S. Yin]{Shiqi Yin}
\address[S. Yin]{School of Mathematical Science\\
Fudan University\\
200433 Shanghai\\
P.R. China}
\email{sqyin25@m.fudan.edu.cn}

%\thanks{*Corresponding author: Cong Wang.}
%----------classification, keywords, date
%\subjclass{35J60, 35J25, 35E10, 35B40}

\thanks{The second author is supported by NSFC Grant No.12141105.}

%\keywords{Minkowski space, Inverse curvature flow
% , Quermassintegral inequality}

\begin{abstract}
% In this paper, we study the Dirichlet problem at infinity for self-shrinkers of the inverse Hessian curvature flow in Minkowski space. For every continuous negative boundary value prescribed on the ideal boundary of hyperbolic space, we prove the existence and uniqueness of an entire spacelike strictly convex graphical hypersurface in Minkowski space.
% We further study the inverse Hessian curvature flow evolving from an entire spacelike strictly convex hypersurface satisfying a subsolution condition at infinity. After a rescaling, the flow exists for all time and converges locally smoothly to the unique self-shrinker with the prescribed boundary value at infinity.

This paper studies self-shrinkers and the long-time behavior of the inverse $\sigma_k$ curvature flow for noncompact entire spacelike strictly convex hypersurfaces in Minkowski space. In contrast to the co-compact setting, where the corresponding self-shrinker is rigid, the noncompact problem admits a rich family of self-shrinkers determined by their asymptotic data. More precisely, we formulate the self-shrinker equation as a fully nonlinear Dirichlet problem on hyperbolic space with prescribed data on its ideal boundary, and prove that every continuous negative boundary value determines a unique entire spacelike strictly convex self-shrinker. 
We further study the inverse $\sigma_k$ curvature flow starting from an entire spacelike strictly convex hypersurface satisfying a subsolution condition at infinity and a uniform positive lower bound for its $\sigma_k$ curvature. We prove global existence of the flow and show that the normalized flow converges locally smoothly to the unique self-shrinker with the same prescribed boundary value at infinity.
\end{abstract}

\maketitle

\section{Introduction}

% This paper concerns the inverse Hessian curvature flow in Minkowski space for noncompact hypersurfaces. The paper has two main parts. In the first part, we prove the existence of self-shrinkers with given boundary value at infinity for the inverse $\sigma_k$ curvature flow. In the second part, we show that, after rescaling, the flow converges to the self-shrinker, provided the initial hypersurface is chosen appropriately.

Let $\mathbb{H}^n$ be the hyperbolic space, that is the space form with constant sectional curvature $-1$. Let $g$ and $\nabla$ denote the standard metric and Levi-Civita connection of $\mathbb{H}^n$. $\Lambda \coloneqq \nabla^2 v - vg$ is the hyperbolic Hessian of a function $v$ defined on $\mathbb{H}^n$. Then we can define a $(1,1)$-tensor
\[
g^{-1} \circ \left( \nabla^2 v - vg \right) : T\mathbb{H}^n \to T\mathbb{H}^n. 
\]
Its eigenvalues are denoted by $\lambda \left[ \Lambda_v \right] = (\lambda_1, \cdots, \lambda_n)$. Let $1 \leqslant k \leqslant n$.
We consider the following Dirichlet problem
\begin{align}
  \begin{cases}
      \left( \frac{\sigma_n}{\sigma_{n-k}} \left( \lambda \left[ g^{-1} \circ \left( \nabla^2 v - vg \right) \right] \right) \right)^{\frac{1}{k}} = -\frac{v}{\binom{n}{k}^{\frac{1}{k}}} \quad \text{in } \mathbb{H}^{n}, \\
      v \rightarrow \varphi \quad \text{on } \mathbb{S}^{n-1}=\mathbb{H}^{n}(\infty), \label{sigmak selfshrinker}
  \end{cases}
\end{align}
where $\mathbb{H}^n(\infty)$ denotes the ideal boundary of $\mathbb{H}^n$ and $\sigma_k$ is the $k$-th elementary symmetric polynomial, i.e.
\[
\sigma_k(\lambda) = \sum_{1 \leqslant i_1 < \cdots < i_k \leqslant n} \lambda_{i_1}\cdots \lambda_{i_k}.
\]

It is clear that $v \equiv -1$ is a special solution of \eqref{sigmak selfshrinker}, which represents the hyperboloid. The boundary condition in \eqref{sigmak selfshrinker} is understood as the asymptotic sense at infinity, following \cite{Choi}. We will explain more in Section \ref{Preliminaries}. In this paper, we only consider the hyperbolic convex function $v$, namely that all eigenvalues of $\lambda[\Lambda_v]$ are positive. 

It follows from \eqref{sigmak selfshrinker} that any admissible
solution must be negative, thus we will restrict $\varphi \leqslant 0$. For convenience, since $\varphi \in C^{0}(\mathbb{H}^{n}(\infty))$ is defined on a compact set $\mathbb{H}^{n}(\infty)$, we assume that 
\begin{align}
    \varphi \leqslant \sup \varphi < 0. 
\end{align}

Equation \eqref{sigmak selfshrinker} arises from the study of the inverse $\sigma_k$ curvature flow in Minkowski space 
\begin{align}\label{eq:ICF}
    \frac{\partial X (p, t)}{\partial t} &= -\frac{1}{\sigma_k^{\frac{1}{k}}(\kappa\left[ \mathcal{M}_t \right])} \cdot \nu,
\end{align}
where $\kappa\left[ \mathcal{M}_t \right]$ are the principal curvatures of the hypersurface $\mathcal{M}_t$.
A special solution of the above equation is the self-shrinker satisfying  
\begin{align}\label{1.4}
    \sigma_k^\frac{1}{k}(\kappa\left[ \mathcal{M} \right]) = - \frac{\binom{n}{k}^{\frac{1}{k}}}{\langle X, \nu \rangle}.
\end{align}
It can be verified that the eigenvalues of hyperbolic Hessian $\Lambda$ of support function $v = \langle X, \nu \rangle$ are curvature radii of $\mathcal{M}$. Therefore, \eqref{1.4} can be rewritten as \eqref{sigmak selfshrinker}. Hence the geometric meaning of \eqref{sigmak selfshrinker} is the support function of hypersurfaces in Minkowski space.

In \cite{Ger} and \cite{Ur}, Gerhardt and Urbas respectively considered the inverse curvature flow in Euclidean space, which is a similar flow equation as \eqref{eq:ICF} with the opposite sign. Roughly speaking, they have proved that, for any compact initial hypersurface $\mathcal{M}_0$, the ICF will expand $\mathcal{M}_0$ and it will converge to some sphere by a suitable scaling. For entire graphical flows, Daskalopoulos-Huisken \cite{DH} studied the inverse mean curvature flow. 

In Minkowski space, Andrews, Chen, Fang and McCoy \cite{co-compact} first studied the curvature flow for cocompact hypersurfaces.  Aarons \cite{Aa}, Bayard-Schn\"{u}rer \cite{BayardSchnürer} and Bayard \cite{B09} studied the mean curvature flow, Gauss curvature flow and scalar curvature flow with a forcing term in Minkowski space. For the non-compact hypersurfaces, the second author and Xiao \cite{Entireself-expanders, Entiresigmakcurvatureflow} studied the Hessian curvature self-expanders and the Hessian curvature flows of entire spacelike hypersurfaces in Minkowski space. Then, Qu, the second author and Wo \cite{QWW24}  generalized a similar result to Hessian curvature translating solutions. Moreover, the second author and Xiao \cite{Entireconvexcurvatureflow} also studied some fully nonlinear curvature flows of noncompact spacelike hypersurfaces in Minkowski space. 

For prescribed curvature problem in Minkowski space, Treibergs \cite{Treibergs1982} and Choi-Treibergs \cite{ChoiTreibergs1990} studied the constant mean curvature equations. Then, Li \cite{Li1995}, Bayard-Schn\"urer \cite{BayardSchnürer} and Guan-Jian-Schoen \cite{GJS2006} considered the constant Gauss curvature equations. In 
\cite{Entireconstantsigmak,WX21}, the second author and Xiao considered the constant $\sigma_k$ curvature equations.  
In \cite{Theprescribedcurvatureproblem} Ren-Wang-Xiao also considered the prescribed $\sigma_k$ curvature equations, namely that, the right hand side is a given function, not constant.     

% A novelty of this paper is that we need the concept of boundary at infinity for negative curvature manifolds, which is first introduced by Eberlein and O'Neill in \cite{Visibilitymanifolds}. A remarkable application of this idea is due to Choi in \cite{Choi} and Anderson in \cite{Anderson}. Choi introduced the Dirichlet problem with boundary at infinity for harmonic equation. Anderson \cite{Anderson} and Sullivan \cite{sullivan1983dirichlet} solved this problem with additional curvature condition that sectional curvature pinched between two negative constants, thus showing that there exists a lot of bounded harmonic functions. 
% In \cite{AndersonSchoen}, a simplified proof is given by Anderson and Schoen, along with further results on the boundary at infinity.

The notion of the boundary at infinity for negatively curved manifolds was introduced by Eberlein and O'Neill \cite{Visibilitymanifolds} and has played an important role in the study of Dirichlet problems at infinity; see, for instance, Choi \cite{Choi}, Anderson \cite{Anderson}, Sullivan \cite{sullivan1983dirichlet}, and Anderson--Schoen \cite{AndersonSchoen}. In the present problem, the choice of asymptotic data is crucial. The usual asymptotic conditions on the graphical function or its Legendre transform are too coarse to distinguish different self-shrinkers. By contrast, the support function equation on $\mathbb{H}^n$ admits a comparison principle, and prescribing the support function on $\mathbb{H}^n(\infty)$ provides a natural asymptotic datum for the uniqueness problem. This motivates our formulation of the self-shrinker equation as a Dirichlet problem at infinity.

The first main result of the paper is

%\subsection{Main results}

\begin{theorem}\label{self-shrinker existence}
    Let $\varphi \in C^0(\mathbb{S}^{n-1})$ and $\varphi<0$. Then the Dirichlet problem \eqref{sigmak selfshrinker} admits a unique admissible solution $v \in C^\infty(\mathbb{H}^n) \cap C^0(\overline{\mathbb{H}}{}^n)$. Moreover, this solution is the support function of an entire spacelike strictly convex graphic hypersurface in Minkowski space.
\end{theorem}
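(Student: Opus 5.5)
We prove existence by an exhaustion argument on geodesic balls together with a comparison principle. Uniqueness will come from the same comparison principle applied on all of $\mathbb{H}^n$. We write $F(\Lambda_v)=\left(\sigma_n/\sigma_{n-k}\right)^{1/k}(\lambda[\Lambda_v])$. On the positive cone, $F$ is concave, elliptic and homogeneous of degree one. The equation reads $F(\Lambda_v)+\binom{n}{k}^{-1/k}v=0$.

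\textbf{Comparison principle.} Suppose $\underline v$ is a subsolution and $\overline v$ a supersolution, and at infinity $\limsup(\underline v-\overline v)\leqslant 0$. We claim $\underline v\leqslant\overline v$. If the maximum $m$ of $\underline v-\overline v$ is positive, it is attained at an interior point $p$. There $\nabla^2\underline v\leqslant\nabla^2\overline v$, so $\Lambda_{\underline v}\leqslant\Lambda_{\overline v}+mg$. Since $F$ is monotone and $F(A+mg)\geqslant F(A)$, we only get the wrong-direction inequality directly. Instead we use the structure of the equation: $F(\Lambda_{\underline v})\geqslant -c\,\underline v$ and $F(\Lambda_{\overline v})\leqslant -c\,\overline v<-c\,\underline v$. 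This alone is not contradictory, so we scale. Because $F$ is $1$-homogeneous and both sides are linear in $v$, $t\overline v$ is again a supersolution for every $t>0$. Since $\overline v<0$, taking $t>1$ with $t\overline v$ compared to $\underline v$ gives the contradiction. At the first touching point, $\Lambda_{\underline v}\leqslant\Lambda_{t\overline v}+(\underline v-t\overline v)g$ and $\underline v=t\overline v$. Therefore $F(\Lambda_{\underline v})\leqslant F(\Lambda_{t\overline v})$, and we obtain $-c\underline v\leqslant -ct\overline v$. This is an equality, so to make it strict we use the strict version given by boundary data strictly ordered. Letting $t\downarrow1$ finishes the argument. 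Asymptotic continuity up to $\overline{\mathbb{H}}^n$ (in the ball model) makes the boundary condition meaningful.

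\textbf{Barriers.} Model solutions are support functions of hyperboloids translated by $a\in\mathbb{R}^{n,1}$. If $v_0$ solves the equation, then $v_0+\langle a,x\rangle$ has unchanged $\Lambda$, because linear functions $\ell(x)=\langle a,x\rangle$ on $\mathbb{H}^n\subset\mathbb{R}^{n,1}$ satisfy $\nabla^2\ell=\ell g$. The equation's right-hand side does change, however. The natural ansatz is therefore $v=-\rho\,\psi$ with $\psi(x)=-\langle x,e\rangle$-type functions. These are functions whose boundary values on $\mathbb{S}^{n-1}$ are $C\,(1-\xi\cdot\theta)$-type kernels after normalizing by the conformal factor. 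Concretely, using $x_{n+1}$ normalization, boundary values correspond to $\lim v/x_{n+1}$ or similar, as in \cite{Choi}.

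We construct sub- and supersolutions with prescribed boundary behavior $\varphi$ as follows. First take $\varphi$ smooth. Let $h$ be a hyperbolic-convex extension, e.g.\ the Poisson-type extension $h(x)=\int_{\mathbb{S}^{n-1}}\varphi(\theta)\,(\cdots)$ built from the functions $\langle x,(1,\theta)\rangle$. Each of these has $\Lambda=0$, so $\Lambda_h=0$. Then $h-\varepsilon$ or $(1+\varepsilon)h$ combined with multiples of $-1$ (the hyperboloid) gives a subsolution $\underline v$ and supersolution $\overline v$ with the same data. A max/min of such constructions over parameters handles general $\varphi$.

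\textbf{Existence and regularity.} On balls $B_R$ we solve the Dirichlet problem with data $\underline v$ using Caffarelli–Nirenberg–Spruck type theory for concave $F$, with $\underline v$ as a subsolution. By comparison, $\underline v\leqslant v_R\leqslant\overline v$. Local $C^1$ bounds follow from convexity. Local $C^2$ bounds come from a Pogorelov-type interior estimate for $\sigma_n/\sigma_{n-k}$; this is the main obstacle. The estimate should use a cutoff $(\overline v+\delta-v)$ or the strict gap to the asymptotic data. One also needs a positive lower bound on eigenvalues, obtained from the equation together with bounds on $v$. Evans–Krylov and Schauder then give local $C^\infty$ bounds, so a subsequence converges to $v$. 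The barriers force the boundary value $\varphi$. For continuous $\varphi$ we approximate by smooth $\varphi_j$ uniformly; by comparison and homogeneity the solutions satisfy $|v_i-v_j|\lesssim\|\varphi_i-\varphi_j\|$, which passes continuity at infinity to the limit. Finally, strict convexity ($\Lambda>0$), negativity and completeness let us invert the support function into an entire spacelike graph. The asymptotic data keep the Gauss map image equal to all of $\mathbb{H}^n$, which gives entireness.
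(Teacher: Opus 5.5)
Your proposal has a genuine gap at the barrier step, which is the heart of the existence proof. It is true that $\ell_a(x)=\langle x,a\rangle$ satisfies $\Lambda_{\ell_a}=0$. But any superposition $\int\varphi(\theta)\langle x,(\theta,1)\rangle\,d\theta$ is again a single linear function $\langle x,A\rangle$. Along $x=(\sinh\rho\,\theta,\cosh\rho)$ it grows like $\tfrac12 e^{\rho}(\theta\cdot A'-A_{n+1})$. So it is unbounded near every ideal point with $\theta\cdot A'\neq A_{n+1}$, and it has no bounded boundary value unless $A=0$ (and then it is $0$).

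The actual Poisson kernel is, up to normalisation, $(-\langle x,(\theta,1)\rangle)^{1-n}$. Its integral is the harmonic extension, which has $\Lambda\neq 0$ and is not hyperbolic convex in general.

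Your candidates also fail the sub/supersolution test. Since $F(-c\,g)=-c\binom{n}{k}^{-1/k}$ for $c<0$, the function $h+c$ with $\Lambda_h=0$ is a subsolution iff $h\geqslant 0$ and a supersolution iff $h\leqslant 0$. Meanwhile $(1+\varepsilon)h$ has $\lambda[\Lambda]=0\notin\Gamma_n$. So none of them is an admissible barrier with data $\varphi$.

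The root cause is the boundary condition. Here $v\to\varphi$ means that $v$ itself converges in the cone topology, not $v/x_{n+1}$. The quantity $\lim v/x_{n+1}=u^*|_{\partial B_1}$ equals $0$ for every bounded $\varphi$. That is exactly the coarse datum the paper points out cannot distinguish self-shrinkers.

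What is missing is a pair of barriers that really attain $\varphi$. The paper builds them as follows.
\begin{itemize}
\item Upper barrier: by Newton--Maclaurin, every solution satisfies $\Delta v\geqslant 0$. Hence it lies below the harmonic extension of $\varphi$, using Anderson's theorem and the asymptotic maximum principle.
\item Lower barrier: a Choi-type construction. Sum $f(s_i)=-e^{ks_i}$, $-\tfrac14\leqslant k<0$, over distances $s_i$ to Anderson's convex neighbourhoods of ideal points, and truncate by a negative constant. Taking the supremum over all $v,\varepsilon,\delta$ gives a viscosity subsolution equal to $\varphi$ at infinity.
\end{itemize}

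Three further steps also need repair.

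(i) Comparison principle. The sign is off: at the maximum point, $\Lambda_{\underline v}\leqslant\Lambda_{\overline v}-mg$. More importantly, scaling $\overline v\mapsto t\overline v$ cannot create strictness, because both sides of the equation are $1$-homogeneous in $v$. At a first touching point every inequality becomes an equality, whatever the data at infinity. Instead, use the linearization $a_{ij}(\underline v-\overline v)_{ij}-c(\underline v-\overline v)\geqslant 0$ with $c=\int_0^1\bigl(\sum F^{ii}-\binom{n}{k}^{-1/k}\bigr)dt\geqslant 0$. Then apply the weak maximum principle on a large ball, as the paper does. A sliding argument combined with the strong maximum principle would also work.

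(ii) Interior $C^2$ bounds. Your weight $\overline v+\delta-v$ is $\geqslant\delta$ everywhere, since $v_R\leqslant\overline v$, so it does not cut off near $\partial U_r$. The paper uses $-\varepsilon x_{n+1}-v$, i.e.\ $-\varepsilon-u^*$, which is negative near $\partial U_r$ because $u^*\to 0$. Moreover, the paper only carries out a Pogorelov estimate in support-function form for $k=n$. For general $k$ it uses the resulting Gauss self-shrinker as a hyperbolic convex upper barrier, so that the barriers can be Legendre transformed. It then proves local $C^1$ and $C^2$ bounds for the dual graph equation $\sigma_k(\kappa)=\binom{n}{k}(-\langle X,\nu\rangle)^{-k}$, via the Bayard--Schn\"urer and Ren--Wang--Xiao estimates.

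(iii) Entireness. What is needed is $Du^*(B_1)=\mathbb{R}^n$; that the Gauss image is $\mathbb{H}^n$ is automatic. The paper obtains this by comparison with the constant solution $\sup\varphi$.
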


\begin{remark}
In \cite{InverseHessianCurvatureFlow},  we have proved that, for co-compact hypersurfaces, the only self-shrinker is the hyperboloid. However, for non cocompact hypersurfaces here, we can find a lot of self-shrinkers.   
\end{remark}

According to \cite{Entireconstantsigmak}, the curvature equation \eqref{1.4} can be explicitly written as 
\begin{align}\label{1.5}
    \sigma_k^\frac{1}{k}\left( \frac{1}{w} \gamma^{ik} u_{kl} \gamma^{lj} \right) = - \frac{\binom{n}{k}^{1/k}}{\langle X, \nu \rangle},
\end{align}
where $u$ is the height function of hypersurface, $\gamma^{ij} = \delta_{ij} + \frac{u_i u_j}{w(1+w)}$ and $w = \sqrt{1 - |Du|^2}$. Let $u^*$ be the Legendre transform of $u$. Then the dual equation in $B_1$ model of \eqref{1.5} is
\begin{align}\label{1.6}
    F(w^* \gamma_{ik}^* u_{kl}^* \gamma_{lj}^*) = \left[ \frac{\sigma_n}{\sigma_{n-k}} (\kappa^* [w^* \gamma_{ik}^* u_{kl}^* \gamma_{lj}^*]) \right]^{\frac{1}{k}} = - \frac{u^*}{\binom{n}{k}^{1/k} w^*}, 
\end{align}
where $F=\left( \frac{\sigma_n}{\sigma_{n-k}} \right)^{\frac{1}{k}}$, $\gamma_{ij}^* = \delta_{ij} - \frac{\xi_i \xi_j}{1 + w^*}$ and $w^* = \sqrt{1 - |\xi|^2}$.  It is clear that \eqref{1.5} and \eqref{1.6}  do not satisfy the maximum principle. However, we observe that \eqref{sigmak selfshrinker} can satisfy the maximum principle. Therefore,  we propose problem \eqref{sigmak selfshrinker}, i.e. the Dirichlet problem with boundary at infinity on the hyperboloid. Note that the boundary condition used here is different from the previous works of the second author and Xiao \cite{Entireconstantsigmak,WX21,Entireself-expanders}. In fact, the boundary condition used there is  too rough for our equation in view of lack of maximum principle.  One may see that, every bounded function $\varphi$ in \eqref{sigmak selfshrinker} is corresponding to $u^*=0$ on $\partial B_1$ and $u\rightarrow |x|$ for $|x|\rightarrow +\infty$. Thus, it means that the boundary condition $u\rightarrow |x|+\phi(x/|x|)$ for $|x|\rightarrow+\infty$ and $u^*=-\phi$ on $\partial B_1$ cannot uniquely determine a hypersurface for self-shrinker equations.

Next, we consider the inverse $\sigma_k$ curvature flow. We first introduce the following condition. 

\begin{definition}\label{Cond}
Suppose $v_0$ is the support function of an entire strictly convex spacelike hypersurface $\mathcal{M}_{u_0}$. We say it satisfies \emph{subsolution condition at infinity} if there exists a compact set $K \subset \subset \mathbb{H}^n$
such that 
\begin{align}
    \sigma_k^{\frac{1}{k}}\left( \kappa(\mathcal{M}_{u_0}) \right) < - \frac{\binom{n}{k}^{\frac{1}{k}}}{\langle X, \nu \rangle} \quad \text{whenever } 
    \nu \in \mathbb{H}^n \setminus K,
\end{align}
or equivalently that
\begin{align}\label{strict subsolution}
    \left( \frac{\sigma_n}{\sigma_{n-k}} \right)^{\frac{1}{k}}(\Lambda_{v_0}) + \frac{v_0}{\binom{n}{k}^{\frac{1}{k}}} > 0 \quad \text{whenever } 
    \nu \in \mathbb{H}^n \setminus K.
\end{align}
%where the condition means when $\nu$ is sufficiently close to the ideal boundary of $\mathbb{H}^n$.
\end{definition}

%\begin{remark}\label{initial curvature condition}
   % From the subsolution condition at infinity, given $v_0$ is uniformly bounded, we have $\sigma_k\left( \kappa\left[ \mathcal{M}_{u_0} \right] \right) < C$ for some large constant $C$.
%\end{remark}

%In addition, we need to impose a technical condition
%\begin{align}\label{sigmak lower bound}
   % c_0  < \sigma_k\left( \kappa\left[ \mathcal{M}_{u_0} \right] \right)
%\end{align}
%for some positive constant $c_0$.

By using the above conditions, we can show that after rescaling, the rescaled inverse $\sigma_k$ curvature flow converges to the self-shrinker with given boundary value. 

\begin{theorem}\label{inverse Hessian curvature flow}
    Given an entire spacelike strictly convex hypersurface in Minkowski space $\mathcal{M}_{u_0}=\left\{ X = (x,u_0(x)) \in \mathbb{R}^{n,1} | x \in \mathbb{R}^n \right\}$. Suppose it satisfies the \emph{subsolution condition at infinity} and its support function has negative boundary value function  $\varphi \in C^0(\mathbb{S}^{n-1})$ on $\mathbb{H}^n(\infty)$.
    % If $k\neq n$, 
    We need further assume that there exists some small positive constant $c_0$ such that 
   \begin{align}\label{sigmak lower bound}
    c_0  \leqslant \sigma_k\left( \kappa\left[ \mathcal{M}_{u_0} \right] \right).
    \end{align}
    Then the inverse $\sigma_k$ curvature flow \eqref{eq:ICF} with the initial hypersurface $\mathcal{M}_{u_0}$ admits a unique solution for all time. Moreover, the rescaled flow 
        \[
        \tilde{X}(\cdot, t) = e^{\gamma t}X(\cdot, t), \quad \gamma = \frac{1}{\binom{n}{k}^{\frac{1}{k}}}.
        \]
    converges locally smoothly to the unique self-shrinker with boundary value $\varphi$ at infinity.
\end{theorem}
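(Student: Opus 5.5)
Our plan is to pass to the support function and work entirely on $\mathbb{H}^n$. Let $v(\cdot,t)$ be the support function of $\mathcal{M}_t$, parametrized by the Gauss map $\nu\in\mathbb{H}^n$. Since the curvature radii are the eigenvalues of $\Lambda_v$, the flow \eqref{eq:ICF} becomes, up to tangential diffeomorphisms, the scalar parabolic equation
\[
\partial_t v = -\left(\frac{\sigma_n}{\sigma_{n-k}}\right)^{\frac1k}(\Lambda_v).
\]
The sign is fixed by $v=\langle X,\nu\rangle$ and $\partial_t X=-\sigma_k^{-1/k}\nu$ with $\langle\nu,\nu\rangle=-1$; we will recheck it in Section \ref{Preliminaries}. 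For the rescaled flow, $\tilde v=e^{\gamma t}v$ satisfies
\[
\partial_t\tilde v=-F(\Lambda_{\tilde v})+\gamma\tilde v=-\Big(F(\Lambda_{\tilde v})+\tfrac{\tilde v}{\binom nk^{1/k}}\Big),
\]
with $F=(\sigma_n/\sigma_{n-k})^{1/k}$. The stationary solutions of this equation are exactly the solutions of \eqref{sigmak selfshrinker}. The subsolution condition \eqref{strict subsolution} says that $\partial_t\tilde v<0$ at $t=0$ outside $K$. In this normalized form, the boundary value is expected to be preserved: along the unrescaled flow the curvature radii are expected to decay toward the ideal boundary, so $\tilde v\to\varphi$ there for all $t$. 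Our strategy is therefore: comparison principle at infinity, then barriers, then local a priori estimates, then monotone convergence to the solution given by Theorem \ref{self-shrinker existence}.

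\textbf{Step 1: existence and preservation of boundary data.} We first obtain short-time existence for the noncompact problem by approximation. We solve the parabolic Dirichlet problem on geodesic balls $B_R\subset\mathbb{H}^n$ with boundary data $\tilde v_0$, and then let $R\to\infty$ using the interior estimates of Step 3. Next we build barriers in the class of functions continuous up to $\overline{\mathbb{H}}{}^n$ with value $\varphi$ at infinity.

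For the upper barrier we use the following. The time-derivative $\partial_t\tilde v$ satisfies the linearized equation $\partial_t w = -F^{ij}(\nabla_{ij}w-wg_{ij})-\gamma w$. A maximum principle at infinity, in the spirit of the comparison principle used for Theorem \ref{self-shrinker existence}, applies to this equation. We need $\partial_t\tilde v\leqslant 0$ everywhere, but the subsolution condition only gives this outside $K$. So we will first modify the statement: the expected result is that $\tilde v(\cdot,t)$ is eventually sandwiched between sub- and supersolutions of \eqref{sigmak selfshrinker} with boundary value $\varphi$. Concretely, we take $\underline v=\min(\tilde v_0,\, v_\varphi)$-type constructions, where $v_\varphi$ is the self-shrinker from Theorem \ref{self-shrinker existence}, along with perturbations $v_{\varphi\pm\epsilon}$. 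These give, by comparison,
\[
v_{\varphi-\epsilon}\leqslant\tilde v(\cdot,t)\leqslant v_{\varphi+\epsilon}
\]
near infinity for all $t$, and hence the boundary value $\varphi$ is kept. A global two-sided bound $-C\leqslant\tilde v\leqslant -c<0$ follows by comparing with the constant-type solutions $v=-\lambda$, which evolve explicitly.

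\textbf{Step 2: speed bounds.} We need $c\leqslant F(\Lambda_{\tilde v})\leqslant C$, i.e.\ bounds on $\sigma_k$ of $\mathcal{M}_t$. The upper curvature bound comes from \eqref{sigmak lower bound}. The function $\sigma_k^{-1/k}$, or equivalently $\partial_t v$, satisfies a linear parabolic equation, so the bound $c_0\leqslant\sigma_k$ propagates by the maximum principle at infinity. The upper bound is inherited from the subsolution condition together with the bound on $v_0$.

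\textbf{Step 3: local $C^2$ estimates.} This is where we expect the main obstacle. We need local estimates for $\Lambda_{\tilde v}$ on compact subsets of $\mathbb{H}^n$, independent of $t$ and $R$. For this we use a Pogorelov-type cutoff with the weight $(-\tilde v - \epsilon)$, or a cutoff built from a strict subsolution. We apply it to the largest eigenvalue of $\Lambda_{\tilde v}$ and exploit the concavity of $F$ and the positive lower bound for the speed. Since $F$ is concave and the operator is uniformly parabolic once $\lambda$ is bounded, Krylov–Safonov and Evans–Krylov then give $C^{k}$ local bounds.

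\textbf{Step 4: convergence.} We show that $\tilde v(\cdot,t)$ converges as $t\to\infty$. Using the speed sign (eventual monotonicity) or an energy/oscillation argument, the limit is a stationary solution with boundary value $\varphi$ by Step 1. By the uniqueness part of Theorem \ref{self-shrinker existence}, this limit is the self-shrinker. We then translate back from the support function to the hypersurface $\tilde X$ to obtain local smooth convergence.
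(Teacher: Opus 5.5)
Your outline (comparison at infinity, barriers with value $\varphi$, local estimates, then identification of the limit by Theorem \ref{self-shrinker existence}) matches the paper's architecture, but several concrete steps would fail.

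\textbf{Sign and barriers.} Since $\langle\nu,\nu\rangle=-1$, one gets $\partial_t v=\langle\partial_t X,\nu\rangle=+\sigma_k^{-1/k}(\kappa)=+F(\Lambda_v)$; check this on the shrinking hyperboloid $v=-re^{-\gamma t}$. Hence $\partial_t\tilde v=F(\Lambda_{\tilde v})+\gamma\tilde v$, and the subsolution condition means $\partial_t\tilde v>0$ outside $K$ at $t=0$. Your equation (which also contains the slip $-F+\gamma\tilde v\neq-(F+\gamma\tilde v)$) is backward parabolic, so the maximum principles you invoke for it do not hold.

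More seriously, the barriers of Step 1 do not exist as described. Comparing with the stationary solutions $v_{\varphi\pm\epsilon}$ requires $v_{\varphi-\epsilon}\le\tilde v_0\le v_{\varphi+\epsilon}$ on all of $\mathbb{H}^n$. This is false in general, because $\tilde v_0$ is unconstrained on compact sets. A comparison valid only ``near infinity'' would need control on an inner boundary for all $t$, which you do not have. Also, $\min(\tilde v_0,v_\varphi)$ is neither hyperbolic convex nor a subsolution.

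The paper's missing ideas are two barriers:
\begin{itemize}
\item The lower barrier $\underline v=v_0+A\chi(t)g$, where $g=-(\cosh\rho)^{-a}$, $0<a<\frac{n-1}{n}$, is the strict hyperbolic-convex subsolution vanishing at infinity from Lemma \ref{smooth function g}. The subsolution condition handles the exterior, a large $A$ handles the compact part, and $\chi(0)=0$ matches $v_0$.
\item The upper barrier comes from Newton--Maclaurin, $F(\Lambda_v)+\gamma v\le\Delta v/(n\binom{n}{k}^{1/k})$. This makes the flow a subsolution of a heat equation whose solution keeps the value $\varphi$ and tends to the harmonic extension $h$.
\end{itemize}
Using $\underline v$ as time-dependent boundary data on $U_r$ is also what yields the gradient bound (via convexity of $u^*$) and boundary control of the speed.

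For Step 2, a maximum principle for $F$ alone only preserves its upper bound, and nothing is known about $F$ at the ideal boundary. The paper instead works on $U_r$ with $F/(-v)$, whose equation has no zeroth-order term.

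\textbf{Local estimates.} In Step 3, the weight $-\tilde v-\epsilon$ tends to $-\varphi-\epsilon>0$ at infinity for small $\epsilon$, so it is not a cutoff. The paper's weight is $-\epsilon-\tilde v/x_{n+1}=-\epsilon-u^*$, which vanishes near $\partial B_1$. Moreover, the paper can run the support-function Pogorelov argument only for $k=n$. For $1\le k<n$ it passes to the Legendre transform $u$ and uses three ingredients:
\begin{itemize}
\item the trapping $\sqrt{c_1^2+|x|^2}\le u\le\sqrt{c_2^2+|x|^2}$, obtained by comparison with negative constants;
\item the Bayard--Schn\"{u}rer gradient lemma;
\item the curvature estimate $(c-u)^m\log\kappa_{\max}\le C$.
\end{itemize}

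\textbf{Convergence.} In Step 4, ``eventual monotonicity'' is not available, since the speed has a sign only outside $K$. The paper's mechanism is as follows. The quantity $q=-\tilde v_t$ solves $q_t-a^{ij}q_{ij}-b^iq_i+cq=0$ with $c\ge0$. Comparing $(v_r)_t$ with $C_0g$ gives $q\le C_0(1-|Du|^2)^{a/2}$, so $q_+\to0$ at infinity uniformly in $t$. Hence $\sup q_+$ is nonincreasing, and a time-translation limit together with the strong maximum principle shows it tends to $0$. Every limit $U$ of time translates then satisfies $U_s\le0$. Its limits as $s\to\pm\infty$ are stationary and squeezed between $v_0+Ag$ and $h$, so they have boundary value $\varphi$. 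Uniqueness of the self-shrinker then gives convergence of the whole flow.
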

\begin{remark}\label{sigmak upper bound}
    Indeed, the support function of $\mathcal{M}_{u_0}$ is bounded between two negative constants by continuity. Therefore, by the subsolution condition at infinity, we have 
    $$0<\sigma_k\left( \kappa\left[ \mathcal{M}_{u_0} \right] \right) \leqslant C$$ for some large constant $C$.
\end{remark}

\begin{remark} The assumptions  in  Theorem  \ref{inverse Hessian curvature flow} are fairly mild. In section 13, we will give a large collection of initial hypersurfaces to satisfy conditions in Theorem  \ref{inverse Hessian curvature flow}. Roughly speaking, on sufficiently large compact subsets in the interior of the initial hypersurface, it can be prescribed arbitrarily.
\end{remark}
%The subsolution condition at infinity is an improvement of the condition in \cite{Entiresigmakcurvatureflow}. 
Finally, we provide a counterexample showing that the long-time existence result in Theorem \ref{inverse Hessian curvature flow} and the existence of smooth solution in Theorem \ref{self-shrinker existence} cannot, in general, be extended to the inverse quotient curvature flows and quotient self-shrinkers.

\begin{theorem}\label{Counterexample}
(1) There exists a family of smooth entire spacelike strictly convex hypersurfaces $\mathcal{M}_t$ for finite time $0<t<T$ satisfying 
the inverse quotient curvature flow
\begin{align}\label{IQCF}
    \frac{\partial X}{\partial t}
    =
    -\frac{\sigma_{n-1}}{\sigma_n}(\kappa[\mathcal{M}_t])\,\nu,
\end{align}
and as $t\rightarrow T$, $\mathcal{M}_t$ develops a singularity in interior.

%More precisely, there exist suitable small $\varepsilon>0$ and a smooth function $\hat{v}$ on $\mathbb H^n$, such that
%\[
%v(y,t) = -e^{-nt}+\varepsilon \hat{v}(y),
%\quad y\in\mathbb H^n,
%\]
%is a strictly convex solution of the flow equation, for $0<t<T$,
%\[
%\frac{\partial v}{\partial t}
%=
%\sigma_1(\Lambda[v]),
%\quad
%\Lambda[v]=\nabla^2v-v\bar g.
%\]
%Moreover, $\Lambda[v]$ is positive definite for $t=0$ and it will become degenerate at $T$. Consequently, the corresponding spacelike strictly convex hypersurface will develop singularity (i.e. some principal curvature is infinite) at a finite time.

(2) If $n \geqslant 3$, consider self-shrinker equation to the flow \eqref{IQCF}, 
\begin{align}\label{QS}
    \frac{\sigma_{n-1}}{\sigma_n}\left[ \kappa \right] = - n \langle X, \nu \rangle.
\end{align}
We can give an entire spacelike convex hypersurface such that its principal curvature only has singularity in interior, namely that it is smooth near infinity. Moreover, its support function is bounded on ideal boundary $\mathbb{H}(\infty)$.      
\end{theorem}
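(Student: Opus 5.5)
The key observation for both parts is that, once written for the support function, the quotient flow and its self-shrinker equation become \emph{linear}. Let $v=\langle X,\nu\rangle$ on $\mathbb{H}^n$, so that the eigenvalues of $\Lambda_v$ are the principal radii of $\mathcal{M}$. Then
\[
\frac{\sigma_{n-1}}{\sigma_n}(\kappa)=\sum_i \frac{1}{\kappa_i}=\sigma_1(\Lambda_v)=\Delta v-nv .
\]
Since $\langle \nu,\nu\rangle=-1$, the flow \eqref{IQCF} becomes $\partial_t v=\Delta v-nv$. The shrinker equation \eqref{QS} becomes $\Delta v-nv=-nv$, that is, $\Delta v=0$ on $\mathbb{H}^n$. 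So in both parts convexity, i.e.\ $\Lambda_v>0$, is the only nonlinear constraint. The plan is to produce explicit solutions of the linear equations for which $\Lambda_v$ loses positivity exactly at an interior point.

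For (1), I will use the ansatz $v(y,t)=-e^{-nt}+\varepsilon\hat v(y)$, where $\hat v$ is smooth and satisfies $\Delta\hat v=n\hat v$. The function $-e^{-nt}$ solves the linear flow because $\Lambda_{-e^{-nt}}=e^{-nt}g$ has trace $ne^{-nt}=\partial_t(-e^{-nt})$. The term $\varepsilon\hat v$ is time-independent, and its contribution $\Delta\hat v-n\hat v$ vanishes, so $v$ solves the flow. Moreover
\[
\Lambda_v=e^{-nt}g+\varepsilon\Lambda_{\hat v},\qquad \operatorname{tr}\Lambda_{\hat v}=0 .
\]
Hence as soon as $\Lambda_{\hat v}\not\equiv0$ it has a negative eigenvalue somewhere. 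I need $\hat v$ to satisfy two conditions: (a) $\Lambda_{\hat v}$ is bounded on $\mathbb{H}^n$, so that $\Lambda_{v}>0$ at $t=0$ for small $\varepsilon$; and (b) $\mu:=\sup_{\mathbb{H}^n}(-\lambda_{\min}(\Lambda_{\hat v}))>0$ is attained at an interior point $y_0$, for instance because $\Lambda_{\hat v}\to0$ at infinity.

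Then $\Lambda_v>0$ for $t<T$, where $e^{-nT}=\varepsilon\mu$, and $\Lambda_v$ degenerates at $(y_0,T)$. This means some principal curvature blows up there, which is the interior singularity. Boundedness of $v$ on $\mathbb{H}^n$ together with convexity gives, through the standard correspondence from the earlier sections, an entire spacelike strictly convex graph for each $t<T$.

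For $\hat v$ I will try superpositions of the Poisson-type functions $(-\langle y,(1,\theta)\rangle)^{-n}$, which satisfy $\Delta f=nf$ (the exponent $a=-n$ solves $a(a+n-1)=n$). I will average them against a smooth density $\psi(\theta)$ on $\mathbb{S}^{n-1}$, or alternatively take an $L^2$-type solution obtained by separation of variables in geodesic polar coordinates. Verifying (a) and (b), i.e.\ that the Hessian part decays at infinity while staying nonzero inside, is the step I expect to be the main technical obstacle.

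For (2), I will take $v_s=-1+sh$, where $h$ is the bounded harmonic function on $\mathbb{H}^n$ with continuous boundary data $\phi\leqslant0$, $\phi\not\equiv$ const, solved as in \cite{Choi}. Then $\Delta v_s=0$ and $v_s<0$. By interior Schauder estimates on unit geodesic balls, where the oscillation of $h$ tends to $0$, we get $\nabla h,\nabla^2h\to0$ at infinity. Therefore near $\mathbb{H}^n(\infty)$
\[
\Lambda_{v_s}\approx(1-s\phi)\,g>0,
\]
uniformly for $s$ in compact sets. Set $s^*=\sup\{s:\Lambda_{v_s}>0 \text{ on }\mathbb{H}^n\}$. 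I need $s^*<\infty$, which requires showing that $\nabla^2h$ has an eigenvalue that is large relative to $1-sh$ somewhere. I expect to use $n\geqslant3$ here: choose $\phi$ concentrated near a point so that the traceless part of $\nabla^2h$ dominates $h$ at some interior point. At $s=s^*$ the form $\Lambda_{v_{s^*}}$ is positive semidefinite and degenerate. By the uniform positivity near infinity, the degeneracy occurs only at interior points. This gives a convex entire hypersurface solving \eqref{QS} that is smooth near infinity, has a curvature singularity inside, and has support function with boundary values $-1+s^*\phi$, which are bounded.
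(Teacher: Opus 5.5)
In part (1) you use the paper's ansatz $v=-e^{-nt}+\varepsilon\hat v$ with $\Delta\hat v=n\hat v$. Reducing to conditions (a), (b), with $e^{-nT}=\varepsilon\mu$, is sound. However, the step that produces an \emph{entire} graph fails as written. You appeal to boundedness of $v$, which is the comparison argument of Appendix B, and that is impossible here.

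Any bounded solution of $\Delta\hat v=n\hat v$ on $\mathbb{H}^n$ vanishes identically. Take an Omori--Yau sequence $x_k$ with $\hat v(x_k)\to\sup\hat v$ and $\Delta\hat v(x_k)\leqslant 1/k$; this gives $n\sup\hat v\leqslant 0$, and symmetrically $\inf\hat v\geqslant0$. This is the paper's closing remark. So $v$ is unbounded whenever $\Lambda_{\hat v}\not\equiv0$. Your own Poisson averages $\int\psi(\theta)(-\langle y,(\theta,1)\rangle)^{-n}d\theta$ confirm this: they grow like $c\,\psi(\omega)\cosh\rho$, and only $\hat u^*=\hat v\,w^*$ stays bounded.

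What is actually needed is a gradient bound in the Klein ball, e.g.\ $\sup_{B_1}|D\hat u^*|<\infty$, or at least $|D\hat u^*|=o(1/w^*)$. Then $DU^*=e^{-nt}\xi/w^*+\varepsilon D\hat u^*$ tends to infinity uniformly at $\partial B_1$. Together with strict convexity, this makes $DU^*$ proper and hence onto $\mathbb{R}^n$. This is exactly what the paper verifies from its explicit formula for $D\hat u^*$.

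Beyond that, the heart of (1), namely producing $\hat v$ satisfying (a) and (b), is left open. The paper does it by separation of variables. It takes $\hat u^*=r^2H(r^2)(n\cos^2\theta-1)$ with $H={}_2F_1(1,\tfrac12;\tfrac{n+4}{2};s)$. For this choice $\Lambda_{\hat v}=w^*\mathcal B$ with $\mathcal B$ bounded near $\partial B_1$, and $D^2\hat u^*(0)$ has eigenvalue $-2$.

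Your averaging idea can reach the same kind of function, with one caution. Densities $\psi$ in the span of spherical harmonics of degree $0$ and $1$ give restrictions of linear functions, for which $\Lambda_{\hat v}\equiv0$ (e.g.\ $\psi\equiv1$ yields $c\,y_{n+1}$). You need degree $\geqslant2$ components. A degree-two harmonic $\psi$ should reduce, by rotational equivariance, to the paper's separated solution.

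For part (2) your route is genuinely different from the paper's. The paper takes the explicit harmonic $\hat v$ with boundary values $H(1)(n\cos^2\theta-1)$, where now $H={}_2F_1(\tfrac32,1;\tfrac{n+4}{2};s)$. It sets $v=-1-\alpha\hat v$ with $\alpha=\frac{1}{2(n-1)}$, and checks positivity only at the ideal boundary and degeneracy only at the origin.

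You instead take any bounded harmonic $h$ with continuous data $\phi\leqslant0$ and stop at the threshold $s^*$. This buys several things:
\begin{itemize}
\item Positivity near infinity holds for every $s$, since $1-s\phi\geqslant1$ and $\nabla^2h\to0$.
\item Global semidefiniteness and interior degeneracy follow from the definition of $s^*$, so nothing between the origin and infinity needs checking.
\item Since $v_{s^*}\leqslant-1$, entireness follows directly from the Appendix B comparison with $-1$.
\end{itemize}

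The price is showing $s^*<\infty$, which you only sketch, but it is easy. For the Poisson kernel $P_\theta=(-\langle y,(\theta,1)\rangle)^{1-n}$ one computes that $\Lambda_{P_\theta}$ has eigenvalue $n(n-2)P_\theta$ in the direction of $\nabla P_\theta$ and $-nP_\theta$ orthogonally. At the origin this reads $\Lambda_{P_\theta}=n(n-1)\,\theta\otimes\theta-ng$.

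Take $\phi=-\psi$ with $\psi\geqslant0$ concentrated near a unit vector $e$. Then $\Lambda_h(e,e)$ at the origin is a positive multiple of approximately $-n(n-2)\int\psi$, which is negative when $n\geqslant3$. For $n=2$, by contrast, $\Lambda_{P_\theta}\leqslant0$ everywhere, so $\Lambda_h\geqslant0$ for all $\phi\leqslant0$. This explains the dimension restriction.
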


Note that we still cannot extend the above examples to the general inverse quotient curvature flows.

This paper is organized as follows. In Section 2, we review the concepts of boundary at infinity, distance function to convex sets and the comparison principle. Sections 3-6 solve the existence of \eqref{sigmak selfshrinker}, namely the proof of Theorem  \ref{self-shrinker existence}.
In detail, Section 3 will construct the upper and lower barriers for \eqref{sigmak selfshrinker}. Section 4 solves the approximate problem of \eqref{sigmak selfshrinker} on every geodesic ball of $\mathbb{H}^n$. In Section 5, we first consider the local estimate and existence of the self-shrinkers for the inverse Gauss curvature flow. Section 6 gives the local estimates and existence of   the self-shrinkers for the general inverse $\sigma_k$ curvature flow. Sections 7-12 prove Theorem \ref{inverse Hessian curvature flow}. In Section 7, we rewrite the inverse curvature flow \eqref{eq:ICF} by the support function, namely equation \eqref{support function flow}. Section 8 constructs the upper and lower barriers for flow equations. Section 9 solves the approximate problem for flow equations. Section 10 and Section 11 prove the existence of entire solution for the inverse Hessian curvature flow by using the local estimates. Section 12 considers the convergence of the rescaled flow. Section 13 discusses the existence of the initial hypersurface. The last Section gives two counterexamples, i.e. Theorem \ref{Counterexample}.

\section{Preliminaries}\label{Preliminaries}

\subsection{Boundary at infinity}\label{Boundary at infinity}

A \textit{Cartan--Hadamard manifold} $M$ is a complete simply connected Riemannian manifold with sectional curvature $\leqslant 0$. We can make a compactification of $M$, denoted by $\overline{M}$, by attaching the boundary at infinity $M(\infty)$.

For readers' convenience, we give a brief introduction here for some properties of Cartan--Hadamard manifolds. One may see   \cite{Visibilitymanifolds, Choi,BishopO'Neill} and references therein for more detail. 

\begin{definition}
    On a Cartan--Hadamard manifold $M$, two unit-speed geodesics $\gamma_i : \mathbb{R} \rightarrow M \, (i=1,2)$ are said to be \textit{asymptotic} if there exists a constant $C$ such that $d(\gamma_1 (t), \gamma_2 (t)) \leqslant C , \, \forall t \geqslant 0$, where $d$ is the distance function of $M$.
\end{definition}

\begin{lemma}
    The asymptotic relation for unit-speed geodesics in $M$ is an equivalence relation.
\end{lemma}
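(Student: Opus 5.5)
We check the three axioms of an equivalence relation directly from the definition. Only boundedness of $t\mapsto d(\gamma_1(t),\gamma_2(t))$ on $[0,\infty)$ enters, so the Cartan--Hadamard hypothesis is not even needed for this part. It matters later, when equivalence classes are identified with points of $M(\infty)$.

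\emph{Reflexivity.} For any unit-speed geodesic $\gamma$ we have $d(\gamma(t),\gamma(t))=0$ for all $t\geqslant 0$. Hence the defining inequality holds with $C=0$.

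\emph{Symmetry.} The distance function is symmetric: $d(\gamma_1(t),\gamma_2(t))=d(\gamma_2(t),\gamma_1(t))$. So a constant $C$ that works for the pair $(\gamma_1,\gamma_2)$ also works for $(\gamma_2,\gamma_1)$.

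\emph{Transitivity.} Suppose $\gamma_1$ is asymptotic to $\gamma_2$ with constant $C_1$, and $\gamma_2$ is asymptotic to $\gamma_3$ with constant $C_2$. The triangle inequality for $d$, applied at each fixed $t\geqslant 0$ with intermediate point $\gamma_2(t)$, gives
\[
d(\gamma_1(t),\gamma_3(t))\leqslant d(\gamma_1(t),\gamma_2(t))+d(\gamma_2(t),\gamma_3(t))\leqslant C_1+C_2 .
\]
Thus $\gamma_1$ and $\gamma_3$ are asymptotic with constant $C_1+C_2$.

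There is no genuine obstacle here. The one point to be careful about is that the definition compares the two geodesics at the same parameter value $t$, and uses only $t\geqslant 0$ with a fixed parametrization. The comparison at equal times is exactly what lets the triangle inequality be applied pointwise in $t$. No reparametrization or convexity of $t\mapsto d(\gamma_1(t),\gamma_2(t))$ is required.
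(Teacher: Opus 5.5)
Your proof is correct: reflexivity and symmetry are immediate, and transitivity follows from the triangle inequality applied at each fixed $t\geqslant 0$, giving the constant $C_1+C_2$. The paper states this lemma without proof, citing Eberlein--O'Neill; your argument is the standard one, and you are right that the nonpositive curvature hypothesis plays no role in this step.
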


\begin{lemma}[Uniqueness]
    Asymptotic class satisfies the following uniqueness:

    \textup{(1)} If two unit speed asymptotic geodesics have a point in common, then they are the same. 

    \textup{(2)} Given a geodesic $\alpha$ and a point $p \in M$, there exists a unique geodesic $\beta$ such that $\beta(0) = p$ and $\beta$ is asymptotic to $\alpha$.
\end{lemma}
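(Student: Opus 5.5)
Both parts of the Uniqueness lemma rest on one tool: convexity of the distance function on a Cartan--Hadamard manifold. For two geodesics $\gamma_1,\gamma_2$ in $M$, the function $t\mapsto d(\gamma_1(t),\gamma_2(t))$ is convex. This follows from the second variation formula, since the sectional curvature is $\leqslant 0$ and $M$ has no conjugate points, so $d$ is smooth away from the diagonal. Equivalently, it follows from comparison with Euclidean triangles in the CAT(0) sense. I will use this convexity without reproving it.

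For part (1), let $\gamma_1,\gamma_2$ be unit-speed asymptotic geodesics with a common point. After reparametrizing time I assume $\gamma_1(0)=\gamma_2(0)=p$. A shift of parameter keeps them asymptotic, because $d(\gamma_1(t),\gamma_1(t+s))=|s|$. Set $f(t)=d(\gamma_1(t),\gamma_2(t))$. Then $f$ is convex, $f(0)=0$, and $f$ is bounded on $[0,\infty)$. A convex function on $[0,\infty)$ that is bounded must be nonincreasing. Since $f\geqslant 0$ and $f(0)=0$, this gives $f\equiv 0$ on $[0,\infty)$. Hence $\gamma_1=\gamma_2$ on $[0,\infty)$. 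Their velocities at $0$ coincide, so by ODE uniqueness they coincide on all of $\mathbb{R}$.

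The one subtlety is the common point. If it is $\gamma_1(t_1)=\gamma_2(t_2)$ with $t_1\neq t_2$, I first compare $\gamma_1(t+t_1)$ with $\gamma_2(t+t_2)$. These are still asymptotic by the triangle inequality, so the argument above shows they are the same geodesic.

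For part (2), existence, let $\alpha$ be the given geodesic and $p\in M$. For $n\in\mathbb{N}$, let $\beta_n$ be the unit-speed geodesic from $p$ to $\alpha(n)$; it is unique, since $\exp_p$ is a diffeomorphism. After passing to a subsequence, $\beta_n'(0)\to v$ in the compact unit sphere of $T_pM$. Let $\beta$ be the geodesic with $\beta(0)=p$ and $\beta'(0)=v$. Now fix $t\geqslant 0$ and take $n$ large. Apply convexity to the geodesic segments $s\mapsto\beta_n(s)$ and $s\mapsto\alpha(s)$, after matching them affinely on $[0,n]$. This gives
\[
d(\beta_n(t),\alpha(t))\leqslant d(p,\alpha(0))+\text{(a term from the speed mismatch)}.
\]
Here $\beta_n$ has length $L_n$ with $|L_n-n|\leqslant d(p,\alpha(0))$. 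So $d(\beta_n(t),\alpha(t))\leqslant 2d(p,\alpha(0))$ uniformly in $n$, for $t\leqslant \min(n,L_n)$. Letting $n\to\infty$ along the subsequence, $\beta_n(t)\to\beta(t)$ by continuous dependence of $\exp_p$. Therefore $d(\beta(t),\alpha(t))\leqslant 2d(p,\alpha(0))$ for all $t\geqslant 0$, which means $\beta$ is asymptotic to $\alpha$.

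Uniqueness in (2) follows directly from part (1). Two geodesics through $p$ that are both asymptotic to $\alpha$ are asymptotic to each other, because the relation is an equivalence relation by the preceding Lemma. They share the point $p$, so part (1) makes them equal.

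The main obstacle is the uniform bound in the existence step, where the reparametrization mismatch between $\beta_n$ and $\alpha$ must be handled carefully. I would control it by the triangle inequality, $|d(\beta_n(t),\alpha(t))-d(\beta_n(t L_n/n),\alpha(t))|\leqslant t|L_n/n-1|\to 0$, together with convexity for the affinely matched parametrization.
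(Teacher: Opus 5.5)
The paper does not prove this lemma; it quotes it as background from Eberlein--O'Neill and Bishop--O'Neill. Your argument is correct and is essentially the classical proof from those sources. In (1), convexity of $t\mapsto d(\gamma_1(t),\gamma_2(t))$ forces a bounded distance that vanishes at $t=0$ to vanish identically. In (2), the asymptotic geodesic through $p$ is the limit of the segments from $p$ to $\alpha(n)$. There your affine-matching estimate gives the uniform bound $d(\beta_n(t),\alpha(t))\leqslant (1-t/n)D+tD/n=D$ for $0\leqslant t\leqslant n$, where $D=d(p,\alpha(0))$. This is even slightly sharper than your $2D$.

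One small point: in (1), when the common point occurs at different parameters, you obtain equality up to a time shift. That is the correct reading of ``the same''. In the uniqueness part of (2), both geodesics pass through $p$ at parameter $0$, so your argument gives exact equality.
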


\begin{definition}
    The asymptotic class of $\gamma$ is denoted by $\gamma (\infty)$. Similarly, $\gamma (-\infty)$ denotes the asymptotic class of the reverse curve of $\gamma$.
\end{definition}

\begin{definition}
    The \textit{boundary at infinity} or the \textit{Eberlein-O'Neill boundary} \cite{Visibilitymanifolds} of $M$ is the set $M(\infty)$ of all asymptotic classes of geodesics in $M$. Define $\overline{M} = M \sqcup M(\infty)$.
\end{definition}

\begin{proposition}
    Let $p$ be any fixed point of $M$. Then $M(\infty)$ and $S_p$ are in one-to-one correspondence, where $S_p$ is the set of unit tangent vectors at $p$, $S_p = \{v \in T_p M \mid \langle v, v \rangle = 1\}$. Namely, if we denote $\gamma_v$ the geodesic starting from $p$ with initial velocity $v$, then
    \[
    M(\infty) = \left\{ \gamma_v(\infty) | v \in S_p \right\}.
    \]
\end{proposition}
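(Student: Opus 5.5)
We need to prove that the map $v \mapsto \gamma_v(\infty)$ from $S_p$ to $M(\infty)$ is a bijection. The plan is to split this into surjectivity and injectivity, using the two parts of the Uniqueness lemma stated above together with the standard facts about Cartan--Hadamard manifolds. Completeness plus the Cartan--Hadamard theorem gives that $\exp_p$ is a diffeomorphism, so geodesics through $p$ are defined on all of $\mathbb{R}$.

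\emph{Surjectivity.} Take an asymptotic class $[\alpha] \in M(\infty)$ represented by a unit-speed geodesic $\alpha$.
\begin{enumerate}
\item By part (2) of the Uniqueness lemma, there is a geodesic $\beta$ with $\beta(0)=p$ that is asymptotic to $\alpha$.
\item The asymptotic relation is defined for unit-speed geodesics, so $\beta$ is unit-speed. Hence $v \coloneqq \beta'(0) \in S_p$.
\item By uniqueness of geodesics with given initial data, $\beta = \gamma_v$.
\item Therefore $\gamma_v(\infty) = \beta(\infty) = \alpha(\infty)$, since asymptoticity is an equivalence relation (the earlier lemma).
\end{enumerate}

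\emph{Injectivity.} Suppose $v,w \in S_p$ satisfy $\gamma_v(\infty) = \gamma_w(\infty)$, so $\gamma_v$ and $\gamma_w$ are asymptotic. Both pass through $p$ at $t=0$. Part (1) of the Uniqueness lemma then says they are the same geodesic, and differentiating at $t=0$ gives $v = \gamma_v'(0) = \gamma_w'(0) = w$.

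The one delicate point is "the same" in part (1). Two asymptotic geodesics could a priori share a point at different parameter values, so we must make sure they coincide with the same parametrization through $p$. If part (1) is read only as equality of images, argue directly:
\begin{itemize}
\item Suppose $v \neq w$. Then $f(t) = d(\gamma_v(t), \gamma_w(t))$ is convex on $[0,\infty)$, since in nonpositive curvature the distance between two geodesics is a convex function of $t$.
\item We have $f(0) = 0$.
\item For small $t>0$, $f(t)>0$, because $\exp_p$ is a diffeomorphism and so $\gamma_v(t) \neq \gamma_w(t)$.
\item A convex function with $f(0)=0$ and $f(t_0)>0$ for some $t_0>0$ satisfies $f(t) \geqslant (t/t_0) f(t_0)$ for $t \geqslant t_0$, so $f$ grows at least linearly and is unbounded.
\end{itemize}
This contradicts the asymptoticity of $\gamma_v$ and $\gamma_w$. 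Hence $v = w$.

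Combining surjectivity and injectivity, $M(\infty) = \{\gamma_v(\infty) \mid v \in S_p\}$ with the correspondence one-to-one.
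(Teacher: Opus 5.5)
Your proof is correct. The paper gives no proof of its own: it quotes this proposition from Eberlein--O'Neill, Choi and Bishop--O'Neill as an immediate consequence of the Uniqueness lemma stated just before it. Your deduction is essentially the standard argument behind that citation: surjectivity comes from part (2), and injectivity from part (1), backed by the convexity of $f(t)=d(\gamma_v(t),\gamma_w(t))$, which cleanly removes the parametrization ambiguity. A minor refinement: $f(t)>0$ holds for every $t>0$, not only for small $t$, because $\exp_p$ is injective.
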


% $\forall p,q \in M$, let $\gamma_{pq}$ denote the unique geodesic from $p$ to $q$.
Generally, $\forall x,y \in \overline{M}$, let $\gamma_{xy}$ denote the unique geodesic through $x$ and $y$ if exists. When $x$ or $y \in M(\infty)$, it means that $\gamma (-\infty) = x$ or $\gamma (\infty) = y$. If $x$ and $y$ are both in $M(\infty)$, a stronger curvature condition is needed to ensure existence and uniqueness of $\gamma_{xy}$. A sufficient condition is that sectional curvature of $M$ is less than a constant $-c, c>0$, the proof can be found in \cite{Visibilitymanifolds}.

Let $p$ be a point of $M$. For $v, w \in T_p{M}$, define $\angle_p(v, w)$ to be the angle between $v$ and $w$ in the vector space $T_p{M}$. If $x, y \in M$, then we define the angle $\angle_p(x, y) = \angle_p(\gamma^{\prime}_{px}(0), \gamma^{\prime}_{py}(0))$.

For a fixed point $p \in M$, we have cone with vertex at $p$, opening angle $\delta$ and axis $v \in T_p{M}$ as 
\[
C(v,\delta) = \{x \in \overline{M} \mid  \angle_{p}(v, \gamma_{px}^{\prime}(0)) < \delta\}.
\]
Let the truncated cone with $r>0$ be
\[
T(v, \delta, r) = C(v, \delta) - \{q \in M \mid d(p,q) \leqslant r\}.
\]
It can be verified that the family $\mathscr{B} = \{T(v, \delta, r), B_{r}(q) \mid \forall v \in T_p{M}, \delta>0, r>0, q\in M \}$ forms a basis of a topology, which is called the \textit{cone topology}.

\begin{remark}
    The topology on $M$ induced by cone topology is the original topology of $M$, and $M$ is a dense open set of $\overline{M}$.
\end{remark}

\begin{definition}[Convergence in cone topology]\label{convergence in cone topology}
Given $v \in S_p$, the truncated cone in $M$, $K(v, \delta, r)$ is defined to be $T(v, \delta, r) \cap M = \{ q \in M \mid d(p, q) > r \text{ and } \angle_{p}(v, \gamma_{pq}^{\prime}(0)) < \delta \}$. Suppose $u$ is a function defined only on $M$. We say $u$ converges to a number $A$ as $q \rightarrow \gamma_v(\infty)$ if given $\varepsilon > 0$, there exists some $\delta > 0$ and $r > 0$ such that $|u(q) - A| \leqslant \varepsilon$ for all $q \in K(v, \delta, r)$.
\end{definition}

In this paper, what we consider is just hyperbolic space $\mathbb{H}^n$ and its natural boundary $\mathbb{H}^n(\infty) = \mathbb{S}^{n-1}$. Boundary value condition of \eqref{sigmak selfshrinker} is explained in the sense of Definition \ref{convergence in cone topology}. Indeed, if we consider Klein ball model $B_1$ for hyperbolic space, the compactification at infinity is actually the closed unit ball, with the topology induced by the Euclidean metric.

\subsection{Distance function to some convex set}

Motivated by \cite{Choi}, we can use distance function to construct barriers. For convenience, we state Lemma 4.3 in \cite{Choi} as follows.
\begin{lemma}[Lemma 4.3 in \cite{Choi}]\label{Hessian of distance function}
    Suppose $M$ is a $n$-dimensional Cartan--Hadamard manifold with sectional curvature $\leqslant -1$. If $s : M \rightarrow \mathbb{R}$ is the distance function from some convex set in $M$, then $\Delta s \geqslant (n-1) \tanh s$. Moreover we have
    \[
    \begin{aligned}
            \operatorname{Hess} s \geqslant \tanh s \, (g - \mathrm{d}s \otimes \mathrm{d}s). 
    \end{aligned}
    \]
\end{lemma}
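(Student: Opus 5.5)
The plan is a Hessian comparison argument in the spirit of Choi's proof. Write $s=d(\cdot,C)$ for the convex set $C$ and work on the open set $\{s>0\}$; on $C$ itself there is nothing to prove for the region where $s$ vanishes. The trace inequality $\Delta s\geqslant (n-1)\tanh s$ follows by taking the trace of the tensor inequality, because $g-\mathrm{d}s\otimes\mathrm{d}s$ has trace $n-1$ when $|\nabla s|=1$. It therefore suffices to prove the Hessian bound.

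\textbf{Step 1: regularity and reduction to level sets.} For $p\notin C$ there is a unique nearest point $\pi(p)\in C$, since $M$ is Cartan--Hadamard and $C$ is closed and convex. Moreover $s$ is $C^{1}$ with $\nabla s$ equal to the unit tangent of the geodesic from $\pi(p)$ to $p$. Since $\operatorname{Hess}s(\nabla s,\cdot)=0$, we only need to bound $\operatorname{Hess}s(X,X)$ for $X\perp\nabla s$. Where $s$ fails to be $C^2$, which happens when $\partial C$ is not smooth, we interpret the inequality in the support (barrier) sense. Concretely, we exhibit smooth upper-support functions from below.

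\textbf{Step 2: comparison via an equidistant hypersurface.} Fix $p$ with $s(p)=r>0$, let $q=\pi(p)$, and let $\gamma$ be the unit geodesic from $q$ to $p$. Take $H$ to be the totally geodesic hypersurface through $q$ orthogonal to $\gamma'(0)$, namely the image under $\exp_q$ of $\gamma'(0)^\perp$. Convexity of $C$ and the first variation formula show that $C$ lies on the side of $H$ opposite to $p$. Hence $d(\cdot,C)\geqslant d(\cdot,H)$ on the side containing $p$, with equality at $p$. So $\sigma=d(\cdot,H)$ is a lower support function for $s$ at $p$, and $\operatorname{Hess}s\geqslant\operatorname{Hess}\sigma$ at $p$ in the barrier sense.

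\textbf{Step 3: Riccati comparison.} Let $A(t)$ be the shape operator of the level set $\{\sigma=t\}$ along $\gamma$, restricted to $\gamma'^\perp$. It satisfies the Riccati equation
\[
A'+A^2+R_{\gamma'}=0,\qquad A(0)=0,
\]
where $R_{\gamma'}=R(\cdot,\gamma')\gamma'$. The initial condition holds because $H$ is totally geodesic at $q$. The curvature bound $K\leqslant -1$ gives $R_{\gamma'}\leqslant -I$, so $A'\geqslant I-A^2$. The model solution is $\tanh t\, I$, which solves $a'=1-a^2$ with $a(0)=0$. The standard matrix Riccati comparison (e.g.\ Eschenburg--Heintze) then gives $A(t)\geqslant \tanh t\, I$ up to the focal time, and there are no focal points since $K\leqslant 0$. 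Evaluating at $t=r$ yields
\[
\operatorname{Hess}\sigma\geqslant\tanh r\,(g-\mathrm{d}\sigma\otimes\mathrm{d}\sigma)
\]
at $p$, and transferring through Step 2 gives the claim for $s$.

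The main obstacle is Step 2, specifically the smoothness of $\sigma=d(\cdot,H)$ near $p$ in a general Cartan--Hadamard manifold. It also requires justifying the support-sense inequality when $s$ is only $C^{1,1}$. I would handle this by noting that $H$ is a smooth embedded hypersurface, and that normal geodesics from $H$ have no focal points under $K\leqslant0$. The normal exponential map is then a diffeomorphism near $\gamma$, so $\sigma$ is smooth there. Alternatively, one can replace $H$ by a small smooth convex hypersurface tangent at $q$, for which the same Riccati argument works with $A(0)\geqslant 0$.
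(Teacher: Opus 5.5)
The paper gives no proof of this lemma; it only quotes Lemma 4.3 of Choi. Your proposal is a correct, self-contained proof. You bound $s$ from below near $p$ by $\sigma=d(\cdot,H)$, where $H=\exp_q(\gamma'(0)^\perp)$ has vanishing second fundamental form at the foot point $q$ and $\sigma(p)=s(p)$. You then get $\operatorname{Hess}\sigma\geqslant\tanh\sigma\,(g-\mathrm{d}\sigma\otimes\mathrm{d}\sigma)$ from matrix Riccati comparison with $A(0)=0$.

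Compared with the bare citation, your route works for arbitrary closed convex sets. It also makes precise the sense in which the inequality holds: through a smooth lower support $\sigma$ with $\nabla\sigma(p)=\nabla s(p)$. That is exactly what the paper needs. Lemma \ref{f(s)} and the barrier $B=\sum_i f(s_i)$ use $\operatorname{Hess}f(s)=f'\operatorname{Hess}s+f''\,\mathrm{d}s\otimes\mathrm{d}s$ pointwise, although $s$ is in general only $C^{1,1}$. With your version, take any $C^2$ function $\phi$ touching $f(s)$ from above at $p$. Since $f'>0$, it satisfies $\phi\geqslant f(s)\geqslant f(\sigma)$ near $p$ with equality at $p$. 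Hence $\Lambda_\phi(p)\geqslant\Lambda_{f(\sigma)}(p)$, which is precisely the viscosity-subsolution test. For $M=\mathbb{H}^n$, the only case used, $H$ is a genuine totally geodesic hyperplane and $\operatorname{Hess}\sigma=\tanh\sigma\,(g-\mathrm{d}\sigma\otimes\mathrm{d}\sigma)$ holds with equality. So Step 3 and the smoothness issue are trivial there.

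For general $M$, four points need tightening.

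\textbf{Totally geodesic.} $\exp_q(\gamma'(0)^\perp)$ is not totally geodesic in general. Only its second fundamental form at $q$ vanishes, because the geodesics $\tau\mapsto\exp_q(\tau w)$ with $w\perp\gamma'(0)$ lie in it. That is all Step 3 uses. The separation in Step 2 still holds: $\exp_q$ is a diffeomorphism, and first variation along $[q,c]\subset C$ gives $\langle\exp_q^{-1}c,\gamma'(0)\rangle\leqslant 0$ for every $c\in C$.

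\textbf{Equality at $p$.} ``With equality at $p$'' requires $d(p,H)=r$, which you did not justify. It follows from $d(\exp_q a,\exp_q b)\geqslant|a-b|$, which gives $d(\gamma(t),\exp_q w)\geqslant\sqrt{t^2+|w|^2}$ for $w\perp\gamma'(0)$.

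\textbf{Smoothness of $\sigma$.} The same estimate shows that nearest points in $H$ of points near $\gamma(t)$ lie near $q$. This, together with the absence of focal points, is what makes $\sigma$ smooth near $\gamma((0,r])$; the absence of focal points alone does not suffice.

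\textbf{Fallback.} Your alternative of a small convex hypersurface tangent at $q$ does not give a lower support, since the comparison region must contain $C$.
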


\subsection{Comparison principle}

Recall equation
\begin{align}
  \begin{cases}
      \left( \frac{\sigma_n}{\sigma_{n-k}} \left[v_{ij}-v\delta_{ij}\right] \right)^{\frac{1}{k}} = -\frac{v}{\binom{n}{k}^{\frac{1}{k}}} \quad \text{in } \mathbb{H}^{n}, \\
      v \rightarrow \varphi \quad \text{on } \mathbb{S}^{n-1}=\mathbb{H}^{n}(\infty), \label{sigmak selfshrinker2}
  \end{cases}
\end{align}
where $\varphi \in C^0(\mathbb{S}^{n-1})$ and $\varphi<0$.

% Recall dual equation of equation \eqref{quotient}
% \begin{align}
%         \frac{1}{H} = -\frac{1}{n} \langle X, \nu \rangle \quad \text{in } \mathcal{M} \label{selfshrinker}
% \end{align}

Define the admissible set of equation \eqref{sigmak selfshrinker2}:
\begin{align}\label{admissible set}
    \mathcal{S} \coloneqq \left\{ v < 0, v \in C^{2} \mid \lambda \left[ g^{-1} \circ (\nabla^2 v - v g) \right] \in \Gamma_n \right\}.
\end{align}

\begin{lemma}[Comparison principle]\label{Comparison principle}
    Suppose $F=\left( \frac{\sigma_n}{\sigma_{n-k}} \right)^{\frac{1}{k}}$, then let $F(w)=F(w_{ij}-w\delta_{ij})$ by a slight abuse of notation. For $w,v \in \mathcal{S}$, if $F(w) + \frac{w}{\binom{n}{k}^{\frac{1}{k}}} \geqslant F(v) + \frac{v}{\binom{n}{k}^{\frac{1}{k}}}$ and $w \leqslant v$ on $\mathbb{S}^{n-1}$ in cone topology sense, we have $w \leqslant v$ in $\mathbb{H}^{n}$. 
\end{lemma}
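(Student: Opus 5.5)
We argue by contradiction with a scaling trick, which exploits the fact that the zeroth-order term $w/\binom{n}{k}^{1/k}$ is increasing in $w$ and $F$ is homogeneous of degree one. Write $c=\binom{n}{k}^{-1/k}$ and $G(w)=F(w)+cw$. Since $v<0$ and $\sup v<0$ on the relevant regions (because $v\in\mathcal S$ and, near infinity, $v\to$ boundary values), it is convenient to compare $w$ with dilations $\theta v$ for $\theta>1$. Homogeneity gives $F(\theta v)=\theta F(v)$, hence $G(\theta v)=\theta G(v)$. Note also that for admissible $v$ one has $G(v)\ge$ something we control; more precisely, in the intended application $G(v)\le 0\le G(w)$ is typical, but to stay general I would instead use the perturbation $v_\theta=\theta v$ with $\theta>1$ close to $1$, or alternatively $v_\varepsilon=v+\varepsilon$, checking which produces a strict inequality.

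The cleanest choice seems to be $v_\varepsilon = v+\varepsilon$ is bad (it changes $\Lambda$ by $-\varepsilon g$, possibly leaving $\Gamma_n$), so I will use $\theta v$. First, on $\mathbb S^{n-1}$: since $w\le v$ there in the cone-topology sense and $v\le \sup v<0$ near infinity, we get $w\le v<\theta v$ ... wait, $\theta v<v$ for $\theta>1$. So instead take $\theta<1$: then $\theta v>v\ge w$ near the ideal boundary, strictly, with margin $(1-\theta)|v|\ge(1-\theta)|\sup_{\text{near }\infty} v|>0$. By Definition~\ref{convergence in cone topology} and compactness of $\mathbb S^{n-1}$, there is $R$ such that $w<\theta v$ outside the geodesic ball $B_R$. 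Hence if $w>\theta v$ somewhere, the function $w-\theta v$ attains a positive interior maximum at some $x_0\in B_R$.

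At $x_0$: $\nabla w=\theta\nabla v$, $\nabla^2 w\le\theta\nabla^2 v$, and $w>\theta v$, so $\Lambda_w=\nabla^2w-wg\le \theta\nabla^2 v-\theta v g=\Lambda_{\theta v}$ as matrices. Both lie in $\Gamma_n$, and $F$ is monotone increasing on $\Gamma_n$, so $F(w)\le F(\theta v)=\theta F(v)$, and $cw>c\theta v$. This gives only $G(w)\le \theta F(v)+cw$, and we need a contradiction with the hypothesis $G(w)\ge G(v)$. Write $G(w)\ge F(v)+cv$; combining, $F(v)+cv\le \theta F(v)+cw$, i.e. $(1-\theta)F(v)\le c(w-v)$. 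Since $w-\theta v>0$ is the maximum but $w-v$ may still be negative, this is not immediately contradictory; this is the main obstacle. To fix it, I would compare in the ratio form: consider $\max w/v$ — i.e. the smallest $\theta$ with $w\ge\theta v$... Concretely, let $\theta^*=\sup_{\mathbb H^n} w/v$ wait signs: both negative, $w\le v\iff w/v\ge1$. Set $\theta_*=\inf w/v$ and suppose $\theta_*<1$. Near infinity $w/v\ge1-o(1)$ by the boundary condition and $v$ bounded away from $0$, so the infimum is attained at an interior point $x_0$ where $w\ge \theta_* v$ with equality at $x_0$. Then $\Lambda_w\ge\Lambda_{\theta_* v}$ at $x_0$ (touching from above in the sense $w-\theta_*v$ has a minimum $0$), giving $F(w)\ge\theta_*F(v)$ — the wrong direction again. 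So instead use $\theta^*=\sup w/v$... equivalently the minimum of $w-\theta v$ touching; with $w=\theta_*v$ at $x_0$ and $w-\theta_*v\ge0$ minimum, we get $\Lambda_w\ge\theta_*\Lambda_v$, so $G(w)\ge \theta_*G(v)$ — consistent, no contradiction. Hence the right touching is $\sup (w/v)$ replaced by considering $\sup(w-v)$ with $\theta$; the genuine argument: at a positive maximum of $w-v$, $\Lambda_w\le\Lambda_v + (w-v)(-g)$... giving $F(w)\le F(v)$ strictly less since $\Lambda_w\le \Lambda_v-(w-v)g<\Lambda_v$ and $F$ strictly increasing, while $cw>cv$; so $G(w)$ vs $G(v)$ is undetermined. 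I expect the correct mechanism is the concavity/homogeneity estimate $F(\Lambda_v-\delta g)\le F(\Lambda_v)-\delta\,\sum F^{ii}\le F(v)-\delta c'$ with $\sum F^{ii}\ge 1\ge c$ (true for $(\sigma_n/\sigma_{n-k})^{1/k}$, since $\sum F^{ii}\ge \binom nk^{-1/k}$ by concavity and $F(1,\dots,1)=\binom nk^{-1/k}$), giving $G(w)\le G(v)-\delta(\sum F^{ii}-c)$ along the segment, and then a strict version via $\theta$-scaling to close; verifying $\sum F^{ii}> c$ strictly (or absorbing equality by the $\theta$ trick) is the key step.
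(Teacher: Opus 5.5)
Your proposal has a genuine gap: it never reaches a contradiction. Take an interior positive maximum $x_0$ of $w-v$ and set $\delta=(w-v)(x_0)$. Your concavity estimate gives
\[
F(w)\leqslant F(\Lambda_v-\delta g)\leqslant F(v)-\delta\sum_i F^{ii}(\Lambda_v)\leqslant F(v)-\delta\binom{n}{k}^{-1/k},
\]
that is, $G(w)(x_0)\leqslant G(v)(x_0)$. This is perfectly compatible with the hypothesis $G(w)\geqslant G(v)$. (Also, $\sum F^{ii}\geqslant 1$ is false for $k<n$; the correct bound is $\sum F^{ii}\geqslant F(1,\dots,1)=\binom{n}{k}^{-1/k}$.)

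Neither of your proposed ways to make this strict works.
\begin{itemize}
\item The strict inequality $\sum F^{ii}>\binom{n}{k}^{-1/k}$ fails wherever $\Lambda_v$ is a multiple of $g$. For example, it fails everywhere for $v\equiv-1$, where equality holds identically.
\item The scaling gives $G(\theta v)=\theta G(v)$. Even combined with the concavity estimate, a maximum of $w-\theta v$ only yields $(1-\theta)G(v)\leqslant 0$. This contradicts nothing unless $G(v)>0$, and in the case relevant for uniqueness, $G(w)=G(v)=0$, it gives nothing at all.
\item The scaling also needs $v$ bounded away from $0$ near infinity, which the lemma does not assume.
\end{itemize}

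The missing idea is to stop arguing at a single point and linearize along the segment. Since $\Lambda$ is linear in the function and $\Gamma_n$ is convex, $tw+(1-t)v\in\mathcal S$. Hence $G(w)-G(v)=a^{ij}(w-v)_{ij}-c\,(w-v)\geqslant 0$, where $a^{ij}=\int_0^1F^{ij}(tw+(1-t)v)\,dt$ and $c=\int_0^1(\sum_iF^{ii}-\binom{n}{k}^{-1/k})\,dt$. Your own inequality $\sum F^{ii}\geqslant\binom{n}{k}^{-1/k}$ is exactly what gives $c\geqslant 0$.

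Now suppose $(w-v)(y)=2\varepsilon>0$. The cone-topology boundary condition and compactness of $\mathbb{S}^{n-1}$ give $R$ with $w-v\leqslant\varepsilon$ outside $B_R$. The weak maximum principle for linear elliptic operators with $c\geqslant0$ then gives $w-v\leqslant\varepsilon$ in $B_R$, a contradiction. This is the paper's proof. The weak maximum principle is precisely what absorbs the non-strict case: its proof uses an auxiliary barrier such as $e^{\alpha x_1}$, relying on the uniform ellipticity of $a^{ij}$ on the compact ball $\overline{B}_R$.

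Alternatively, you can rescue your pointwise argument by perturbing $w$ instead of $v$, as the paper does in Lemma \ref{Viscosity comparison principle}. Replace $w$ by $w+\eta g$ with $g=-(\cosh\rho)^{-a}$ from Lemma \ref{smooth function g}. Superadditivity gives $G(w+\eta g)\geqslant G(w)+\eta G(g)\geqslant G(v)+\eta c_0$ on $B_R$. Since $g<0$ and $g\to 0$ at infinity, the boundary comparison is unaffected, and the inequality at the interior maximum now contradicts strictness.
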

\begin{proof}
    Denote $G(w) \coloneqq F(w) + \frac{w}{\binom{n}{k}^{\frac{1}{k}}}$, then 
    \begin{align*}
        G(w) - G(v)= a_{ij}(w-v)_{ij} - c (w-v) \geqslant 0.
    \end{align*}
    where 
    \begin{align*}
        a_{ij} &= \int_{0}^{1} F^{ij}\left( tw + (1-t)v \right) dt, \\
        c &= \int_{0}^{1} \left( \sum_{i}F^{ii}\left( tw + (1-t)v \right) - \frac{1}{\binom{n}{k}^{\frac{1}{k}}} \right) dt \geqslant 0,
    \end{align*}
    by homogeneity and concavity of $F$.

    Suppose there exists a point $y \in \mathbb{H}^n$, such that $w(y) - v(y) = 2\varepsilon > 0$. Since $w \leqslant v$ on $\mathbb{S}^{n-1}$, for any direction $\ell \in \mathbb{S}^{n-1}$, there exist some $\delta > 0$ and $r > 0$ such that $(w-v)(q) \leqslant \varepsilon$ for all $q \in K(\ell, \delta, r)$. By compactness we can choose finite $\{\ell_i, \delta_i,r_i\}_{i=1}^k$ such that $K(\ell_i, \delta_i, r_i)$ cover all directions. Let $R = \max\{r_i\}$, then $(w-v) \leqslant \varepsilon$ outside $B_R$, i.e. $y \in B_R$ and $\left.(w-v)\right|_{\partial B_R} \leqslant \varepsilon$.
    By maximum principle for linear elliptic equation, we have
    \begin{align*}
        w \leqslant v + \varepsilon \quad \text{in } B_R.
    \end{align*}
    We have a contradiction. 
\end{proof}

It is well known that the hyperbolic space $\mathbb{H}^n(-1)$ can be  canonically embedded into $\mathbb{R}^{n,1}$ as the hyperboloid
\[
\mathbb{H}^n(-1) \coloneqq \{X \in \mathbb{R}^{n,1} \mid \langle X, X \rangle = -1, x_{n+1}>0 \}.
\]
Let $B_1^{\xi}$ be the unit ball in hyperplane $\{x_{n+1}=1\}$. The Beltrami map $P$ is the projection centered at the origin: 
\begin{align*}
    P: \mathbb{H}^n &\longrightarrow B_1^{\xi} \\
    (y_1, \cdots, y_{n+1}) &\longmapsto (\xi_1, \cdots, \xi_n),
\end{align*}
where $y_{n+1} = \sqrt{1+y_1^2+\cdots+y_n^2}$ and $\xi_i = \frac{y_i}{y_{n+1}}$.

We now extend the concept of hyperbolic convexity to continuous functions. We say a continuous function $v$ defined in $\mathbb{H}^n$ is \emph{hyperbolic convex} if $v\left( y(\xi) \right) \sqrt{1-|\xi|^2}$ is convex in $B_1^{\xi}$, where $y(\xi) = P^{-1}(\xi)$.

From now on, we always denote $F(v)=\left( \frac{\sigma_n}{\sigma_{n-k}} \right)^{\frac{1}{k}}(\Lambda_v)$ and $G(v) =F(v) + \frac{v}{\binom{n}{k}^{\frac{1}{k}}}$.

\begin{definition}[Viscosity subsolution]
    We say a hyperbolic convex function $v \in C^0(\mathbb{H}^n)$ is a viscosity subsolution to equation
    \begin{align}
        F(h_{ij}-h\delta_{ij}) = -\frac{h}{\binom{n}{k}^{\frac{1}{k}}}, \label{viscosity subsolution equation}
    \end{align}
    if for any point $y \in \mathbb{H}^n$, and every $\varphi \in C^2$ defined in a neighborhood of $y$, such that $v-\varphi$ has a local maximum at $y$ and
    \[
    v(y) = \varphi(y),
    \]
    we have
    \begin{align*}
        F(\varphi_{ij}-\varphi\delta_{ij}) \geqslant -\frac{\varphi}{\binom{n}{k}^{\frac{1}{k}}}
    \end{align*}
    at $y$.
\end{definition}

% \begin{remark}
%     Viscosity supersolution are defined by same way. Viscosity solution are both viscosity subsolution and viscosity supersolution.
% \end{remark}

\begin{lemma}[Viscosity comparison principle]\label{Viscosity comparison principle}
    Suppose $w$ is a viscosity subsolution of \eqref{viscosity subsolution equation}.
    Suppose $v \in \mathcal{S}$ is a $C^2$ supersolution of \eqref{viscosity subsolution equation}.
    If $w \leqslant v$ on $\mathbb{S}^{n-1}$ in cone topology sense, then $w \leqslant v$ in $\mathbb{H}^{n}$.
\end{lemma}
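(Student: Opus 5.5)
Assume for contradiction that $\sup_{\mathbb{H}^n}(w-v)>0$. The plan is to perturb $v$ upward into a \emph{strict} supersolution and then touch $w$ from above at an interior point, which contradicts the viscosity subsolution property. The argument mirrors Lemma \ref{Comparison principle}, with the linearization replaced by a touching argument.

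\textbf{Step 1 (reduction to a compact set).} Pick $y_0$ with $(w-v)(y_0)=2\varepsilon>0$. Using the cone-topology boundary condition and compactness of $\mathbb{S}^{n-1}$, exactly as in the proof of Lemma \ref{Comparison principle}, we find $R$ with $w-v\leqslant\varepsilon$ outside $B_R$.

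\textbf{Step 2 (strict supersolution).} For $t>0$ small, set $v_t=(1-t)v$. Since $v<0$, we have $v_t>v$. On $\overline{B_{R+1}}$ we have $v\leqslant -c<0$, so $v_t\in\mathcal{S}$ there. By $1$-homogeneity of $F$,
\[
G(v_t)=(1-t)G(v)\leqslant 0 .
\]
This is not yet strict, so we add a small convex correction. Let $\eta=-\theta$ with $\theta>0$ a small constant, and put $\tilde v=v_t+\theta$. Then $\Lambda_{\tilde v}=\Lambda_{v_t}-\theta g$, and this stays positive definite on the compact set provided $\theta$ is small relative to $t\min\lambda(\Lambda_v)$. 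Since $F$ is increasing in its argument, $F(\tilde v)<F(v_t)$, and $\tilde v/\binom{n}{k}^{1/k}=v_t/\binom{n}{k}^{1/k}+\theta\gamma$. To balance these, it is cleaner to use $\tilde v=v_t$ and exploit $G(v_t)=(1-t)G(v)$ together with $-t\,v/\binom{n}{k}^{1/k}$.

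Concretely, I will use instead $\tilde v=(1+t)v$. This lies in $\mathcal{S}$ and satisfies
\[
G(\tilde v)=(1+t)G(v)\leqslant 0 .
\]
Then I add $\theta>0$: $\hat v=(1+t)v+\theta$. This gives
\[
G(\hat v)\leqslant F\bigl((1+t)\Lambda_v-\theta g\bigr)-F\bigl((1+t)\Lambda_v\bigr)+\theta\gamma .
\]
Concavity and homogeneity give $\sum F^{ii}\geqslant \gamma$ (this is the same fact as $c\geqslant0$ in Lemma \ref{Comparison principle}). The intended conclusion is that the first difference is at most $-\theta\gamma$, with strict inequality once a $t$-dependent term is included. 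This sign bookkeeping is the \textbf{main obstacle}: we need a perturbation $\hat v$ satisfying $G(\hat v)<0$ strictly, $\Lambda_{\hat v}>0$, and $\hat v-v$ uniformly small on $B_{R+1}$. I would first try the two-parameter family above and choose $\theta$ and $t$ so that the gain from $\sum F^{ii}>\gamma$ makes the inequality strict. Failing that, I would add $-\delta\cosh r$, where $r$ is the distance from $y_0$. Since $\nabla^2\cosh r=\cosh r\, g$, we get $\Lambda_{\cosh r}=0$, so this term leaves $F$ unchanged and lowers the zeroth-order term by $\delta\gamma\cosh r$. Taking instead $+\delta\cosh r$, which is large at $\partial B_R$, gives
\[
G(v+\delta\cosh r)=G(v)+\delta\gamma\cosh r .
\]
That has the wrong sign. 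So the natural candidate is $\hat v=v-\delta\cosh r+C\delta$, which has
\[
G(\hat v)=G(v)-\delta\gamma\cosh r+C\delta\gamma .
\]
This is strict where $\cosh r>C$. Reconciling strictness everywhere on $B_R$ with the boundary comparison is exactly the delicate point, and I would settle it by choosing the constants carefully.

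\textbf{Step 3 (touching argument).} Once a strict $C^2$ supersolution $\hat v\in\mathcal{S}$ is available, with $w-\hat v<0$ near $\partial B_R$ and $w-\hat v>0$ at $y_0$, the function $w-\hat v$ attains a positive interior maximum at some $y_1$. Then $\phi=\hat v+(w-\hat v)(y_1)$ is a $C^2$ function touching $w$ from above at $y_1$. The viscosity definition gives $G(\phi)(y_1)\geqslant0$. On the other hand,
\[
G(\phi)=G(\hat v)+\gamma\,(w-\hat v)(y_1).
\]
This has the wrong sign to contradict directly, because adding a positive constant raises $G$ by $\gamma(w-\hat v)(y_1)$ while leaving $\Lambda_\phi=\Lambda_{\hat v}-(w-\hat v)(y_1)g$, which lowers $F$. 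Using $\sum F^{ii}\geqslant\gamma$ and concavity, the net change in $G$ is $\leqslant 0$. Hence $G(\phi)\leqslant G(\hat v)<0$, a contradiction, provided $\Lambda_\phi$ stays in $\Gamma_n$. If it does not, then $F(\phi)$ is undefined or $0$ in the degenerate sense, and the inequality $G(\phi)\geqslant0$ still fails because $\phi<0$. Therefore $w\leqslant v$.
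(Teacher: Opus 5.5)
\textbf{There is a genuine gap.} The core of your argument, the strict supersolution in Step 2, is never actually produced. You stop at ``I would settle it by choosing the constants carefully'', and the obstruction you describe there comes from two mistakes.

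First, Step 3 does not need $w-\hat v<0$ near $\partial B_R$. Step 1 does not give this anyway, since it only yields $w-v\leqslant\varepsilon$ there. What Step 3 needs is only $\max_{\partial B_R}(w-\hat v)<(w-\hat v)(y_0)$, and the gap between $\varepsilon$ and $2\varepsilon$ from Step 1 provides exactly that.

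Second, your formula $G(v-\delta\cosh r+C\delta)=G(v)-\delta\gamma\cosh r+C\delta\gamma$ is wrong, because the constant also enters $\Lambda$: $\Lambda_{\hat v}=\Lambda_v-C\delta g$. By the same fact you invoke in Step 3 (concavity together with $\sum F^{ii}\geqslant\gamma$), an admissible positive constant shift never increases $G$. In fact additive constants in $\hat v$ play no role at all, since they are absorbed into $M=\max(w-\hat v)$ when you touch. The family $(1+t)v+\theta$ is a dead end as well: for $v\equiv-1$ it produces negative constants, which satisfy $G=0$ exactly.

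\textbf{The fix.} The candidate you found and then abandoned already closes the proof. Let $r$ be the distance to $y_0$ and set $\hat v=v-\delta\cosh r$. Since $\nabla^2\cosh r=\cosh r\,g$, you get $\Lambda_{\hat v}=\Lambda_v$. Hence $\hat v\in\mathcal S$ and $G(\hat v)=G(v)-\gamma\delta\cosh r\leqslant-\gamma\delta<0$.

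Choose $\delta\cosh(2R)<\varepsilon$. Because $y_0\in B_R$, you have $r\leqslant 2R$ on $\overline{B}_R$. So on $\partial B_R$,
\[
w-\hat v\leqslant\varepsilon+\delta\cosh r<2\varepsilon<2\varepsilon+\delta=(w-\hat v)(y_0).
\]
Thus $w-\hat v$ attains an interior maximum $M>0$ at some $y_1$, and your Step 3 computation gives $0\leqslant G(\hat v+M)(y_1)\leqslant G(\hat v)(y_1)<0$, a contradiction.

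\textbf{The degenerate case.} Drop the appeal to ``$\phi<0$'': $\phi(y_1)=w(y_1)$, and $w$ is not assumed negative. Instead use convexity. Since $w\sqrt{1-|\xi|^2}$ is convex in the Klein model, any $C^2$ function touching it from above has nonnegative Hessian at the contact point. So $\Lambda_\phi(y_1)\in\overline{\Gamma}_n$, and the concavity inequality extends there by continuity of $F$.

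\textbf{Comparison with the paper.} Repaired this way, your route differs from the paper's. The paper instead makes the subsolution strict, passing to $w+\eta g$ with $g=-(\cosh\rho)^{-a}$ from Lemma \ref{smooth function g}. That requires the strict-subsolution computation for $g$, plus a check via superadditivity of $F$ that $w+\eta g$ is a viscosity subsolution of $G=\eta c_0$. Your perturbation of the smooth supersolution by a function in the kernel of $\Lambda$ is shorter: the change in $G$ is explicit, and no viscosity manipulation or auxiliary barrier is needed.
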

\begin{proof}
    We first claim that the viscosity comparison principle holds for any geodesic ball $B_R$. Suppose that $w$ and $v$ are viscosity subsolution and admissible $C^2$ supersolution in $B_R$ respectively, with
    \begin{align}
        w \leqslant v \quad \text{on } \partial B_R. 
    \end{align}
    We will prove that
    \begin{align}
        w \leqslant v \quad \text{in } B_R. \label{BR leq}
    \end{align}
    Let $\rho$ be the geodesic distance function from the origin. Define
    \[
    g(\rho) = - \left( \cosh \rho \right)^{-a}
    \]
    as in Lemma \ref{smooth function g}, then $g$ is hyperbolic convex, and $g$ is bounded. By Lemma \ref{smooth function g}, we can see that    
    \[
    G(g)=F(g) + \frac{g}{\binom{n}{k}^{\frac{1}{k}}}\geqslant c_0 \quad \text{in } B_R,
    \]
    for some positive constant $c_0$. 
 
    Consider 
    \[
    w_\eta = w + \eta g.
    \]
    We claim that $w_\eta$ is a viscosity subsolution to equation
    \begin{align}\label{Modified subsolution}
        G(h) = \eta c_0.
    \end{align}
    For any $x_0 \in B_R$, suppose  $\varphi_{\eta} \in C^2$ satisfies 
    \[
    \varphi_{\eta}(x_0) = w_{\eta}(x_0), \quad \varphi_{\eta}(z) \geqslant w_{\eta}(z), \quad \mathbb\forall z \in B_R.
    \]
    Let $\varphi = \varphi_{\eta} - \eta g$, then
    \[
    \varphi(x_0) = w(x_0), \quad \varphi(z) \geqslant w(z), \quad \mathbb\forall z \in B_R.
    \]
    Since $w$ is a viscosity subsolution in $B_R$, we have
    \[
    G(\varphi)(x_0) \geqslant 0.
    \]
    By concavity we know at $x_0$
    \begin{align*}
        G(\varphi_{\eta}) \geqslant G(\varphi) + G(\eta g) \geqslant \eta c_0.
    \end{align*}
    By the hyperbolic convexity of $w$, we know that in $B_1^\xi$ model, $w \sqrt{1-|\xi|^2}$ is convex, which implies 
    $(w +\eta g)\sqrt{1-|\xi|^2}$ is also convex. Thus, $w_{\eta}$ is a viscosity subsolution.

    Now suppose \eqref{BR leq} fails. We may assume  that there exists $\eta>0$ such that 
    \[
    \max_{\overline{B}_R} \left( w_{\eta}-v \right) > \max_{\partial B_R} \left( w_{\eta}-v \right).
    \]
    Suppose at $z_0 \in B_R$, we have
    \[
    w_{\eta}(z_0)-v(z_0) = M = \max_{\overline{B}_R} \left( w_{\eta}-v \right).
    \]
    Thus we have
    \[
    v(z_0) + M = w_{\eta}(z_0), \quad v + M \geqslant w_{\eta} \quad \text{in } \overline{B}_R.
    \]
    Therefore since $w_{\eta}$ is a viscosity subsolution to \eqref{Modified subsolution}, at $z_0$, we have
    \[
    G(v+M) \geqslant \eta c_0,
    \]
    which shows that at $z_0$
    \begin{align*}
        G(v) = G((v+M) + (-M)) \geqslant G(v+M) + G(-M) \geqslant \eta c_0.
    \end{align*}
    This contradicts $G(v) \leqslant 0$ since $v$ is a supersolution.

    Now we can prove the viscosity comparison principle. Suppose not, then there exists a point $y_0 \in \mathbb{H}^n$, such that $w(y_0) - v(y_0) = 2\varepsilon > 0$. By the same argument as in Lemma \ref{Comparison principle}, we may still assume that $(w-v) \leqslant \varepsilon$ outside $B_R$. Then in $\overline{B}_R$, we have 
    \begin{align*}
        w - \varepsilon \leqslant v \quad \text{on } \partial B_R,
    \end{align*}
    and $w - \varepsilon$ is a viscosity subsolution in $\overline{B}_R$. Using the viscosity comparison principle for geodesic ball $B_R$, we have
    \[
    w - \varepsilon \leqslant v \quad \text{in } B_R.
    \]
    Contradiction. The viscosity comparison principle is proved.
\end{proof}

\begin{lemma}
    For a family of viscosity subsolution $\{v_{\alpha}\}_{\alpha \in \mathcal{L}}$, their supremum $v \coloneqq \sup_{\alpha \in \mathcal{L}} v_{\alpha}$ is still a viscosity subsolution.
\end{lemma}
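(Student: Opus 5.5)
We must show two things: that $v = \sup_\alpha v_\alpha$ is hyperbolic convex and continuous, and that it satisfies the viscosity subsolution inequality. Throughout, $v$ is implicitly assumed finite; in our applications it is bounded above by a supersolution barrier.

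\emph{Hyperbolic convexity and continuity.} In the Klein model $B_1^\xi$, each $v_\alpha(y(\xi))\sqrt{1-|\xi|^2}$ is convex. Since $\sqrt{1-|\xi|^2}>0$ does not depend on $\alpha$, we have
\[
v(y(\xi))\sqrt{1-|\xi|^2}=\sup_\alpha v_\alpha(y(\xi))\sqrt{1-|\xi|^2}.
\]
A pointwise supremum of convex functions is convex, so this function is convex on $B_1^\xi$. A finite convex function on an open convex set is locally Lipschitz, hence continuous. Therefore $v\in C^0(\mathbb{H}^n)$ and $v$ is hyperbolic convex.

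\emph{Subsolution inequality.} Fix $y_0$ and a $C^2$ test function $\varphi$ with $v-\varphi$ attaining a local maximum at $y_0$ and $v(y_0)=\varphi(y_0)$. Replace $\varphi$ by
\[
\tilde\varphi=\varphi+|y-y_0|^4
\]
in local coordinates. Then $\tilde\varphi$ has the same value, gradient and Hessian at $y_0$, so $F(\tilde\varphi_{ij}-\tilde\varphi\delta_{ij})$ and $\tilde\varphi$ agree with those of $\varphi$ at $y_0$. Moreover, $v-\tilde\varphi$ now has a strict maximum at $y_0$ on a small closed ball $\overline B_r(y_0)$, with
\[
v-\tilde\varphi\le -r^4 \quad\text{on } \partial B_r(y_0).
\]

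Choose $\alpha_j$ with $v_{\alpha_j}(y_0)\to v(y_0)$. Let $y_j$ be a maximum point of $v_{\alpha_j}-\tilde\varphi$ on $\overline B_r(y_0)$. Since $v_{\alpha_j}\le v$,
\[
(v_{\alpha_j}-\tilde\varphi)(y_j)\ \ge\ (v_{\alpha_j}-\tilde\varphi)(y_0)\to 0,
\]
while $v_{\alpha_j}-\tilde\varphi\le v-\tilde\varphi\le -r^4$ on the boundary. So for large $j$, $y_j$ is an interior point, and the maximum there is local.

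Next we show $y_j\to y_0$. Let $y^*$ be any limit point of $(y_j)$. Using $v_{\alpha_j}\le v$ and the continuity of $v$ and $\tilde\varphi$,
\[
(v-\tilde\varphi)(y^*)\ \ge\ \limsup_j\, (v_{\alpha_j}-\tilde\varphi)(y_j)\ \ge\ 0.
\]
Since the maximum of $v-\tilde\varphi$ is strict with value $0$, this forces $y^*=y_0$. The same chain of inequalities gives $c_j:=(v_{\alpha_j}-\tilde\varphi)(y_j)\to 0$.

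Now $\tilde\varphi+c_j$ touches $v_{\alpha_j}$ from above at $y_j$. Since $v_{\alpha_j}$ is a viscosity subsolution,
\[
F\big((\tilde\varphi+c_j)_{ij}-(\tilde\varphi+c_j)\delta_{ij}\big)(y_j)\ \ge\ -\frac{\tilde\varphi(y_j)+c_j}{\binom{n}{k}^{1/k}}.
\]

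\emph{Passing to the limit.} The main obstacle is this limit, since $F$ is defined only when the eigenvalues lie in $\Gamma_n$ (it can be extended by $0$ on $\partial\Gamma_n$). The right-hand side tends to $-\varphi(y_0)/\binom{n}{k}^{1/k}>0$, because $v<0$. Hence for large $j$ the matrices $\tilde\varphi_{ij}(y_j)-(\tilde\varphi(y_j)+c_j)\delta_{ij}$ lie in $\Gamma_n$ and have $F$ bounded below by a positive constant.

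These matrices converge to $\varphi_{ij}(y_0)-\varphi(y_0)\delta_{ij}$, which therefore lies in $\overline{\Gamma}_n$. Since $F=(\sigma_n/\sigma_{n-k})^{1/k}$ extends continuously to $\overline{\Gamma}_n$ (with value $0$ on $\partial\Gamma_n$) and the limit inequality has a positive right-hand side, the limit matrix in fact lies in $\Gamma_n$. Continuity of $F$ then gives
\[
F(\varphi_{ij}-\varphi\delta_{ij})(y_0)\ \ge\ -\frac{\varphi(y_0)}{\binom{n}{k}^{1/k}},
\]
which is the required viscosity subsolution inequality for $v$.
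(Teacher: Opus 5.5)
The paper states this lemma without proof, treating it as a standard fact from viscosity theory, so there is no argument of the paper's to compare with. Your proposal supplies the standard perturbation-and-limit proof, and it is correct.

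It is also more careful than the bare statement. You check the two requirements of the paper's definition that are not automatic for a general supremum of viscosity subsolutions: continuity and hyperbolic convexity of $v$. Both come at once from convexity of $v(y(\xi))\sqrt{1-|\xi|^2}$ in the Klein model, given finiteness of $v$, which you rightly flag. You also handle the degeneracy of $F$ on $\partial\Gamma_n$ by using the strictly positive right-hand side.

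Two small points could be made explicit.

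First, any $C^2$ function $\varphi$ touching a hyperbolic convex function from above has $\lambda[\Lambda_\varphi]\in\overline{\Gamma}_n$ at the touching point. In the Klein model, $\varphi\sqrt{1-|\xi|^2}$ touches a convex function from above, so its Euclidean Hessian is nonnegative there. After conjugation by $\gamma^*$ and multiplication by $w^*>0$, that Hessian has the same eigenvalues as $\Lambda_\varphi$. This observation is what makes the subsolution inequality for $v_{\alpha_j}$ at $y_j$ meaningful with $F$ extended by $0$ to $\partial\Gamma_n$. It is also what justifies your conclusion that those matrices lie in $\Gamma_n$.

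Second, you invoke $v<0$, which is not part of the paper's definition of viscosity subsolution. This costs nothing: if $v(y_0)\geqslant 0$, the required inequality holds trivially by the same observation, since its right-hand side is then $\leqslant 0\leqslant F$. In the paper's application, moreover, the supremum satisfies $\underline{v}\leqslant\sup\varphi<0$ anyway.
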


\section{The construction of barriers: self-shrinker}\label{The construction of barriers: self-shrinker}

\subsection{Construction of upper barrier function}

Here we want to construct $\overline{v}$ defined on $\mathbb{H}^n$, which is not a supersolution but an upper barrier of \eqref{sigmak selfshrinker}, since $\overline{v}$ is not hyperbolic convex in general. 

Let $\overline{v}$ be the harmonic function with $\overline{v} = \varphi$ on $\mathbb{H}^{n}(\infty)$, i.e.
\[
\begin{cases}
    \Delta \overline{v} = 0 \quad \text{in} \quad \mathbb{H}^{n}, \\
    \overline{v} = \varphi \quad \text{on} \quad \mathbb{S}^{n-1}. 
\end{cases}
\]
By using a special case of the main theorem in \cite{Anderson}, the solution exists and is unique. By asymptotic maximum principle (Proposition 2.5 in \cite{Choi}), we have $\inf \varphi \leqslant \overline{v} \leqslant \sup \varphi$. 

If $v$ solves \eqref{sigmak selfshrinker}, $v$ is hyperbolic convex, then by generalized Newton--Maclaurin's inequality, we have
\begin{align*}
    \left( \frac{\sigma_n}{\sigma_{n-k}} \right)^{\frac{1}{k}} \left( v_{ij}-v\delta_{ij} \right) \leqslant \frac{1}{\binom{n}{k}^{\frac{1}{k}}} \frac{\sigma_1}{n} \left( v_{ij}-v\delta_{ij} \right)
    = \frac{1}{\binom{n}{k}^{\frac{1}{k}}} \left( \frac{\Delta v}{n} - v \right) \quad \Rightarrow \Delta v \geqslant 0.
\end{align*}
By asymptotic maximum principle for subharmonic function, $v \leqslant \overline{v}$.

\subsection{Construction of lower barrier function}

Similarly by generalized Newton--Maclaurin's inequality, we have
\begin{align}
    \left[ \frac{\frac{\sigma_n (\lambda)}{\binom{n}{n}}}{\frac{\sigma_{n-1} (\lambda)}{\binom{n}{n-1}}} \right]^{1} \leqslant \left[ \frac{\frac{\sigma_n (\lambda)}{\binom{n}{n}}}{\frac{\sigma_{n-k} (\lambda)}{\binom{n}{k}}} \right]^{\frac{1}{k}}
    \leqslant \left[ \frac{\frac{\sigma_n (\lambda)}{\binom{n}{n}}}{\frac{\sigma_0 (\lambda)}{\binom{n}{0}}} \right]^{\frac{1}{n}}.
\end{align}
Thus in order to verify some function is a subsolution of \eqref{sigmak selfshrinker}, we only need to check it is a subsolution of \eqref{sigmak selfshrinker} when $k=1$. In this section, we restrict $F=\frac{\sigma_n}{\sigma_{n-1}}$. We will construct a subsolution to the following equation
\begin{align}
    \begin{cases}
        \frac{\sigma_n}{\sigma_{n-1}} \left[v_{ij}-v\delta_{ij}\right] = -\frac{1}{n} v \quad \text{in } \mathbb{H}^{n}, \\
        v \rightarrow \varphi \quad \text{on } \mathbb{S}^{n-1} = \mathbb{H}^{n}(\infty). \label{k=1 selfshrinker}
    \end{cases}
\end{align}

\begin{lemma}\label{f(s)}
    Let $s : \mathbb{H}^n \setminus K \rightarrow \mathbb{R}^{+}$ be the distance function from some convex set $K \subseteq \mathbb{H}^n$. Then there exists a function $f: \mathbb{R}^{+} \to \mathbb{R}^{-}$ such that $f(s) \to 0$ as $s \to +\infty$, $f(s)$ is an admissible function, i.e. $f \in \mathcal{S}$, see \eqref{admissible set}, and 
    \[
    \left[ F(f) + \frac{f}{n} \right](s) \geqslant 0
    \]
    for all $s \geqslant s_0$, where $s_0$ satisfies $\tanh s_0 = \frac{1}{2}$.
\end{lemma}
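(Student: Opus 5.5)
The plan is to take $f$ to be an explicit negative, increasing, convex function of $s$, for instance $f(s)=-e^{-s}$, and to check admissibility and the subsolution inequality directly from the chain rule together with the Hessian comparison of Lemma \ref{Hessian of distance function}.

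\textbf{Step 1: the hyperbolic Hessian of $f(s)$.} Since $|\nabla s|=1$, we have $\nabla^2 s(\nabla s,\cdot)=0$. By the chain rule,
\[
\Lambda_{f(s)} = f''\, ds\otimes ds + f'\,\nabla^2 s - f g .
\]
In an orthonormal frame with $e_1=\nabla s$ this is block diagonal. The $e_1$ eigenvalue is $f''-f$. On $e_1^{\perp}$ the matrix is $f'\nabla^2 s - f g$, which by Lemma \ref{Hessian of distance function} (using $f'>0$) is bounded below by $(f'\tanh s - f)\,g$. Since $f<0$, $f''\geqslant 0$ and $f'>0$, all eigenvalues are positive. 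Hence $\lambda[\Lambda_{f}]\in\Gamma_n$, and together with $f<0$ this gives $f\in\mathcal{S}$.

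\textbf{Step 2: monotonicity of $F$.} We use $\frac{\sigma_n}{\sigma_{n-1}}(\lambda)=\bigl(\sum_i \lambda_i^{-1}\bigr)^{-1}$. This quantity is increasing in each $\lambda_i$ on $\Gamma_n$, and also monotone with respect to the matrix order. Therefore
\[
F(f) \geqslant \Bigl(\frac{1}{f''-f}+\frac{n-1}{f'\tanh s-f}\Bigr)^{-1}.
\]

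\textbf{Step 3: the subsolution inequality.} Because $f''\geqslant0$ and $f'\tanh s\geqslant 0$, each denominator is at least $-f>0$. The sum of reciprocals is therefore at most $n/(-f)$, so $F(f)\geqslant -f/n$, i.e. $F(f)+f/n\geqslant0$. This holds for all $s>0$, in particular for $s\geqslant s_0$. Clearly $f(s)=-e^{-s}\to0$ as $s\to\infty$. If a quantitative strict inequality is needed later, one could instead choose $f$ with $f'\tanh s_0$ comparable to $-f$, for instance $f=-e^{-s/2}$, where $\tanh s\geqslant\tfrac12$ is what would be used.

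\textbf{Expected obstacle: regularity of $s$.} The distance function to a convex set is in general only $C^{1,1}$ off $K$. I would handle this in one of two ways. The first is to take $K$ with smooth strictly convex boundary, so that $s$ is smooth on $\mathbb{H}^n\setminus K$. The second is to interpret the Hessian bound of Lemma \ref{Hessian of distance function} in the barrier/viscosity sense: a smooth lower support function for $s$ composed with the increasing convex $f$ gives a smooth lower support for $f(s)$ satisfying the same inequalities, which is exactly what the viscosity framework of Lemma \ref{Viscosity comparison principle} requires.
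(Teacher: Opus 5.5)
There is a genuine gap: the function you propose is not admissible, and the shape you require of $f$ cannot exist. For $f(s)=-e^{-s}$ you have $f'=e^{-s}>0$ but $f''=-e^{-s}<0$, so $f$ is concave, not convex. Since $\nabla^2 s(\nabla s,\cdot)=0$, the vector $\nabla s$ is an eigenvector of $\Lambda_{f(s)}$ with eigenvalue $f''-f\equiv 0$; along each normal geodesic, $-e^{-s}$ solves $v''-v=0$. Hence $\lambda[\Lambda_{f(s)}]\notin\Gamma_n$ and $\sigma_n=0$. This gives $F(f)=\frac{\sigma_n}{\sigma_{n-1}}=0$ and $F(f)+\frac fn=-\frac{e^{-s}}{n}<0$ everywhere, so both admissibility and the subsolution inequality fail. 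This is not a badly chosen example but a structural problem. The conditions $f'>0$ and $f''\geqslant 0$ force $f(s)\geqslant f(0)+f'(0)s\to+\infty$, which is incompatible with $f<0$ and $f\to 0$. So the bound in Step 3, that both denominators are at least $-f$, is never available. Your conclusion that the inequality holds for all $s>0$, which would make the threshold $s_0$ superfluous, should have been a warning sign.

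The missing idea is that $f$ must be concave with $f''-f>0$. Then the normal eigenvalue is strictly \emph{smaller} than $-f$. This loss must be paid for by the tangential eigenvalues $f'\tanh s-f$, which exceed $-f$ only by $f'\tanh s$, and this is exactly where $\tanh s\geqslant\frac12$ enters. The paper takes $f=-e^{ks}$ with $-1<k<0$, so that $f''-f=(1-k^2)(-f)>0$ and $f'\tanh s-f=(1-k\tanh s)(-f)$. After clearing the positive denominator, $F(f)+\frac fn\geqslant 0$ reduces to
\[
-k\tanh s\,(1-k^2)-\frac{k^2}{n}+\frac{k}{n}\tanh s\geqslant 0 .
\]
For $\tanh s\geqslant\frac12$ and $-\frac14\leqslant k<0$, the left side is bounded below by $\frac18(-k)(1+4k)\geqslant 0$. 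The smallness of $|k|$ really matters. Your fallback $f=-e^{-s/2}$ (that is, $k=-\frac12$) makes the left side equal to $\frac{\tanh s-1}{8}<0$ when $n=2$. This is not an artifact of the Hessian comparison, since for $K$ a geodesic in $\mathbb{H}^2$ one has $\nabla^2 s=\tanh s\,(g-ds\otimes ds)$ exactly. Your remark about the limited regularity of $s$ is fair, and the paper does not address it, but it is secondary. Note that viscosity subsolutions are tested by functions touching from \emph{above}; a lower support function for $s$ still bounds the Hessian of any such test function from below, so that point can be repaired.
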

\begin{proof}
For convenience, we assume that $f: \mathbb{R}^{+} \to \mathbb{R}^{-}$ is a smooth function with $f^{\prime}(s)>0, f^{\prime \prime}(s)<0$, then by Lemma \ref{Hessian of distance function} we have
\begin{align*}
    \text{Hess}(f(s)) &= f^{\prime}(s) \text{Hess}(s) + f^{\prime \prime}(s) \mathrm{d}s \otimes \mathrm{d}s \\
    &\geqslant         
    f^{\prime}(s) (\tanh s) g - \left( f^{\prime}(s)\tanh s - f^{\prime \prime}(s) \right) \mathrm{d}s \otimes \mathrm{d}s. 
\end{align*}
The tensor in the last line has eigenvalues of $(n-1) \cdot \left[ f^{\prime}(s) \tanh s \right]$ and $1 \cdot \left[ f^{\prime \prime}(s) \right]$. To ensure that $f(s)$ to be hyperbolic convex, We also need
\[
-f(s) + f^{\prime \prime}(s) > 0.
\]
For subsolution condition, it demands
\[
F(f) + \frac{f}{n} \geqslant 0.
\]
A direct calculation gives
\begin{align}\label{subsolution condition}
    F + \frac{f}{n} &= \frac{1}{\sigma_{1}\left( \Lambda^{-1} \right)} + \frac{f}{n} 
    \geqslant \frac{1}{\frac{n-1}{f^{\prime} \tanh s-f} + \frac{1}{f^{\prime \prime}-f}} + \frac{f}{n} \notag \\
    &= \frac{\left( f^{\prime} \tanh s-f \right)\left( f^{\prime \prime}-f \right)}{\left( n-1 \right)\left( f^{\prime \prime}-f \right) + f^{\prime} \tanh s-f} + \frac{f}{n} \notag \\
    &= (-f) \left[ \frac{\left( 1-\frac{f^{\prime} \tanh s}{f} \right) \left( 1-\frac{f^{\prime \prime}}{f} \right)}{\left( n-1 \right)\left( 1-\frac{f^{\prime \prime}}{f} \right) -\frac{f^{\prime}}{f} \tanh s +1} - \frac{1}{n} \right].
\end{align}
Combining the above conditions, what we want is 
\[
\begin{cases}
    f: [0, \infty) \to \mathbb{R^{-}}, \\
    f<0, f^{\prime}>0, f^{\prime \prime}<0, \\
    \lim_{s \to +\infty} f(s) \to 0,\\
    -f + f^{\prime \prime} > 0, \\
    \left[ \frac{\left( 1-\frac{f^{\prime} \tanh s}{f} \right) \left( 1-\frac{f^{\prime \prime}}{f} \right)}{\left( n-1 \right)\left( 1-\frac{f^{\prime \prime}}{f} \right) -\frac{f^{\prime}}{f} \tanh s +1} - \frac{1}{n} \right] \geqslant 0.
\end{cases}
\]
We also require that $f$ vanishes at infinity because of the construction of subsolution later in this section. Now let 
\[
f(s) = -e^{ks},
\]
where $k$ is a constant to be determined later.
Then we get
\begin{align*}
    f^{\prime} = -ke^{ks} > 0 &\Longleftrightarrow k < 0, \\
    f^{\prime \prime} = -k^{2}e^{ks} < 0 &\Longleftrightarrow k \in \mathbb{R}\setminus\{0\}, \\
    -f + f^{\prime \prime} > 0 &\Longleftrightarrow |k| < 1, \\
    \lim_{s \to +\infty} f(s) \to 0 &\Longleftrightarrow k<0. 
\end{align*}
Hence we restrict $-1 < k < 0$. For the last condition, after finding a common denominator, the numerator is 
\begin{align}\label{numerator}
    &\left( 1-k\tanh s \right) \left( 1-k^{2} \right) - \frac{n-1}{n} \left( 1-k^{2} \right) + \frac{k}{n} \tanh s - \frac{1}{n} \notag \\
    =&-\frac{k^2}{n} - k\tanh s \left( 1-k^{2} \right) + \frac{k}{n} \tanh s \notag \\
    \geqslant&\left( \frac{3}{4} - \frac{1}{n} \right) \left( -k\tanh s \right) -\frac{k^2}{n} \notag \\
    \geqslant&\frac{1}{4} \left( -k\tanh s \right) -\frac{k^2}{2} \notag \\
    \geqslant&\frac{-k}{8} -\frac{k^2}{2} = \frac{1}{8}\left( -k - 4k^2 \right) \notag \\
    =& \frac{1}{8}(-k)(1+4k ) \geqslant 0, 
\end{align}
given $-\frac{1}{4} \leqslant k <0$, and $s$ is sufficiently large such that $\tanh s \geqslant \frac{1}{2}$. Then, we fix some $-\frac{1}{4} \leqslant k <0$ and $f(s) = -e^{ks}$, the lemma is now proved.
\end{proof}

From now on, in order to obtain the boundary value, we need to consider the summation of a family of function $\{f(s_i)\}_{i=1}^k$, where each $s_i$ is the distance function from a convex set $K_i$. By the concavity and homogeneity with degree one of the operator $F$, we know $h = \sum_{i=1}^{k} f(s_i)$ also satisfies
\[
F(h) + \frac{h}{n} \geqslant 0
\]
in the common definition domain of $s_i$ where $s_i \geqslant s_0$ for any $1 \leqslant i \leqslant k$.

\begin{lemma}
    There exists a viscosity subsolution $\underline{v}$ to equation \eqref{k=1 selfshrinker} defined in $\mathbb{H}^n$ with boundary value $\varphi$ at infinity.
\end{lemma}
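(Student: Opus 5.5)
We build $\underline{v}$ as a Perron-type supremum of local subsolutions, one for each boundary point, following Choi's barrier construction. By the preceding lemma, a supremum of viscosity subsolutions is again a viscosity subsolution. So it suffices to produce, for every $\theta\in\mathbb{S}^{n-1}$ and every $\varepsilon>0$, a hyperbolic convex function $w_{\theta,\varepsilon}$ on all of $\mathbb{H}^n$ with four properties: it is a (viscosity) subsolution of \eqref{k=1 selfshrinker}, it satisfies $w_{\theta,\varepsilon}\leqslant\varphi$ at infinity, it satisfies $w_{\theta,\varepsilon}\geqslant \varphi(\theta)-\varepsilon$ near $\theta$ in the cone topology, and it is bounded below by a fixed negative constant. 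Then $\underline{v}=\sup_{\theta,\varepsilon}w_{\theta,\varepsilon}$ satisfies $\liminf \underline{v}\geqslant\varphi$ at each boundary point. For the reverse inequality we compare with the upper barrier: each $w_{\theta,\varepsilon}$ is subharmonic, because a subsolution satisfies $\Delta w\geqslant 0$ by the Newton--Maclaurin computation of Section 3.1. The asymptotic maximum principle then gives $w_{\theta,\varepsilon}\leqslant\overline{v}$, so $\limsup\underline{v}\leqslant\varphi$. We also intersect with the constant subsolution $\inf\varphi$ ($v\equiv c<0$ gives $F=-c\geqslant -c/n$) to keep $\underline{v}$ bounded below.

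To build $w_{\theta,\varepsilon}$, fix $\theta$ and use continuity of $\varphi$ to pick $\delta$ with $\varphi\geqslant\varphi(\theta)-\varepsilon$ on the spherical cap of radius $2\delta$ around $\theta$. Cover the complement of the cap $C_\delta(\theta)$ in $\mathbb{S}^{n-1}$ by finitely many boundary points $\theta_1,\dots,\theta_N$. For each $i$ take a convex set $K_i$, for instance a closed half-space bounded by a totally geodesic hyperplane, whose ideal boundary is a small cap around $\theta_i$ and which stays away from the cap at $\theta$. Let $s_i$ be the distance to $K_i$. Then $s_i\to\infty$ near $\theta$, while $s_i=0$ on $K_i$. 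With $f(s)=-e^{ks}$ from Lemma \ref{f(s)}, define
\[
w_{\theta,\varepsilon}=\varphi(\theta)-\varepsilon+A\sum_{i=1}^N f(s_i),
\]
where $A>0$ is large. Since $F$ is homogeneous of degree one, concave, and the equation is linear in the zeroth-order term, the combination is a subsolution in the region where all $s_i\geqslant s_0$. This uses the remark after Lemma \ref{f(s)}; for the constant term we note $F(c)+c/n=-c(1-1/n)\geqslant0$ when $c<0$. We also need $\varphi(\theta)-\varepsilon<0$, which holds because $\sup\varphi<0$. Near $\theta$ each $f(s_i)\to 0$, so $w_{\theta,\varepsilon}\to\varphi(\theta)-\varepsilon$. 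Near each $\theta_i$, the term $A f(s_i)\leqslant -A$ pushes $w$ below $\inf\varphi$.

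The main obstacle is that Lemma \ref{f(s)} only gives the subsolution property where $s_i\geqslant s_0$, and $f(s_i)$ is not even $C^2$ across $\partial K_i$. So $w$ is only a subsolution in the region $\Omega=\{s_i\geqslant s_0\ \forall i\}$, which is a neighborhood of the cap at $\theta$ in $\overline{\mathbb{H}^n}$. Our first remedy is to set $\tilde w=\max(w,\ c_*)$, where $c_*$ is a large negative constant such as $\inf\varphi-1$ (or a slightly lowered hyperboloid-type subsolution). We choose $A$ so large that $w<c_*$ on $\{\min_i s_i\leqslant s_0\}$ up to infinity. This requires that the truncated region near the $\theta_i$ and the compact part are covered; we would enlarge the sets $K_i$ and include a ball around the origin as one more convex set $K_0$ to handle the compact part. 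With this choice $\tilde w=c_*$ near the bad set, and $\tilde w$ is a maximum of subsolutions on each open region, hence a viscosity subsolution. Hyperbolic convexity is preserved because a maximum of convex functions in the $B_1^\xi$ model is convex. Finally, $\tilde w\leqslant\varphi$ at infinity: near $\theta_i$ we have $\tilde w=c_*<\varphi$, and elsewhere on the cap $\tilde w\leqslant\varphi(\theta)-\varepsilon\leqslant\varphi$. The delicate point is checking that $A$ can be chosen uniformly so that $w<c_*$ on the whole boundary of $\Omega$ and beyond. The quantitative bound $f(s_i)\leqslant -e^{ks_0}$ there, together with compactness of the finite cover, should suffice.
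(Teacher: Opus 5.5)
Your proposal is correct and follows essentially the paper's construction: Choi-type barriers $\varphi(\theta)-\varepsilon+A\sum_i f(s_i)$ built from distance functions to convex sets near boundary points, cut off by a constant below $\inf\varphi$, followed by a supremum. The differences are that you use half-spaces instead of Anderson's convex neighborhoods, and you compare with the harmonic barrier $\overline{v}$ to control the $\limsup$ of the supremum at infinity, a step the paper only asserts.

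Two harmless remarks. First, for a constant $c<0$ one has $F(c)=\frac{\sigma_n}{\sigma_{n-1}}(-c\,I)=-c/n$, not $-c$, so constants are exact solutions. Second, the extra set $K_0$ is unnecessary: $w\leqslant\varphi(\theta)-\varepsilon-Ae^{k(s_0+1)}$ on $\{\min_i s_i\leqslant s_0+1\}$, which already makes the choice of $A$ in the cutoff uniform.
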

\begin{proof}
We give a constructive proof directly and divide into three steps. The construction is adapted from Theorem 4.7 in \cite{Choi}.

\textbf{Step 1:} Fix some origin $O \in \mathbb{H}^n$. Then for any direction $v\in \mathbb{S}^{n-1}$, given $\varepsilon > 0$, there exists a sufficiently small $\delta(\varepsilon)>0$ depending on the modulus of continuity of $\varphi$, such that
\[
|\varphi(w) - \varphi(v)| \leqslant \varepsilon \quad \text{for } \measuredangle (w, v) \leqslant \delta, w \in \mathbb{S}^{n-1}.
\]
Let $S(v, \delta) = \{ w \in \mathbb{S}^{n-1} | \measuredangle (w, v) = \delta \}$, then $S(v, \delta)$ is compact. By Theorem 3.2 in \cite{Anderson}, we know that for any $x, y \in \mathbb{S}^{n-1}$, $x \neq y$, there exist disjoint open neighborhoods $V_x, V_y$ in $\overline{\mathbb{H}}{}^n$ relative to the cone topology such that $V_x \cap \mathbb{H}^n$ and $V_y \cap \mathbb{H}^n$ are convex. Fix one point $v$, let $w$ vary in $S(v, \delta)$, we obtain disjoint open neighborhoods $V_{w}$ and $V_v(w)$. By compactness of $S(v, \delta)$, there exists a finite family of points $w_i \in S(v, \delta)$, $i=1,2,\cdots,k$, such that $\{V_{w_i}\}_{i=1}^k$ cover $S(v, \delta)$ and 
\[
V_{w_i} \cap V_{v} = \varnothing \quad \forall \, 1 \leqslant i \leqslant k,
\]
where $V_{v} = \cap V_{v}(w_i)$ is an open neighborhood of $v$.

\textbf{Step 2:} Let $\Omega = \mathbb{H}^n \setminus \cup_{i=1}^k V_{w_i}$ and $s_i: \Omega \to \mathbb{R}^{+}$ the distance function from $V_{w_i}$. Then define 
\[
B(x) \coloneqq 
\sum_{i=1}^{k} f(s_i(x)).
\]
It should be noted that this function is not defined on the whole space. To overcome this obstacle, we need to do some cutoff. Consider
\[
R \coloneqq \sup \{\operatorname{dist}(O, \cup_{i=1}^k V_{w_i} \cap \gamma_{w}) | w \in S(v, \delta) \},
\]
where $\operatorname{dist}$ denotes the distance between two sets, and $\gamma_{w}$ denotes the geodesic curve with tangent vector $w$ at origin. Since $S(v, \delta)$ is compact, we know $R$ has a uniform upper bound. Then we can see that $\Omega \setminus B_{2R}$ are divided into two components. Denote the one that has nonempty intersection with $V_{v}$ by $C_1$, and the other one by $C_2$. Let
\[
L = \{ x| s_i(x) \geqslant s_0,  \forall 1 \leqslant i \leqslant k \},
\]
where $s_0$ satisfies $\tanh s_0 = \frac{1}{2}$.
Then let $\Omega_1 = C_1 \cap L$, and $\overline{\Omega}_2 = \{x \in \Omega_1 | B(x) \geqslant c\}$, where $c$ is a negative constant, sufficiently close to $0$, and to be determined later. Specifically, we can see on $\partial \Omega_1$, $B$ is uniformly bounded from above by a negative constant. Indeed
\[
B(x) \leqslant \max\{f(4R), f(s_0), -1\} \eqqcolon d < 0  \quad \text{on } \partial \Omega_1.
\]
Then choose $c > d$, we have $\overline{\Omega}_2$ is away from $\Omega_1$ around any finite point.
% $\overline{\Omega}_2 \subset \subset \Omega_1$. 
Thus we can define
\[
B_{v, \delta}(x) =
\begin{cases}
    B(x), &x \in \overline{\Omega}_2 \\
    c, &x \in \mathbb{H}^n \setminus \overline{\Omega}_2.
\end{cases}
\]
$B_{v, \delta}$ is defined on $\mathbb{H}^n$ depending on $v$ and $\delta$ and satisfies the following properties:
\begin{enumerate}[label=\text{(\arabic*)}]
    \item $B_{v,\delta} \leqslant 0$ and $\lim B_{v,\delta}(q) = 0$ as $q \to v$;
    \item $\exists \, \eta > 0$ such that $\limsup B_{v,\delta}(q) \leqslant - \eta$ as $q \to w$ for any $\measuredangle (w, v) > \delta$. ($\eta = -c$ in construction)
\end{enumerate}
Since $\varphi$ is continuous on $\mathbb{S}^{n-1}$, let $A \coloneqq \sup |\varphi|$. 

\textbf{Step 3:} Let $W_{v, \varepsilon, \delta, \eta} = \frac{2A}{\eta}B_{v, \delta} + \varphi (v) - \varepsilon$ (where $\delta = \delta (\varepsilon), \eta = \eta (v, \delta)$), therefore $W_{v, \varepsilon, \delta, \eta}$ satisfies 
\begin{align*}
    \begin{cases}
    \underset{q \to w}{\limsup} W_{v, \varepsilon, \delta, \eta} \leqslant \varphi(w) \quad \forall \, w \in \mathbb{S}^{n-1}, \\
    \underset{q \to v}{\lim} W_{v, \varepsilon, \delta, \eta} = \varphi(v) - \varepsilon,
    \end{cases}
\end{align*}
and
\[
\frac{\sigma_n}{\sigma_{n-1}} \left[W_{ij}-W\delta_{ij}\right] \geqslant -\frac{1}{n} W \quad \text{in } \mathbb{H}^{n}
\]
in viscosity sense. Since in $\Omega_1$, $B$ is a smooth subsolution by Lemma \ref{f(s)}, and any negative constant is a smooth solution, then after taking their supremum, it is still a subsolution. 
% because $\overline{\Omega}_2 \subset \subset \Omega_1$.

Therefore, $W_{v,\varepsilon,\delta,\eta}$ is a viscosity subsolution of \eqref{k=1 selfshrinker}. Then let
\[
\underline{v}(x) \coloneqq \sup \{W_{v,\varepsilon,\delta,\eta}(x) \mid v, \varepsilon, \delta, \eta\}.
\]
$\underline{v}$ is also a viscosity subsolution, and $\underline{v} = \varphi$ on $\mathbb{S}^{n-1}$, which is what we want. It should also be noted that by comparison principle and $\varphi < 0$, we have $\underline{v} \leqslant \sup \varphi < 0$.
\end{proof}

\section{Self-shrinkers of the inverse Hessian curvature flow}

In the following sections, we solve \eqref{sigmak selfshrinker} by an approximate procedure and convergence argument following the method of \cite{Entireself-expanders}. First, in this section, we solve approximate Dirichlet problem by continuity method.

Let $U_r \coloneqq \{X \in \mathbb{H}^n | x_{n+1} < \frac{1}{\sqrt{1-r^2}}\}$. Then let $B_r=P(U_r)$, where the definition of $P$ is given in Section \ref{Preliminaries}.

\subsection{Approximate Dirichlet problem}\label{Approximate Dirichlet problem}

We first construct a local subsolution between the entire lower barrier and upper barrier directly.
\begin{lemma}
    For any fixed $U_r$, there exists a smooth hyperbolic convex function $\overset{\circ}{v}$ defined in $\overline{U}_r$, satisfying $\underline{v} \leqslant \overset{\circ}{v} \leqslant \overline{v}$, where $\underline{v},\overline{v}$ are viscosity  subsolution and smooth supersolution constructed in Section \ref{The construction of barriers: self-shrinker}.  
\end{lemma}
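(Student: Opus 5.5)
The plan is to work in the Klein model $B_1^{\xi}$, where hyperbolic convexity becomes Euclidean convexity. Set $w^*=\sqrt{1-|\xi|^2}$. By definition, a continuous $v$ is hyperbolic convex exactly when $u:=v\,w^*$ is convex in $B_1^{\xi}$. Since $U_r$ corresponds to $B_r\subset\subset B_1^{\xi}$, it suffices to produce a smooth convex function $\mathring u$ on $\overline B_r$ with
\[
\underline v\,w^*\leqslant \mathring u\leqslant \overline v\,w^* \quad\text{on } \overline B_r,
\]
and then set $\mathring v=\mathring u/w^*$.

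\textbf{Step 1: $\underline v$ is hyperbolic convex and lies below $\overline v$.} Every $W_{v,\varepsilon,\delta,\eta}$ is a supremum of hyperbolic convex pieces. Negative constants $c$ qualify because $c\,w^*=-|c|\sqrt{1-|\xi|^2}$ is convex. The sums $\sum f(s_i)$ qualify by Lemma \ref{f(s)}. Suprema of convex functions are convex, so $\underline u:=\underline v\,w^*$ is convex, hence continuous and locally Lipschitz in $B_1^{\xi}$. A viscosity subsolution of \eqref{k=1 selfshrinker} is viscosity subharmonic by the Newton--Maclaurin argument of Section \ref{The construction of barriers: self-shrinker}. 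Since $\overline v$ is harmonic with the same boundary data $\varphi$, the asymptotic maximum principle gives $\underline v\leqslant\overline v$ on $\mathbb{H}^n$.

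\textbf{Step 2: a strict gap on $\overline U_r$.} Next I will show $\underline v<\overline v$ on the compact set $\overline U_r$, so that
\[
\mu:=\min_{\overline B_r}(\overline v-\underline v)\,w^*>0 .
\]
The function $\overline v-\underline v$ is nonnegative and viscosity superharmonic. By the strong maximum principle, if it vanishes at an interior point it vanishes identically. In that case $\underline v=\overline v$ is smooth, hyperbolic convex and harmonic. Then Newton--Maclaurin combined with the subsolution inequality forces $\Lambda_{\underline v}=-\underline v\, g$. I expect to contradict this either with the boundary behaviour of the explicit $W$'s, which are strictly below $\varphi-\varepsilon/2$ near most boundary points, or simply by taking $\mathring v=\underline v$ in this degenerate case, since it is then already smooth.

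\textbf{Step 3: mollification.} Mollify $\underline u$ on a slightly larger ball $B_{r'}$, $r<r'<1$, to get $u_\epsilon=\underline u*\rho_\epsilon$. Convexity is preserved, and Jensen's inequality gives $u_\epsilon\geqslant\underline u$. Uniform continuity on $\overline B_{r'}$ gives $u_\epsilon\leqslant\underline u+\mu/2\leqslant\overline v\,w^*$ for $\epsilon$ small. For strict hyperbolic convexity, i.e.\ $\lambda[\Lambda]\in\Gamma_n$, I add $\tau(|\xi|^2-r'^2)$ with $0<\tau\ll\mu$. This term is strictly convex and changes $u_\epsilon$ by at most $\tau r'^2<\mu/2$, but it is negative on $B_{r'}$, which threatens the lower bound. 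To repair this, I will first mollify $\underline u+\tau|\xi|^2$ and then subtract nothing, absorbing the $\tau$ term into the $\mu/2$ margin of the upper bound instead. Convexity of $\mathring u$ together with $\mathring v=\mathring u/w^*<0$ then yields $\mathring v\in\mathcal S$.

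\textbf{Main obstacle.} The expected difficulty is Step 2, the strict separation $\underline v<\overline v$ on compact sets. Steps 1 and 3 are routine.
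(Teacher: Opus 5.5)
Your proposal is correct and follows the paper's proof. The strong maximum principle applied to $\underline v-\overline v\leqslant 0$ gives the dichotomy. In the degenerate case you take $\overset{\circ}{v}=\underline v=\overline v$, which is your second option and exactly what the paper does, so the contradiction route is unnecessary. Otherwise the strict gap on $\overline U_r$ lets you mollify the convex function $\underline u^*$ in the Klein model, and your $\tau|\xi|^2$ term replaces the paper's upward translation while also giving strict convexity. Step 2 is therefore not a real obstacle.
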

\begin{proof}
    Since $\underline{v}$ is a viscosity subsolution to equation \eqref{viscosity subsolution equation}, by definition and generalized Newton--Maclaurin's inequality, we can see that $\underline{v}$ is viscosity subharmonic, which also means subharmonic in distribution sense in any bounded set $\Omega$. Let 
    \[
    q = \underline{v} - \overline{v} \leqslant 0 \quad \text{in } \overline{\mathbb{H}}{}^n,
    \]
    $q$ is also subharmonic. If there exists a point $z_0 \in \mathbb{H}^n$ such that $q(z_0) = 0$, then by strong maximum principle, we have $q \equiv 0$. In this case, we take 
    \[
    \overset{\circ}{v} = \underline{v} = \overline{v}.
    \]
    Otherwise, $q<0$ in $\mathbb{H}^n$, then there exists $\delta>0$ such that
    \[
    \overline{v} - \underline{v} \geqslant \delta \quad \text{in } \overline{U}_r.
    \]
    Thus in $B_1$ model, we can translate the graph of $\underline{u}^*(\xi)=\underline{v}(P^{-1}(\xi))\sqrt{1-|\xi|^2}$ upward and smooth it by convolution. It is still a convex function. One may see \cite{Entireconstantsigmak} for more detail about $B_1$ model. This concludes the construction.
\end{proof}

Define 
\[
\tilde{\underline{v}} \coloneqq A\left(x_{n+1} - \frac{1}{\sqrt{1-r^2}}\right) + \overset{\circ}{v} \quad \text{in } U_r.
\]
Then on $\partial U_r$, we have
\[
\underline{v} \leqslant \tilde{\underline{v}} = \overset{\circ}{v} \leqslant \overline{v},
\]
and 
\begin{align*}
    F(\tilde{\underline{v}}) + \frac{\tilde{\underline{v}}}{\binom{n}{k}^{\frac{1}{k}}} &\geqslant A F(x_{n+1} - \frac{1}{\sqrt{1-r^2}}) + F(\overset{\circ}{v}) + \frac{\tilde{\underline{v}}}{\binom{n}{k}^{\frac{1}{k}}} \\
    &\geqslant A\frac{1}{\sqrt{1-r^2}}\frac{1}{\binom{n}{k}^{\frac{1}{k}}} + \frac{A(x_{n+1} - \frac{1}{\sqrt{1-r^2}}) + \overset{\circ}{v}}{\binom{n}{k}^{\frac{1}{k}}} \\
    &= \frac{Ax_{n+1} + \overset{\circ}{v}}{\binom{n}{k}^{\frac{1}{k}}} \geqslant 0,
\end{align*}
if we choose $A \geqslant -\inf \underline{v}$. Hence $\tilde{\underline{v}}$ is a new subsolution.

To use the continuity method, we need to make global estimates up to second order. Fix some $0<r<1$, the approximate Dirichlet problem is 
\begin{equation}
    \begin{cases} 
       \left( \frac{\sigma_n}{\sigma_{n-k}} \left[v^t_{ij}-v^t\delta_{ij}\right] \right)^{\frac{1}{k}} = -\frac{v^t(y)}{\binom{n}{k}^{\frac{1}{k}}} + (1-t)G(\tilde{\underline{v}}) \quad \text{in } U_r, \\
       v^t = \tilde{\underline{v}} \quad \text{on } \partial U_r,
       \label{sigmak approximate equation}
    \end{cases}
\end{equation}
where $\tilde{\underline{v}}$ is a solution when $t=0$. What we want is the solution when $t=1$. If the solution at $t=1$ exists, say $v$, then by comparison principle, we have
\[
\underline{v} \leqslant v \leqslant \overline{v}. 
\]
In the following, we fix and suppress superscript $t$ for convenience.

\subsection{Global \texorpdfstring{$C^0$}{C0} estimates}

By simple calculation, $\overline{v}$ and $\tilde{\underline{v}}$ are upper barrier and lower barrier for the equation \eqref{sigmak approximate equation}, respectively. By Lemma \ref{Comparison principle} and the maximum principle for subharmonic function, we have
\[
\tilde{\underline{v}} \leqslant v \leqslant \overline{v} \quad \text{in } \overline{U}_r
\]
for any solution $v$ to equation \eqref{sigmak approximate equation}, where $U_r$ is the geodesic ball we fixed before. Hence we have global $C^0$ estimates for $v$.

\subsection{Global \texorpdfstring{$C^1$}{C1} estimates}

\begin{lemma}[$C^1$ estimates]\label{C1estimates}
    If $v$ solves \eqref{sigmak approximate equation}, we have
    \[
    |\nabla v(y)|^2 \leqslant C, \quad \forall y \in \overline{U}_r,
    \]
    where $C$ is a uniform constant. 
\end{lemma}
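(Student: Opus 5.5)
The plan is to prove the gradient bound in the Klein-ball picture, where hyperbolic convexity turns into ordinary convexity. Set $u^*(\xi)=v(P^{-1}(\xi))\sqrt{1-|\xi|^2}$ on $\overline{B}_r$. Since $\lambda[\Lambda_v]\in\Gamma_n$, the function $u^*$ is convex on $B_r$. We also have $C^0$ bounds from $\tilde{\underline{v}}\leqslant v\leqslant\overline{v}$ and from the explicit factor $\sqrt{1-|\xi|^2}\geqslant\sqrt{1-r^2}$. Because $r<1$ is fixed, the map $P$ and its inverse are smooth on $\overline{U}_r$ with all derivatives controlled by $r$. Therefore $|\nabla v|_g$ on $\overline{U}_r$ is bounded by a constant depending on $r$, $\sup|u^*|$ and $\sup|Du^*|$ on $\overline{B}_r$. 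So it suffices to bound $|Du^*|$ on $\overline{B}_r$.

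\textbf{Step 1 (reduction to the boundary).} For a convex function, $|Du^*|$ attains its maximum over $\overline{B}_r$ on $\partial B_r$. Indeed, for each unit vector $e$ the directional derivative $D_eu^*$ is nondecreasing along lines in direction $e$. Given an interior point $\xi$, follow the ray from $\xi$ in direction $Du^*(\xi)/|Du^*(\xi)|$ until it meets $\partial B_r$ at a point $\xi'$. Then $|Du^*(\xi)|=D_eu^*(\xi)\leqslant D_eu^*(\xi')\leqslant|Du^*(\xi')|$. If the paper prefers to stay intrinsic, the same conclusion can be reached with the maximum principle applied to $|\nabla v|^2$ or to $\nabla v\cdot\nabla\rho$. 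Differentiating the equation gives a linear equation with nonnegative zeroth-order coefficient, just as in Lemma \ref{Comparison principle}. The convexity route is cleaner, so I would use it.

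\textbf{Step 2 (boundary gradient).} Write $\underline{u}^*$ for the $B_1$-transform of $\tilde{\underline{v}}$, and $\overline{u}^*$ for that of $\overline{v}$. On $\partial B_r$, tangential derivatives of $u^*$ equal those of $\underline{u}^*$, since $v=\tilde{\underline{v}}$ on $\partial U_r$. For the normal derivative we use one-sided bounds.

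From below: the comparison principle (Lemma \ref{Comparison principle}) gives $v\geqslant\tilde{\underline{v}}$ in $U_r$, with equality on the boundary. Hence $u^*\geqslant\underline{u}^*$ with equality on $\partial B_r$, which gives $\partial_{\mathbf n}u^*\leqslant\partial_{\mathbf n}\underline{u}^*$ for the outward normal $\mathbf n$. Thus $\partial_{\mathbf n}u^*$ is bounded above by a constant controlled by $A$, $r$ and $\overset{\circ}{v}$.

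From the other side: convexity gives, at a boundary point $\xi_0$ and for interior $\xi=\xi_0-t\mathbf n$ with $t$ fixed (say $t=r$, so that $\xi$ is the center),
\[
\partial_{\mathbf n}u^*(\xi_0)\geqslant\frac{u^*(\xi_0)-u^*(\xi_0-t\mathbf n)}{t}\geqslant -\frac{2\sup_{\overline{B}_r}|u^*|}{r}.
\]
So the normal derivative is bounded on both sides by the $C^0$ bound. This avoids needing $\overline{v}=\tilde{\underline{v}}$ on the boundary, which fails in general.

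\textbf{Main obstacle.} The only real subtlety is that $\overline{v}$ is not a barrier touching on $\partial U_r$, so it cannot serve for the normal derivative. Convexity of $u^*$ together with the $C^0$ bound replaces it, as above. We must also make sure that all constants depend only on $r$, $\varphi$ and the fixed functions $\overset{\circ}{v}$ and $A$, and not on the continuity parameter $t$. This holds because $\tilde{\underline{v}}\leqslant v^t\leqslant\overline{v}$ uniformly in $t$. Translating the bound on $|Du^*|$ back to $|\nabla v|^2$ then completes the proof.
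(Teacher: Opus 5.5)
Your proof is correct, but it is organized differently from the paper's.

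The paper never splits interior from boundary, or tangential from normal directions. It observes that $\tilde{\underline{u}}^*\leqslant u^*$ in $B_r$ with equality on $\partial B_r$. The supporting-hyperplane argument then gives $Du^*(B_r)\subset D\tilde{\underline{u}}^*(B_r)$, hence $|Du^*|\leqslant\sup_{\overline{B}_r}|D\tilde{\underline{u}}^*|$ in one stroke. It converts this to $|\nabla v|$ through the exact identity $|\nabla v|^2-v^2=|Du^*|^2-(\xi\cdot Du^*-u^*)^2$; both sides equal $|x|^2-u^2=\langle X,X\rangle$ for the position vector of the dual hypersurface. This gives $|\nabla v|^2\leqslant v^2+|Du^*|^2$ with no chart-distortion constants.

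Your route is the classical boundary-gradient argument for convex functions, and every step holds:
\begin{itemize}
\item monotonicity of directional derivatives pushes $\max|Du^*|$ to $\partial B_r$;
\item tangential derivatives come from the boundary data;
\item the normal derivative is bounded above by the touching subsolution;
\item it is bounded below by convexity together with the $C^0$ bound.
\end{itemize}
In effect you unpack what the image inclusion encodes. The inclusion handles both sides of the normal derivative at once, with constant $\sup|D\tilde{\underline{u}}^*|$ where your lower bound uses $2\sup|u^*|/r$. You are also right that $\overline{v}$ plays no role, since it does not touch on $\partial U_r$; the paper avoids it too.

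Your conversion through the smoothness of $P$ on $\overline{U}_r$ yields an $r$-dependent constant. That is harmless here: $r$ is fixed, and only independence from the continuity parameter $t$ is needed. The paper's constant also depends on $r$, through $\tilde{\underline{v}}$. The paper's identity is simply cleaner, and it is the form reused in the later local gradient estimates (Sections 5 and 9).
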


We first obtain an estimate of $\left| D u^* \right|$, where $v(y) = \frac{u^*(\xi)}{w^*(\xi)}$ by projection and $w^*(\xi) = \sqrt{1 - |\xi |^2}$. 
\begin{lemma}[$C^1$ estimates for $u^*$]\label{C1estimates for u*}
    If $v$ solves \eqref{sigmak approximate equation}, $P(U_r) = B_r \subseteq B_1$, we have
    \[
    \left| D u^*(\xi) \right| \leqslant C , \quad \forall \xi \in \overline{B}_r,
    \]
    where $C$ depends only on $|D \tilde{\underline{u}}^*|$.
\end{lemma}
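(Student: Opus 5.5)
The key fact is that $u^*(\xi)=v(P^{-1}(\xi))\sqrt{1-|\xi|^2}$ is a convex function on $\overline{B}_r$: hyperbolic convexity of $v$ is exactly convexity of $u^*$ in the $B_1^\xi$ model, by the definition in Section \ref{Preliminaries}. The plan is to bound $|Du^*|$ by its values on $\partial B_r$, and then control those boundary values using the boundary data $u^*=\tilde{\underline{u}}^*$ together with the subsolution $\tilde{\underline{u}}^*$ and a convex upper barrier.

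\textbf{Step 1 (reduction to the boundary).} Since $u^*$ is convex, each partial derivative $\partial_\ell u^*$ is monotone along lines in the direction $e_\ell$. Hence
\[
\sup_{\overline{B}_r}|Du^*|\leqslant \sup_{\partial B_r}|Du^*|.
\]
Concretely, for $\xi\in B_r$ and any unit vector $e$, follow the ray $\xi+te$ until it meets $\partial B_r$; the directional derivative $\partial_e u^*$ is nondecreasing along this ray. Doing the same in the direction $-e$ bounds $\partial_e u^*(\xi)$ from both sides by boundary values.

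\textbf{Step 2 (tangential derivatives).} On $\partial B_r$ we have $u^*=\tilde{\underline{u}}^*$, so the tangential derivatives of $u^*$ agree with those of $\tilde{\underline{u}}^*$. They are therefore bounded by $|D\tilde{\underline{u}}^*|$.

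\textbf{Step 3 (normal derivative).} For the lower bound on the outward normal derivative, compare with the subsolution. From the $C^0$ estimate, $\tilde{\underline{v}}\leqslant v$ in $U_r$, hence $\tilde{\underline{u}}^*\leqslant u^*$ in $B_r$, with equality on $\partial B_r$. This gives
\[
\partial_\nu u^*\leqslant \partial_\nu \tilde{\underline{u}}^*
\]
for the outward normal $\nu$ on $\partial B_r$, which bounds the outward normal derivative from above. For the opposite bound, convexity gives $\partial_\nu u^*(\xi_0)\geqslant$ the difference quotient $(u^*(\xi_0)-u^*(\xi))/|\xi_0-\xi|$ along the inward segment. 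I would take $\xi$ to be the antipodal-type point $\xi_0-2r\nu$ on the other side of $\partial B_r$, or the center $0$. Then the quotient is controlled by $\operatorname{osc} u^*\leqslant C(r)$, using $\tilde{\underline{v}}\leqslant v\leqslant\overline{v}$. The constant depends on $r$, which is acceptable since $r$ is fixed here.

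A cleaner alternative is to sharpen Step 1. For a unit vector $e$ and $\xi\in B_r$, convexity along the chord through $\xi$ in direction $e$ bounds $\partial_e u^*(\xi)$ between $\partial_e u^*$ at the two chord endpoints on $\partial B_r$. At an endpoint where $e$ points outward, $\partial_e u^* = $ (tangential part) $+\,(e\cdot\nu)\,\partial_\nu u^*$ with $e\cdot\nu>0$, which is bounded above by Steps 2 and 3. At the other endpoint, $e$ points inward and the same decomposition gives a lower bound. So only the upper bound on $\partial_\nu u^*$ is needed, and it comes from the comparison with $\tilde{\underline{u}}^*$; no lower bound on $\partial_\nu u^*$ is required. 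I would present this version, since it yields a constant depending only on $|D\tilde{\underline{u}}^*|$, as the statement claims.

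\textbf{Main obstacle.} The step needing care is Step 3: one must check that the comparison $\tilde{\underline{v}}\leqslant v$ transfers to the $\xi$-model with common boundary values, and that the one-sided derivative inequality holds at the boundary. Both are routine since $\sqrt{1-|\xi|^2}>0$ and the solution is $C^1$ up to $\partial B_r$ along the continuity path.
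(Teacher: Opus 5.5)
Your argument is correct, but it takes a different route from the paper.

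\textbf{The paper's route.} The paper asserts the gradient-image inclusion $Du^*(B_r)\subset D\tilde{\underline{u}}^*(B_r)$. This is the standard supporting-plane argument: if $\ell$ is a supporting affine function of $u^*$ at an interior point $\xi_0$, then $\tilde{\underline{u}}^*-\ell\geqslant 0$ on $\partial B_r$ and $\tilde{\underline{u}}^*-\ell\leqslant 0$ at $\xi_0$. Hence $\tilde{\underline{u}}^*-\ell$ has an interior minimum, where $D\tilde{\underline{u}}^*=D\ell=Du^*(\xi_0)$. This gives $|Du^*|\leqslant\sup_{\overline{B}_r}|D\tilde{\underline{u}}^*|$ in one line.

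\textbf{Your route.} You use monotonicity of $\partial_e u^*$ along chords to move the estimate to the two endpoints on $\partial B_r$. There, $Du^*-D\tilde{\underline{u}}^*=(\partial_\nu u^*-\partial_\nu\tilde{\underline{u}}^*)\nu$ with a nonpositive coefficient. Since $e\cdot\nu>0$ at the exit point and $e\cdot\nu<0$ at the entry point, this gives $\partial_e u^*\leqslant\partial_e\tilde{\underline{u}}^*$ at the exit and $\partial_e u^*\geqslant\partial_e\tilde{\underline{u}}^*$ at the entry.

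\textbf{What each buys.} The paper's argument uses only the ordering, the boundary equality, and supporting planes at interior points. It needs no regularity of $u^*$ at $\partial B_r$. Yours requires $u^*\in C^1(\overline{B}_r)$, which is harmless here because these are a priori estimates for solutions smooth up to the boundary along the continuity path. In exchange, you get the slightly sharper bound $\sup_{\partial B_r}|D\tilde{\underline{u}}^*|$, involving only boundary values of the subsolution's gradient and not its convexity. This matches the form the paper later uses for the flow, $|Du_r^*|\leqslant\max_{\partial B_r}|D\underline{u}^*|$.

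Your first version of Step 3, the oscillation-based lower bound on $\partial_\nu u^*$, is unnecessary, as you noticed. It would also have made the constant depend on $r$ and the $C^0$ bounds, contrary to the statement.
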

\begin{proof}
    Let $\tilde{\underline{u}}^*(\xi) = \tilde{\underline{v}}(y) \sqrt{1 - |\xi|^2}, u^*(\xi) = v(y) \sqrt{1 - |\xi|^2}$, where $y=P^{-1}(\xi)$. Then
    \[
    \tilde{\underline{u}}^*(\xi) = u^*(\xi) \quad \text{on } \partial B_r,
    \]
    and
    \[
    \tilde{\underline{u}}^*(\xi) \leqslant u^*(\xi) \quad \text{in } B_r.
    \]
    Hence
    \[
    D \tilde{\underline{u}}^*(B_r) \supset D u^*(B_r).
    \]
    It follows that
    \[
    |D u^*(\xi)| \leqslant \sup_{\overline{B}_r} |D \tilde{\underline{u}}^*| \quad \text{in } B_r.
    \]
\end{proof}

\begin{lemma}[Relation between $|\nabla v|$ and $|D u^*|$] \label{Relation}
    We have the following relation between $|\nabla v|$ and $|D u^*|$:
    \begin{align*}
        |\nabla v|^2 - v^2 = |D u^*(\xi)|^2 - (\xi \cdot D u^*(\xi) - u^*(\xi))^2,
    \end{align*}
    where  $u^*(\xi) = v(P^{-1}(\xi)) \sqrt{1 - |\xi|^2}$.
    \end{lemma}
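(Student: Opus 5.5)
The plan is to view both sides as Lorentzian norms of one vector in $\mathbb{R}^{n,1}$. Extend $v$ to the future timelike cone as the degree-one homogeneous function $\hat v(Y)=|Y|\,v(Y/|Y|)$, where $|Y|=\sqrt{-\langle Y,Y\rangle}$. The function $u^*$ is then just $\hat v$ restricted to the hyperplane $\{x_{n+1}=1\}$: for $Y=(\xi,1)$ we have $|Y|=\sqrt{1-|\xi|^2}$ and $Y/|Y|=P^{-1}(\xi)$, so $\hat v(\xi,1)=u^*(\xi)$. We compute the Lorentzian gradient $\bar D\hat v$ in two ways and compare $\langle \bar D\hat v,\bar D\hat v\rangle$.

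\textbf{Computation on the hyperboloid.} At a point $y\in\mathbb{H}^n$, the gradient $\bar D\hat v$ splits into a tangential and a normal part. The tangential part is $\nabla v$, because $\hat v=v$ on $\mathbb{H}^n$. The normal part comes from the radial derivative: by Euler's relation $\frac{d}{ds}\hat v(sy)\big|_{s=1}=v(y)$. Since the position vector $y$ satisfies $\langle y,y\rangle=-1$, this forces the normal component to be $-v(y)\,y$. Hence $\bar D\hat v=\nabla v - v\,y$, and
\[
\langle \bar D\hat v,\bar D\hat v\rangle=|\nabla v|^2-v^2 .
\]
This quantity is evaluated at $y$, but because $\bar D\hat v$ is homogeneous of degree zero, it equals the same quantity at any point $sy$ of the ray.

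\textbf{Computation on the hyperplane.} Write $\hat v(x,x_{n+1})=x_{n+1}\,u^*(x/x_{n+1})$. The Euclidean partial derivatives at $(\xi,1)$ are
\[
\partial_i\hat v=u^*_i(\xi),\qquad \partial_{n+1}\hat v=u^*(\xi)-\xi\cdot Du^*(\xi).
\]
To raise the index with the metric $\mathrm{diag}(1,\dots,1,-1)$, we flip the sign of the last component. The Lorentzian norm is then
\[
\langle \bar D\hat v,\bar D\hat v\rangle=|Du^*|^2-(\xi\cdot Du^*-u^*)^2 .
\]
The points $(\xi,1)$ and $P^{-1}(\xi)$ lie on the same ray, so equating the two expressions gives the lemma.

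\textbf{Main obstacle.} The only delicate step is the sign bookkeeping in the normal component on the hyperboloid. Because $\langle y,y\rangle=-1$, the radial derivative enters with a minus sign, and that sign is what produces $-v^2$ on the left-hand side. Everything else is routine calculus. As a fallback, one can verify the identity by direct computation in Beltrami coordinates, using the induced metric $g_{ij}=\frac{\delta_{ij}}{1-|\xi|^2}+\frac{\xi_i\xi_j}{(1-|\xi|^2)^2}$ and its inverse $(1-|\xi|^2)(\delta_{ij}-\xi_i\xi_j)$ applied to $d\big(u^*/\sqrt{1-|\xi|^2}\big)$.
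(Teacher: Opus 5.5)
Your argument is correct and is essentially the paper's: the vector $\bar D\hat v(\xi,1)=(Du^*,\,\xi\cdot Du^*-u^*)$ is exactly the position vector $X=(x,u(x))$, $x=Du^*(\xi)$, of the hypersurface obtained by Legendre transform, and the paper likewise computes $\langle X,X\rangle$ once as $|x|^2-u^2$ and once as $|\nabla v|^2-v^2$. Your homogeneous-extension computation has the advantage of proving the decomposition $X=\nabla v-v\,y$, which the paper uses without comment, and it needs neither the convexity of $u^*$ nor the Legendre transform, so the identity holds for any $C^1$ function $v$.
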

\begin{proof}
    Since $u^*$ is strictly convex, we can let $u$ be its Legendre transform and $x=D u^*$.  
  Thus,  take inner product of position vector $X=(x,u(x))$ with itself, then
    \begin{align*}
        |X|^2 = |x|^2 - u^2(x) = |\nabla v|^2 - v^2.
    \end{align*}
    Hence we get
    \begin{align*}
        |\nabla v(y)|^2 - v^2(y) = |D u^*(\xi)|^2 - (\xi \cdot D u^*(\xi) - u^*(\xi))^2.
    \end{align*}
 \end{proof}

\begin{proof}[Proof of Lemma \ref{C1estimates}]
    By Lemma \ref{Relation}, we have
    \begin{align*}
        |\nabla v|^2 &= v^2 + |D u^*(\xi)|^2 - (\xi \cdot D u^*(\xi) - u^*(\xi))^2 \leqslant C. 
    \end{align*}
    Since $v$ is bounded, $|D u^*|$ is bounded by Lemma \ref{C1estimates for u*}, $|\xi| \leqslant 1$ and $u^* = v w^*$ is bounded.
\end{proof}
Hence we have global $C^1$ estimates for $v$.

\subsection{Boundary \texorpdfstring{$C^2$}{C2} estimates}

Consider linearized operator $\mathscr{L}$ defined by
\begin{align*}
    \mathscr{L} w &\coloneqq F^{ij} \nabla^2_{ij}w - w \sum_{i}F^{ii} = F^{ij} (\Lambda_w)_{ij},
\end{align*}
where $F = \left( \frac{\sigma_n}{\sigma_{n-k}} \right)^{\frac{1}{k}}$ and $F^{ij}$ is defined with respect to $\Lambda_{ij}=v_{ij}-v\delta_{ij}$.

\begin{lemma}\label{auxiliary function for boundary estimates}
    For any positive constant $A$, let
    \[
    h = v - \tilde{\underline{v}} + A \left(\frac{1}{\sqrt{1-r^2}} - x_{n+1}\right).
    \]
    Then we have
    \[
    \mathscr{L} h \leqslant -A \sum_{i} F^{ii} \quad \text{in } U_r,
    \]
    and
    \[
    h \geqslant 0 \quad \text{on } \partial U_r.
    \]
\end{lemma}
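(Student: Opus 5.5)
Because the operator $\mathscr{L}$ is linear, we will compute $\mathscr{L}h$ by splitting $h$ into its three pieces: $v$, $-\tilde{\underline{v}}$, and the multiple of $\frac{1}{\sqrt{1-r^2}}-x_{n+1}$. We then add the three contributions.

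\textbf{The term $v$.} By $1$-homogeneity of $F$ we have $\mathscr{L}v = F^{ij}\Lambda_{ij} = F(v)$. Hence
\[
\mathscr{L}v=-\frac{v}{\binom{n}{k}^{\frac1k}}+(1-t)G(\tilde{\underline{v}})
\]
by \eqref{sigmak approximate equation}.

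\textbf{The term $\tilde{\underline{v}}$.} By concavity of $F$ and homogeneity,
\[
F^{ij}(\Lambda_v)\,(\Lambda_{\tilde{\underline{v}}})_{ij}\geqslant F(\tilde{\underline{v}}).
\]
So $-\mathscr{L}\tilde{\underline{v}}\leqslant -F(\tilde{\underline{v}})$.

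\textbf{The linear term.} The coordinate function satisfies $\nabla^2x_{n+1}=x_{n+1}g$, hence $\Lambda_{x_{n+1}}=0$. A constant $c$ has $\Lambda_c=-cg$. Therefore
\[
\Lambda_{\frac{1}{\sqrt{1-r^2}}-x_{n+1}}=-\frac{1}{\sqrt{1-r^2}}\,g .
\]
This gives the contribution $-\frac{A}{\sqrt{1-r^2}}\sum F^{ii}\leqslant -A\sum F^{ii}$, since $\frac{1}{\sqrt{1-r^2}}\geqslant1$.

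\textbf{Combining.} Adding the three pieces,
\[
\mathscr{L}h\leqslant -\frac{v}{\binom{n}{k}^{\frac1k}}+(1-t)G(\tilde{\underline{v}})-F(\tilde{\underline{v}})-A\sum F^{ii}.
\]
Write $-F(\tilde{\underline{v}})=-G(\tilde{\underline{v}})+\frac{\tilde{\underline{v}}}{\binom nk^{1/k}}$. The middle terms then become
\[
-tG(\tilde{\underline{v}})+\frac{\tilde{\underline{v}}-v}{\binom nk^{1/k}}.
\]
Both are $\leqslant0$: $G(\tilde{\underline{v}})\geqslant0$ was shown above, and $\tilde{\underline{v}}\leqslant v$ holds by the global $C^0$ estimate. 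This yields $\mathscr{L}h\leqslant -A\sum F^{ii}$.

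\textbf{Boundary values.} On $\partial U_r$ we have $v=\tilde{\underline{v}}$ and $x_{n+1}=\frac{1}{\sqrt{1-r^2}}$, so $h=0$ there.

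The only step requiring care is the middle terms. It relies on the sign of $G(\tilde{\underline{v}})$ and the $C^0$ comparison; if the latter were unavailable, one would instead absorb the bounded term into a larger multiple of the $\sum F^{ii}$ term, using $\sum F^{ii}\geqslant \binom{n}{k}^{-1/k}$.
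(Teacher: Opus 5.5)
Your proof is correct and follows the paper's argument. Concavity and $1$-homogeneity of $F$ give $F^{ij}(\Lambda_v-\Lambda_{\tilde{\underline{v}}})_{ij}\leqslant F(v)-F(\tilde{\underline{v}})$, and $\Lambda_{x_{n+1}}=0$ turns the linear term into $-\frac{A}{\sqrt{1-r^2}}\sum_i F^{ii}$. The leftover terms $-tG(\tilde{\underline{v}})$ and $(\tilde{\underline{v}}-v)/\binom{n}{k}^{1/k}$ are then discarded using $G(\tilde{\underline{v}})\geqslant 0$ and the $C^0$ bound $\tilde{\underline{v}}\leqslant v$, exactly as the paper does (it uses the latter implicitly).
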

\begin{proof}
    We have
    \begin{align*}
        \mathscr{L} h &= F^{ij} ((v - \tilde{\underline{v}})_{ij} - A x_{n+1} \delta_{ij}) - (v - \tilde{\underline{v}}) \sum_{i}F^{ii} - A \left(\frac{1}{\sqrt{1-r^2}} - x_{n+1}\right) \sum_{i}F^{ii} \\
        &\leqslant F[v] - F[\tilde{\underline{v}}] - A\frac{1}{\sqrt{1-r^2}} \sum_{i}F^{ii} \\
        &= -\frac{1}{\binom{n}{k}^{\frac{1}{k}}}v + (1-t)g - g + \frac{1}{\binom{n}{k}^{\frac{1}{k}}}\tilde{\underline{v}} - A\frac{1}{\sqrt{1-r^2}}\sum_{i}F^{ii} \\
        &\leqslant -tg - A\frac{1}{\sqrt{1-r^2}} \sum_{i}F^{ii} \\
        &\leqslant -A \sum_{i} F^{ii},
    \end{align*}
    where the first inequality is from the concavity of $F$.
\end{proof}
Then the same proof can be carried out following the argument in \cite{GuanJiao} to derive $C^2$ boundary estimates. For the double normal estimates, one may proceed analogously as in \cite{GuanJiao}, with the caveat that here the tensor $A[u] = -ug$ does not satisfy (1.10) of \cite{GuanJiao}. Nevertheless, we can still prove Proposition 2.1 in \cite{GuanJiao} directly in $M_\delta$, which suffices for constructing the barriers needed in the boundary estimates.
\begin{lemma}[Proposition 2.1 in \cite{GuanJiao}]
    There exist uniform positive constants $R, \theta, \delta$ such that
    \[
    \mathscr{L}\left( \tilde{\underline{v}} - v \right) \geqslant \theta \sum F^{ii} + \theta, \quad \text{whenever } |\lambda(\Lambda)| \geqslant R
    \]
    in $U_{r,\delta} = \{X \in \mathbb{H}^n | \frac{1}{\sqrt{1-r^2}} - \delta \leqslant x_{n+1} \leqslant \frac{1}{\sqrt{1-r^2}}\}$.
\end{lemma}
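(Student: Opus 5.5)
We prove the inequality in the form of Proposition 2.1 of \cite{GuanJiao}, adapted to the operator $G(v)=F(\Lambda_v)+v/\binom{n}{k}^{1/k}$ on the collar $U_{r,\delta}$. The idea is to perturb the subsolution $\tilde{\underline{v}}$ by a small strictly hyperbolic convex function. Concavity of $F$ then produces the term $\theta\sum F^{ii}$. The right-hand-side structure, together with the largeness of $|\lambda(\Lambda)|$, produces the extra $+\theta$.

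First, $\tilde{\underline{v}}=A(x_{n+1}-\frac{1}{\sqrt{1-r^2}})+\overset{\circ}{v}$. Here $\Lambda_{x_{n+1}}=0$ on $\mathbb{H}^n$, so $\Lambda_{\tilde{\underline{v}}}=\Lambda_{\overset{\circ}{v}}-A\big(x_{n+1}-\frac{1}{\sqrt{1-r^2}}\big)g$. On $U_r$ the factor $x_{n+1}-\frac{1}{\sqrt{1-r^2}}$ is $\leqslant 0$, so $\Lambda_{\tilde{\underline{v}}}\geqslant\Lambda_{\overset{\circ}{v}}\geqslant 2\theta_0 g$ on $\overline{U}_r$ for some $\theta_0>0$, by smoothness, strict convexity and compactness. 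Set $w=\tilde{\underline{v}}-\theta_0\cdot 1$. Its hyperbolic Hessian is $\Lambda_{\tilde{\underline{v}}}+\theta_0 g\geqslant 3\theta_0 g$. Concavity and degree-one homogeneity of $F$ give
\[
\mathscr{L}(\tilde{\underline{v}}-v)=F^{ij}(\Lambda_{\tilde{\underline{v}}}-\Lambda_v)_{ij}\geqslant F^{ij}(\Lambda_{\tilde{\underline{v}}}-2\theta_0 g)_{ij}+2\theta_0\sum F^{ii}\geqslant F(\Lambda_{\tilde{\underline{v}}}-2\theta_0 g)-F(\Lambda_v)+2\theta_0\sum F^{ii}.
\]
The middle inequality uses $F^{ij}\Lambda_{ij}=F$. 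The term $F(\Lambda_{\tilde{\underline v}}-2\theta_0g)\geqslant0$ is bounded below, and $F(\Lambda_v)=-v/\binom{n}{k}^{1/k}+(1-t)G(\tilde{\underline{v}})$ is bounded above by the $C^0$ bounds. Hence we obtain $\mathscr{L}(\tilde{\underline v}-v)\geqslant 2\theta_0\sum F^{ii}-C_1$, with $C_1$ uniform in $t$.

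Next we absorb $-C_1$ when $|\lambda(\Lambda_v)|\geqslant R$. This is the standard fact for the cone $\Gamma_n$ and $F=(\sigma_n/\sigma_{n-k})^{1/k}$: when $F(\lambda)$ stays in a compact subinterval $[a,b]\subset(0,\infty)$ and $|\lambda|\to\infty$, we have $\sum F^{ii}(\lambda)\to\infty$. To see this, order $\lambda_1\geqslant\dots\geqslant\lambda_n$. Since $F$ is bounded while $\lambda_1\to\infty$, the smallest eigenvalue satisfies $\lambda_n\to0$. For this quotient one checks $F^{nn}\geqslant c\,F/\lambda_n$, directly from $\partial_{\lambda_n}\log(\sigma_n/\sigma_{n-k})\geqslant 1/\lambda_n-\sigma_{n-k-1}(\lambda|n)/\sigma_{n-k}$ and $\sigma_{n-k}\geqslant\lambda_1\sigma_{n-k-1}(\lambda|n)$-type inequalities. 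Here $F$ is bounded below because $-v\geqslant-\sup\overline{v}>0$ and $G(\tilde{\underline v})\geqslant 0$. Thus $\sum F^{ii}\geqslant C_1/\theta_0+1$ once $R$ is large, and taking $\theta=\theta_0/2$ yields
\[
\mathscr{L}(\tilde{\underline{v}}-v)\geqslant \theta_0\sum F^{ii}+\big(\theta_0\sum F^{ii}-C_1\big)\geqslant\theta\sum F^{ii}+\theta .
\]

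The main obstacle is the verification of the blow-up $\sum F^{ii}\to\infty$ for general $k$. For $k=n$ this is immediate, since $F=\sigma_n^{1/n}$ and $\sum F^{ii}\geqslant n\sigma_n^{1/n}/\lambda_n\cdot\frac{1}{n}$. For $k<n$ I would instead use $\sum_i F^{ii}\geqslant F\cdot\frac{1}{k}\sum_i\frac{1}{\lambda_i}\cdot c_{n,k}$, obtained by writing $F=(1/\sigma_k(\lambda^{-1}))^{1/k}$ and differentiating: this gives $F^{ii}=\frac{F}{k}\,\frac{\sigma_{k-1}(\lambda^{-1}|i)\lambda_i^{-2}}{\sigma_k(\lambda^{-1})}$. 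With $\mu=\lambda^{-1}$, $\mu_n=\max\mu_i\to\infty$, and $\sigma_{k-1}(\mu|n)\mu_n\geqslant c\,\sigma_k(\mu)$, this gives $F^{nn}\geqslant cF\mu_n\to\infty$. In this argument $\delta$ plays no essential role; it can be taken as any fixed small number so that the collar lies in $U_r$.
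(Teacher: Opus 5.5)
\textbf{There is a genuine gap for $1\leqslant k<n$.} Your first reduction, $\mathscr{L}(\tilde{\underline{v}}-v)\geqslant 2\theta_0\sum F^{ii}-C_1$, is correct. The step that absorbs $-C_1$ fails, because it rests on a claim that is false for $k<n$: that $|\lambda|\to\infty$ with $F(\lambda)$ in a compact subinterval of $(0,\infty)$ forces $\lambda_{\min}\to 0$, and hence $\sum F^{ii}\to\infty$.

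Take $\lambda=(t,1,\dots,1)$ with $t\to\infty$. Then $\sigma_n/\sigma_{n-k}\to\binom{n-1}{k}^{-1}$, so $F$ stays between positive constants while $\lambda_{\min}=1$. In your own formula $F^{ii}=\frac{F}{k}\sigma_{k-1}(\mu|i)\mu_i^2/\sigma_k(\mu)$ with $\mu=\lambda^{-1}$, we get $F^{11}\to 0$, and $F^{ii}$ for $i\geqslant 2$ converge to finite limits. So $\sum F^{ii}$ stays bounded. For instance, when $k=1$ we have $F=(\sum_i\lambda_i^{-1})^{-1}$, $F^{ii}=F^2\lambda_i^{-2}$, and $\sum F^{ii}\to\frac{1}{n-1}$.

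Since $C_1$ has the size of $\sup F(\Lambda_v)$ and $\theta_0$ is small, your bound $2\theta_0\sum F^{ii}-C_1$ is negative along such sequences and proves nothing. For $k=n$ your route does work, and it is simpler than the paper's: $\sigma_n\leqslant b^n$ together with $\lambda_1\to\infty$ forces $\lambda_n\to 0$, and then $F^{nn}=\frac1n\sigma_n^{1/n}/\lambda_n\to\infty$.

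\textbf{What is missing.} You discarded the subsolution information by bounding $F(\Lambda_{\tilde{\underline v}}-2\theta_0 g)\geqslant 0$ and $F(\Lambda_v)\leqslant C$ separately.

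The paper instead imports Guan--Jiao's inequality (2.7):
\[
F^{ii}(\underline\Lambda_{ii}-\Lambda_{ii})\geqslant F(\underline\Lambda)-F(\Lambda)+\theta\sum F^{ii}+\theta \quad \text{for } |\lambda|\geqslant R .
\]
Here the gain $\theta(1+\sum F^{ii})$ does not come from blow-up of $\sum F^{ii}$. It comes from concavity combined with a geometric fact: for $|\lambda|$ large on a bounded range of $F$, the unit normal $\nabla F(\lambda)/|\nabla F(\lambda)|$ stays a definite distance from the normals on the compact set of eigenvalues of $\Lambda_{\tilde{\underline v}}$.

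The paper then uses the equation to get
\[
F(\underline\Lambda)-F(\Lambda)=tG(\tilde{\underline v})+\frac{v-\tilde{\underline v}}{\binom nk^{1/k}}\geqslant -\frac{\tilde{\underline v}-v}{\binom nk^{1/k}} .
\]
This is $\geqslant -C\delta$ on the collar, by the gradient bound and $v=\tilde{\underline v}$ on $\partial U_r$.

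Note that simply keeping $F(\Lambda_{\tilde{\underline v}})\geqslant F(\Lambda_v)$ inside your perturbation argument is not enough. By the mean value theorem, the shift by $2\theta_0 g$ costs exactly $2\theta_0\sum F^{ii}(\Lambda_{\tilde{\underline v}}-sg)$ for some $s\in(0,2\theta_0)$. The net gain is therefore $2\theta_0\bigl(\sum F^{ii}(\Lambda)-\sum F^{ii}(\Lambda_{\tilde{\underline v}}-sg)\bigr)$, and its sign is not controlled when $\sum F^{ii}(\Lambda)$ stays bounded.

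To close the gap for $k<n$, you need the normal-separation (strict concavity) estimate underlying Guan's (2.7), or an explicit substitute for it for $F=(\sigma_n/\sigma_{n-k})^{1/k}$.
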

\begin{proof}
    As (2.7) in Proposition 2.1 of \cite{GuanJiao}, we have
    \begin{align}
        F^{ii}(\underline{\Lambda}_{ii} - \Lambda_{ii}) \geqslant F(\underline{\Lambda}) - F(\Lambda) + \theta\sum F^{ii} + \theta,
    \end{align}
    where $\underline{\Lambda} = \Lambda_{\tilde{\underline{v}}}$. Hence
    \begin{align*}
        \mathscr{L}\left( \tilde{\underline{v}} - v \right) &= 
        F^{ii}(\underline{\Lambda}_{ii} - \Lambda_{ii}) 
        \geqslant - \frac{\tilde{\underline{v}} - v}{\binom{n}{k}^{\frac{1}{k}}} + \theta\sum F^{ii} + \theta \\
        &\geqslant -C|\nabla (\tilde{\underline{v}} - v)|\delta + \theta\sum F^{ii} + \theta \geqslant \frac{\theta}{2} + \theta\sum F^{ii}.
    \end{align*}
    provided $\delta>0$ is sufficiently small, since we already have gradient estimates.
\end{proof}

\subsection{Global \texorpdfstring{$C^2$}{C2} estimates}

For global $C^2$ estimates, we will use the following lemma. The proof is the same as Lemma 18 in \cite{Entireself-expanders}.

\begin{lemma}[Lemma 18 in \cite{Entireself-expanders}]\label{C2 global estimates for selfshrinker}
    Let $v$ be a solution to approximate Dirichlet problem \eqref{sigmak approximate equation}. Denote the eigenvalues of $\left[ g^{-1} \circ \left( \nabla^2 v - vg \right) \right]$ by $(\lambda_1, \cdots, \lambda_n)$ with $\lambda_1 \geqslant \lambda_2 \geqslant \cdots \geqslant \lambda_n > 0$. Then 
    \[
    \lambda_1 \leqslant \max\left\{ C, \lambda_{\partial U_r} \right\},
    \]
    and $C$ is a positive constant depending only on $U_r$ and $\| v \|_{C^0(\overline{U}_r)}$.
\end{lemma}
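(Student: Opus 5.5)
We argue by a maximum principle applied to the largest eigenvalue of $\Lambda=\nabla^2 v - vg$, working in a local orthonormal frame on $\mathbb{H}^n$. Since $\lambda_1$ need not be smooth, we use the test quantity
\[
\Phi = \log \Lambda_{11} + \beta(v)\quad\text{or}\quad \Phi=\log\Lambda_{11} - \tfrac{a}{2}\,\cdots,
\]
taken at a point where $\Lambda_{11}=\lambda_1$ after rotating the frame so that $\Lambda$ is diagonal there. The perturbation argument (adding a small multiple of $\xi_1\otimes\xi_1$ constraints, or the standard $\lambda_1$-viscosity trick) handles multiplicity. We look for a first-order correction depending only on $v$, or on $x_{n+1}$ via the embedding $\mathbb{H}^n\subset\mathbb{R}^{n,1}$. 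The most convenient choice is $\Phi=\log \lambda_1 + N x_{n+1}$, since $\Lambda_{x_{n+1}} = \nabla^2 x_{n+1}-x_{n+1}g = 0$, so $x_{n+1}$ solves the linearized operator $\mathscr{L}x_{n+1}=0$ while $\nabla^2 x_{n+1} = x_{n+1}g$ is positive. If the maximum of $\Phi$ occurs on $\partial U_r$, we are done, because $x_{n+1}$ is bounded on $\overline U_r$ (the constant then depends on $U_r$). So we assume an interior maximum.

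At the interior maximum we differentiate the equation $F(\Lambda)=\psi:=-\binom{n}{k}^{-1/k}v+(1-t)G(\tilde{\underline v})$ twice in the $e_1$ direction. On hyperbolic space the commutation formulas for $\nabla^3 v$ and $\nabla^4 v$ with curvature $-1$ give
\[
\nabla_{11}\Lambda_{ii} - \nabla_{ii}\Lambda_{11} = \Lambda_{11}-\Lambda_{ii}\quad(\text{up to sign/convention}),
\]
and crucially $\nabla_k\Lambda_{ij}$ is totally symmetric (Codazzi), since $\nabla^2 v - vg$ is Codazzi on a space form of curvature $-1$. Hence
\[
F^{ii}\nabla_{ii}\Lambda_{11} = \psi_{11} - F^{ij,kl}\nabla_1\Lambda_{ij}\nabla_1\Lambda_{kl} + \Lambda_{11}\textstyle\sum F^{ii} - F^{ii}\Lambda_{ii}.
\]
Here $F^{ii}\Lambda_{ii}=F$ by homogeneity, and $\psi_{11}$ is bounded by $C(1+\Lambda_{11})$, using $v_{11}=\Lambda_{11}+v$ and the smoothness of $\tilde{\underline v}$ on $\overline U_r$. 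Concavity of $F$, together with the usual inequality for $\Lambda_{11}$ and the gradient terms $F^{ii}(\nabla_i\Lambda_{11})^2/\Lambda_{11}^2$, is the main obstacle. Since $F=(\sigma_n/\sigma_{n-k})^{1/k}$ is concave, and its inverse form $\tilde F(\Lambda^{-1}) = 1/F$ is convex, we can use the Andrews/Urbas-type term
\[
-F^{ij,kl}\nabla_1\Lambda_{ij}\nabla_1\Lambda_{kl}\ \geqslant\ 2\sum_{i\ge2}\frac{F^{11}-F^{ii}}{\Lambda_{11}-\Lambda_{ii}}(\nabla_1\Lambda_{1i})^2 .
\]
For the inverse-concave operator we also have $F^{ii}\ge F^{11}$ for $i\ge 2$, with $\tfrac{F^{ii}-F^{11}}{\Lambda_{11}-\Lambda_{ii}} \ge F^{11}/\Lambda_{ii}$ type bounds. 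By Codazzi, $\nabla_1\Lambda_{1i}=\nabla_i\Lambda_{11}$, so this absorbs $\sum_{i\geq 2}F^{ii}(\nabla_i\Lambda_{11})^2/\Lambda_{11}^2$. The $i=1$ gradient term is handled by the first-order condition $\nabla_1\Lambda_{11}/\Lambda_{11} = -N\nabla_1 x_{n+1}$, giving a contribution $\le N^2 C F^{11}$.

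Collecting terms at the maximum point, we obtain
\[
0\ \ge\ F^{ii}\nabla_{ii}\Phi\ \ge\ \textstyle\sum F^{ii}\,(1 + N x_{n+1}) - \frac{C}{\Lambda_{11}} - C - CN^2F^{11},
\]
using $F^{ii}\nabla_{ii}x_{n+1} = x_{n+1}\sum F^{ii}$. We then need lower bounds on $\sum F^{ii}$. For $F=(\sigma_n/\sigma_{n-k})^{1/k}$ one has $\sum F^{ii}\ge c>0$, with $c=\binom nk^{-1/k}$ from the computation behind Lemma~\ref{Comparison principle}. In addition, $F^{11}\le F/\Lambda_{11}\cdot C$, which is small when $\Lambda_{11}$ is large, because $F^{11}\Lambda_{11}\le F$. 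Choosing $N$ large first (relative to $C$ from $\psi_{11}$, which involves $\|v\|_{C^0}$ and $U_r$), and then assuming $\Lambda_{11}$ is large, the right side becomes positive, which is a contradiction. This yields $\lambda_1\le C$ at an interior maximum, and therefore $\lambda_1\le\max\{C,\lambda_{\partial U_r}\}$ after absorbing the bounded factor $e^{N(\max x_{n+1}-\min x_{n+1})}$ on $\overline U_r$ into the constant. Note that the dependence is only on $U_r$ and $\|v\|_{C^0}$, since the gradient enters only through already controlled quantities or cancels via Codazzi.
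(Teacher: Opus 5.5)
The paper gives no argument of its own here; it just cites Lemma 18 of \cite{Entireself-expanders}. Your maximum-principle route with $\Phi=\log\lambda_1+Nx_{n+1}$ does close. It uses $\Lambda_{x_{n+1}}=0$, $\sum_i F^{ii}\geqslant\gamma:=\binom{n}{k}^{-1/k}$ and $\lambda_1F^{11}\leqslant F\leqslant C$. However, the commutator identity has the wrong sign. On $\mathbb{H}^n$ the Ricci identity gives $\Lambda_{11,ii}=\Lambda_{ii,11}+\Lambda_{ii}-\Lambda_{11}$, which is the identity used in Appendix \ref{Proof of Gauss selfshrinker Pogorelov}. Hence
\[
F^{ii}\Lambda_{11,ii}=\psi_{11}-F^{ij,kl}\Lambda_{ij,1}\Lambda_{kl,1}+F-\lambda_1\sum_i F^{ii}.
\]
Your formula, with $+\lambda_1\sum_iF^{ii}-F$, is the one for $\nabla^2h+hg$ on the sphere. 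After dividing by $\lambda_1$, the curvature therefore contributes the bad term $-\sum_iF^{ii}$, not a good one. The correct inequality at an interior maximum is
\[
0\geqslant (Nx_{n+1}-1)\sum_i F^{ii}-\gamma-\frac{C(1+N^2)}{\lambda_1},
\]
where $-\gamma$ comes from $\psi_{11}=-\gamma\lambda_1+O(1)$. Since $x_{n+1}\geqslant 1$, taking $N=3$ already forces $\lambda_1\leqslant C$, so the conclusion survives. But the term $Nx_{n+1}\sum_iF^{ii}$ is exactly what overcomes the negative curvature, not merely a convenient correction.

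The second step to repair is the absorption of $\sum_{i\geqslant 2}F^{ii}\Lambda_{11,i}^2/\lambda_1^2$. The concavity term should read $2\sum_{i\geqslant2}\frac{F^{ii}-F^{11}}{\lambda_1-\lambda_i}\Lambda_{1i,1}^2$; your version with $F^{11}-F^{ii}$ is nonpositive. After Codazzi, the absorption amounts to $(\lambda_1+\lambda_i)f_i\geqslant 2\lambda_1f_1$. This follows from $f_i\geqslant f_1$ together with $\lambda_if_i\geqslant\lambda_1f_1$. The latter is not a consequence of concavity plus inverse concavity: $\sigma_1$ has both properties and fails it. You must check it for this particular $F$. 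With $\mu_j=1/\lambda_j$ one has $\lambda_jf_j=\frac1k\sigma_k(\mu)^{-\frac1k-1}\mu_j\sigma_{k-1}(\mu|j)$, and $\mu_i\sigma_{k-1}(\mu|i)-\mu_1\sigma_{k-1}(\mu|1)=(\mu_i-\mu_1)\sigma_{k-1}(\mu|1i)\geqslant 0$ when $\lambda_i\leqslant\lambda_1$.

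Finally, a boundary maximum of $\Phi$ only gives $\lambda_1\leqslant e^{N(\max_{\overline U_r}x_{n+1}-1)}\lambda_{\partial U_r}$. So what you actually prove is $\lambda_1\leqslant C\max\{1,\lambda_{\partial U_r}\}$, not literally $\max\{C,\lambda_{\partial U_r}\}$. That weaker form is all the continuity method needs.
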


Then since $\nabla^2 v - vg$ is positive definite, we get lower bounds for $\nabla^2 v$. Hence we have global $C^2$ estimates for $v$.

%\section{Self-shrinkers of inverse Gauss curvature flow}

\section{Convergence of local solutions to an entire solution: Gauss curvature case}

In this section, we will prove the existence of self-shrinkers of inverse Gauss curvature flow with given boundary condition, i.e.
\begin{align}
    \begin{cases}
        \left( {\sigma_n} \left[v_{ij}-v\delta_{ij}\right] \right)^{\frac{1}{n}} = - v \quad \text{in } \mathbb{H}^{n}, \\
        v \rightarrow \varphi \quad \text{on } \mathbb{S}^{n-1} = \mathbb{H}^{n}(\infty). \label{Gauss selfshrinker}
    \end{cases}
\end{align}
It is the special case when $k=n$ in the equation \eqref{sigmak selfshrinker}. The reason why we consider this equation at first is because the upper barrier $\overline{v}$ we constructed before is not hyperbolic convex, thus we cannot take Legendre transform of $\overline{v}$. However, the Gauss curvature self-shrinker is a supersolution to the equation \eqref{sigmak selfshrinker} for any $k$ still by generalized Newton--Maclaurin's inequality. Hence we can use it as a new upper barrier to get local estimates for dual equation later.

By previous estimates or directly by Theorem 1.3 in \cite{GuanBo}, we know that the Dirichlet problem
\begin{equation}
    \begin{cases} 
        \left( \sigma_n \left[v_{ij}-v\delta_{ij}\right] \right)^{\frac{1}{n}} = -v \quad \text{in } U_r, \\
        v = \tilde{\underline{v}} \quad \text{on } \partial U_r. \label{Ur Gauss selfshrinker}
    \end{cases}
\end{equation}
has a unique solution $v_r$. 
We will show that a subsequence of $\left\{ v_r \right\}$ converges to an entire solution to equation \eqref{Gauss selfshrinker} locally uniformly as $r \to 1$. 
To prove this, we first obtain local estimates for $v_{r^{\prime}}$ with $r^{\prime}>r$ in a fixed geodesic ball $U_r$ in hyperbolic space, i.e. estimates for solutions $v_{r^{\prime}}$ restricted in $U_r$, and the constant depends on $r$.

In the following, we always denote $u^*(\xi)=v(P^{-1}(\xi))\sqrt{1-|\xi|^2}$.  $\underline{u}^*, u^*_{r^{\prime}}, \overline{u}^*$ have the similar definition.  

\subsection{Local \texorpdfstring{$C^0$}{C0} estimates}

Since $\overline{v}$ and $\underline{v}$ and upper barrier and lower barrier of equation \eqref{Ur Gauss selfshrinker}, we have
\[
\underline{v} \leqslant v_{r^{\prime}} \leqslant \overline{v} \quad \text{in } \overline{U}_r.
\]
Hence
\[
\underline{u}^* \leqslant u^*_{r^{\prime}} \leqslant \overline{u}^* \quad \text{in } \overline{B}_r
\]
for any $r^{\prime}>r>0$.

\subsection{Local \texorpdfstring{$C^1$}{C1} estimates}\label{Gauss Local C1}

Since $u^*$ is convex, then by local Lipschitz property of convex functions, $|D u^*|$ is locally uniformly bounded when given $C^0$ bound. Recall 
\[
|u^*(\xi)| = |v(y(\xi))| \sqrt{1 - |\xi |^2} \leqslant C.
\]
Then we have
\[
|D u^*_{r^{\prime}}| \leqslant C(\delta) \quad \text{in } \overline{B}_r,
\]
if $r^{\prime} - r \geqslant \delta$ for some $\delta > 0$. Hence by the same argument as Lemma \ref{Relation}, we obtain
\[
|\nabla v_{r^{\prime}}| \leqslant C(\delta) \quad \text{in } \overline{U}_r.
\]

\subsection{Local \texorpdfstring{$C^2$}{C2} estimates}\label{Gauss Local C2}

Here we want a Pogorelov-type interior $C^2$ estimate. Let $v_r$ denote the solution to the approximate Dirichlet problem in $\overline{U}_r = \{X \in \mathbb{H}^n \mid x_{n+1} \leqslant \frac{1}{\sqrt{1-r^2}}\}$. Then when $r$ tends to $1$, the definition domain of $v_r$ is approaching the whole space $\mathbb{H}^n$. For convenience, we will denote the restriction of the function $v_{r^{\prime}}$ to $U_r$ by $v$ in the following.

Since $\overline{v}$ and $\underline{v}$ are both asymptotic to $\varphi$ at infinity, we have $\overline{u}^*(\xi), \underline{u}^*(\xi) \to 0$ uniformly as $|\xi| \to 1$, i.e.
\[
\underline{u}^*(\xi) = \overline{u}^*(\xi) = 0 \quad \text{on } \partial B_1.
\]
Thus we can do cutoff for $u^*$ by $-\varepsilon$. Define 
\[
B_{\varepsilon} \coloneqq \{\xi \in B_1 \mid -\varepsilon - u^*(\xi) > 0\}, 
\]
and
\[
A_{\varepsilon} \coloneqq P^{-1} (B_{\varepsilon}) = \{y \in \mathbb{H}^n \mid -\varepsilon - \frac{v}{x_{n+1}}(y) > 0\}.
\]
% Let's recall the following geometric formulae:
% \begin{align}
% \Lambda_{ij,k} &= \Lambda_{ik,j}  \\
% \Lambda_{lk,ji} - \Lambda_{lk,ij} &= v_{lkji} - v_{lkij} \notag \\
% &= -v_{lj}\delta_{ik} + v_{li}\delta_{jk} - v_{jk}\delta_{il} + v_{ik}\delta_{jl}.
% \end{align}
% \Lambda_{11,ii} = \Lambda_{ii,11} + \Lambda_{ii} - \Lambda_{11}
\begin{lemma}\label{Gauss selfshrinker Pogorelov}
    Given $\varepsilon>0$ close to $0$, for $r$ sufficiently close to $1$ such that 
    \[
    \left. \frac{v}{x_{n+1}}\right|_{\partial U_r} > -\varepsilon.
    \]
    Then we have
    \[
    \max_{A_{\varepsilon}} \left( -\varepsilon -  \frac{v}{x_{n+1}} \right)^{\alpha} \lambda_1 \leqslant C.
    \]
    where $\lambda_1$ denotes the maximum eigenvalue of $v_{ij}-v\delta_{ij}$ and $\alpha$ is a large positive constant.
\end{lemma}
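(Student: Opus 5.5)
This is a Pogorelov-type estimate, and I will prove it by the maximum principle applied to the test function
\[
\Phi = \alpha \log\Big(-\varepsilon - \frac{v}{x_{n+1}}\Big) + \log \lambda_1 + \frac{\beta}{2}|\nabla v|^2
\]
on $A_\varepsilon \cap U_r$. Here $\beta$ is an auxiliary constant; I may replace the last term by a multiple of $x_{n+1}$ or of $|X|^2$ if that is more convenient. By the hypothesis $v/x_{n+1} > -\varepsilon$ on $\partial U_r$, the set $A_\varepsilon$ is compactly contained in $U_r$. On $\partial A_\varepsilon$ the cutoff vanishes, so $\Phi$ attains an interior maximum at some $y_0 \in A_\varepsilon$. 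To avoid nondifferentiability of $\lambda_1$, I use the standard perturbation trick: replace $\lambda_1$ by $\Lambda_{11}$ in coordinates at $y_0$ that diagonalize $\Lambda = \nabla^2 v - vg$, with $\Lambda_{11}=\lambda_1$.

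The key structural fact is that $\psi := v/x_{n+1}$ corresponds to $u^*$ in the $B_1$ model. Since $x_{n+1}$ satisfies $\nabla^2 x_{n+1} = x_{n+1} g$, we have $\Lambda_{x_{n+1}} = 0$. Hence $\Lambda_v = x_{n+1}\nabla^2\psi + \nabla x_{n+1}\otimes\nabla\psi + \nabla\psi\otimes\nabla x_{n+1}$. This gives
\[
\mathscr{L}\Big(-\varepsilon - \frac{v}{x_{n+1}}\Big) = -\frac{1}{x_{n+1}}F^{ij}\Lambda_{ij} + \text{(first-order terms in }\nabla\psi\text{)}.
\]
By homogeneity, $F^{ij}\Lambda_{ij}=F=-v$ (here $k=n$, $F=\sigma_n^{1/n}$). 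So the cutoff function $\eta := -\varepsilon-\psi$ satisfies $F^{ij}\eta_{ij} \geq -C - C\sum F^{ii}$, with $C$ controlled by the local $C^0$ and $C^1$ bounds from Subsections \ref{Gauss Local C1}. This replaces the linear-function cutoff of the Euclidean Pogorelov argument.

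Next I differentiate the equation $\log\sigma_n(\Lambda) = n\log(-v)$ twice in the $e_1$ direction. I commute covariant derivatives on $\mathbb{H}^n$ using $\Lambda_{ij,k}=\Lambda_{ik,j}$ and $\Lambda_{11,ii} = \Lambda_{ii,11} + \Lambda_{ii} - \Lambda_{11}$. Using the concavity of $\log\sigma_n$ in the form $-F^{ij,kl}\Lambda_{ij,1}\Lambda_{kl,1} \geq \sum_{i\neq j}\Lambda_{ij,1}^2/(\lambda_i\lambda_j)$, I get the lower bound
\[
\frac{\Lambda^{ii}\Lambda_{11,ii}}{\lambda_1} \geq \sum_{i\geq 2}\frac{\Lambda_{11,i}^2}{\lambda_1^2\lambda_i} - C\sum_i\Lambda^{ii} - C - \frac{C}{\lambda_1} - \lambda_1^{-1}\cdot n\,\frac{v_{11}}{v}\cdots,
\]
where $\Lambda^{ii}=1/\lambda_i$ is the linearized operator. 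The right-hand side $n\log(-v)$ contributes terms like $-n v_{11}/v$, which are of order $\lambda_1$ with a favorable sign, since $-v\geq c>0$ by the $C^0$ bounds. The gradient term adds $\beta\sum_i\Lambda^{ii}(v_{ii})^2 \geq \beta\sum\lambda_i - C\beta\sum \Lambda^{ii}$, which absorbs the bad $C\sum\Lambda^{ii}$ terms once $\lambda_1$ is large.

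The main obstacle is the third-derivative terms $\Lambda_{11,i}^2/\lambda_1^2$ coming from $\nabla\Phi=0$, split into $i=1$ and $i\geq2$. For $i\geq2$, I write $\Lambda_{11,i}/\lambda_1 = -\alpha\eta_i/\eta - \beta v_kv_{ki}$. The good concavity term $\sum_{i\geq2}\Lambda_{11,i}^2/(\lambda_1^2\lambda_i)$ absorbs it, at the cost of $\alpha^2\Lambda^{ii}\eta_i^2/\eta^2$, which is in turn dominated by the $\alpha\Lambda^{ii}\eta_i^2/\eta^2$ term coming from the cutoff, as in Pogorelov's classical scheme. The $i=1$ term is handled using $\eta_1$ and the equation.

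Collecting terms at $y_0$ gives an inequality of the form $0 \geq \beta\lambda_1 - C\alpha/\eta - C$, or equivalently $\eta\lambda_1 \leq C$ after choosing $\alpha$ large and then $\beta$. This yields $\eta^\alpha\lambda_1\leq C$ at the maximum point, hence everywhere on $A_\varepsilon$. The constants depend only on the local $C^0$ and $C^1$ bounds and on $\varepsilon$, not on $r$.
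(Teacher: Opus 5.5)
Your overall architecture matches the paper's: a power of a cutoff times $\lambda_1 e^{\beta|\nabla v|^2/2}$, Brendle's perturbation of $\lambda_1$, the commutator $\Lambda_{11,ii}=\Lambda_{ii,11}+\Lambda_{ii}-\Lambda_{11}$, and concavity. However, the handling of the $\sum F^{ii}$ terms does not close, and the cause is your choice of cutoff.

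For $\eta=-\varepsilon-v/x_{n+1}$ you correctly find $F^{ij}\eta_{ij}=-F/x_{n+1}-\frac{2}{x_{n+1}}F^{ij}(x_{n+1})_i\eta_j$, so you only get $F^{ij}\eta_{ij}\geqslant -C-C\sum F^{ii}$. In $\Phi$ this enters as $\alpha F^{ij}\eta_{ij}/\eta$, i.e.\ as $-\frac{C\alpha}{\eta}\sum F^{ii}$. Your final inequality $0\geqslant\beta\lambda_1-C\alpha/\eta-C$ silently drops this term, and it cannot be absorbed:
\begin{enumerate}[label=\text{(\roman*)}]
\item $\sum F^{ii}$ is not controlled by $\lambda_1$. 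The equation only gives $\sum F^{ii}\leqslant C\lambda_1^{n-1}$, so ``once $\lambda_1$ is large'' fails for $n\geqslant3$.
\item The only genuine absorber is $\beta F^{ii}v_{ii}^2=\beta F^{ii}(\lambda_i+v)^2\geqslant\beta v^2\sum F^{ii}+\beta\sum\lambda_i-C\beta$. This is a \emph{good} multiple of $\sum F^{ii}$, not the $-C\beta\sum\Lambda^{ii}$ you wrote, but it carries no factor $1/\eta$.
\item If you instead feed $\nabla\Phi=0$ into the cross term, you get $\frac{2\beta v}{x_{n+1}}F^{ii}(x_{n+1})_iv_i$. This is of the same order $\beta$ as that absorber and has no sign, and nothing in your proposal controls it.
\end{enumerate}
The paper avoids all of this by using the cutoff $\tilde\eta=-\varepsilon x_{n+1}-v=x_{n+1}\eta$, which is the $\mathbb{H}^n$-lift of the $B_1$ cutoff $-\varepsilon-u^*$. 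Since $\Lambda_{x_{n+1}}=0$, one has $\Lambda_{\tilde\eta}=-\Lambda_v$ exactly. Hence $\alpha F^{ij}\tilde\eta_{ij}/\tilde\eta=\alpha\sum F^{ii}-\alpha F^{ij}\Lambda_{ij}/\tilde\eta$, with no first-order remainder and with the $\sum F^{ii}$ term of good sign. Since $A_\varepsilon\subset\{\underline{u}^*<-\varepsilon\}$ lies in a fixed compact set, $1\leqslant x_{n+1}\leqslant C$ there, so a bound on $\tilde\eta^{\alpha}\lambda_1$ gives the lemma.

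Your absorption of the gradient terms is also stated backwards. The cutoff contributes $-\alpha F^{ii}\eta_i^2/\eta^2$, which has the bad sign, so it cannot dominate anything (and $\alpha^2\gg\alpha$ in any case). The correct order, as in the paper, is:
\begin{enumerate}[label=\text{(\roman*)}]
\item Rewrite this bad term via $\nabla\Phi=0$ as $-\frac{1}{\alpha}F^{ii}\bigl(\Lambda_{11,i}/\lambda_1+\beta v_iv_{ii}\bigr)^2$.
\item Absorb the part $\frac{2}{\alpha}F^{ii}\Lambda_{11,i}^2/\lambda_1^2$ by the leftover concavity term. For $i\geqslant2$ the pairs $(1,i),(i,1)$ give $2\sum_{i\geqslant2}$, and only after cancelling $-F^{ii}(\log\lambda_1)_i^2$ does a coefficient $1$ remain. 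Your coefficient $1$ leaves nothing.
\item Absorb $\frac{2\beta^2}{\alpha}F^{ii}v_i^2v_{ii}^2$ by $\frac{\beta}{2}F^{ii}v_{ii}^2$. This forces $\alpha\gg\beta$; the paper takes $\alpha=\beta^2$.
\end{enumerate}
For $i=1$, your $\sum_{i\neq j}$ bound leaves $-\Lambda_{11,1}^2/\lambda_1^3$ uncontrolled. ``Using $\eta_1$'' through $\nabla\Phi=0$ turns it into a term of size $\beta^2v_1^2\lambda_1$, which beats the good $\beta\lambda_1$. You have two options:
\begin{enumerate}[label=\text{(\roman*)}]
\item Keep the diagonal terms of $\sum_{i,j}\Lambda_{ij,1}^2/(\lambda_i\lambda_j)$, which cancel it exactly, and then bound the cutoff's $i=1$ term directly by $C\alpha/(\lambda_1\eta^2)$.
\item As the paper does with $\sigma_n$, use the refined concavity inequality with factor $1+\frac{1}{n+1}$. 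This also absorbs the $i=1$ part of the $\frac1\alpha$ term.
\end{enumerate}
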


The proof is an elliptic analog of Lemma 28 in \cite{Entireconvexcurvatureflow}, we include the details in Appendix \ref{Proof of Gauss selfshrinker Pogorelov}.

Thus we can show the existence of the solution to the Gauss curvature self-shrinker equation \eqref{Gauss selfshrinker} by a standard argument (see section \ref{Proof of Theorem 1.1} for more detail). By Appendix \ref{Recover hypersurface from its support function}, we can see that the above solution is indeed the support function of an entire spacelike strictly convex hypersurface.

\section{Convergence of local solutions to an entire solution: general case}

We still denote $v_r$ the unique solution to the Dirichlet problem 
\begin{equation}
    \begin{cases} 
       \left( \frac{\sigma_n}{\sigma_{n-k}} \left[v_{ij}-v\delta_{ij}\right] \right)^{\frac{1}{k}} = -\frac{v}{\binom{n}{k}^{\frac{1}{k}}} \quad \text{in } U_r, \\
       v = \tilde{\underline{v}} \quad \text{on } \partial U_r.
    \end{cases}
\end{equation}
In the following, we will deal with the dual curvature equation of \eqref{sigmak selfshrinker}
\begin{align}
    \left( \sigma_k \left[\frac{1}{w} \gamma^{ik} u_{kl} \gamma^{lj}\right] \right)^{\frac{1}{k}} &= -\binom{n}{k}^{\frac{1}{k}} \left( \frac{1}{\sqrt{1 - |Du|^2}} \left( \sum_i x_i \frac{\partial u}{\partial x_i} - u \right) \right)^{-1} \notag \\
    &= -\binom{n}{k}^{\frac{1}{k}} \left( \langle X, \nu \rangle \right)^{-1}. \label{sigmak curvature equation}
\end{align}
We can express this equation in the following form
\begin{align}
    \sigma_k(\kappa[\mathcal{M}_u]) = \psi(X, \nu) \label{sigmak curvature equation for psi},\ \  
    \psi(X, \nu) = \frac{\binom{n}{k}}{\left( -\langle X, \nu \rangle \right)^k}. 
\end{align}
Let $u_r^*(\xi) = v_r(y(\xi)) \sqrt{1 - |\xi |^2}$ and $u_r$ be the Legendre transform of $u_r^*$. Then $u_r$ is a solution to equation \eqref{sigmak curvature equation for psi} in $D u_r^*(B_r)$, where $B_r = P(U_r)$ is the ball in $B_1$ model.

In this section, we will show that a subsequence of $\left\{ u_r \right\}$ converges to an entire solution to equation \eqref{sigmak curvature equation for psi} locally uniformly as $r \to 1$. 
To prove this, we first obtain local estimates for $u_{r}$ in any given compact set $K \subset \subset \mathbb{R}^n$.

In the following, we replace the upper barrier $\overline{v}$ constructed in Section \ref{The construction of barriers: self-shrinker} with the solution to Gauss curvature self-shrinker equation \eqref{Gauss selfshrinker}, still denoted by $\overline{v}$. The benefit of this change is that  $\overline{v}$ is hyperbolic convex, so we can take Legendre transform.

\subsection{Local \texorpdfstring{$C^0$}{C0} estimates}\label{Legendre transform}

By maximum principle, we have 
\[
\underline{v} \leqslant v_r \leqslant \overline{v} \quad \text{in } U_r.
\]
Then define $\underline{u}^*(\xi) = \underline{v}(y(\xi)) \sqrt{1 - |\xi |^2}$ and $\overline{u}^*(\xi) = \overline{v}(y(\xi)) \sqrt{1 - |\xi |^2}$, we have
\begin{align*}
    \underline{u}^* \leqslant u_r^* \leqslant \overline{u}^* \quad \text{in } B_r.
\end{align*}
Let $\underline{u}$, $u_r$ and $\overline{u}$ be the Legendre transform of $\overline{u}^*$ , $u_r^*$ and $\underline{u}^*$ respectively. By the same argument as in \cite[section 6.1]{Entireconstantsigmak}, we can show that for any $K \subset \subset \mathbb{R}^n$, we have
\begin{align*}
    \underline{u} \leqslant u_r \leqslant \overline{u} \quad \text{in } K
\end{align*}
for $r>r_K$ is sufficiently large.

\subsection{Local \texorpdfstring{$C^1$}{C1} estimates}

\begin{lemma}[Lemma 5.1 in \cite{BayardSchnürer}]\label{Local C1}
Let $\Omega \subseteq \mathbb{R}^n$ be a bounded open set. Let $u, \overline{u}, \psi : \Omega \to \mathbb{R}$ be strictly spacelike. Assume that near $\partial\Omega$, we have $\psi > \overline{u}$. Everywhere in $\Omega$ we assume that $u \leqslant \overline{u}$. Consider the set, where $u > \psi$. For every $x$ in that set, we get the following gradient estimate for $u$:
\[
\frac{1}{\sqrt{1 - |Du(x)|^2}} \leqslant \frac{1}{u(x) - \psi(x)} \cdot \sup_{\{u > \psi\}} \frac{\overline{u} - \psi}{\sqrt{1 - |D\psi|^2}}.
\]
\end{lemma}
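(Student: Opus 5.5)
The plan is to prove the estimate by a maximum principle argument applied to the test function
\[
Q \coloneqq \bigl(u-\psi\bigr)\,\nu^{n+1}, \qquad \nu^{n+1}=\frac{1}{\sqrt{1-|Du|^2}},
\]
on the open set $\Omega' \coloneqq \{x\in\Omega \mid u>\psi\}$. The target is
\[
Q \leqslant \sup_{\Omega'} \frac{\overline{u}-\psi}{\sqrt{1-|D\psi|^2}} .
\]
Dividing by $u-\psi$ then gives the stated bound.

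First I will check where $Q$ can attain its maximum. By hypothesis $\psi>\overline{u}\geqslant u$ near $\partial\Omega$, so $\Omega'$ is compactly contained in $\Omega$. On $\partial\Omega'$ we have $u=\psi$, hence $Q=0$ there, and $Q>0$ inside $\Omega'$. Therefore $Q$ attains a positive maximum at some interior point $x_0\in\Omega'$. Here the statement is purely about spacelike functions, with no equation involved, so I expect the argument to rest only on first-order conditions at $x_0$ together with a Lorentzian Cauchy--Schwarz inequality, rather than on any second-derivative analysis.

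At $x_0$, the first-order condition is $D\log Q=0$, that is,
\[
\frac{D(u-\psi)}{u-\psi} = -\frac{D\nu^{n+1}}{\nu^{n+1}} .
\]
The derivative $D\nu^{n+1}$ involves $D^2u$, which is awkward to control, so I will use a geometric reformulation in its place. I write
\[
Q=\bigl(u-\psi\bigr)\,\langle e_{n+1},\nu\rangle
\]
in Minkowski space, up to sign conventions. Then I compare the graph of $u$ with the graph of $\psi$ along a timelike direction: the quantity $(u-\psi)\nu^{n+1}$ measures a Lorentzian distance-type quantity from the point $(x,u(x))$ to the graph of $\psi$ in the direction normal to $\mathcal{M}_u$. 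Maximality of $Q$ should force the tangent plane of $u$ at $x_0$ to be related to $\psi$. Concretely, I would consider the affine spacelike function $\ell$ whose graph is the tangent plane of $u$ at $x_0$. Then I would compare $\ell$ with $\psi$ and $\overline{u}$ at the point $x_1$ where the normal line of $\mathcal{M}_u$ at $x_0$ meets the graph of $\psi$. The step I expect to be the main obstacle is this: showing, via the reverse Cauchy--Schwarz inequality for timelike vectors, that
\[
\bigl(u-\psi\bigr)(x_0)\,\nu^{n+1}(x_0) \leqslant \frac{(\overline{u}-\psi)(x_1)}{\sqrt{1-|D\psi(x_1)|^2}}
\]
at a suitable point $x_1\in\Omega'$.

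If this geometric route proves messy, my fallback is the standard direct computation. At $x_0$ I would expand $D\log Q=0$ componentwise in the direction of $Du$, use $|D\psi|<1$, and estimate the cross terms with $\langle Du,D\psi\rangle\leqslant|Du|\,|D\psi|$. I would also use $u\leqslant\overline{u}$ to replace $u-\psi$ by $\overline{u}-\psi$ in the final bound. Once $Q(x_0)$ is controlled, the inequality $Q(x)\leqslant Q(x_0)$ for every $x\in\Omega'$ yields the stated gradient estimate.
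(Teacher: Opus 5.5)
\textbf{There is a genuine gap: your proof never uses convexity of $u$, and the statement is false without it.}

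The statement as printed omits convexity, and you take it literally. You assert that the argument rests only on spacelikeness and first-order conditions. No such argument can work, because the inequality fails for non-convex $u$. Take $n=1$, $\Omega=(-2,2)$, $\psi\equiv0$, $\overline{u}=1-\tfrac{9}{10}\sqrt{x^2+10^{-2}}$, and the concave function $u=\tfrac{11}{20}-\tfrac{99}{100}\sqrt{(x+\tfrac{1}{20})^2+10^{-6}}$. All three are strictly spacelike, $\overline{u}<\psi$ near $\pm2$, and $u<\overline{u}$ everywhere. Yet at $x=0$ we have $u(0)>\tfrac12$ and $|u'(0)|\approx 0.99$, so the left-hand side is about $7$ while the right-hand side is at most $2\sup\overline{u}<2$.

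Your fallback stalls exactly where second-order information enters. At the maximum point $x_0$ of $Q$, pairing $DQ=0$ with $Du$ gives
\[
|Du|^2-Du\cdot D\psi=-(u-\psi)\,\frac{D^2u(Du,Du)}{1-|Du|^2},
\]
and Cauchy--Schwarz on $Du\cdot D\psi$ yields nothing unless you know the right-hand side is $\leqslant 0$. In the paper's applications convexity is available, since $u_r$ is the Legendre transform of the convex function $u_r^*$. That is also the setting of Bayard--Schn\"urer.

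\textbf{With convexity, your fallback closes at once.} The identity gives $|Du(x_0)|^2\leqslant Du(x_0)\cdot D\psi(x_0)$, hence $|Du(x_0)|\leqslant|D\psi(x_0)|$. Then $Q(x_0)\leqslant(\overline{u}-\psi)(x_0)/\sqrt{1-|D\psi(x_0)|^2}$, and the rest of your argument goes through.

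\textbf{Your main geometric route uses the wrong point.} The standard argument is pointwise and needs only $C^1$. Fix $x_0\in\{u>\psi\}$ and let $\ell$ be the tangent affine function of $u$ at $x_0$. Convexity gives $\ell\leqslant u\leqslant\overline{u}$, so $\ell-\psi<0$ near $\partial\Omega$. Hence $\ell-\psi$ attains a positive interior maximum at some $y_1\in\{\ell>\psi\}\subseteq\{u>\psi\}$, where $D\psi(y_1)=Du(x_0)$. Therefore
\[
\frac{(u-\psi)(x_0)}{\sqrt{1-|Du(x_0)|^2}}=\frac{(\ell-\psi)(x_0)}{\sqrt{1-|D\psi(y_1)|^2}}\leqslant\frac{(\ell-\psi)(y_1)}{\sqrt{1-|D\psi(y_1)|^2}}\leqslant\frac{(\overline{u}-\psi)(y_1)}{\sqrt{1-|D\psi(y_1)|^2}}.
\]
Geometrically, $y_1$ is where the graph of $\psi$ is Lorentz-farthest from the tangent hyperplane. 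There the two normals agree, so no reverse Cauchy--Schwarz is needed.

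At your point $x_1$, on the normal line of $\mathcal{M}_u$ through $x_0$, $D\psi(x_1)$ has no relation to $Du(x_0)$. Already for affine $\psi$ with slope $q$, and $p=Du(x_0)$, one computes $(\ell-\psi)(x_1)=(\ell-\psi)(x_0)\frac{1-|p|^2}{1-p\cdot q}$. So comparing through $\ell$ at $x_1$ loses a factor comparable to $(1-|p|^2)^{3/2}$, precisely in the regime $|p|\to1$ you need to exclude.
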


Hence we only need to construct a suitable spacelike function $\psi$. With the same proof of Lemma 14 in \cite{Entireconstantsigmak}, we can show that $\underline{u}, \overline{u} \rightarrow \left| x \right|$ uniformly as $\left| x \right| \to +\infty$. Let $\delta>0$ be a constant and $\psi = \underline{u} + \delta$. Therefore there exists a bounded open set $\Omega$ such that $\psi > \overline{u}$ near the boundary of $\Omega$. Then smoothing $\psi$ by convolution concludes the construction.

\subsection{Local \texorpdfstring{$C^2$}{C2} estimates}

\begin{lemma}[Lemma 17 in \cite{Theprescribedcurvatureproblem}, Lemma 21 in \cite{Entireself-expanders}]
Let $u_r$ be the same as before. For any given $s > 2C_0 + 1$, where $C_0 > \min \overline{u}$ is an arbitrary constant, let $r_s > 0$ be a positive number such that, when $r > r_s$, we have $u_r |_{\partial \Omega_r} > s$, where $\Omega_r = D u_r^*(B_r)$. Let $\kappa_{\max}(x)$ be the largest principal curvature of $\mathcal{M}_{u_r}$ at $x$, where $\mathcal{M}_{u_r} = \{ (x, u_r(x)) \mid x \in \Omega_r \}$. Then, for $r > r_s$, we have
\begin{equation}
\max_{\mathcal{M}_{u_r}} \left( s-u_r \right) \kappa_{\max} \leqslant C.
\end{equation}
Here, $C$ depends on the local $C^1$ estimates of $u_r$ and $s$.
\end{lemma}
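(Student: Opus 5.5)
The bound $\max_{\mathcal{M}_{u_r}}(s-u_r)\kappa_{\max}\leqslant C$ will come from a Pogorelov-type maximum principle argument on $\mathcal{M}_{u_r}$, run on the curvature equation \eqref{sigmak curvature equation for psi}, $\sigma_k(\kappa)=\psi(X,\nu)$ with $\psi=\binom{n}{k}(-\langle X,\nu\rangle)^{-k}$, in the style of Lemma 17 in \cite{Theprescribedcurvatureproblem}.

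\textbf{Setup and test function.} Set $\Omega^s=\{x\in\Omega_r: u_r(x)<s\}$. Since $u_r|_{\partial\Omega_r}>s$, the cutoff $s-u_r$ vanishes on $\partial\Omega^s$, and $\Omega^s$ is compactly contained in a fixed bounded region. That region is fixed by the local $C^0$ bounds $\underline{u}\leqslant u_r\leqslant\overline{u}$ on compact sets, together with $\underline{u},\overline{u}\to|x|$. On $\Omega^s$ the local $C^1$ estimate (Lemma \ref{Local C1}) gives $w^{-1}=(1-|Du_r|^2)^{-1/2}\leqslant C(s)$. Hence $X$, $\nu$ and $\langle X,\nu\rangle$ are bounded there. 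Since $v_r<0$ is bounded away from $0$ by the barriers, $\psi$ is bounded between positive constants together with its derivatives in $(X,\nu)$. I will consider
\[
\Phi=\beta\log(s-u_r)+\log\kappa_{\max}+\frac{a}{2}\langle X,X\rangle \quad\text{or}\quad +\,a\,\langle\nu,E\rangle
\]
with $E=(0,\dots,0,1)$, and large constants $\beta,a$. At an interior maximum I will perturb so that $\kappa_1=h_{11}$ is smooth, and work in principal coordinates.

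\textbf{Key steps.}
\begin{enumerate}
\item Write out the first-order conditions $\Phi_i=0$. Apply $\sigma_k^{ii}\nabla_{ii}$ and use the Minkowski Gauss--Codazzi formulas: commuting $h_{11ii}$ and $h_{ii11}$ produces the terms $\pm(h_{11}h_{ii}^2-h_{11}^2h_{ii})$ with the Lorentzian sign.
\item Differentiate the equation twice in $e_1$. Concavity of $\sigma_k^{1/k}$ controls $-\sigma_k^{pq,rs}h_{pq1}h_{rs1}$ up to the usual good term $\sum_{i}\frac{2\sigma_k^{ii}h_{1i1}^2}{\kappa_1-\kappa_i}$. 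The derivatives of $\psi(X,\nu)$ contribute $\psi_\nu h_{11k}$ terms, which vanish up to a bounded multiple by the critical-point relation. They also contribute $-C\kappa_1^2$ from $\psi_{\nu\nu}$-type terms, where $\nabla_1\nu=h_{11}e_1$.
\item Handle the $u$ and $X$ terms. Since $\mathcal{L}u=\sigma_k^{ii}h_{ii}\,\nu^{n+1}$-type terms, we get $\sigma_k^{ii}(s-u)_{ii}\geqslant -C$. The choice $\frac a2|X|^2$ gives $\sigma_k^{ii}\langle X,X\rangle_{ii}\geqslant 2\sum\sigma_k^{ii}-C\sum\sigma_k^{ii}\kappa_i|\langle X,\nu\rangle|$, while $\langle\nu,E\rangle_{ii}$ yields $\sigma_k^{ii}\kappa_i^2\langle\nu,E\rangle$. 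The term $a\sigma_k^{ii}\kappa_i^2$ absorbs $-C\kappa_1^2$, because $\sigma_k^{11}\kappa_1^2\geqslant c\,\kappa_1\sigma_k\to$ and more precisely $\sum\sigma_k^{ii}\kappa_i^2\geqslant c\,\kappa_1\psi$ for convex $\kappa$.
\end{enumerate}

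\textbf{Main obstacle.} The hardest part is the third-order bad term $\beta\sigma_k^{ii}\frac{u_i^2}{(s-u)^2}$ together with $\sigma_k^{ii}h_{11i}^2/\kappa_1^2$. I will follow the standard split into indices $i$ with $\kappa_i\geqslant\delta\kappa_1$ and the rest. For the large indices, $\sigma_k^{ii}\leqslant C\sigma_k^{11}$, and the critical equation bounds them by $\beta^2\sigma_k^{11}/(s-u)^2$. This is dominated by $a\,\sigma_k^{11}\kappa_1^2$ unless $(s-u)\kappa_1\leqslant C$, which is exactly the desired conclusion. For the small indices, the concavity term $\frac{2\sigma_k^{ii}h_{11i}^2}{\kappa_1(\kappa_1-\kappa_i)}$ absorbs $\frac{\sigma_k^{ii}h_{11i}^2}{\kappa_1^2}$, using $(1+\delta)$-type room. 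Since $\mathcal{M}$ is strictly convex, $\kappa\in\Gamma_n$ and these Guan--Ren--Wang-type inequalities apply directly. Combining, I get $0\geqslant c\,a\,\kappa_1-C\beta/(s-u)-C$ at the maximum, whence $(s-u)\kappa_1\leqslant C$.
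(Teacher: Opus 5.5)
There is a genuine gap in how you absorb the terms at $i=1$. Your overall route is the one the paper takes: it simply invokes the Pogorelov argument of Lemma 17 in \cite{Theprescribedcurvatureproblem} with $\psi=\binom{n}{k}(-\langle X,\nu\rangle)^{-k}$. Your setup of the cutoff region and of the bounds on $X$, $\nu$, $\psi$ is also fine.

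\textbf{The index $i=1$.} In step 3 you need the term $a\phi$, with $\phi=-\langle\nu,E\rangle$, to produce the good term $a\sum_i\sigma_k^{ii}\kappa_i^2\phi\geqslant ca\kappa_1$. The alternative $\frac a2\langle X,X\rangle$ only gives $a\sum_i\sigma_k^{ii}=a(n-k+1)\sigma_{k-1}$, which for $k\geqslant3$ may grow only like $\kappa_1^{1/(k-1)}$. But then $\nabla_i\phi=-\kappa_i\langle e_i,E\rangle$ (up to sign conventions), so the critical-point relation is
\[
\frac{h_{11i}}{\kappa_1}=\frac{\beta u_i}{s-u}+a\,\kappa_i\langle e_i,E\rangle .
\]
For $i=1$ this bounds $\sigma_k^{11}h_{111}^2/\kappa_1^2$ only by $2\beta^2\sigma_k^{11}u_1^2/(s-u)^2+2a^2\sigma_k^{11}\kappa_1^2\langle e_1,E\rangle^2$, not by $C\beta^2\sigma_k^{11}/(s-u)^2$.

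The second piece is of order $a^2\sigma_k^{11}\kappa_1^2$. All your zeroth-order good terms are $O\bigl((a\phi+1)\sigma_k^{11}\kappa_1^2\bigr)$, since for convex $\kappa$ one has $\sigma_k^{ii}\kappa_i^2\leqslant\sigma_k^{11}\kappa_1^2$. Because $a$ must be large, and the gradient bound only makes $\langle e_1,E\rangle$ bounded rather than small, this term cannot be absorbed. For $i\neq1$ there is no such problem: $2\sigma_k^{ii}h_{11i}^2/(\kappa_1(\kappa_1-\kappa_i))\geqslant 2\sigma_k^{ii}h_{11i}^2/\kappa_1^2$ handles every $i\neq1$, large or small.

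What is missing is a positive $h_{111}^2$ term coming from the diagonal part $-\sum_{p\neq q}\sigma_k^{pp,qq}h_{pp1}h_{qq1}$. Concavity of $\sigma_k^{1/k}$ alone gives none. You need either a convexity-specific inequality of the type
\[
-\sum_{p\neq q}\sigma_k^{pp,qq}\xi_p\xi_q+K\frac{\bigl(\sum_i\sigma_k^{ii}\xi_i\bigr)^2}{\sigma_k}\geqslant(1+c)\frac{\sigma_k^{11}\xi_1^2}{\kappa_1},\qquad \kappa\in\Gamma_n,
\]
or the $P_m=\sum_j\kappa_j^m$ device of Guan--Ren--Wang. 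The paper uses an inequality of the first type for $\sigma_n$ in Appendix \ref{Proof of Gauss selfshrinker Pogorelov}. Its cost $K\psi_1^2/(\psi\kappa_1)=O(K\kappa_1)$ is paid by taking $a\gg K$. The term $\beta\sigma_k^{11}u_1^2/(s-u)^2$ is then handled as you handle the other large indices.

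\textbf{Two smaller points.} First, you wrote that $\langle\nu,E\rangle_{ii}$ yields only $\sigma_k^{ii}\kappa_i^2\langle\nu,E\rangle$. In fact $\sigma_k^{ii}\phi_{ii}=\sigma_k^{ii}\kappa_i^2\phi-\langle\nabla\psi,E^T\rangle$, and the $d_\nu\psi$-part of $a\langle\nabla\psi,E^T\rangle$ is of order $a\kappa_1$. Your claim that the $\kappa_1^{-1}d_\nu\psi(\nabla h_{11})$ terms become bounded after inserting the critical relation is true only because their $a$-part cancels exactly against this term.

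Second, with $\Phi=\beta\log(s-u)+\log\kappa_{\max}+\dots$ and $\beta$ large, the bound $(s-u)\kappa_1\leqslant C$ at the maximum point of $\Phi$ gives only $(s-u_r)^\beta\kappa_{\max}\leqslant C$. This is weaker than the stated $(s-u_r)\kappa_{\max}\leqslant C$ near $\{u_r=s\}$. To get exponent one, the cutoff power must match the power of the curvature quantity, e.g.\ $m\log(s-u)+\log P_m$ with $P_m^{1/m}\geqslant\kappa_{\max}$. The weaker bound would still suffice for the local $C^2$ estimates used to pass to the limit.
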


\begin{proof}
    The proof is the same as that of Lemma 17 in \cite{Theprescribedcurvatureproblem}, with $\psi$ here is 
    \[
    \psi(X, \nu) = \frac{\binom{n}{k}}{\left( -\langle X, \nu \rangle \right)^k}. 
    \]
\end{proof}

\subsection{Proof of Theorem \ref{self-shrinker existence}}\label{Proof of Theorem 1.1}

% Combine the estimates above, we can conclude to give the proof.

\begin{proof}[Proof of Theorem \ref{self-shrinker existence}]
    Choose a sequence of positive numbers $r_i \to 1$ and consider the corresponding sequence of solutions $u_{r_i}$ to equation \eqref{sigmak curvature equation for psi} in $\Omega_{r_i}$ constructed by Legendre transform of the solution $u_{r_i}^*$ to the corresponding approximate Dirichlet problem. Then by local $C^0$, $C^1$ and $C^2$ estimates, with the Cantor's diagonal argument, there exists a subsequence $\left\{ u_{r_i} \right\}$ that converges to some function $u_{\infty}$ in $C^{2,\alpha}_{\mathrm{loc}}$ topology. Then by standard regularity theory for elliptic PDE, we have that $u_{\infty}$ is a smooth solution to equation \eqref{sigmak curvature equation for psi}. 
    Moreover, by local $C^0$ estimates, we have 
    \[
    \underline{u} \leqslant u_{\infty} \leqslant \overline{u} \quad \text{in } \mathbb{R}^n.
    \]
    By the same argument in \cite[Theorem 29]{Entireconstantsigmak}, we obtain that the graph of $u_{\infty}$ will split as a product $\mathcal{M}^l \times \mathbb{R}^{n-l}$. But since $\underline{u}$ and $\overline{u}$ are both strictly convex, $n-l$ must equal to $0$, i.e. $u_{\infty}$ is strictly convex.

    By definition of Legendre transform and the same argument as in \cite[section 6.1]{Entireconstantsigmak}, we have
    \[
    \underline{u}^* \leqslant u_{\infty}^* \leqslant \overline{u}^* \quad \text{in } B_1.
    \]
    Multiplying $\frac{1}{\sqrt{1 - |\xi|^2}}$ on both sides, we have
    \[
    \underline{v} \leqslant v_{\infty} \leqslant \overline{v} \quad \text{in } \mathbb{H}^n.
    \]
    From the definition of $\underline{v}$ and $\overline{v}$, we have $\underline{v} = \overline{v} = \varphi$ on $\mathbb{H}^n(\infty)$. Then 
    \[
    v_{\infty} = \varphi \quad \text{on } \mathbb{H}^n(\infty).
    \]
    Hence $v_{\infty}$ is the desired solution to equation \eqref{sigmak selfshrinker} and $u_{\infty}$ is the corresponding entire hypersurface satisfying \eqref{sigmak curvature equation for psi}.
\end{proof}

\section{The inverse Hessian curvature flow}

Now we consider evolution equation for support function of the inverse $\sigma_k$ curvature flow
\begin{align}
    \begin{cases}
    v_t = F(\Lambda_{v}) & \text{in } P^{-1}(B_1) \times (0, \infty) \coloneqq U_1 \times (0, \infty), \\[4pt]
    v(\cdot, t) = V(t) \varphi & \text{on } \partial U_1 \times [0, \infty), \\[4pt]
    v(\cdot, 0) = u_0^* x_{n+1} & \text{on } U_1 \times \{0\}. \label{support function flow}
    \end{cases}
  \end{align}
where $F=\left( \frac{\sigma_n}{\sigma_{n-k}} \right)^{\frac{1}{k}}$ and $u_0^*$ is the Legendre transform of $u_0$, the height function of initial hypersurface. Then $v_0=u_0^* x_{n+1}$ is the support function of initial hypersurface. In fact $U_1 = \mathbb{H}^n$ and $\partial U_1 = \mathbb{H}^n(\infty)$.

Let $V(t) = e^{-\gamma t},  \gamma = \frac{1}{\binom{n}{k}^{\frac{1}{k}}}$.
Consider the normalized flow of \eqref{support function flow}, $\tilde{v}=V(t)^{-1}v=e^{\gamma t}v$, then we have 
\begin{align}
    \begin{cases}
    \tilde{v}_t = F(\Lambda_{\tilde{v}}) + \frac{\tilde{v}}{\binom{n}{k}^{\frac{1}{k}}} & \text{in } U_1 \times (0, \infty), \\[4pt]
    \tilde{v}(\cdot, t) = \varphi & \text{on } \partial U_1 \times [0, \infty), \\[4pt]
    \tilde{v}(\cdot, 0) = v_0 & \text{on } U_1 \times \{0\}. \label{normalized support function flow}
    \end{cases}
\end{align}
Since the normalized flow \eqref{normalized support function flow} is equivalent to flow \eqref{support function flow} when considering long-time existence, we will only deal with the normalized flow in the following sections.

\subsection{Comparison principle}

The proof of asymptotic comparison principle is same as Lemma \ref{Comparison principle}, since $\overline{\mathbb{H}}{}^n \times [0,T]$ is endowed with the product topology.

\section{The construction of barriers: inverse Hessian curvature flow}

\subsection{Construction of upper barrier}

For upper barrier, consider the solution to the following heat equation
\begin{align}
    \begin{cases}
    v_t = \frac{\Delta v}{n \cdot \binom{n}{k}^{\frac{1}{k}}} & \text{in } U_1 \times (0, \infty), \\[4pt]
    v(\cdot, t) = \varphi & \text{on } \partial U_1 \times [0, \infty), \\[4pt]
    v(\cdot, 0) = v_0 & \text{on } U_1 \times \{0\}. \label{heat equation}
    \end{cases}
\end{align}
Its solution is denoted by $\overline{v}$. For completeness, we show the existence of $\overline{v}$ as follows. First, under the time rescaling $s = \alpha t$, we can assume that it is the standard heat equation $v_t = \Delta v$. We claim that 
\[
\overline{v} = h + w,
\]
where $h$ is the harmonic extension of $\varphi$ and $w$ can be formulated by convolution with the heat kernel, i.e.
\[
w(x, t) = e^{t\Delta} \left[ v_0 - h \right](x) = \int_{\mathbb{H}^n} p_n(d(x,y), t)(v_0 - h)(y) d\mu(y).
\]
We only need to check that $w$ vanishes at infinity. Let $w_0 = v_0 - h$, since $w_0 = 0$ on $\mathbb{H}^n(\infty)$, there exists a $B_R$ such that $|w_0| < \varepsilon$ outside $B_R$, then we have
\begin{align*}
    &|w(x, t)| \\
    \leqslant& \left| \int_{\mathbb{H}^n \setminus B_R} p_n(d(x,y), t)w_0(y) d\mu(y) \right| + \left| \int_{B_R} p_n(d(x,y), t)w_0(y) d\mu(y) \right| \\
    \leqslant& \varepsilon + \varepsilon = 2\varepsilon,
\end{align*}
as $x \to \mathbb{H}^n(\infty)$ if $t>0$. The estimate for the second term follows from the asymptotic behavior of $p_n$ when $d(x,y)$ tends to infinity. The heat kernel on hyperbolic space has an explicit expression in \cite[Theorem 1.1]{TheHeatKernel} and the estimates for heat kernel can be found in \cite[Theorem 3.1]{HeatKernelBoundsonHyperbolicSpaceandKleinianGroups} that 
\[
p_{n+1}(\rho, t) \sim t^{-\frac{1}{2}(n+1)} e^{-\frac{1}{4}n^2 t - \rho^2/4t - \frac{1}{2}n\rho} \cdot (1 + \rho + t)^{\frac{1}{2}n-1}(1 + \rho),
\]
uniformly for $0 \leqslant \rho < \infty$ and $0 < t < \infty$. The heat kernel estimates also yields that as $t \to +\infty$,
\[
\overline{v}(\cdot, t) \rightarrow h \quad \text{uniformly on } \mathbb{H}^n.
\]

Then by generalized Newton--Maclaurin's inequality, we have
\begin{align*}
    \left( \frac{\sigma_n}{\sigma_{n-k}} \right)^{\frac{1}{k}} \left( v_{ij}-v\delta_{ij} \right) &\leqslant \frac{1}{\binom{n}{k}^{\frac{1}{k}}} \frac{\sigma_1}{n} \left( v_{ij}-v\delta_{ij} \right)
    = \frac{1}{\binom{n}{k}^{\frac{1}{k}}} \left( \frac{\Delta v}{n} - v \right) \Rightarrow \\
    F(\Lambda_{v}) + \frac{v}{\binom{n}{k}^{\frac{1}{k}}} &\leqslant \frac{\Delta v}{n \cdot \binom{n}{k}^{\frac{1}{k}}}.
\end{align*}
Namely, solution to \eqref{normalized support function flow} is a subsolution to \eqref{heat equation}. Thus by asymptotic maximum principle for heat equation, we have $v \leqslant \overline{v}$.

\subsection{Construction of lower barrier}\label{subsolution to the approximate problem}

From the subsolution condition at infinity (Definition \ref{Cond}), suppose that outside some $U_{r_0} \supset K$, $v_0$ satisfies the subsolution condition. Let $\rho$ be the distance function from the origin. If $g = g(\rho)$ is a smooth function on $[0, \infty)$, then we have 
\begin{align*}
    \nabla^2 g(\rho) = g^{\prime\prime}(\rho)\,\mathrm{d}\rho\otimes \mathrm{d}\rho + g^{\prime}(\rho)\coth \rho \bigl(g-\mathrm{d}\rho\otimes \mathrm{d}\rho\bigr).
\end{align*}
Similarly to Section \ref{The construction of barriers: self-shrinker}, recall that we replace the distance function from a convex set with the distance function from origin here, then we impose the following conditions on $g$:
\[
\begin{cases}
    g: [0, \infty) \to \mathbb{R^{-}}, \\
    \lim_{\rho \to +\infty} g(\rho) \to 0, \\
    g^{\prime}\coth \rho - g > 0, \\
    g^{\prime \prime} - g > 0, \\
    \left[ \frac{\left( 1-\frac{g^{\prime} \coth \rho}{g} \right) \left( 1-\frac{g^{\prime \prime}}{g} \right)}{\left( n-1 \right)\left( 1-\frac{g^{\prime \prime}}{g} \right) -\frac{g^{\prime}}{g} \coth \rho +1} - \frac{1}{n} \right] \geqslant 0.
\end{cases} 
\]
\begin{lemma}\label{smooth function g}
    For $0 < a \leqslant \frac{n-1}{n}$, the function 
    \begin{align}\label{g=}
        g(\rho) = - \left( \cosh \rho \right)^{-a}
    \end{align}
    satisfies the above conditions, i.e. $g$ is admissible and is a subsolution for the self-shrinker equation. Moreover, $g$ is hyperbolic convex for any $a>-1$ and $g$ is a strict subsolution for $0 < a < \frac{n-1}{n}$.
\end{lemma}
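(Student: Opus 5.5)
The plan is a direct computation in geodesic polar coordinates around the origin, followed by the reduction to $k=1$ used in Section \ref{The construction of barriers: self-shrinker}. Since $g=g(\rho)$ is radial, the Hessian formula stated just before the lemma gives exact eigenvalues of $\Lambda_g=\nabla^2 g-g\,g_{\mathbb H}$. There is one radial eigenvalue $g''-g$ and $(n-1)$ tangential eigenvalues $g'\coth\rho-g$. This is an equality, not an inequality as in Lemma \ref{f(s)}. Smoothness at $\rho=0$ is automatic, because $(\cosh\rho)^{-a}$ is a smooth even function of $\rho$, hence a smooth function on $\mathbb H^n$.

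\textbf{Step 1: compute the derivatives.} With $T\coloneqq\tanh^2\rho\in[0,1)$ I expect
\[
g'=a\tanh\rho\,(\cosh\rho)^{-a},\qquad g''=a(\cosh\rho)^{-a}\bigl(1-(a+1)T\bigr).
\]
Consequently
\[
\frac{g'\coth\rho}{g}=-a,\qquad \frac{g''}{g}=-a+a(a+1)T.
\]
The first two conditions are immediate for $a>0$: $g<0$ and $g\to0$.

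\textbf{Step 2: admissibility.} The two eigenvalues are
\[
g'\coth\rho-g=(1+a)(\cosh\rho)^{-a},\qquad g''-g=(1+a)(1-aT)(\cosh\rho)^{-a}.
\]
The first is positive exactly when $a>-1$. The second is positive whenever $a>-1$ and $aT<1$. This holds for all $\rho$ when $-1<a\leqslant 1$, in particular on the whole range $0<a\leqslant\frac{n-1}{n}$ needed for the subsolution part. So $g\in\mathcal S$ there.

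For the assertion ``hyperbolic convex for any $a>-1$'' I will check the sign of $1-aT$ carefully. For $a>1$ it becomes negative once $\tanh^2\rho>1/a$, so convexity on all of $\mathbb H^n$ can only be claimed in the range $-1<a\leqslant 1$. I would state the hyperbolic convexity in that form, or restricted to the region $aT<1$.

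\textbf{Step 3: subsolution inequality.} For $k=1$ we have $F=1/\sigma_1(\Lambda^{-1})$ exactly. As in \eqref{subsolution condition}, the quantity $F+\frac{g}{n}$ equals $(-g)$ times the bracket in the last condition. Substituting $1-\frac{g'\coth\rho}{g}=1+a$ and $1-\frac{g''}{g}=(1+a)(1-aT)$, the bracket becomes
\[
\frac{(1+a)(1-aT)}{(n-1)(1-aT)+1}-\frac1n .
\]
This is nonnegative if and only if $(1-aT)(1+na)\geqslant 1$.

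For $a>0$ the left side is decreasing in $T$, so it suffices to check the limit $T\to1$. There it equals
\[
(1-a)(1+na)=1+a\bigl(n-1-na\bigr),
\]
which is $\geqslant1$ precisely when $a\leqslant\frac{n-1}{n}$. For $0<a<\frac{n-1}{n}$ the same computation gives the uniform margin $(1-aT)(1+na)-1\geqslant a(n-1-na)>0$. Hence
\[
F(g)+\frac gn\geqslant c_a(\cosh\rho)^{-a}
\]
with $c_a>0$, which is the strict subsolution property. In particular $G(g)$ has a positive lower bound on every geodesic ball $B_R$, which is what Lemma \ref{Viscosity comparison principle} uses.

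\textbf{Step 4: general $k$.} Finally, the Newton--Maclaurin chain displayed in Section \ref{The construction of barriers: self-shrinker} transfers the $k=1$ inequality to general $k$, since $\bigl(\frac{\sigma_n/\binom nn}{\sigma_{n-1}/\binom n{n-1}}\bigr)\leqslant\binom nk^{1/k}\bigl(\frac{\sigma_n}{\sigma_{n-k}}\bigr)^{1/k}$. The only delicate point I foresee is the range statement on hyperbolic convexity in Step 2. All other steps are routine algebra.
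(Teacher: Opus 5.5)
Your proposal is correct and follows essentially the paper's route: you compute the radial eigenvalues $g'\coth\rho-g=(1+a)(\cosh\rho)^{-a}$ and $g''-g$ explicitly, then reduce the $k=1$ bracket to a one-variable inequality checked in the worst case $\rho\to\infty$; your factorization $(1-a\tanh^2\rho)(1+na)\geqslant 1$ is equivalent to the paper's bound $B\geqslant 1-a^2$ combined with monotonicity in $B$. Your caveat on convexity is also right: the paper's claim that $1-a^2+(a+a^2)\operatorname{sech}^2\rho>0$ for every $a>-1$ fails for $a>1$ once $\tanh^2\rho>1/a$, so hyperbolic convexity holds exactly for $-1<a\leqslant 1$; this range still covers every use of $g$ in the paper, since all of them take $0<a<\frac{n-1}{n}$.
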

\begin{proof}
    By simple calculation, we have
    \begin{align*}
        g^{\prime}(\rho) &= a \left( \cosh \rho \right)^{-a-1} \sinh \rho = a \left( \cosh \rho \right)^{-a} \tanh \rho, \\
        g^{\prime \prime}(\rho) &= -a^2 \left( \cosh \rho \right)^{-a} \tanh^2 \rho + a \left( \cosh \rho \right)^{-a} \operatorname{sech}^2 \rho \\
        &= a \left( \cosh \rho \right)^{-a} \left( \operatorname{sech}^2 \rho - a \tanh^2 \rho \right).
    \end{align*}
    Hence we have
    \begin{align*}
        g^{\prime}\coth \rho - g &= a \left( \cosh \rho \right)^{-a} \tanh \rho \coth \rho + \left( \cosh \rho \right)^{-a} = (a+1) \left( \cosh \rho \right)^{-a} > 0, \\
        g^{\prime \prime} - g &= \left( \cosh \rho \right)^{-a} \left( 1 + a \operatorname{sech}^2 \rho - a^2 \tanh^2 \rho \right) \\
        % &= \left( \cosh \rho \right)^{-a} \left( 1 + a \operatorname{sech}^2 \rho - a^2 \left( 1 - \operatorname{sech}^2 \rho \right) \right) \\
        &= \left( \cosh \rho \right)^{-a} \left( 1 - a^2 + \left( a + a^2 \right) \operatorname{sech}^2 \rho \right) > 0,
    \end{align*}
    if $a>-1$.

    Let
    \begin{align*}
        A &= 1-\frac{g^{\prime} \coth \rho}{g} = a+1, \\
        B &= 1-\frac{g^{\prime \prime}}{g} = 1 - a^2 + \left( a + a^2 \right) \operatorname{sech}^2 \rho \geqslant 1-a^2 \eqqcolon \hat{B}.
    \end{align*}
    Therefore for the last condition, the expression can be simplified to 
    \begin{align*}
        \frac{AB}{\left( n-1 \right)B + A} \geqslant \frac{1}{n}.
    \end{align*}
    Since the LHS is increasing in $B$, we only need to check
    \begin{align*}
        \frac{\left( a+1 \right)\left( 1-a^2 \right)}{\left( n-1 \right)\left( 1-a^2 \right) + \left( a+1 \right)} \geqslant \frac{1}{n}.
    \end{align*}
    This is equivalent to
    \begin{align*}
        n \left( 1-a^2 \right) \geqslant n - \left( n-1 \right) a,
    \end{align*}
    i.e.
    \begin{align*}
                a \leqslant \frac{n-1}{n}.
    \end{align*}
\end{proof}
\begin{remark}
    Indeed, in hyperboloid model, $\cosh \rho = x_{n+1}$.
\end{remark}

Choose some $a < \frac{n-1}{n}$ and fix, then the term in last condition is strictly positive for any $\rho>0$. Hence $F(\Lambda_{g}) + \frac{g}{\binom{n}{k}^{\frac{1}{k}}}$ has a positive lower bound $c_0>0$ in $U_{r_0}$. 

Then define the subsolution as
\[
\underline{v} \coloneqq A\chi(t)g + v_0 \left( A \gg 1 \right),
\]
where $\chi \in C^{\infty}([0, +\infty))$ a nonnegative function such that $\chi(t) = t$ for $0 \leqslant t \leqslant \frac{1}{2}$, $\chi(t) = 1$ for $t \geqslant 1$ and $0\leqslant \chi^{\prime}(t) \leqslant 2$. $A$ is a constant to be determined later. We can see that $\underline{v}$ does not change with time after $t \geqslant 1$. It follows from the expression of $\underline{v}$ that $\underline{v}$ is hyperbolic convex, $\underline{v}(\cdot, 0) = v_0$ and $\underline{v}(\cdot, t) = v_0 = \varphi$ on $\mathbb{H}^n(\infty)$. 
Then the following holds:

In $\mathbb{H}^n \setminus U_{r_0}$, we have
\begin{align*}
    &F(\Lambda_{\underline{v}}) + \frac{\underline{v}}{\binom{n}{k}^{\frac{1}{k}}} 
    \geqslant A\chi F(\Lambda_{g}) + F(\Lambda_{v_0}) + \frac{A\chi g + v_0}{\binom{n}{k}^{\frac{1}{k}}}
    > 0 \geqslant A\chi^{\prime}g = \underline{v}_t,
\end{align*}
by the construction of $g$ and subsolution condition at infinity of $v_0$.

In $U_{r_0}$, 

(1): for $0 \leqslant t \leqslant \frac{1}{2}$, we have
\begin{align*}
    &F(\Lambda_{\underline{v}}) + \frac{\underline{v}}{\binom{n}{k}^{\frac{1}{k}}} - A\chi^{\prime}g 
    \geqslant At F(\Lambda_{g}) + F(\Lambda_{v_0}) + \frac{Atg + v_0}{\binom{n}{k}^{\frac{1}{k}}} - Ag \\
    >& -Ag + \frac{v_0}{\binom{n}{k}^{\frac{1}{k}}}
    \geqslant -Ag(l_0) + \frac{v_0}{\binom{n}{k}^{\frac{1}{k}}} 
    \geqslant 0,
\end{align*}
if $A$ is large, where $l_0$ is the geodesic radius of geodesic ball $U_{r_0}$.

(2): for $t \geqslant \frac{1}{2}$, we have
\begin{align*}
    &F(\Lambda_{\underline{v}}) + \frac{\underline{v}}{\binom{n}{k}^{\frac{1}{k}}} 
    \geqslant A\chi F(\Lambda_{g}) + F(\Lambda_{v_0}) + \frac{A\chi g + v_0}{\binom{n}{k}^{\frac{1}{k}}} \\
    >& \frac{1}{2} c_0 A + \frac{v_0}{\binom{n}{k}^{\frac{1}{k}}} 
    \geqslant 0 \geqslant A\chi^{\prime}g = \underline{v}_t,
\end{align*}
if $A$ is large.

In a word, given $A$ is sufficiently large (depending on $r_0$ and $c_0$), $\underline{v}$ satisfies
\begin{align*}
    \begin{cases}
    \underline{v}_t \leqslant F(\Lambda_{\underline{v}}) + \frac{\underline{v}}{\binom{n}{k}^{\frac{1}{k}}} & \text{in } U_1 \times (0, T], \\[4pt]
    \underline{v}(\cdot, t) = \varphi & \text{on } \partial U_1 \times [0, T], \\[4pt]
    \underline{v}(\cdot, 0) = v_0 & \text{on } U_1 \times \{0\}.
    \end{cases}
\end{align*}
Thus by asymptotic maximum principle, we have
$
\underline{v} \leqslant v.
$

\section{Longtime existence of the inverse \texorpdfstring{$\sigma_k$}{sigmak} curvature flow}\label{Longtime existence}

\subsection{Approximate Dirichlet problem}

We consider the approximate problem of \eqref{normalized support function flow} in fixed domain $U_r$ when $r$ is large, 
\begin{align}\label{approximate normalized support function flow}
    \begin{cases}
    (v_r)_t = F(\Lambda_{v_r}) + \frac{v_r}{\binom{n}{k}^{\frac{1}{k}}} & \text{in } U_r \times (0, T], \\[4pt]
    v_r(\cdot, t) = \underline{v} = A\chi(t)g + v_0 & \text{on } \partial U_r \times [0, T], \\[4pt]
    v_r(\cdot, 0) = \underline{v} & \text{on } U_r \times \{0\}.
    \end{cases}
\end{align}

\subsection{\texorpdfstring{$C^0$}{C0} estimates}

Since $\underline{v}$ and $\overline{v}$ are lower barrier and upper barrier respectively, by asymptotic maximum principle we have
\begin{align*}
    \underline{v} \leqslant v_r \leqslant \overline{v}.
\end{align*}

\subsection{\texorpdfstring{$C^1$}{C1} estimates}

Recall subsolution $\underline{v}$ satisfies
\begin{align}
    \begin{cases}
    \underline{v}_t \leqslant F(\Lambda_{\underline{v}}) + \frac{\underline{v}}{\binom{n}{k}^{\frac{1}{k}}} & \text{in } U_r \times (0, T], \\[4pt]
    \underline{v}(\cdot, t) = \underline{v} & \text{on } \partial U_r \times [0, T], \\[4pt]
    \underline{v}(\cdot, 0) = \underline{v} & \text{on } U_r \times \{0\}.
    \end{cases}
\end{align}
for $r$ large such that $U_r \supseteq K$. Then by maximum principle, we have
\begin{align*}
    \underline{v} \leqslant v_r \quad \forall \, (y,t) \in \overline{U}_r \times [0, T].
\end{align*}
Multiplying $\sqrt{1-|\xi|^2}$ on both sides, we have
\begin{align*}
    \underline{u}^* \leqslant u^*_r \quad \forall \, (\xi,t) \in \overline{B}_r \times [0, T], \text{ and  }
     \underline{u}^* = u^*_r \quad \forall \, (\xi,t) \in \partial B_r \times [0, T].
\end{align*}
Therefore for any $t \in [0, T]$, by convexity we have
\begin{align*}
    |Du^*_r| \leqslant \max_{\xi \in \partial B_r} |D \underline{u}^*|.
\end{align*}
Then still by Lemma \ref{Relation}, we have 
\begin{align*}
    |\nabla v_r| \leqslant C,
\end{align*}
where $C$ is independent of time since $\underline{v}$ does not change with time after $t \geqslant 1$.

\subsection{Upper and lower bounds for \texorpdfstring{$F$}{F}}

For convenience, we omit the subscript when considering approximate problem \eqref{approximate normalized support function flow}.

\begin{lemma}\label{Upper and lower bounds for F}
    Let $v$ be the solution to approximate equation \eqref{approximate normalized support function flow}. Then $F$ is bounded between two positive constants in $\overline{U}_r \times [0, T]$ for $r>\tilde{r}$, where $\tilde{r}$ is a sufficiently large constant.   
\end{lemma}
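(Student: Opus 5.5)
The plan is to bound the speed function $\Phi \coloneqq F(\Lambda_v) + \frac{v}{\binom{n}{k}^{\frac{1}{k}}} = v_t$ from above and below, and then convert this into two-sided bounds for $F$ using the $C^0$ estimate $\underline{v}\leqslant v\leqslant \overline{v}$. Since $v$ is bounded between two negative constants, $\frac{v}{\binom{n}{k}^{1/k}}$ is bounded, so upper and lower bounds on $F$ are equivalent to suitable bounds on $v_t$ modulo constants. The upper bound is the easier half; the positive lower bound is the real content.

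\textbf{Upper bound on $F$.} Differentiating \eqref{approximate normalized support function flow} in $t$, $w=v_t$ satisfies the linear equation $w_t = F^{ij}(\nabla^2_{ij} w - w\delta_{ij}) + \gamma w = \mathscr{L}w + \gamma w$. Since $\sum F^{ii}\geqslant \gamma$ by concavity and homogeneity (as in the proof of Lemma \ref{Comparison principle}), the zeroth-order coefficient $\gamma-\sum F^{ii}\leqslant 0$. Hence the maximum principle shows that $\max w$ is attained at $t=0$ or on the parabolic boundary $\partial U_r\times[0,T]$. At $t=0$ the solution starts from $\underline{v}$, which is a subsolution, so $v_t\geqslant \underline v_t$ there. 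On $\partial U_r$ we have $v_t=\underline{v}_t = A\chi'g$, which is bounded. At $t=0$ one can also compute $v_t = F(\Lambda_{v_0})+\gamma v_0$, which is bounded above by Remark \ref{sigmak upper bound}: $\sigma_k(\kappa)\leqslant C$ means the curvature radii have $F$ bounded. Strictly speaking, $\underline{v}(\cdot,0)=v_0$ and $\underline v_t(0)=Ag$, so one must check compatibility at the corner; I will take this as given. Therefore $v_t\leqslant C$, and so $F\leqslant C-\gamma v\leqslant C$.

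\textbf{Lower bound on $F$.} Here $F=1/\sigma_k^{1/k}(\kappa)$ up to normalization, so a positive lower bound on $F$ amounts to an upper bound on $\sigma_k(\kappa)$. The hypothesis \eqref{sigmak lower bound} gives the opposite side initially, namely an upper bound for $F(\Lambda_{v_0})$; the curvature upper bound comes from Remark \ref{sigmak upper bound}, i.e.\ $F(\Lambda_{v_0})\geqslant c>0$. I would apply the minimum principle to $w=v_t$ combined with $v$. Consider $Q = v_t - \varepsilon v$, or more naturally $\Phi/(-v)$, and use the evolution $\partial_t v = \mathscr{L}v + \ldots$ together with homogeneity, $F^{ij}\Lambda_{ij}=F$, to get an equation for $\Phi$ whose minimum cannot decrease below a positive level in the interior. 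On $\partial U_r$ we have $v_t=\underline v_t = A\chi' g$, which may be negative. Hence I will work instead with $F$ itself. Setting $\Psi=F(\Lambda_v)$, it satisfies $\Psi_t = F^{ij}(\Lambda_{v_t})_{ij} = \mathscr{L}(\Psi+\gamma v) = \mathscr{L}\Psi + \gamma F$, with $\mathscr{L}\Psi = F^{ij}\Psi_{ij}-\Psi\sum F^{ii}$. At an interior minimum of $\Psi$ we get $\Psi_t\geqslant \Psi(\gamma-\sum F^{ii})$, which could be negative, so $\Psi$ might decay like $e^{-Ct}$. Because $\sum F^{ii}$ is bounded only if the eigenvalues are controlled, the plan is to test $\Psi/(-v)$ or $\log\Psi - \beta\log(-v)$ to absorb the $\sum F^{ii}$ term, using $\mathscr{L}v = F + \ldots$, i.e.\ $F^{ij}\Lambda_{ij}=F$.

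\textbf{Boundary values.} On $\partial U_r$ with $r>\tilde r$, $v=\underline v=A\chi g+v_0$ is fixed, but $F(\Lambda_v)$ is not determined by the boundary data alone. I would use the boundary $C^2$ estimates of Section \ref{Approximate Dirichlet problem} (Guan–Jiao type), which bound $\Lambda_v$ on $\partial U_r$. Combined with the lower bound $F\geqslant \underline F$ near the boundary, coming from $\mathscr{L}(\underline v - v)$ comparisons and from the strict subsolution property of $\underline v$ outside $K$ for $r$ large, this gives $\Psi\geqslant c$ on $\partial U_r$. I expect this step to be the main obstacle: obtaining boundary bounds for $F$ uniform in $r$ and $T$. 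The interior argument runs by the maximum principle as above, while the uniform boundary control is where the subsolution condition at infinity and the choice $r>\tilde r$ must be used.
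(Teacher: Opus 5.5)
Your upper bound is correct but takes a different route from the paper. You use the fact that $v_t$ solves $(v_t)_t=F^{ij}(v_t)_{ij}-\bigl(\sum_i F^{ii}-\gamma\bigr)v_t$ with $\sum_i F^{ii}\geqslant\gamma$. You also use that $v_t=A\chi' g\leqslant 0$ on $\partial U_r$, and that $v_t$ is bounded above at $t=0$. That last bound comes from \eqref{sigmak lower bound}, not from Remark \ref{sigmak upper bound} as you first wrote, since $F(\Lambda_v)=\sigma_k(\kappa)^{-1/k}$; you corrected this later.

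The positive lower bound, which is the real content of the lemma, is not established. You mention $\Psi/(-v)$ but never compute with it, so the key observation is missing. Write $\mathcal{L}=\partial_t-F^{ij}\nabla^2_{ij}$ and use $F^{ij}\Lambda_{ij}=F$. Then the speed and the negative support function satisfy the \emph{same} linear equation:
\[
\mathcal{L}F=\Bigl(\gamma-\sum_i F^{ii}\Bigr)F,\qquad \mathcal{L}(-v)=\Bigl(\gamma-\sum_i F^{ii}\Bigr)(-v).
\]
The uncontrolled coefficient $\sum_i F^{ii}$ therefore cancels exactly in the quotient, and $\phi=F/(-v)$ satisfies $\mathcal{L}\phi=2F^{ij}\frac{(-v)_j}{-v}\phi_i$, which has no zeroth-order term. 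So $\phi$ attains both its maximum and minimum on the parabolic boundary. Both bounds on $F$ then follow at once from the $C^0$ bounds on $v$, and your separate $v_t$ argument is not needed.

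The second gap is on the lateral boundary, where you assert that $F(\Lambda_v)$ is not determined by the boundary data. It is. Since $v=\underline v=A\chi g+v_0$ on $\partial U_r\times[0,T]$, the equation gives $F=v_t-\gamma v=A\chi' g-\gamma(A\chi g+v_0)$ there. Hence
\[
\phi=\gamma-\frac{A\chi'\,(-g)}{-A\chi g-v_0}\in\Bigl[\gamma-\frac{2A(-g)}{-v_0},\,\gamma\Bigr]\subset\Bigl[\frac{\gamma}{2},\,\gamma\Bigr]
\]
as soon as $-g=(\cosh\rho)^{-a}$ is small on $\partial U_r$. This is exactly where $r>\tilde r$ enters. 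At $t=0$, $\phi=F(\Lambda_{v_0})/(-v_0)$ is controlled by \eqref{sigmak lower bound} and Remark \ref{sigmak upper bound}.

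Your proposed substitute, via boundary $C^2$ estimates, would not work. An upper bound on $\Lambda_v$ bounds $F$ from above, not from below. Moreover, the parabolic boundary $C^2$ estimates come after this lemma in the paper, and the double-normal estimate itself uses this explicit positive boundary value of $F$, so the argument would be circular.
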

\begin{proof}
    Denote $w = -v >0$, then we have
    \begin{align*}
        -\left( w_t - F^{ij} \nabla^2_{ij} w \right) &= v_t - F^{ij} \nabla^2_{ij} v \\
        &= \left( F + \frac{v}{\binom{n}{k}^{\frac{1}{k}}} \right) - F - F^{ij} v \delta_{ij} = \frac{v}{\binom{n}{k}^{\frac{1}{k}}} - v \sum F^{ii} \geqslant 0.
    \end{align*}
    Consider test function $\phi = \frac{F}{w}$, let $\mathcal{L}=\frac{\partial}{\partial t} - F^{ij} \nabla^2_{ij}$, we have
    \begin{align}
        \mathcal{L}\phi &= \frac{\mathcal{L}F}{w} - \frac{F}{w^2} \mathcal{L}w + F^{ij}\frac{F_jw_i}{w^2} + F^{ij}\frac{F_iw_j}{w^2} - \frac{2F F^{ij} w_i w_j}{w^3} \notag \\
        &= F^{ij}\frac{F_jw_i}{w^2} + F^{ij}\frac{F_iw_j}{w^2} - \frac{2F F^{ij} w_i w_j}{w^3} = 2F^{ij}\frac{w_j}{w}\phi_i,
    \end{align}
    where we use the fact that
    \begin{align}
        &\frac{\mathcal{L}F}{w} - \frac{F}{w^2} \mathcal{L}w 
        = \phi \left( - \sum F^{kk} + \frac{1}{\binom{n}{k}^{\frac{1}{k}}} \right) - \frac{F}{w^2} \left( \frac{w}{\binom{n}{k}^{\frac{1}{k}}} - w \sum F^{ii} \right) \notag \\
        =& \phi \left( - \sum F^{kk} + \frac{1}{\binom{n}{k}^{\frac{1}{k}}} \right) - \frac{\phi}{w} \left( \frac{w}{\binom{n}{k}^{\frac{1}{k}}} - w \sum F^{ii} \right) = 0.
    \end{align}
    Hence by maximum principle, $\phi$ attains its minimum and maximum at parabolic boundary. 
    
    On $U_r \times \{0\}$, $F = F(\Lambda_{v_0})$ is uniformly bounded from above and below under the condition \eqref{sigmak lower bound}, together with the discussion in Remark \ref{sigmak upper bound}. 
    On $\partial U_r \times [0, T]$, from equation we have
    \[
    A \chi^{\prime} g = F + \frac{\underline{v}}{\binom{n}{k}^{\frac{1}{k}}}.
    \]
    i.e.
    \[
    \phi = \frac{1}{\binom{n}{k}^{\frac{1}{k}}} - \frac{A \chi^{\prime}}{A \chi + \frac{v_0}{g}} \leqslant \frac{1}{\binom{n}{k}^{\frac{1}{k}}}
    \]
    and 
    \begin{align*}
        \phi = \frac{1}{\binom{n}{k}^{\frac{1}{k}}} - \frac{A \chi^{\prime}}{A \chi + \frac{v_0}{g}} 
        \geqslant \frac{1}{\binom{n}{k}^{\frac{1}{k}}} - \frac{-2Ag}{-v_0}
        \geqslant \frac{1}{2\binom{n}{k}^{\frac{1}{k}}},
    \end{align*}
    if we choose $r \geqslant \tilde{r}$ sufficiently large. 
\end{proof}

\subsection{\texorpdfstring{$C^2$}{C2} boundary estimates}

\begin{lemma}
    Let $v$ be the solution to approximate equation \eqref{approximate normalized support function flow}, then the second order tangential derivatives on the boundary satisfy $|\nabla^2_{\alpha \beta} v| \leqslant C$ on $\partial U_r \times [0, T]$ for $\alpha, \beta < n$.
\end{lemma}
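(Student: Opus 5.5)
On $\partial U_r\times[0,T]$ the solution coincides with the prescribed boundary function $\underline{v}=A\chi(t)g+v_0$. The plan is to exploit that $\partial U_r$ is a level set of the radial function $\rho$ (equivalently of $x_{n+1}=\cosh\rho$). The difference $v-\underline{v}$ vanishes identically on $\partial U_r$ for each $t$, so its tangential derivatives along the boundary vanish as well. For $\alpha,\beta<n$, with $e_n$ the unit outward normal of the geodesic sphere $\partial U_r$, I will use the standard identity
\begin{align*}
\nabla^2_{\alpha\beta}(v-\underline{v}) = -\nabla_n(v-\underline{v})\,\mathrm{II}(e_\alpha,e_\beta) \quad \text{on } \partial U_r,
\end{align*}
where $\mathrm{II}$ is the second fundamental form of $\partial U_r$. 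Since $\partial U_r$ is a geodesic sphere of radius $l_r$, we have $\mathrm{II}=\coth l_r\, g|_{\partial U_r}$, which is bounded (by $\coth l_r\le \coth l_{\tilde r}$ for $r\ge\tilde r$).

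The bound on $\nabla^2_{\alpha\beta}v$ then comes from three ingredients. First, $|\nabla^2_{\alpha\beta}\underline{v}|$ is bounded, because $\underline{v}=A\chi g+v_0$ is smooth in space with $\chi\le1$, and $g=-(\cosh\rho)^{-a}$ has bounded Hessian by the formulas of Lemma \ref{smooth function g}. Second, I need $\nabla^2 v_0$ bounded on $\partial U_r$; this follows from Remark \ref{sigmak upper bound} together with \eqref{sigmak lower bound}, which give a uniform bound on the curvature radii, so $\Lambda_{v_0}$ is bounded and $v_0$ is bounded. Third, $|\nabla_n(v-\underline{v})|\le|\nabla v|+|\nabla\underline{v}|\le C$ by the $C^1$ estimate established above, which is uniform in $t$. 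Putting these together gives $|\nabla^2_{\alpha\beta}v|\le C$ on $\partial U_r\times[0,T]$.

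The main obstacle is making the constant independent of $r$, since the bound must be used as $r\to1$. The second fundamental form is fine because $\coth l_r\to1$. The Hessian of $v_0$ is controlled only through the bounded curvature radii; if the radii bound is unavailable near some region, I would fall back on the $C^2$ bound of $\underline{v}$ on $\partial U_r$ for each fixed $r$, which suffices for the approximate problem.
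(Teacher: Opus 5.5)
Your argument is essentially the paper's: the paper simply cites \cite[Theorem 6.1]{GuanBo}, whose standard argument (also the one sketched in the paper's source) is exactly the identity $\nabla^2_{\alpha\beta}(v-\underline{v})=\pm\nabla_n(v-\underline{v})\,\mathrm{II}(e_\alpha,e_\beta)$ on $\partial U_r$ combined with the $C^1$ estimate. With the outward normal and $\mathrm{II}=\coth l_r\, g$ the sign is actually $+$, which is immaterial. One justification is wrong and should be dropped: $c_0\leqslant\sigma_k(\kappa)\leqslant C$ does not bound the individual principal radii (e.g.\ for $k=1$, take $\kappa=(\epsilon,1,\dots,1)$ with $\epsilon\to0$), so $\Lambda_{v_0}$ need not be uniformly bounded. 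However, no uniformity in $r$ is needed, because this estimate only serves to solve the approximate problem on $U_r\times[0,T]$ for fixed $r$, and the limit $r\to1$ rests on the interior estimates of Sections~\ref{Convergence to entire solution: inverse Gauss curvature flow} and~\ref{Convergence to entire solution: inverse Hessian curvature flow}. Your fallback, the smoothness of $\underline{v}$ on the compact set $\partial U_r$, is therefore the correct justification, and the ``main obstacle'' you describe does not arise.
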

% \begin{proof}
% Let $\tau_\alpha, \tau_\beta$ be some tangential vector fields on $\partial U_r$. For any fixed $t$.
% Since $v - \underline{v} \equiv 0$ on $\partial U_r$ we have  
% \[
% {\nabla}^2_{\alpha \beta}(v - \underline{v}) = -{\nabla}_{\tau_n}(v - \underline{v}) \Pi(\tau_\alpha, \tau_\beta) \text{ on } \partial U_r.
% \]
% where $\tau_n$ is the interior unit normal vector field to $\partial U_r$ and $\Pi$ denotes the second fundamental form of $\partial U_r$. Since we already get $C^1$ estimates, it follows that
% \[
% |\nabla^2_{\alpha \beta}v| \leqslant C \quad \text{on} \quad \partial U_r.
% \]
% \end{proof}
Estimates for pure tangential second derivatives are well known; see, for instance, \cite[Theorem 6.1]{GuanBo}.

In the following, we will consider the linearized operator $\mathscr{L}$ defined by 
\begin{align*}
    \mathscr{L}\phi = \phi_t - F^{ij} \nabla^2_{ij}\phi + \phi \sum_i F^{ii}
\end{align*}
for any smooth function $\phi$. Recall that 
\[
\overline{U}_r = P^{-1}(\overline{B}_r) = \left\{ X \in \mathbb{H}^{n} | x_{n+1} \leqslant \frac{1}{\sqrt{1-r^2}} \right\}.
\]

\begin{lemma}\label{function h}
    Let $v$ be the solution to approximate equation \eqref{approximate normalized support function flow}, then for any positive constant $A$, let
    \[
    h = v - \underline{v} + A \left( \frac{1}{\sqrt{1-r^2}} - x_{n+1} \right),
    \]
    we have
    \[
    \mathscr{L} h \geqslant A \sum_i F^{ii}.
    \]
\end{lemma}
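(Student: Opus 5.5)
The plan is to compute $\mathscr{L}h$ term by term. I will linearize the equation with concavity of $F$ and use the subsolution inequality for $\underline{v}$. The one fact needed about $x_{n+1}$ is that, restricted to $\mathbb{H}^n\subset\mathbb{R}^{n,1}$, it satisfies $\nabla^2 x_{n+1} = x_{n+1} g$. This means its hyperbolic Hessian vanishes, $\Lambda_{x_{n+1}} = 0$. The same fact was used in the elliptic analogue, Lemma \ref{auxiliary function for boundary estimates}.

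\textbf{Step 1: the linear term.} Set $\ell = \frac{1}{\sqrt{1-r^2}} - x_{n+1}$. It is time independent, so
\[
\mathscr{L}\ell = -F^{ij}(\ell_{ij} - \ell\delta_{ij}) = -F^{ij}\bigl(-x_{n+1}\delta_{ij} - \ell\delta_{ij}\bigr) = \frac{1}{\sqrt{1-r^2}}\sum_i F^{ii}.
\]
Since $\frac{1}{\sqrt{1-r^2}} \geqslant 1$, this gives $A\mathscr{L}\ell \geqslant A\sum_i F^{ii}$.

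\textbf{Step 2: the term $v-\underline{v}$.} We have
\[
\mathscr{L}(v-\underline{v}) = v_t - \underline{v}_t - F^{ij}\bigl(\Lambda_v - \Lambda_{\underline{v}}\bigr)_{ij},
\]
where $F^{ij}$ is evaluated at $\Lambda_v$. Concavity of $F$ on $\Gamma_n$ gives
\[
F^{ij}(\Lambda_v)(\Lambda_v - \Lambda_{\underline{v}})_{ij} \leqslant F(\Lambda_v) - F(\Lambda_{\underline{v}}).
\]
This is legitimate because $\underline{v}$ is hyperbolic convex, so $\Lambda_{\underline{v}} \in \Gamma_n$. Combining this with the flow equation for $v$ and the subsolution inequality $\underline{v}_t \leqslant F(\Lambda_{\underline{v}}) + \gamma\underline{v}$, where $\gamma = \binom{n}{k}^{-1/k}$, we obtain
\[
\mathscr{L}(v-\underline{v}) \geqslant \bigl(F(\Lambda_v)+\gamma v\bigr) - \bigl(F(\Lambda_{\underline{v}})+\gamma\underline{v}\bigr) - F(\Lambda_v) + F(\Lambda_{\underline{v}}) = \gamma(v-\underline{v}) \geqslant 0.
\]
The final inequality uses the $C^0$ estimate $\underline{v} \leqslant v$ on $U_r\times[0,T]$.

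\textbf{Step 3: conclusion.} Adding the two steps gives $\mathscr{L}h \geqslant A\sum_i F^{ii}$.

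The main point to check is the sign bookkeeping in Step 2. The zeroth-order term $+\phi\sum F^{ii}$ in $\mathscr{L}$ is exactly what turns $-F^{ij}\nabla^2_{ij}$ into $-F^{ij}\Lambda_{ij}$. The only remaining sign-sensitive piece is $\gamma(v-\underline{v})$, and its nonnegativity relies on the comparison $\underline{v}\leqslant v$ already established. I must also confirm that $\underline{v} = A\chi g + v_0$ is admissible, i.e.\ has $\Lambda_{\underline{v}} \in \Gamma_n$, on all of $U_r$ so that concavity applies. This holds because $g$ and $v_0$ are both hyperbolic convex.
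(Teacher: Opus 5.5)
Your proposal is correct and is essentially the paper's argument. The paper omits this proof as the parabolic analogue of Lemma \ref{auxiliary function for boundary estimates}, and the omitted computation matches yours. You use $\Lambda_{x_{n+1}}=0$ to get $\mathscr{L}\bigl(\frac{1}{\sqrt{1-r^2}}-x_{n+1}\bigr)=\frac{1}{\sqrt{1-r^2}}\sum_i F^{ii}$. You then combine concavity of $F$, the flow equation, the subsolution inequality for $\underline{v}$, and the comparison $\underline{v}\leqslant v$ to get $\mathscr{L}(v-\underline{v})\geqslant \gamma(v-\underline{v})\geqslant 0$, with the signs and the admissibility of $\underline{v}$ handled correctly.
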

This lemma is the parabolic counterpart of Lemma \ref{auxiliary function for boundary estimates}. The proof is similar and is therefore omitted.
% \begin{proof}
%     First we have
%     \begin{align*}
%         \mathscr{L} \left( \frac{1}{\sqrt{1-r^2}} - x_{n+1} \right) &= \left( \frac{1}{\sqrt{1-r^2}} - x_{n+1} \right) \sum_i F^{ii} + x_{n+1} \sum_i F^{ii} \\
%         &= \frac{1}{\sqrt{1-r^2}} \sum_i F^{ii}.
%     \end{align*}
%     Therefore
%     \begin{align*}
%         \mathscr{L} h &= \left( v - \underline{v} \right)_t - F^{ij} \nabla^2_{ij}\left( v - \underline{v} \right) + \left( v - \underline{v} \right) \sum_i F^{ii} 
%         + A \frac{1}{\sqrt{1-r^2}} \sum_i F^{ii} \\
%         &\geqslant \left( v - \underline{v} \right)_t - \left( F(\Lambda_{v}) - F(\Lambda_{\underline{v}}) \right) 
%         + A \frac{1}{\sqrt{1-r^2}} \sum_i F^{ii} \\
%         &= \frac{v}{\binom{n}{k}^{\frac{1}{k}}} - \underline{v}_t + F(\Lambda_{\underline{v}})
%         + A \frac{1}{\sqrt{1-r^2}} \sum_i F^{ii} \\
%         &\geqslant \frac{v}{\binom{n}{k}^{\frac{1}{k}}} - \frac{\underline{v}}{\binom{n}{k}^{\frac{1}{k}}}
%         + A \frac{1}{\sqrt{1-r^2}} \sum_i F^{ii} \\
%         &\geqslant A \frac{1}{\sqrt{1-r^2}} \sum_i F^{ii} \geqslant A \sum_i F^{ii},
%     \end{align*}
%     where the first inequality is from the concavity of $F$. 
% \end{proof}

Then the estimates of mixed tangential-normal and pure normal second order derivatives are obtained following the same method as in \cite{Entiresigmakcurvatureflow} and \cite{Jiao}. It should also be noted that, despite our equation not fitting the structure in \cite{Jiao}, its proof remains valid and can be applied with some modifications.
% For the sake of completeness, we present the proof here as well.

\begin{lemma}
    Let $v$ be the solution to approximate equation \eqref{approximate normalized support function flow}, then the mixed tangential-normal derivatives on the boundary satisfy $|\nabla^2_{\alpha n} v| \leqslant C$ on $\partial U_r \times [0, T]$ for $\alpha < n$.
\end{lemma}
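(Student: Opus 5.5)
This estimate comes from a barrier argument in the style of Guan--Jiao / Jiao, adapted to the parabolic operator $\mathscr{L}\phi = \phi_t - F^{ij}\nabla^2_{ij}\phi + \phi\sum_i F^{ii}$.

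\textbf{Setup.} Fix a boundary point $y_0\in\partial U_r$ and a time $t_0\in(0,T]$. Choose a local orthonormal frame $\tau_1,\dots,\tau_n$ near $y_0$, with $\tau_n$ the interior unit normal to $\partial U_r$ (extended, e.g., via the gradient of $x_{n+1}$). For $\alpha<n$, take a tangential vector field $T_\alpha$ generated by an infinitesimal isometry of $\mathbb{H}^n$ preserving $U_r$. Since $U_r$ is a geodesic ball centered at the origin, rotations about the $x_{n+1}$-axis work. Set
\[
W=\nabla_{T_\alpha}(v-\underline{v}).
\]
Because $v=\underline{v}$ on $\partial U_r\times[0,T]$, we have $W=0$ on $\partial U_r\times[0,T]$. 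At $t=0$ we have $v=\underline{v}$, so $W=0$ there as well.

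\textbf{Step 1: bound $\mathscr{L}W$.} Differentiate the equation $v_t=F(\Lambda_v)+\gamma v$ along the Killing field $T_\alpha$. Since $T_\alpha$ commutes with $\nabla^2$ and with the metric, this gives $\mathscr{L}(\nabla_{T_\alpha}v)=\gamma\nabla_{T_\alpha}v$, up to the sign convention for $\mathscr{L}$. This is the main reason to use Killing fields: no third-order error terms appear. For $\underline{v}$ we only need crude bounds, namely $|\mathscr{L}\nabla_{T_\alpha}\underline{v}|\le C(1+\sum F^{ii})$, which uses the smoothness of $\underline{v}$ on $\overline{U}_r\times[0,T]$. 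Combining these with the $C^1$ bound gives
\[
|\mathscr{L}W|\le C\Big(1+\sum_iF^{ii}\Big).
\]

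\textbf{Step 2: barrier.} Let
\[
\Psi = B\,h - W,
\]
with $h$ as in Lemma \ref{function h}. In a small neighborhood $\Omega_\delta=U_r\cap B_\delta(y_0)$, add the term $+N d^2(\cdot,y_0)$ if needed so that $\Psi\ge0$ on $\partial\Omega_\delta$. This is possible by the $C^1$ estimate and the inequality $h\ge0$ on $\partial U_r$. By Lemma \ref{function h}, $\mathscr{L}h\ge A\sum F^{ii}$.

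\textbf{Main obstacle.} The difficulty is the bare constant term $C$ in $|\mathscr{L}W|$, which is not dominated by $\sum F^{ii}$. Two facts handle it. First, $\sum F^{ii}\ge c>0$ for $F=(\sigma_n/\sigma_{n-k})^{1/k}$, since $\sum F^{ii}\ge1$ for this normalized concave homogeneous quotient. Second, Lemma \ref{Upper and lower bounds for F} gives two-sided bounds on $F$. Together these absorb the constant once $A$ and $B$ are large. The term $N\,\mathscr{L}d^2$ is bounded by $CN(1+\sum F^{ii})$, and it is absorbed as long as $h$ is small in $\Omega_\delta$, so that $A$ dominates. Here we pick $A$ large first and then $\delta$ small, in the manner of Guan's argument.

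\textbf{Conclusion.} The parabolic maximum principle on $\Omega_\delta\times(0,t_0]$ then gives $\Psi\ge0$, with equality at $(y_0,t_0)$. Hence
\[
\nabla_n\Psi(y_0,t_0)\ge0,
\]
which yields $\nabla_nW\le C$. Applying the same argument to $Bh+W$ gives the lower bound. Converting $\nabla_n\nabla_{T_\alpha}v$ into $\nabla^2_{\alpha n}v$ only costs first-order terms, which are controlled by the $C^1$ estimate. This gives $|\nabla^2_{\alpha n}v|\le C$ with $C$ independent of $T$, because $\underline{v}$ is stationary for $t\ge1$.
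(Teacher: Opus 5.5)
Your proposal is correct and follows essentially the paper's route: you differentiate the equation tangentially, bound $|\mathscr{L}W|\le C\sum_iF^{ii}$ using $\sum_iF^{ii}\ge F(1,\dots,1)=\binom{n}{k}^{-1/k}$ (not $\ge 1$ as you wrote), and compare $\pm W$ with a multiple of $h$ from Lemma \ref{function h}, with the rotational Killing fields giving a clean way to avoid commutator terms. Since $h\ge0$ on all of $\overline{U}_r\times[0,T]$, the localization with $Nd^2$ is unnecessary; if you keep it, $B$ must be chosen after $\delta$ and $N\sim\delta^{-2}$ are fixed, not $\delta$ after $A$ as your ordering remark suggests.
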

% \begin{proof}
%     First we differentiate \eqref{approximate normalized support function flow} to get, for each fixed $1 \leqslant k \leqslant n-1$,
%     \begin{align*}
%         (v_t)_k = F^{pq}\Lambda_{pq,k} + \frac{v_k}{\binom{n}{k}^{\frac{1}{k}}}.
%     \end{align*}
%     Therefore
%     \begin{align*}
%         &\left| \mathscr{L} \nabla_k \left( v-\underline{v} \right) \right| \\
%         =& \left| (v_k)_t - F^{ij} \nabla^2_{ij}v_k + v_k \sum F^{ii} - \mathscr{L} \underline{v}_k \right| \\
%         \leqslant& C + C \sum F^{ii} \leqslant C \sum F^{ii}.
%     \end{align*}
%     Then Lemma \ref{auxiliary Lemma} applies to $\pm \nabla_k \left( v-\underline{v} \right)$ to yield
%     \begin{align*}
%         |\nabla^2_{\alpha n} v| \leqslant C.
%     \end{align*}
% \end{proof}
\begin{remark}
    Since $\chi$ is bounded, the upper bound is independent of time $t$.
\end{remark}

\begin{lemma}[Lemma 10 in \cite{Entiresigmakcurvatureflow}]
    Let $v$ be the solution to approximate equation \eqref{approximate normalized support function flow}, then the second order normal derivatives on the boundary satisfy $|\nabla^2_{nn} v| \leqslant C$ on $\partial U_r \times [0, T]$.
\end{lemma}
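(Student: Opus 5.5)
We follow the Trudinger/Guan double-normal argument as adapted in Lemma 10 of \cite{Entiresigmakcurvatureflow}. Fix $(y_0,t_0)\in\partial U_r\times(0,T]$ and a local orthonormal frame with $\tau_n$ the interior unit normal to $\partial U_r$. The tangential and mixed second derivatives are already bounded by the two preceding lemmas. The lower bound $\nabla^2_{nn}v\geqslant v-C$ is automatic from admissibility, since $\Lambda_v>0$. So it remains to bound $\Lambda_{nn}$ from above.

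Expanding $\sigma_n$ and $\sigma_{n-k}$ along the last row and column gives
\[
F(\Lambda)\to\infty\quad\text{as }\Lambda_{nn}\to\infty,
\]
with the tangential block $\Lambda'=(\Lambda_{\alpha\beta})_{\alpha,\beta<n}$ and the mixed entries fixed, \emph{provided} $\lambda(\Lambda')$ stays in a compact subset of $\Gamma_{n-1}$. Indeed, in that regime $F\sim c(\Lambda')\,\Lambda_{nn}^{\,\cdot}$ grows without bound, with a power determined by $\sigma_n/\sigma_{n-k}$ and $k$. On the other hand, $F$ is bounded above at the boundary by Lemma \ref{Upper and lower bounds for F}. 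So an upper bound on $\Lambda_{nn}$ follows once we show that $\lambda(\Lambda')$ is uniformly positive on $\partial U_r$: $\Lambda'\geqslant c_1 I$ with $c_1>0$ independent of $r$ and $t$.

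The main step is this uniform positivity of the tangential block. On $\partial U_r$, $v=\underline v$, so
\[
\Lambda_{\alpha\beta}=\underline\Lambda_{\alpha\beta}-\nabla_n(v-\underline v)\,\Pi_{\alpha\beta},
\]
where $\Pi$ is the second fundamental form of the geodesic sphere $\partial U_r$; $\Pi$ is positive and umbilic, equal to $\coth\rho\, g$. Since $v\geqslant\underline v$ with equality on the boundary, $\nabla_n(v-\underline v)\geqslant0$, and this term pushes the wrong way. Following Trudinger, we minimize
\[
\Lambda_{\xi\xi}(y,t)\quad\text{over unit tangent vectors }\xi\text{ and }(y,t)\in\partial U_r\times[0,T],
\]
attained at some $(y_0,t_0)$ and direction $\xi=\tau_1$. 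Suppose
\[
\nabla_n(v-\underline v)(y_0,t_0)\,\Pi_{11}\leqslant\tfrac12\underline\Lambda_{11}.
\]
Then $\Lambda_{11}\geqslant\tfrac12\underline\Lambda_{11}\geqslant c$, because $\underline v=A\chi g+v_0$ is strictly hyperbolic convex with a lower bound independent of $r$ on compact time intervals. If instead the inequality fails, we build the barrier
\[
\Psi=\frac{1}{\Pi_{11}}\Bigl(\underline\Lambda_{11}-\Lambda_{11}(y_0,t_0)\Bigr)\quad\text{extended off }\partial U_r,
\]
and note that $\nabla_n(v-\underline v)-\Psi\geqslant0$ on $\partial U_r$ with equality at $y_0$.

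Next we combine Lemma \ref{function h} with the mixed-derivative technique from the previous lemma: apply $\mathscr L$ to $\nabla_n(v-\underline v)-\Psi$, plus a multiple of $h$ and a term $|\nabla(v-\underline v)|^2$, on the small region $U_{r,\delta}\cap B_\delta(y_0)$. This yields the one-sided bound $\nabla_{nn}(v-\underline v)(y_0,t_0)\leqslant C$ from the Hopf-type comparison. Plugging this into the equation at $(y_0,t_0)$ gives an upper bound on $\Lambda_{nn}$ in terms of $\Lambda_{11}(y_0,t_0)$. Since $F$ is bounded below by Lemma \ref{Upper and lower bounds for F}, and $F\to0$ if $\Lambda_{11}\to0$ while the other entries stay bounded (because $F\leqslant C\sigma_n^{1/k}/\sigma_{n-k}^{1/k}$ degenerates as one eigenvalue vanishes), we get $\Lambda_{11}(y_0,t_0)\geqslant c_1>0$, and $c_1$ then bounds the tangential block everywhere on the boundary.

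I expect the main obstacle to be obtaining the lower bound on $\Lambda_{11}$ with constants independent of $r$ and $t$. In particular, the curvature $\coth\rho$ of $\partial U_r$ tends to $1$, so it does not degenerate, but the zeroth-order term $-v\sum F^{ii}$ in $\mathscr L$ must be absorbed using the $C^1$ bound and the uniform positivity of $\sum F^{ii}$. For the time-independence we use that $\underline v$ is stationary for $t\geqslant1$. The $t=0$ endpoint is handled since $v=\underline v$ there.
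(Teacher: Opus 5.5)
\textbf{There is a genuine gap.} It sits in the step you treat as routine. For $F=(\sigma_n/\sigma_{n-k})^{1/k}$ with $1\leqslant k\leqslant n-1$, it is false that $F(\Lambda)\to\infty$ as $\Lambda_{nn}\to\infty$ with $\Lambda'$ and the mixed entries fixed. Expanding along the last row and column gives $\sigma_n(\Lambda)=\Lambda_{nn}\sigma_{n-1}(\Lambda')+O(1)$ and $\sigma_{n-k}(\Lambda)=\Lambda_{nn}\sigma_{n-k-1}(\Lambda')+O(1)$. Numerator and denominator are both affine in $\Lambda_{nn}$, so $F(\Lambda)$ increases to the finite limit $f(\lambda(\Lambda'))$, where $f=(\sigma_{n-1}/\sigma_{n-k-1})^{1/k}$. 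For $k=1$ this is visible directly: $F=(\sum_i\lambda_i^{-1})^{-1}$ stays bounded. Only for $k=n$ does $F$ blow up.

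Hence your intermediate goal $\Lambda'\geqslant c_1 I$ does not bound $\Lambda_{nn}$. On $\partial U_r$ the equation forces $F(\Lambda)=\psi$, where $\psi=A\chi'g-\gamma\underline v$ is the value of $v_t-\gamma v$ fixed by the boundary data. Since $F(\Lambda)<f(\lambda(\Lambda'))$ always, $\Lambda_{nn}$ is controlled only through the gap, essentially $\Lambda_{nn}\leqslant C/(f(\lambda(\Lambda'))-\psi)$. A lower bound on $\Lambda'$ bounds $f(\lambda(\Lambda'))$ from below but says nothing about $f(\lambda(\Lambda'))-\psi$. Your minimization of $\Lambda_{\xi\xi}$ is the Monge--Amp\`ere form of Trudinger's argument and closes only when $k=n$.

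\textbf{The missing idea.} This is exactly what the paper takes from Lemma 10 of \cite{Entiresigmakcurvatureflow}: run the Trudinger-type minimization on $d=f(\lambda(\Lambda'))-\psi$ (the paper's $d=f(\kappa')-\psi$) rather than on $\Lambda_{11}$.
\begin{itemize}
\item From the subsolution condition at infinity and the choice of $r$, one has $d(\cdot,0)\geqslant c>0$ on $\partial U_r$. So either $d\geqslant c$ everywhere, or the minimum of $d$ is attained at some $(y,t_0)$ with $t_0>0$.
\item At that point, build the barrier from the linearization $f^{\alpha\beta}$ of the concave function $f$ at the minimum point. This replaces your $\Psi$, which only sees the single direction $\tau_1$. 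Use it together with Lemma \ref{function h} and the time-independent bounds on derivatives of $\underline v=A\chi g+v_0$ and $\psi$ on the boundary.
\item This gives $\nabla_{nn}v\leqslant C$ at $(y,t_0)$, hence a uniform lower bound for $d$ there and therefore everywhere, and then $\Lambda_{nn}\leqslant C$.
\end{itemize}

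\textbf{A secondary problem.} Your claim that $\underline\Lambda$ has a positive lower bound independent of $r$ is not justified. $\Lambda_g\to 0$ at infinity, and for $k\geqslant 2$ the hypotheses bound $\sigma_k(\kappa)$ above and below but do not bound each principal radius of $\mathcal M_{u_0}$ from below.
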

\begin{proof}
    By Lemma 10 in \cite{Entiresigmakcurvatureflow}, the lemma can be proved similarly.
    
    Indeed, from equation \eqref{approximate normalized support function flow}, $v = \underline{v}$ on $\partial U_r \times [0, T]$, thus on $\partial U_r \times [0, T]$,
    \[
    v_t = A\chi^{\prime}g,
    \]
    and $g$ is a constant on $\partial U_r$. Hence on $\partial U_r \times [0, T]$,
    \[
    F = -\frac{v_0}{\binom{n}{k}^{\frac{1}{k}}} + A\chi^{\prime}g.
    \]
    This is equivalent to
    \[
    \frac{\sigma_n}{\sigma_{n-k}} = \left( -\frac{v_0}{\binom{n}{k}^{\frac{1}{k}}} + A\chi^{\prime}g \right)^k.
    \]
    When $r$ is large, the right-hand side is strictly positive. Then we can take the same notations as Lemma 10 in \cite{Entiresigmakcurvatureflow}. We still let
    \[
    d(x, t) \coloneqq f - \left( -\frac{v_0}{\binom{n}{k}^{\frac{1}{k}}} + A\chi^{\prime}g \right),
    \]
    where $f = \left( \frac{\sigma_{n-1}(\kappa^{\prime})}{\sigma_{n-k-1}(\kappa^{\prime})} \right)^{\frac{1}{k}}$.
    From the subsolution condition at infinity and the choice of $r$, we still have $d(x, 0) \geqslant c > 0$ on $\partial U_r \times \{0\}$.
    % We adopt the same argument as Lemma 10 in \cite{Entiresigmakcurvatureflow}. 
    % For the sake of completeness, we present the details adapted to our situation. 
    Therefore, we may assume that $d(x, t)$ achieves its minimum at the point $(y, t_0) \in \partial U_r \times (0, T]$. 
    
    Then as in \cite{Entiresigmakcurvatureflow}, the same argument goes through if we replace 
    \[[(1+\alpha)\tilde{t}]^{\frac{1}{1+\alpha}} \left( \frac{\sqrt{1-r^2}}{-u_0^*} \right)^{\frac{1}{\alpha}}\] 
    by
    \[ -\frac{v_0}{\binom{n}{k}^{\frac{1}{k}}} + A\chi^{\prime}g. \]

    Recall that $\underline{v} = A\chi g + v_0$, hence the covariant derivatives of $\underline{v}$ on boundary are bounded, with constants independent of time, and so are those of $\psi$. In a word, we conclude the double normal estimates independent of time $t$.
\end{proof}

\subsection{\texorpdfstring{$C^2$}{C2} global estimates}

\begin{lemma}[Lemma 20 in \cite{Entireconvexcurvatureflow}]
    Let $v$ be the solution to approximate equation \eqref{approximate normalized support function flow}. Denote the eigenvalues of $\left[ g^{-1} \circ \left( \nabla^2 v - vg \right) \right]$ by $(\lambda_1, \cdots, \lambda_n)$ with $\lambda_1 \geqslant \lambda_2 \geqslant \cdots \geqslant \lambda_n > 0$. Then $\lambda_1$ is bounded from above on $\overline{U}_r \times [0, T]$. 
\end{lemma}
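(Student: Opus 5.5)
We will prove the global bound for $\lambda_1$ by a maximum principle argument applied to a test function built from the largest eigenvalue of $\Lambda = \nabla^2 v - v g$. Since $\lambda_1$ may fail to be smooth, we work instead with $\Lambda_{11}$ in a local orthonormal frame chosen at the maximum point so that $\Lambda$ is diagonal there with $\Lambda_{11}=\lambda_1$. The natural auxiliary function is
\[
W = \log \Lambda_{11} + \beta\, x_{n+1}
\]
(or possibly $\log\Lambda_{11} - \beta v$), with $\beta>0$ a constant to be fixed. Here $x_{n+1}$ is bounded on $\overline U_r$ and satisfies $\nabla^2 x_{n+1} = x_{n+1} g$. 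If $W$ attains its maximum on the parabolic boundary, we are done. At $t=0$ we have $v=\underline v = v_0$ on $U_r$, so $\Lambda$ is bounded by the smoothness of $v_0$ on the compact set $\overline U_r$. On $\partial U_r$ we use the boundary $C^2$ estimates just established. It therefore suffices to treat an interior maximum point $(y_0,t_0)$ with $t_0>0$.

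At $(y_0,t_0)$ we differentiate the flow $v_t = F(\Lambda)+\gamma v$ twice in the $e_1$ direction. Differentiating $\Lambda_{11}$ along $\partial_t - F^{ij}\nabla_{ij}$ involves three ingredients. First, the commutation formula on $\mathbb H^n$,
\[
\Lambda_{ii,11}-\Lambda_{11,ii} = \Lambda_{11}-\Lambda_{ii},
\]
which holds with the sign conventions for curvature $-1$ and uses the Codazzi symmetry $\Lambda_{ij,k}=\Lambda_{ik,j}$. Second, the concavity term $-F^{pq,rs}\Lambda_{pq,1}\Lambda_{rs,1}\ge 0$, which is available because $F=(\sigma_n/\sigma_{n-k})^{1/k}$ is concave. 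Third, the homogeneity identity $F^{ii}\Lambda_{ii}=F$. Combining these, we expect
\[
(\partial_t - F^{ij}\nabla_{ij})\Lambda_{11} \le -\Lambda_{11}\sum F^{ii} + C\,\Lambda_{11} + F^{pq,rs}\Lambda_{pq,1}\Lambda_{rs,1},
\]
with $C$ depending on the upper and lower bounds for $F$ from Lemma \ref{Upper and lower bounds for F} and the $C^0$ bound. The $-v_t$ contribution is handled through $v_t = F+\gamma v$, which is bounded.

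The main obstacle is the gradient term $F^{ii}(\Lambda_{11,i})^2/\Lambda_{11}^2$ coming from the logarithm. Following Lemma 20 of \cite{Entireconvexcurvatureflow}, we will use the first-order condition $\nabla W=0$ to replace $\Lambda_{11,i}/\Lambda_{11}$ by $-\beta \nabla_i x_{n+1}$. This gives a contribution of at most $\beta^2 C\sum F^{ii}$, since $|\nabla x_{n+1}|$ is bounded on $U_r$. The extra term from $\beta x_{n+1}$ gives $-\beta x_{n+1}\sum F^{ii}$ in the operator, which has the good sign. Alternatively, the concavity term can absorb the gradient piece via the standard inequality for $\sigma_n/\sigma_{n-k}$ quotients, namely $-F^{pq,rs}\Lambda_{pq,1}\Lambda_{rs,1}\ge 2\sum_{i>1}\frac{F^{ii}-F^{11}}{\lambda_1-\lambda_i}\Lambda_{11,i}^2$; we will try this if the simple $\beta$-absorption fails.

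We then arrive at $0\le -\Lambda_{11}\sum F^{ii}+C\sum F^{ii}+C$. We will use $\sum F^{ii}\ge c>0$, which follows for $F=(\sigma_n/\sigma_{n-k})^{1/k}$ by concavity and homogeneity ($\sum F^{ii}\ge F(1,\dots,1)$-type bound). This yields $\Lambda_{11}\le C$ at the maximum point. The constants are uniform in $t$ because the barriers and $\underline v$ are stationary for $t\ge1$. The bound for $W$ then gives the bound for $\lambda_1$ on $\overline U_r\times[0,T]$.
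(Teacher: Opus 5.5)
\textbf{There is a sign error that the whole argument rests on.} Your commutator formula $\Lambda_{ii,11}-\Lambda_{11,ii}=\Lambda_{11}-\Lambda_{ii}$ is correct. However, it gives $-F^{ii}\Lambda_{11,ii}=-F^{ii}\Lambda_{ii,11}+\Lambda_{11}\sum F^{ii}-F$. Combined with $\partial_t\Lambda_{11}=\nabla_{11}F-F+\gamma\Lambda_{11}$, this yields
\[
(\partial_t-F^{ij}\nabla_{ij})\Lambda_{11}=F^{pq,rs}\Lambda_{pq,1}\Lambda_{rs,1}-2F+\gamma\Lambda_{11}+\Lambda_{11}\sum_i F^{ii},
\]
so the term is $+\Lambda_{11}\sum F^{ii}$, not $-\Lambda_{11}\sum F^{ii}$. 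The sign you used is the one for $\nabla^2h+hg$ on $\mathbb{S}^n$. On $\mathbb{H}^n$ the curvature term works against you; it is exactly the bad term $-\sigma_{n-1}$ that has to be absorbed in the proof of Lemma \ref{Gauss selfshrinker Pogorelov}. Hence your closing inequality $0\le-\Lambda_{11}\sum F^{ii}+C\sum F^{ii}+C$ is not available.

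\textbf{Your primary route then fails.} At a maximum of $W=\log\Lambda_{11}+\beta x_{n+1}$, using $\nabla W=0$, one has
\[
0\le\frac{F^{pq,rs}\Lambda_{pq,1}\Lambda_{rs,1}}{\lambda_1}-\frac{2F}{\lambda_1}+\gamma+\sum_iF^{ii}+\beta^2F^{ii}(\nabla_ix_{n+1})^2-\beta x_{n+1}\sum_iF^{ii}.
\]
The only good term, $-\beta x_{n+1}\sum F^{ii}$, is linear in $\beta$. The gradient term, however, can be as large as $\beta^2(x_{n+1}^2-1)\sum F^{ii}$. Since $1+\beta^2(x^2-1)-\beta x>0$ for every $\beta$ once $x>2/\sqrt3$, the ``simple $\beta$-absorption'' (discarding the concavity term) cannot close on $U_r$ for $r$ near $1$.

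\textbf{The fallback you mention is mandatory, and it does repair the proof.} By Codazzi, $\Lambda_{1i,1}=\Lambda_{11,i}$. Using $-F^{pq,rs}\Lambda_{pq,1}\Lambda_{rs,1}\ge 2\sum_{i>1}\frac{F^{ii}-F^{11}}{\lambda_1-\lambda_i}\Lambda_{11,i}^2$ together with $0<\lambda_1-\lambda_i\le\lambda_1$, each gradient term with $i>1$ is reduced to at most $2F^{11}\Lambda_{11,i}^2/\lambda_1^2$. When $\lambda_i=\lambda_1$ this already holds, since then $F^{ii}=F^{11}$. The total gradient contribution is then bounded by
\[
2\beta^2|\nabla x_{n+1}|^2F^{11}\le \frac{C}{\lambda_1},
\]
using $F^{11}\lambda_1\le\sum F^{ii}\lambda_i=F$, where $C$ depends on $r$ and on $\sup F$ from Lemma \ref{Upper and lower bounds for F}.

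For the remaining zeroth-order terms, $x_{n+1}\ge1$ and $\sum F^{ii}\ge F(1,\dots,1)=\gamma$ give $\gamma+(1-\beta x_{n+1})\sum F^{ii}\le(2-\beta)\gamma$. The bound $\sum F^{ii}\ge\gamma$ is exactly borderline, so you genuinely need $\beta>2$. Taking $\beta=3$ gives $0\le-\gamma+C/\lambda_1$, so $\lambda_1\le C$ at the maximum point. Together with your treatment of the parabolic boundary, this yields the bound for $\lambda_1$ on $\overline U_r\times[0,T]$.
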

\begin{proof}
    The proof is almost the same as Lemma 20 in \cite{Entireconvexcurvatureflow}, except that here we consider the normalized flow with an additional term.
\end{proof}

From the above discussion, we can see that the approximate problem \eqref{approximate normalized support function flow} has a unique solution $v_r$ for any $r$ sufficiently close to $1$ and any $T>0$.

\section{Convergence to entire solution: inverse Gauss curvature flow}\label{Convergence to entire solution: inverse Gauss curvature flow}

In this section, we want to show that there exists a subsequence of  $\left\{ u_r^*(\xi, t)\right\}$ defined by $v_r(P^{-1}(\xi), t)\sqrt{1-|\xi|^2}$ that converges to the entire solution for the inverse Gauss curvature flow, i.e. $k=n$ case in \eqref{normalized support function flow}. 
The evolution equation for $u^*_{r}$ is 
\begin{align}
    \begin{cases}
    (u^*_r)_t = F(w^* \gamma_{ik}^* (u^*_r)_{kl} \gamma_{lj}^*)w^* + \frac{u^*_r}{\binom{n}{k}^{\frac{1}{k}}} & \text{in } B_r \times (0, T], \\[4pt]
    u^*_r(\cdot, t) = \underline{u}^* & \text{on } \partial B_r \times [0, T], \\[4pt]
    u^*_r(\cdot, 0) = \underline{u}^* = u^*_0 & \text{on } B_r \times \{0\}. 
    \end{cases}
    \label{normalized support function flow in B1 model: inverse Gauss curvature}
\end{align}
In this section, we set $F=\sigma_n^{\frac{1}{n}}$.

\subsection{Local \texorpdfstring{$C^0$}{C0} estimates}

Since $\overline{v}$ and $\underline{v}$ are upper and lower barrier of equation \eqref{approximate normalized support function flow}, we have
\[
\underline{v} \leqslant v_{r} \leqslant \overline{v} \quad \text{in } \overline{U}_r \times [0, T].
\]
Hence
\[
\underline{u}^* \leqslant u^*_{r} \leqslant \overline{u}^* \quad \text{in } \overline{B}_r \times [0, T].
\]
Since $\underline{v} = A\chi(t)g + v_0$ with $\chi(t) \equiv 1$ for $t \geqslant 1$, and $\overline{v}$ is uniformly bounded by maximum principle, we obtain local $C^0$ estimates independent of $t$.

\subsection{Local \texorpdfstring{$C^1$}{C1} estimates}

By the same argument as in section \ref{Gauss Local C1}, we get local $C^1$ estimates.

\subsection{Local estimates for \texorpdfstring{$F$}{F}}

With condition \eqref{sigmak lower bound}, Lemma \ref{Upper and lower bounds for F} provides uniform upper and lower bounds for the speed function $F$.

\subsection{Local \texorpdfstring{$C^2$}{C2} estimates}

\begin{lemma}[Lemma 28 in \cite{Entireconvexcurvatureflow}]
    Let $u^*_r$ be the solution of \eqref{normalized support function flow in B1 model: inverse Gauss curvature}. For $l(\xi)$ any linear function, consider the set $\left\{ l(\xi) - u^*_r(\xi, t) > 0 \right\}$, then we have that, in this set
    \[
    \left( l - u^*_r \right)^{\beta}\lambda_{\max} \leqslant C
    \]
    for $t \in [0, T]$, where $\beta$ is a large positive constant and $C$ is independent of $r$ and $t$.
\end{lemma}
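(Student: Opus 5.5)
We would prove this Pogorelov-type bound by the parabolic analogue of Lemma \ref{Gauss selfshrinker Pogorelov}, working directly with the support function $v=v_r$ on $\mathbb{H}^n$. Since $u^*_r=v\,w^*$ and $l(\xi)$ linear corresponds on the hyperboloid to $\ell(y)=l(\xi)\,x_{n+1}^{-1}$ restricted to $\{x_{n+1}>0\}$, the cutoff is
\[
l-u^*_r=\frac{\hat\ell-v}{x_{n+1}},
\]
where $\hat\ell(y)=\langle a,y\rangle + b\,x_{n+1}$ is the restriction of a linear function on $\mathbb{R}^{n,1}$. The key fact is that $\Lambda_{\hat\ell}=0$, so $\hat\ell$ is annihilated by the operator $\nabla^2-g$. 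The set $\{l-u^*_r>0\}$ is compactly contained in $B_r$, since $u^*_r=\underline u^*$ on $\partial B_r$ and $\underline{u}^*\to 0$ near $\partial B_1$. The argument goes through once $l$ stays below $\underline u^*$ on $\partial B_r$; otherwise the boundary $C^2$ bounds handle the boundary. Hence $\eta:=\hat\ell-v>0$ there, and $\eta$ vanishes on the boundary of the region.

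As the test function we take
\[
Q=\beta\log\eta+\log\lambda_{\max}\quad\text{or}\quad Q=\beta\log\eta+\log\Lambda_{11},
\]
possibly adding a gradient term $\frac{a}{2}|\nabla v|^2$ or $-N x_{n+1}$, evaluated at the space-time maximum in the region. Dividing by $x_{n+1}$ is harmless locally because $x_{n+1}$ is bounded above on the region. At $t=0$, $\lambda_{\max}$ is controlled locally by $v_0$, and on the lateral boundary $\eta=0$. So we may assume an interior maximum at $t_0>0$ with $\Lambda$ diagonal.

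We use the operator $\mathcal{L}=\partial_t-F^{ij}\nabla_{ij}$ with $F=\sigma_n^{1/n}$. Since $\Lambda_{\hat\ell}=0$ and $v_t=F+\gamma v$, we get
\[
\mathcal{L}\eta=-F-\gamma v+F^{ij}v_{ij}=-\gamma v+v\sum F^{ii}+F^{ij}\Lambda_{ij}-F=-\gamma v+v\sum F^{ii}.
\]
Because $v<0$ is bounded, this gives $|\mathcal{L}\eta|\le C(1+\sum F^{ii})$.

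For $\Lambda_{11}$, differentiating the equation twice and commuting derivatives on $\mathbb{H}^n$ gives
\[
\mathcal{L}\Lambda_{11}\le F^{pq,rs}\Lambda_{pq1}\Lambda_{rs1}\cdot(-1)+C\Lambda_{11}\sum F^{ii}+C\Lambda_{11}+C\sum F^{ii}.
\]
The commutator terms have the form $\Lambda_{ii}-\Lambda_{11}$ times $F^{ii}$, together with $F$ and $\Lambda_{11}\sum F^{ii}$. By concavity of $\log\sigma_n$, the third-order term supplies
\[
\sum_{p\ne q}\frac{F^{pp}\Lambda_{pp1}\dots}{\dots}+\frac{F}{n}\Big(\sum\frac{\Lambda_{ii1}}{\Lambda_{ii}}\Big)^2\text{-type good terms}.
\]
Specifically, for $\sigma_n$ we have $F^{ii}=F/(n\Lambda_{ii})$ and $-F^{pq,rs}$ gives $\frac{F}{n}\sum_{p,q}\frac{\Lambda_{pq1}^2}{\Lambda_{pp}\Lambda_{qq}}$ minus a square. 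This explicit structure is why the Gauss case is done first.

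At the maximum, $\nabla Q=0$ gives $\Lambda_{11i}/\Lambda_{11}=-\beta\eta_i/\eta$. The gradient-squared term $F^{ii}(\Lambda_{11i}/\Lambda_{11})^2$ is absorbed by the good term $\frac{F}{n}\sum_i\frac{\Lambda_{11i}^2}{\Lambda_{11}\Lambda_{ii}}=\sum_iF^{ii}\Lambda_{11i}^2/\Lambda_{11}$, using $\Lambda_{1ii}=\Lambda_{ii1}$. The remaining term $\beta F^{ii}\eta_i^2/\eta^2$ is handled by writing it through $\nabla Q=0$ and choosing $\beta$ large.

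From $0\le\mathcal{L}Q$ we then obtain
\[
0\le -c\,\beta\frac{\sum F^{ii}\cdots}{\eta}+C\sum F^{ii}+C+\frac{\beta C(1+\sum F^{ii})}{\eta}.
\]
To get a good sign here we exploit $\mathcal{L}\eta$ more precisely: $v\sum F^{ii}\le -c\sum F^{ii}$, and the $F^{11}$ contribution balances $\Lambda_{11}$. The uniform two-sided bounds on $F$ from Lemma \ref{Upper and lower bounds for F} give $\sum F^{ii}\ge c>0$ and control $v_t$. We conclude $\eta^\beta\Lambda_{11}\le C$, with $C$ depending only on the local $C^0$ and $C^1$ bounds, the bounds on $F$, and $l$, hence independent of $r$ and $t$.

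The main obstacle is the sign bookkeeping in the step above: $\mathcal{L}\eta$ has no favorable sign a priori. We expect to need the extra term $-Nx_{n+1}$, for which $\mathcal{L}x_{n+1}=x_{n+1}\cdot 0$-type identities give $-\sum F^{ii}$ terms, or an $a|\nabla v|^2$ term, to generate a positive multiple of $\sum F^{ii}\Lambda_{ii}^2$ that beats the $\beta/\eta$ error. This follows Lemma 28 of \cite{Entireconvexcurvatureflow}.
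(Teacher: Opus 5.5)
Your route is the right one: a Pogorelov-type maximum principle for $Q=\beta\log\eta+\log\lambda_1$ on $\mathbb{H}^n$ with $\eta=\hat\ell-v$. The paper itself just defers to Lemma~28 of \cite{Entireconvexcurvatureflow}, whose elliptic analogue is Appendix~\ref{Proof of Gauss selfshrinker Pogorelov}. But the proposal does not close. The step you call ``the main obstacle'' is where the proof actually lives, and the computations feeding it are partly wrong.

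\textbf{The ingredients.} First, $\Lambda_{\hat\ell}=0$ means $\nabla^2\hat\ell=\hat\ell g$, not $\nabla^2\hat\ell=0$. So $\mathcal{L}\hat\ell=-\hat\ell\sum F^{ii}$ and
\[
\mathcal{L}\eta=-\gamma v-\eta\sum_i F^{ii},\qquad
\beta\,\mathcal{L}\log\eta=-\frac{\beta\gamma v}{\eta}-\beta\sum_iF^{ii}+\beta F^{ii}\frac{\eta_i^2}{\eta^2}.
\]
The term $-\beta\sum F^{ii}$ is exactly what absorbs the unfavorable commutator on $\mathbb{H}^n$. From $\Lambda_{ii,11}=\Lambda_{11,ii}-\Lambda_{ii}+\Lambda_{11}$ one gets
\[
\mathcal{L}\Lambda_{11}=\Lambda_{11}\sum_iF^{ii}-2F+\gamma\Lambda_{11}+F^{pq,rs}\Lambda_{pq,1}\Lambda_{rs,1},
\]
where the last term is $\leqslant 0$; you wrote it with the opposite sign.

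Second, you cannot use the $(1,1)$ entry of $\frac{F}{n}\sum_{p,q}\Lambda_{pq,1}^2/(\lambda_p\lambda_q)$ to absorb $F^{11}\Lambda_{11,1}^2/\lambda_1^2$. The negative square accompanying it is $F_1^2/F$ with $F_1=(v_t)_1-\gamma v_1$, which is not controlled for the flow. Instead:
\begin{itemize}
\item discard the diagonal block, which is nonnegative by Cauchy--Schwarz;
\item keep $2\sum_{i\ne 1}F^{ii}\Lambda_{11,i}^2/\lambda_1^2$ to absorb the $i\neq 1$ gradient terms $(1+\beta^{-1})\sum_{i\neq1}F^{ii}\Lambda_{11,i}^2/\lambda_1^2$;
\item treat $i=1$ through the critical-point relation, using $F^{11}=F/(n\lambda_1)$, which gives $(1+\beta^{-1})\beta^2F^{11}\eta_1^2/\eta^2\leqslant C\beta^2/(\lambda_1\eta^2)$.
\end{itemize}
This yields, at an interior maximum with $t_0>0$,
\[
0\leqslant(1-\beta)\sum_iF^{ii}+\gamma+\frac{C\beta}{\eta}+\frac{C\beta^2}{\lambda_1\eta^2}.
\]

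\textbf{The missing step.} Your proposal never supplies a good term that grows with $\lambda_1$, and ``$\sum F^{ii}\geqslant c$'' cannot beat $C\beta/\eta$ as $\eta\to 0$. Here it comes from the uniform upper bound on $F$ in Lemma~\ref{Upper and lower bounds for F}. Since $\lambda_1\lambda_n^{n-1}\leqslant\sigma_n=F^n$,
\[
\sum_iF^{ii}\geqslant\frac{F}{n\lambda_n}\geqslant\frac1n F^{-\frac{1}{n-1}}\lambda_1^{\frac{1}{n-1}} .
\]
The inequality above then forces $\eta^{n-1}\lambda_1\leqslant C$ at the maximum point. Hence $\eta^\beta\lambda_1\leqslant C$ for $\beta\geqslant n-1$, with $C$ independent of $r$ and $t$.

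Alternatively, add $\frac a2|\nabla v|^2$ as in Appendix~\ref{Proof of Gauss selfshrinker Pogorelov}. One has $\mathcal{L}\frac12|\nabla v|^2=\gamma|\nabla v|^2-F^{ij}v_{ki}v_{kj}-F^{ij}v_iv_j\leqslant C-\frac{F}{2n}\lambda_1+C\sum F^{ii}$, which gives the needed $-c\lambda_1$ via the lower bound on $F$.

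\textbf{The region.} The set $\{l-u_r^*>0\}$ is compactly contained in $B_1$, uniformly in $r$ and $t$, only if $l<0$ on $\partial B_1$ (for instance $l=-\varepsilon$, as in Lemma~\ref{Gauss selfshrinker Pogorelov}). The boundary $C^2$ bounds for $v_r$ are not uniform in $r$, so your ``otherwise'' case cannot be handled that way. This hypothesis has to be imposed.
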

\begin{proof}
    The proof is the same as Lemma 28 in \cite{Entireconvexcurvatureflow}. Recall here $\tilde{G} = F$ and $\alpha = -1$, the evolution equation for support function is
    \[
    v_t = F + \gamma v.
    \]
    Since we already have local $C^0$ and $C^1$ estimates which are uniform in $t$, the derivation shows that $C$ does not depend on $t$.
\end{proof}

\begin{remark}
    Since the Hessian quotient operator $F$ for $1 \leqslant k <n$ does not satisfy condition (1.6) in \cite{Entireconvexcurvatureflow}, we can only obtain this lemma for the case where $k=n$.
\end{remark}

By above discussion, we obtain the local $C^2$ estimates for $u^*_r$ in $K \times [0, T]$ for any compact set $K \subseteq B_1$ and $T>0$. Therefore we can find a subsequence of $\left\{ u^*_r \right\}$ such that $u^*_r \rightarrow u^*$ in $[0, T]$. Since $T$ is arbitrary, we obtain the existence of an entire solution in $[0, \infty)$ for the inverse Gauss curvature flow.

\section{Convergence to entire solution: inverse Hessian curvature flow}\label{Convergence to entire solution: inverse Hessian curvature flow}

As in Section \ref{Convergence to entire solution: inverse Gauss curvature flow}, the approximate problem in $B_1$ model is \eqref{normalized support function flow in B1 model: inverse Gauss curvature} with $F=\left(\frac{\sigma_n}{\sigma_{n-k}}\right)^{1/k}$. Denote its solution by $u^*_r$.
Let $u_r$ be the Legendre transform of $u^*_r$, then it satisfies
\begin{align}
    (u_r)_t = G \left( \frac{1}{w} \gamma^{ik} (u_r)_{kl} \gamma^{lj} \right) w - \gamma \left( x \cdot Du_r - u_r \right) \text{ in } \Omega_r(t) \times (0, T], \label{normalized approximate flow by Legendre transform}
\end{align}
where $G = -\sigma_k^{-\frac{1}{k}}<0$, $w=\sqrt{1-|Du|^2}$ and $\Omega_r(t) \coloneqq Du_r^*(B_r, t)$. 
In the following, we want to show that there exists a subsequence of $\left\{ u_r \right\}$ that converges locally uniformly to an entire solution. We will derive local estimates for $u_r$ with the same methods as in \cite{Entiresigmakcurvatureflow}.

\subsection{Local \texorpdfstring{$C^0$}{C0} estimates}

Since negative constants are solutions to normalized flow \eqref{approximate normalized support function flow}, by asymptotic maximum principle, we have
\[
-c_2 \leqslant v_r \leqslant -c_1.
\]
Therefore let $u_r$ be the Legendre transform of $u^*_r$, we have, for $(x,t)\in\Omega_r(t)$, 
\[
\sqrt{c_1^2 + |x|^2} \leqslant u_r(x,t) \leqslant \sqrt{c_2^2 + |x|^2}.
\]
For convenience, denote
\[
\underline{u} = \sqrt{c_1^2 + |x|^2},
\text{ and } 
\overline{u} = \sqrt{c_2^2 + |x|^2}.
\]

\subsection{Local \texorpdfstring{$C^1$}{C1} estimates}

First choose $0 < c_0 < c_1$, and let $\hat{\underline{u}} = \sqrt{c_0^2 + |x|^2}$, then we have
\[
\hat{\underline{u}} < \underline{u} \leqslant u_r \leqslant \overline{u}.
\]
For any compact set $K$ and any positive constant $T$, let
\[
2\delta = \min_{K \times [0, T]} \left( \underline{u}-\hat{\underline{u}} \right).
\]
We can see $\delta$ is independent of $t$. Then define $\psi = \hat{\underline{u}} + \delta$. Denote $\Omega = \{(x,t) \in \mathbb{R}^n \times [0,T] \, | \, \psi \leqslant \overline{u}\}$. Then 
\[
K \times [0, T] \subseteq \Omega.
\]
Since all these $\hat{\underline{u}}, \underline{u}, \overline{u} \rightarrow \left| x \right|$ as $\left| x \right| \to +\infty$, we know that $\Omega$ is a compact set. Denote $\Omega_r(t) = Du^*_r(B_r, t)$. Then by Lemma \ref{Local C1}, if $\Omega \subseteq \bigcup_{t \in [0,T]} \Omega_r(t)$, we have the gradient estimate:
\[
\sup_{K \times [0,T]} \frac{1}{\sqrt{1 - |Du_r|^2}} \leqslant \frac{1}{\delta} \sup_{\Omega} \frac{\overline{u} - \psi}{\sqrt{1 - |D\psi|^2}}.
\]

% \subsection{Some calculation}

\subsection{Local estimates for \texorpdfstring{$G$}{G}}

With condition \eqref{sigmak lower bound}, Lemma \ref{Upper and lower bounds for F} provides uniform upper and lower bounds for the speed function $-G = F$.

\subsection{Local \texorpdfstring{$C^2$}{C2} estimates}

Let $\mathfrak{L} = \partial_t - G^{ij}\nabla^2_{ij}$, where $\nabla$ denotes the connection on hypersurface. Recall $u = -\langle \tilde{X}, E_{n+1} \rangle$ and let $\phi = -\langle \nu, E_{n+1} \rangle$ for parametrization with respect to the initial hypersurface, where $\tilde{X}$ is the normalized hypersurface defined by height function $u$, then the normalized flow is 
\begin{align}\label{normalized flow}
    \tilde{X}_t = G \nu + \gamma \tilde{X}.
\end{align}
Then by same calculation as Lemma 11 in \cite{Entiresigmakcurvatureflow}, we can derive
\begin{lemma}\label{evolution for some geometric quantity}
    Under the normalized flow \eqref{normalized flow}, we have
    \begin{align*}
        \mathfrak{L} u &= 2G\phi + \gamma u, \\
        % \nu_t &= \nabla G \\
        \mathfrak{L} \phi &= -\sum_i G^{ii}\kappa_i^2 \phi, \\
        \mathfrak{L} G &= -\sum_i G^{ii}\kappa_i^2 G + \gamma G, \\
        \mathfrak{L} h_{i}^{j} &= -2G \sum_k h_{i}^{k} h_{k}^{j} - h_{i}^{j} \sum_{k,l,s} G^{kl} h_k^s h_{sl} + \sum_{p,q,r,s} G^{pq,rs} \nabla_i h_{pq} \nabla^j h_{rs} - \gamma h_{i}^{j}.
        % \mathfrak{L} v &= -\sum_i G^{ii}\kappa_i^2 v + \gamma v.
    \end{align*}
    % Here $v = \langle X, \nu \rangle$ is the support function.
\end{lemma}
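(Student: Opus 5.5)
The plan is a direct computation along the normalized flow $\tilde{X}_t = G\nu + \gamma \tilde{X}$, in the same way as Lemma 11 in \cite{Entiresigmakcurvatureflow}; the only new ingredient is the scaling term $\gamma\tilde{X}$. I will work in a normal parametrization, adding a tangential diffeomorphism if needed so that $\tilde{X}_t$ has no tangential part. That diffeomorphism does not change the evolution of scalar geometric quantities beyond Lie-derivative terms, and those vanish in normal coordinates at a point. The Minkowski conventions I will use are $\langle\nu,\nu\rangle=-1$, $h_{ij}=\langle \partial_i\partial_j\tilde{X},\nu\rangle$ and the Gauss formula $\nabla^2_{ij}\tilde{X} = h_{ij}\nu$. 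With these, the first variation gives
\[
\partial_t g_{ij} = 2Gh_{ij} + 2\gamma g_{ij}, \qquad \partial_t \nu = \nabla G \quad (\text{tangential part}),
\]
and the second variation gives $\partial_t h_{ij} = \nabla^2_{ij}G + G h_i^k h_{kj} + \gamma h_{ij}$, with signs fixed by $\langle\nu,\nu\rangle=-1$.

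\textbf{Step 1: the scalar quantities $u$ and $\phi$.} For $u=-\langle\tilde X,E_{n+1}\rangle$, differentiating gives $u_t = -G\langle\nu,E_{n+1}\rangle + \gamma u = G\phi+\gamma u$. By the Gauss formula, $\nabla^2_{ij}u = -h_{ij}\langle\nu,E_{n+1}\rangle = h_{ij}\phi$. Since $G$ is homogeneous of degree $-1$ in $h_i^j$, we have $G^{ij}h_{ij} = -G$, so $\mathfrak{L}u = G\phi+\gamma u + G\phi = 2G\phi+\gamma u$. For $\phi$, the time derivative is $\phi_t=-\langle\nabla G,E_{n+1}\rangle$. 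The Weingarten equation together with Codazzi gives $\nabla^2_{ij}\phi = -\langle \nabla h_{ij},E_{n+1}\rangle + h_{ik}h^k_j\phi$. Contracting with $G^{ij}$ and using $G^{ij}\nabla h_{ij}=\nabla G$ cancels the gradient term. The $\gamma$-terms cancel as well, because $\nu$ does not feel the homothety. This leaves $\mathfrak{L}\phi=-G^{ij}h_{ik}h^k_j\phi$.

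\textbf{Step 2: the quantities $G$ and $h_i^j$.} I will compute $\partial_t h_i^j = \partial_t(g^{jk}h_{ik})$ using the two variation formulas above. Substituting gives
\[
\partial_t h_i^j = \nabla_i\nabla^j G - G h_i^kh_k^j - \gamma h_i^j
\]
after the metric term is absorbed; in particular the $\gamma$-contribution is $-2\gamma+\gamma=-\gamma$. Contracting with $G^i_j$ gives $\mathfrak{L}G = -G^{ij}h_{ik}h^k_jG + \gamma G$, where the sign of the $\gamma$-term comes from degree $-1$ homogeneity. For the $h_i^j$ equation I will commute $\nabla_i\nabla_jG = G^{pq}\nabla_i\nabla_jh_{pq}+G^{pq,rs}\nabla_ih_{pq}\nabla_jh_{rs}$, using Simons' identity in Lorentzian signature to move the derivatives onto $G^{pq}\nabla_p\nabla_q h_{ij}$. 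The curvature terms produced this way are of the form $h*h*h$. Contracting them with $G^{pq}$ and using $G^{pq}h_{pq}=-G$ produces the terms $-2G h_i^kh_k^j - h_i^j G^{kl}h_k^sh_{sl}$.

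\textbf{Main obstacle.} The delicate step is tracking the signs in Simons' identity, since the Gauss equation in Minkowski space reads $R_{ijkl} = -(h_{ik}h_{jl}-h_{il}h_{jk})$, and at the same time keeping the homothety contributions straight. I will check both against the static case: the hyperboloid shrinking self-similarly should satisfy all four equations identically. As a further check, setting $\gamma=0$ must recover Lemma 11 in \cite{Entiresigmakcurvatureflow}.
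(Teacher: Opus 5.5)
Your computation is correct and follows the same route as the paper. The paper simply defers to the calculation of Lemma 11 in \cite{Entiresigmakcurvatureflow}, adding the homothety term. Your formulas $\partial_t g_{ij}=2Gh_{ij}+2\gamma g_{ij}$, $\partial_t\nu=\nabla G$, $\partial_t h_{ij}=\nabla^2_{ij}G+Gh_i^kh_{kj}+\gamma h_{ij}$ and $\partial_t h_i^j=\nabla_i\nabla^jG-Gh_i^kh_k^j-\gamma h_i^j$ are right. Combined with the Lorentzian Simons identity, they give exactly the four stated equations. One bookkeeping point: after using $G^{pq}h_{pq}=-G$, the Simons commutator contributes only $-Gh_i^kh_k^j-h_i^jG^{kl}h_k^sh_{sl}$. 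The second $-Gh_i^kh_k^j$ comes from $\partial_t h_i^j$, not from the curvature terms.

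You should delete the opening remark about passing to a normal parametrization, because it is false. A time-dependent tangential reparametrization by a field $V$ changes $\partial_t f$ of a scalar by $\langle V,\nabla f\rangle$, and changes $\partial_t$ of a tensor by its Lie derivative $\mathcal{L}_V$. These are first-order terms, and they do not vanish at a point just because you use normal coordinates there. Here $V$ comes from the tangential part $\gamma\tilde X^{\top}$ of the homothety, which is nonzero in general. In a genuinely normal parametrization you would get
\[
u_t=G\phi-\gamma\langle\tilde X,\nu\rangle\phi=G\phi+\gamma u-\gamma\langle\tilde X^{\top},\nabla u\rangle, \qquad \partial_t g_{ij}=2\bigl(G-\gamma\langle\tilde X,\nu\rangle\bigr)h_{ij}.
\]
The stated $\mathfrak{L}u$ and $\mathfrak{L}\phi$ would then pick up gradient terms. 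The lemma holds precisely in the parametrization $\tilde X_t=G\nu+\gamma\tilde X$, which keeps the tangential part of the homothety. Every formula you actually use is for that parametrization, not a normal one: $u_t=G\phi+\gamma u$, $\partial_t g_{ij}=2Gh_{ij}+2\gamma g_{ij}$, $\partial_t\nu=\nabla G$, and the claim that $\nu$ does not feel the homothety. So your preamble contradicts your own computation, while the computation itself is fine.

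Since you flag signs as the main obstacle, note also that with $\langle\nu,\nu\rangle=-1$ the convention $h_{ij}=\langle\partial_i\partial_j\tilde X,\nu\rangle$ is inconsistent with $\nabla^2_{ij}\tilde X=h_{ij}\nu$. The convention you actually use, via the Gauss formula, is $h_{ij}=-\langle\partial_i\partial_j\tilde X,\nu\rangle$, which equals $u_{ij}/w>0$ for convex graphs with future-directed $\nu$.
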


\begin{lemma}[Lemma 13 in \cite{Entiresigmakcurvatureflow}]
Let $u_r^*$ be the solution of \eqref{normalized support function flow in B1 model: inverse Gauss curvature} with $F=\left(\frac{\sigma_n}{\sigma_{n-k}}\right)^{1/k}$. Let $u_r$ be the Legendre transform of $u_r^*$. Denote $\Omega_r(t) \coloneqq Du_r^*(B_r, t)$. For any given $c > 0$, let $r_c \in (0, 1)$ such that when $r > r_c$, $u_r(\cdot, t)|_{\partial\Omega_r(t)} > c$ for all $t \in [0, T]$. Then for $r > r_c$ we have
\[
(c - u_r)^m \log \kappa_{\max}(x, t) \leqslant C,
\]
where $\kappa_{\max}(x, t)$ is the largest principal curvature of $\mathcal{M}_{u_r}$ at $(x, t)$, $m$ is a large constant only depending on $k$, and $C \coloneqq C(G, c) > 0$ is independent of $r$ and time $t$.
\end{lemma}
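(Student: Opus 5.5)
We will follow the Pogorelov-type maximum principle argument of Lemma 13 in \cite{Entiresigmakcurvatureflow}, adapted to the normalized flow \eqref{normalized flow}, where the extra $\gamma$ terms come from Lemma \ref{evolution for some geometric quantity}. On the (compact in space, for each $t$) set $\{u_r<c\}\subset\Omega_r(t)$ consider
\[
Q=m\log(c-u)+\log\log\kappa_{\max}-N\log\phi ,
\]
or rather the smooth version in which $\kappa_{\max}$ is replaced by $h_{11}$ at a maximum point, with the frame diagonalizing $h_i^j$. Here $m$ is a large constant depending only on $k$, and $N$ is a constant to be chosen. Since $u_r>c$ on $\partial\Omega_r(t)$, $Q\to-\infty$ at the spatial boundary of the region. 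At $t=0$, $\kappa_{\max}$ is bounded on $\{u_0<c\}$ by smoothness of $\mathcal M_{u_0}$, and this bound is independent of $r$. Hence we may assume $Q$ attains a large positive maximum at an interior point $(x_0,t_0)$ with $t_0>0$, and there $\mathfrak L Q\geqslant 0$ and $\nabla Q=0$.

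At this point we compute $\mathfrak L Q$ using Lemma \ref{evolution for some geometric quantity}. From the term $\mathfrak L h_{11}$ the dangerous quantity $-2Gh_{11}^2$ is positive, since $G<0$; dividing by $h_{11}\log h_{11}$ it gives $-2G\kappa_1/\log\kappa_1$. This must be controlled. The term $m\mathfrak L u/(c-u)$ contributes $-m(2G\phi+\gamma u)/(c-u)$, which is of size $m/(c-u)$ because $-G$ is bounded above and below (Lemma \ref{Upper and lower bounds for F}) and $\phi,u$ are locally bounded by the local $C^0,C^1$ estimates. The term $-N\mathfrak L\phi/\phi=N\sum G^{ii}\kappa_i^2$ supplies the good quantity. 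The $\gamma$ terms only add bounded contributions, independent of $t$.

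The gradient terms are handled by the standard split. The concavity term $G^{pq,rs}\nabla_1h_{pq}\nabla_1h_{rs}$, together with the $-G^{ii}(\nabla_ih_{11})^2/(h_{11}\log h_{11})^2$-type terms, is treated via the inequality for $-\sigma_k^{-1/k}$: it is concave in the principal radii / inverse-concave in curvatures. Using $\nabla Q=0$ we replace $\nabla_i h_{11}/(h_{11}\log h_{11})$ by $-m\nabla_iu/(c-u)+N\nabla_i\phi/\phi$. We split into the cases $\kappa_i\geqslant \delta\kappa_1$ and $\kappa_i<\delta\kappa_1$. In the first case the good term $N G^{ii}\kappa_i^2$ dominates, since $G^{ii}\kappa_i^2\geqslant c\,G^{11}\kappa_1^2$ and $G^{11}\kappa_1^2\sim \kappa_1^{?}$ by homogeneity of $G$ of degree $-1$ in $\kappa$. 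In the second case we use the extra positive term coming from concavity, $\sum_{i}(G^{ii}-G^{11})(\nabla_1h_{ii})^2/(\kappa_1-\kappa_i)$.

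Combining these, we obtain at $(x_0,t_0)$ an inequality of the form $0\leqslant C m^2/(c-u)^2+C-\epsilon\sum G^{ii}\kappa_i^2$. Since $\sum G^{ii}\kappa_i^2\geqslant c(-G)\,\kappa_{\max}\cdot(\text{something})$ grows with $\kappa_{\max}$ once $G$ is pinched, this gives $(c-u)^m\log\kappa_{\max}\leqslant C$. The main obstacle is this final absorption, because $G^{ii}$ can degenerate in directions where $\kappa_i$ is small. We would first try to mimic \cite{Entiresigmakcurvatureflow} exactly, using the lower bound of $-G$ to guarantee $\sum G^{ii}\kappa_i^2\geqslant c\kappa_{\max}^{1/k}$-type growth. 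We would verify that none of the constants picks up $t$-dependence, since the $C^0$, $C^1$ and $G$ bounds are all uniform in time.
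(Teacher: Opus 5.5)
Your top-level plan matches the paper's. The paper's proof simply invokes Lemma 13 of \cite{Entiresigmakcurvatureflow} with the evolution equations of Lemma \ref{evolution for some geometric quantity} and $\alpha=-1$. Your remarks that the $\gamma$-terms are lower order and that all constants are time-independent are the checks this requires. The self-contained argument you sketch, however, has a genuine gap.

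\textbf{Sign of the $\phi$-term.} Since $G^{ii}>0$ and $\mathfrak{L}\phi=-\sum_i G^{ii}\kappa_i^2\phi$, and using $\phi_i=\kappa_iu_i$ and $|\nabla u|^2=\phi^2-1$,
\[
\mathfrak{L}(-N\log\phi)=N\sum_i G^{ii}\kappa_i^2-N\frac{G^{ii}\phi_i^2}{\phi^2}\geqslant \frac{N}{\phi^2}\sum_i G^{ii}\kappa_i^2>0 .
\]
At a maximum point, where $\mathfrak{L}Q\geqslant 0$, a positive term is harmful, not good. It is also inconsistent with your final inequality containing $-\epsilon\sum G^{ii}\kappa_i^2$. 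The weight must be increasing in $\phi$, for example $+N\log\phi$ (equivalently $-N\log w$), which gives $\mathfrak{L}(N\log\phi)\leqslant -N\phi^{-2}\sum_i G^{ii}\kappa_i^2$. Your growth claim is also off. One has $\sum_i G^{ii}\kappa_i^2\geqslant \frac1n(-G)\kappa_{\max}$, i.e.\ linear growth, and linear growth is exactly what is needed against the bad reaction term $2(-G)\kappa_1/\log\kappa_1$. A $\kappa_{\max}^{1/k}$-type bound would not suffice.

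\textbf{Absorption of the gradient terms.} This is the step you flag as the main obstacle, and the case split you propose does not close it. With the $\log\log$ weight the bad gradient term is
\[
\Bigl(1+\frac{1}{\log h_{11}}\Bigr)\frac{G^{ii}(\nabla_i h_{11})^2}{h_{11}^2\log h_{11}}
=(\log h_{11}+1)\,G^{ii}\Bigl(\frac{\nabla_i h_{11}}{h_{11}\log h_{11}}\Bigr)^2 .
\]
Take an index with $\kappa_i\geqslant\delta\kappa_1$ (this includes $i=1$). Substituting $\nabla Q=0$ produces a term of size $N^2\log\kappa_1\,G^{ii}\kappa_i^2u_i^2/\phi^2$. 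The only available good term is $N\phi^{-2}\sum_j G^{jj}\kappa_j^2\leqslant CN\kappa_1$. So the claim that ``the good term $NG^{ii}\kappa_i^2$ dominates'' fails by a factor of order $N\log\kappa_1$ whenever $u_i$ is not small. Without the $\log\log$ the same $N^2$-versus-$N$ mismatch remains, because $N$ must then be large to beat $2(-G)\kappa_1$.

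Those indices cannot be handled by substituting $\nabla Q=0$; they need a genuinely different mechanism. That mechanism is precisely the content of the argument in \cite{Entiresigmakcurvatureflow} that the paper imports, and your sketch does not supply it. You should either rely on that lemma outright, checking only that the new terms $\gamma u$, $\gamma G$ and $-\gamma h_i^j$ are harmless and time-independent, or provide a correct absorption argument.

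A smaller point: what gives $G^{pq,rs}\nabla_1h_{pq}\nabla_1h_{rs}\leqslant 0$ is that $G=-\sigma_k^{-1/k}$ is concave as a function of the curvatures. As a function of the principal radii it is convex, not concave.
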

\begin{proof}
    The proof follows the same structure as Lemma 13 in \cite{Entiresigmakcurvatureflow}, except that we use Lemma \ref{evolution for some geometric quantity} instead and take $\alpha = -1$.
\end{proof}

By the above discussion, we obtain the local $C^2$ estimates for $u_r$ in $K \times [0, T]$ for any compact set $K \subseteq \mathbb{R}^n$ and $T>0$. Therefore we can find a subsequence of $\left\{ u_r \right\}$ such that $u_r \rightarrow u$ in $[0, T]$. Since $T$ is arbitrary, we obtain the existence of an entire solution in $[0, \infty)$ for general inverse Hessian curvature flow.

\subsection{Barriers}\label{Barriers}

Same as the argument as in section \ref{Proof of Theorem 1.1}, we take the solution to inverse Gauss curvature flow as new supersolution, still denote its support function by $\overline{v}$. By comparison principle we still have
\[
\underline{v} \leqslant v_r \leqslant \overline{v}.
\]
Then define $\underline{u}^*(\xi, t) = \underline{v}(y(\xi), t) \sqrt{1 - |\xi |^2}$ and $\overline{u}^*(\xi, t) = \overline{v}(y(\xi), t) \sqrt{1 - |\xi |^2}$, we have
\begin{align*}
    \underline{u}^* \leqslant u_r^* \leqslant \overline{u}^*.
\end{align*}
Let $\underline{u}$, $u_r$ and $\overline{u}$ be the Legendre transform of $\overline{u}^*$ , $u_r^*$ and $\underline{u}^*$ respectively. Then we have
\begin{align*}
    \underline{u} \leqslant u_r \leqslant \overline{u}.
\end{align*}

\section{Convergence to the self-shrinker}

We now prove the convergence of the normalized flow. By the estimates obtained in the previous two sections, for every $K \subset \subset \mathbb R^n$, we have 
\begin{align}
    \sup_{t \geqslant 0}\|u_r(\cdot, t)\|_{C^2(K)}
    \leqslant C_{K},
\end{align}
where the constant $C_{K}$ is independent of $r$ and $t$. 
Furthermore, since $u_r$ satisfies \eqref{normalized approximate flow by Legendre transform}, by the local uniform parabolicity of the equation, together with the parabolic Evans--Krylov theorem and the Schauder estimates, for every $K \subset \subset \mathbb{R}^n$, every $\tau>0$, and every integer $m \geqslant 0$, there exists a constant $C_{K,m,\tau}$, independent of $r$ and $t$, such that
\[ 
\sup_{t \geqslant \tau}\|u_r\|_{C^m(K\times[t,t+1])}
\leqslant C_{K,m,\tau},
\]

Hence, letting $r \to 1$ and using a diagonal argument, we obtain an entire solution
$u$ of the normalized flow on
\[
\mathbb R^n\times[0,+\infty).
\]
Moreover, since the local estimates for $u_r$ are uniform in $r$, we have
% for every $K \subset \subset \mathbb{R}^n$,
% \begin{align}\label{local estimates for u}
%     \sup_{t \geqslant 0}\|u(\cdot,t)\|_{C^2(K)}
%     \leqslant C_{K}.
% \end{align}
\begin{align}\label{local estimates for u}
    \sup_{t \geqslant \tau}\|u\|_{C^m(K\times[t,t+1])}
    \leqslant C_{K,m,\tau}.
\end{align}

By the discussion in Section \ref{Barriers}, we know that 
\begin{align}\label{barriers}
    \underline{u} \leqslant u \leqslant \overline{u}.
\end{align}
Let $v$ be the corresponding support function of $u$ defined on $\mathbb H^n$. Then
\[
v_t = F(\Lambda_v)+\gamma v, \quad \gamma = \frac{1}{\binom{n}{k}^{\frac{1}{k}}},
\]
and the boundary value at infinity holds,
$
v(\cdot,t) = \varphi$ on  $\mathbb H^n(\infty).
$
% Let $v_\infty$ be the unique solution of \eqref{sigmak selfshrinker}.
% Set
% \[
% z=v-v_\infty.
% \]
By \eqref{normalized approximate flow by Legendre transform}, we can define
\begin{align}
    q \coloneqq \frac{u_t}{w} = G - \gamma v = -v_t.
\end{align}

\begin{lemma}\label{q+ uniform}
    We first show that $q_+(x, t) = \max\left\{0,q(x,t)\right\}$ tends to zero at infinity uniformly with respect to time, i.e.
    \begin{align}
    \lim_{R\to+\infty} \sup_{\substack{|x| \geqslant R \\ t \geqslant 0}}q_+(x, t)=0.
    \end{align}
\end{lemma}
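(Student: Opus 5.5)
The lemma states that $q_+\to 0$ as $|x|\to\infty$, uniformly in $t\geqslant 0$. The plan is to work with $q=-v_t$ on the support-function side and then move back to $x$ via the Gauss map.

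\textbf{Step 1: $q$ satisfies a linear equation.} Since $v_t=F(\Lambda_v)+\gamma v$, differentiating in $t$ gives
\[
(v_t)_t=F^{ij}\nabla^2_{ij}v_t-v_t\sum_iF^{ii}+\gamma v_t .
\]
Hence $q=-v_t$ satisfies the same linear parabolic equation. It is the equation already used for $\phi=F/w$ in Lemma \ref{Upper and lower bounds for F}. So I would follow that lemma and compare $q$ with a multiple of $w=-v$. Set $\psi=q/(-v)$. The computation in Lemma \ref{Upper and lower bounds for F} then shows that $\psi$ satisfies a drift equation with no zeroth-order term. Indeed, $-v$ satisfies the linear equation with the same zeroth-order coefficient $-\sum F^{ii}+\gamma$. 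Therefore $\psi_+$ obeys the maximum principle on each approximate domain $U_r\times[0,T]$. The identity $q=-v_t=F(\Lambda_{v_r})\cdot(-1)-\gamma v_r$ would be handled on the approximate solutions $v_r$, and the bound passed to the limit.

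\textbf{Step 2: parabolic boundary values.} On $\partial U_r\times[0,T]$ we have $v_r=\underline v=A\chi g+v_0$. So $q=-A\chi'(t)g\geqslant 0$ there, with $|q|\leqslant 2A|g|\leqslant 2A(\cosh\rho_r)^{-a}$, and this tends to $0$ as $r\to1$. At $t=0$ we have $q=-F(\Lambda_{v_0})-\gamma v_0-A\chi'(0)g$. By the subsolution condition at infinity, $-F(\Lambda_{v_0})-\gamma v_0<0$ outside $K$. Also $-A\chi'(0)g\leqslant 2A(\cosh\rho)^{-a}$, which is small far out.

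\textbf{Step 3: the key obstacle.} A global maximum principle only gives $\sup\psi_+\leqslant C$. What we need is smallness \emph{near infinity}, uniformly in $t$. I would localize with a barrier: compare $\psi$, or $q$ directly, with
\[
\Phi=\varepsilon+B\,\eta(y),
\]
where $\eta$ is a positive function vanishing near a chosen ideal boundary region. The natural candidate is $\eta=-\hat g$, with $\hat g$ built from the functions $f(s_i)$ of Lemma \ref{f(s)} or $g$ of Lemma \ref{smooth function g}, centred so that it is small near a boundary direction. Its value $-\hat g>0$ is a supersolution of the linearized operator $\mathscr{L}w=w_t-F^{ij}\nabla^2_{ij}w+w\sum F^{ii}-\gamma w$. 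This requires $F^{ij}(\Lambda_{\hat g})_{ij}\geqslant\gamma \hat g$, which follows from concavity and homogeneity of $F$ exactly as in the comparison principle, using $G(\hat g)\geqslant0$. The constant $\varepsilon$ handles the initial and lateral data. The constant $B$, chosen using the uniform bound of $q$ from Lemma \ref{Upper and lower bounds for F} (note $q\leqslant F+C$), handles the inner compact region where $\eta$ is bounded below. The maximum principle then yields $q_+\leqslant \varepsilon+B\eta$ for all $t$, with the right side at most $2\varepsilon$ near the chosen boundary point. Covering $\mathbb S^{n-1}$ by finitely many cones gives uniform smallness outside a large ball $U_{r'}$, uniformly in $t$. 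Checking that the zeroth-order term $(\sum F^{ii}-\gamma)$ has the right sign for the barrier is the delicate point. I expect it to follow from $\sum F^{ii}\geqslant\gamma$, which holds by Newton--Maclaurin.

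\textbf{Step 4: back to $x$.} Finally, translate from $y\in\mathbb H^n$ to $x$. By the $C^0$ barriers $\underline u\leqslant u\leqslant\overline u$ of \eqref{barriers}, together with the uniform $C^1$ bounds, $|x|\to\infty$ uniformly in $t$ corresponds to $\nu\to\mathbb H^n(\infty)$. Since $q$ is a geometric quantity evaluated at corresponding points, the conclusion follows.
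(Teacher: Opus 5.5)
Your argument is correct and uses the same mechanism as the paper. The function $H_r=(v_r)_t$ solves $LH_r=0$ with $L=\partial_t-F^{ij}\nabla^2_{ij}+c$ and $c=\sum F^{ii}-\gamma\geqslant 0$. The sign of $c$, which you flag as delicate, follows from concavity, $1$-homogeneity and $F(I)=\gamma$. A strict subsolution of the self-shrinker equation that vanishes at infinity gives, via concavity, a supersolution of $L$. This supersolution dominates $q=-H_r$ on the parabolic boundary of $U_r\times[0,T]$, and the bound passes to the limit $r\to1$.

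\emph{Where you differ:} you localize. Your barrier is small only near one ideal boundary point, you add $\varepsilon$ to absorb the lateral data, and then you cover $\mathbb{S}^{n-1}$ by finitely many cones. The paper needs none of this. The radial function $g=-(\cosh\rho)^{-a}$ of Lemma \ref{smooth function g} already tends to $0$ on all of $\mathbb{H}^n(\infty)$. Taking $C_0\geqslant 2A$ large, the paper argues as follows:
\begin{itemize}
\item $H_r(\cdot,0)\geqslant C_0 g$ by the subsolution condition at infinity;
\item on $\partial U_r$, $H_r=A\chi' g\geqslant 2Ag\geqslant C_0 g$;
\item $L(H_r-C_0g)=-C_0Lg>0$.
\end{itemize}
The maximum principle then gives the global bound $q_r\leqslant C_0(\cosh\rho)^{-a}$, that is, $q\leqslant C_0w^a$ after $r\to1$. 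This is simpler and yields an explicit decay rate. It also avoids the non-smooth cut-off barriers built from $f(s_i)$, which would require a viscosity-type comparison. Your quotient $\psi=q/(-v)$ in Step 1 is likewise never needed.

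\emph{Small slips to fix:}
\begin{itemize}
\item The supersolution condition for $-\hat g$ is $F^{ij}(\Lambda_{\hat g})_{ij}+\gamma\hat g\geqslant 0$, not $F^{ij}(\Lambda_{\hat g})_{ij}\geqslant\gamma\hat g$. The correct condition does follow from $G(\hat g)\geqslant 0$.
\item At $t=0$ in the interior, $q=-F(\Lambda_{v_0})-\gamma v_0$. The term $-A\chi'(0)g$ appears only on $\partial U_r$. Including it is harmless, since it only overestimates.
\item In Step 4, what you need is $|Du(x,t)|\to1$ as $|x|\to\infty$, uniformly in $t$. This follows from convexity plus the hyperboloid trapping, for example $Du(x)\cdot\frac{x}{|x|}\geqslant\frac{u(x)-u(0)}{|x|}\geqslant\frac{\sqrt{c_1^2+|x|^2}-c_2}{|x|}$. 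It also follows from the uniform local Lipschitz bound on $u^*$. Local $C^1$ bounds on $u$ itself give only the converse inclusion.
\end{itemize}
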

\begin{proof}
    Let $v_r$ be the solution of \eqref{approximate normalized support function flow}. Define
    \[
    H_r \coloneqq (v_r)_t = F(\Lambda_{v_r}) + \gamma v_r.
    \]
    Then $H_r$ satisfies
    \begin{align}
    (H_r)_t = F^{ij} (H_r)_{ij} - c H_r,
    \end{align}
    where $c = \sum F^{ii} - \gamma \geqslant 0$. Set
    $
    L = \partial_t - F^{ij} \nabla^2_{ij} + c,
    $
    and we have $L H_r = 0$.
    Let
    \[
    g(\rho) = -(\cosh\rho)^{-a},
    \quad
    0 < a < \frac{n-1}{n},
    \]
    be the function constructed in Lemma \ref{smooth function g}, which satisfies
    \[
    \Lambda_g > 0, \quad F(\Lambda_g) + \gamma g > 0.
    \]
    By the concavity and homogeneity of $F$, we have
    \[
    F(B)\leqslant F^{ij}(A)B_{ij}
    \]
    for any admissible $A$ and $B$. Therefore, we get
    \begin{align*}
    Lg &= - F^{ij}\left( g_{ij} - g\delta_{ij} \right) - \gamma g \leqslant - F(\Lambda_g) - \gamma g < 0.
    \end{align*}
    By the subsolution condition at infinity \eqref{strict subsolution}, we can choose $C_0$ large such that
    \[
    H_r(\cdot, 0) - C_0 g \geqslant 0 \quad \text{in } U_r.
    \]
    On $\partial U_r$, we have
    \[
    H_r = A \chi^{\prime}(t) g \geqslant 2A g.
    \]
    Thus
    \[
    H_r - C_0 g \geqslant (2A - C_0)g \geqslant 0,
    \]
    if we choose $C_0 \geqslant 2A$. Let $W_r = H_r - C_0 g$, then
    \[
    L W_r = L H_r - C_0 Lg > 0.
    \]
    By the parabolic maximum principle, we have
    \[
    H_r \geqslant C_0 g.
    \]
    Hence
    \[
    -H_r \leqslant C_0 (\cosh\rho)^{-a}.
    \]
    Rewriting this in $\mathbb{R}^n$, we have
    \[
    q_r(x, t)=-(v_r)_t(Du_r(x,t),t)\leqslant C_0 (1-|Du_r(x,t)|^2)^{a/2}.
    \]
    Then taking the limit $r \to 1$ yields 
    \[
    q(x, t) \leqslant C_0 (1-|Du(x,t)|^2)^{a/2}=C_0w^a(x,t).
    \]
    By the local $C^0$ estimates, we already know that
    \begin{align}\label{hyperboloid trapping}
        \sqrt{c_1^2 + |x|^2} \leqslant u_r(x,t) \leqslant \sqrt{c_2^2 + |x|^2}.
    \end{align}
    Hence $w(x, t)$ converges to $0$ as $|x| \to +\infty$ uniformly with respect to time, which completes the proof.
\end{proof}

\begin{proof}[Proof of Theorem \ref{inverse Hessian curvature flow}]
    By using Lemma \ref{evolution for some geometric quantity}, we can directly calculate that 
    \begin{align}\label{equaiton of q}
    q_t-a^{ij}q_{ij}-b^{i}q_{i}+cq=0,
    \end{align}
    where
    \[
    c 
    % = \frac{w_t - a^{ij}w_{ij} - \beta^{i}w_i - \gamma w}{w}
    % = a^{ij}\left[\frac{u_{ki}u_{kj}}{w^2}+\frac{(u_k u_{ki})(u_\ell u_{\ell j})}{w^4}\right]-\gamma
    % = a^{ij}g^{k\ell}h_{ki}h_{\ell j}-\gamma
    = \sum G^{ii}\kappa_i^2 - \gamma \geqslant 0.
    \]
    Define
    \begin{align}
        M(t) = \max\left\{0,\sup_{\mathbb R^n}q(\cdot,t)\right\}.
    \end{align}
    Since $q$ satisfies \eqref{equaiton of q}, the maximum principle and the previous lemma imply that $M$ is nonincreasing. Hence
    \[
    \ell \coloneqq \lim_{t\to+\infty}M(t)
    \]
    exists. We claim that $\ell=0$. Suppose, to the contrary, that $\ell>0$. Choose $t_j\to+\infty$ and $x_j$ such that
    \[
    q(x_j,t_j)=M(t_j)\to\ell.
    \]
    The uniform decay at infinity of $q_+$ implies that the points $x_j$ remain in a fixed compact subset of $\mathbb R^n$. After passing to a subsequence, we can assume 
    \[
    x_j\to x_\infty.
    \]
    Consider the time translations
    \[
    u_j(x,s) \coloneqq u(x,t_j+s).
    \]
    By \eqref{local estimates for u}, after passing to a subsequence,
    \begin{align}
    u_j \rightarrow U \quad \text{in } C^\infty_{\mathrm{loc}}(\mathbb R^n \times \mathbb R).
    \end{align}
    Set
    \[
    W(x,s)\coloneqq \sqrt{1-|DU(x,s)|^2}, \quad q_\infty(x,s)\coloneqq \frac{U_s(x,s)}{W(x,s)}.
    \]
    Since the convergence $u_j\to U$ is locally smooth, we also have
    \[
    q(x,t_j+s)=\frac{(u_j)_s(x,s)}{\sqrt{1-|Du_j(x,s)|^2}}\longrightarrow q_\infty(x,s)
    \]
    locally smoothly on $\mathbb R^n\times\mathbb R$. For every fixed $s\in\mathbb R$, we have $q(x,t_j+s)\leqslant M(t_j+s)$. Since $t_j\to+\infty$, we have
    \[
    q_\infty(x,s)\leqslant \ell \quad \text{on } \mathbb R^n\times\mathbb R.
    \]
    Moreover, since $x_j\to x_\infty$ and $q(x_j,t_j)=M(t_j)\to\ell$, the local smooth convergence gives
    \[
    q_\infty(x_\infty,0)=\ell.
    \]
    Recall that $q$ satisfies \eqref{equaiton of q}. Passing to the limit along the sequence $u_j$, we obtain
    \[
    (q_\infty)_s-a_\infty^{ij}(q_\infty)_{ij}-b_\infty^i(q_\infty)_i+c_\infty q_\infty=0, \quad c_\infty\geqslant 0.
    \]
    Let $Z\coloneqq \ell-q_\infty$. Then $Z\geqslant 0$, $Z(x_\infty,0)=0$, and
    \[
    Z_s-a_\infty^{ij}Z_{ij}-b_\infty^iZ_i+c_\infty Z=c_\infty\ell\geqslant 0.
    \]
    By the strong parabolic minimum principle,
    % \[
    % Z\equiv0 \quad \text{on } \mathbb R^n\times(-\infty,0].
    % \]
    % Indeed, the strong minimum principle may first be applied on a bounded parabolic cylinder containing $(x_\infty,0)$ as an interior point, and the equality can then be propagated to the whole connected backward parabolic region by applying the same argument on overlapping cylinders. Consequently,
    \[
    q_\infty\equiv\ell \quad \text{on } \mathbb R^n\times(-\infty,0],
    \]
    which contradicts to Lemma \ref{q+ uniform} whenever $\ell>0$. Hence $\ell=0$, and consequently
    \[
    \lim_{t\to+\infty}M(t)=0.
    \]
    % Since $q_\infty=U_s/W$, we obtain $U_s=\ell W$ for all $s\leqslant 0$. By the local gradient estimate, applied on a fixed compact neighborhood of the origin and then passed to the limit, there exists a constant $\delta>0$ such that
    % \[
    % W(0,s)\geqslant \delta \quad \text{for all } s\leqslant 0.
    % \]
    % Hence, for every $T>0$,
    % \[
    % U(0,0)-U(0,-T)=\int_{-T}^0 U_s(0,s)\,ds
    % =\ell\int_{-T}^0 W(0,s)\,ds
    % \geqslant \ell\delta T.
    % \]
    % On the other hand, the uniform hyperboloid trapping for $u$ passes to the limit and gives
    % \[
    % c_1\leqslant U(0,s)\leqslant c_2 \quad \text{for all } s\in\mathbb R.
    % \]
    % Therefore
    % \[
    % U(0,0)-U(0,-T)\leqslant c_2-c_1,
    % \]
    % which contradicts the preceding inequality as $T\to+\infty$ whenever $\ell>0$. Hence $\ell=0$, and consequently
    % \[
    % \lim_{t\to+\infty}M(t)=0.
    % \]

    Now let $t_j\to+\infty$ be arbitrary. By the local estimates, after passing to a subsequence,
    \[
    u(x,t_j+s)\longrightarrow U(x,s).
    \]
    For every fixed $(x,s)$,
    \[
    \frac{U_s(x,s)}{W(x,s)} = \lim_{j\to\infty}q(x,t_j+s) \leqslant \lim_{j\to\infty}M(t_j+s)=0.
    \]
    Therefore, since $W>0$, we have
    \begin{equation}
    U_s \leqslant 0.
    \end{equation}
    Thus $U(x,s)$ is nonincreasing in $s$ for each fixed $x$. The hyperboloid trapping \eqref{hyperboloid trapping} gives a time-independent lower and upper bound, so the pointwise limits
    \begin{equation}
    U^-(x) \coloneqq \lim_{s\to-\infty}U(x,s), \quad U^+(x) \coloneqq \lim_{s\to+\infty}U(x,s)
    \end{equation}
    exist, with
    \begin{equation}
    U^- \geqslant U(\cdot,s) \geqslant U^+.
    \end{equation}
    We show that $U^+$ is stationary. Let $s_j\to+\infty$. The local estimates allow us to extract a subsequence such that
    \[
    U_j(x,\tau) \coloneqq U(x,s_j+\tau)
    \]
    converges locally smoothly on $\mathbb R^n\times\mathbb R$ to some $\hat U(x,\tau)$. But for every fixed $(x,\tau)$,
    \[
    U(x,s_j+\tau)\longrightarrow U^+(x)
    \]
    by monotonicity. Hence
    \[
    \hat U(x,\tau)=U^+(x),
    \]
    which is independent of $\tau$. Therefore $U^+$ is a stationary solution of \eqref{normalized approximate flow by Legendre transform}. The same argument with $s_j\to-\infty$ shows that $U^-$ is a stationary solution too.

    It remains to identify the boundary values at infinity of the stationary limits $U^-$ and $U^+$. Let $V^-$ and $V^+$ denote their corresponding support functions. Since along the normalized flow we have
    \[
    \underline v(\cdot,t)\leqslant v(\cdot,t)\leqslant \overline v(\cdot,t),
    \]
    we may pass to the limit along the sequences defining $U^\pm$. For $t \geqslant 1$, the lower barrier is independent of time and is given by
    \[
    \underline v=v_0+Ag,
    \]
    whereas the upper barrier satisfies
    \[
    \overline v(\cdot,t) \rightarrow h
    \]
    as $t\to+\infty$, where $h$ is the harmonic extension of $\varphi$. Consequently,
    \[
    v_0+Ag\leqslant V^\pm\leqslant h.
    \]
    Since
    \[
    v_0+Ag=h=\varphi \quad \text{on }\mathbb H^n(\infty),
    \]
    both $V^-$ and $V^+$ have boundary value $\varphi$ at infinity. Since $U^-$ and $U^+$ are stationary solutions, the uniqueness of the self-shrinker with prescribed boundary value $\varphi$ yields
    \[
    U^-=U^+=u_\infty.
    \]

    Since every sequence $t_j\to\infty$ has a subsequence whose time translations converge to $u_\infty$, and the limit is unique, the entire family converges:
    \begin{equation}
    u(\cdot,t)\longrightarrow u_\infty \quad \text{in }C^\infty_{\mathrm{loc}}(\mathbb{R}^n) \quad \text{as }t\to\infty.
    \end{equation}
    Equivalently, the normalized hypersurfaces converge locally smoothly to the unique self-shrinker whose support function has boundary value $\varphi$ at infinity.
\end{proof}

\section{Discussion about the initial hypersurface}

In this section, we give a large collection of initial hypersurfaces to satisfy conditions in Theorem  \ref{inverse Hessian curvature flow}. Roughly speaking, on sufficiently large compact subsets in the interior of the initial hypersurface, it can be prescribed arbitrarily.

%These examples satisfy \eqref{sigmak lower bound} in the following two cases: (i)$k=1$ for arbitrary boundary value $\varphi$; (ii)$\varphi \equiv -1$ for $1 \leqslant k \leqslant n-1$.

Let $v$ be the unique self-shrinker of equation \eqref{sigmak selfshrinker} by Theorem \ref{self-shrinker existence} for given $\varphi$. Denote $g$ as in Lemma \ref{smooth function g} for some $0 < a < \frac{n-1}{n}$. Hence $g$ is a strict subsolution. Consider 
\begin{equation}\label{v_0}
v_0 =  v + \mu g
\end{equation}
where $\mu$ is any positive constant. Then by concavity, $v_0$ is an entire strict subsolution, and
\[
v_0 = \varphi \quad \text{on } \mathbb{H}^n(\infty).
\]

Since the strict subsolution inequality is required only near infinity, we may enlarge the exceptional compact set. More precisely, using a regularized-maximum gluing argument, we can replace $v_0$ on a compact set by a smooth negative strictly hyperbolic convex function, while leaving $v_0$ unchanged outside a slightly larger compact set. The resulting function is smooth, negative, and hyperbolic convex on $\mathbb{H}^n$, and it  satisfies \eqref{strict subsolution} outside the enlarged compact set. We need to use the following regularized-maximum function lemma (see Lemma 5.18 in \cite{demailly1997complex}):

\begin{lemma}\label{gluing lemma}
    Let $\eta>0$. There exists a function $M_{\eta} \in C^{\infty}(\mathbb{R}^2)$ with 
    \begin{enumerate}[label=\text{(\alph*)}]
        \item $M_{\eta}(t_1, t_2)$ is nondecreasing with respect to each variable, smooth and convex on $\mathbb{R}^2$;
        \item $\max\{t_1, t_2\} \leqslant M_{\eta}(t_1, t_2) \leqslant \max\{t_1+\eta, t_2+\eta\}$;
        \item $M_{\eta}(t_1, t_2) = \max\{t_1, t_2\}$, if $|t_1 - t_2| \geqslant 2\eta$;
        \item $M_{\eta}(t_1 + a, t_2 + a) = M_{\eta}(t_1, t_2) + a$ for any constant $a$.  
    \end{enumerate}
\end{lemma}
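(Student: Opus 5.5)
The plan is an explicit construction. Fix a smooth even convex function $\theta_\eta:\mathbb{R}\to\mathbb{R}$ with $\theta_\eta(s)=|s|$ for $|s|\geqslant 2\eta$, $|s|\leqslant\theta_\eta(s)\leqslant |s|+2\eta$ on $[-2\eta,2\eta]$, and $|\theta_\eta'|\leqslant 1$. Then set
\[
M_\eta(t_1,t_2) \coloneqq \frac{t_1+t_2}{2}+\frac{1}{2}\theta_\eta(t_1-t_2).
\]
This follows Demailly's Lemma 5.18. To get $\theta_\eta$, convolve $|s|$ with a smooth even mollifier $\rho_\eta$ supported in $[-\eta,\eta]$, and compensate near the origin. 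A clean concrete choice is $\theta_\eta=|\cdot|*\rho_{2\eta}$ with $\rho_{2\eta}$ supported in $[-2\eta,2\eta]$. This function is convex, even and smooth, and it equals $|s|$ when $|s|\geqslant 2\eta$: there the convolution of a linear function with a mollifier of mass one and zero first moment returns the same linear function. Since $|\cdot|$ is convex and $\rho$ is even, Jensen gives $\theta_\eta(s)\geqslant |s|$. Because $|\cdot|$ is $1$-Lipschitz, $\theta_\eta(s)\leqslant |s|+2\eta$, and $|\theta_\eta'|\leqslant 1$.

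The properties then follow in turn.

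For (d), $M_\eta$ depends on $t_1-t_2$ only through $\theta_\eta$, and the average $\frac{t_1+t_2}{2}$ shifts by $a$. Hence translation equivariance is immediate.

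For (c), when $|t_1-t_2|\geqslant 2\eta$ we have $\frac{t_1+t_2}{2}+\frac{|t_1-t_2|}{2}=\max\{t_1,t_2\}$.

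For (b), the bound $|s|\leqslant\theta_\eta(s)\leqslant|s|+2\eta$ gives
\[
\max\{t_1,t_2\}\leqslant M_\eta\leqslant\max\{t_1,t_2\}+\eta .
\]

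For (a), smoothness comes from $\theta_\eta$. Convexity holds because $M_\eta$ is a linear function plus a convex function composed with a linear map. Monotonicity comes from
\[
\partial_{t_1}M_\eta=\frac{1}{2}\bigl(1+\theta_\eta'(t_1-t_2)\bigr)\geqslant 0,\qquad \partial_{t_2}M_\eta=\frac{1}{2}\bigl(1-\theta_\eta'(t_1-t_2)\bigr)\geqslant 0,
\]
using $|\theta_\eta'|\leqslant1$.

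The only delicate point is to choose the mollifier support so that (b) and (c) hold with exactly the constants $\eta$ and $2\eta$. Support in $[-2\eta,2\eta]$ gives both: the Lipschitz bound yields the additive error $2\eta$, which after the factor $\tfrac12$ becomes $\eta$, and the exactness region is $|s|\geqslant 2\eta$. Everything else is routine.
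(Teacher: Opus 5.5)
Your construction is correct and is essentially the paper's route. The paper gives no proof of its own and only cites Demailly's Lemma 5.18. For $p=2$ and $\eta_1=\eta_2=\eta$, Demailly's regularized maximum $\iint \max\{t_1+h_1,t_2+h_2\}\,\eta^{-2}\theta(h_1/\eta)\theta(h_2/\eta)\,dh_1\,dh_2$ reduces, via $\max\{a,b\}=\frac{a+b}{2}+\frac{|a-b|}{2}$, exactly to your $\frac{t_1+t_2}{2}+\frac12(|\cdot|*\rho_{2\eta})(t_1-t_2)$, where $\rho_{2\eta}$ is the even density of $h_2-h_1$ supported in $[-2\eta,2\eta]$. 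Your verifications of (a)--(d) all hold.
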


\begin{proposition}
    Let $M_\eta$ as above lemma. If $u_1, u_2$ are two strictly convex functions, $M_{\eta}(u_1, u_2)$ is also strictly convex.
\end{proposition}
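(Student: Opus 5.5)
The plan is to compute the Hessian of $M_\eta(u_1,u_2)$ directly with the chain rule and read off positivity from properties (a) and (d) of Lemma \ref{gluing lemma}. I will assume $u_1,u_2$ are $C^2$ on a common convex domain with positive definite Hessians; for merely strictly convex functions I will handle that case separately at the end. Writing $M=M_\eta$ and evaluating its partials at $(u_1(x),u_2(x))$, we have
\[
D^2 M(u_1,u_2) = M_1\, D^2u_1 + M_2\, D^2u_2 + \begin{pmatrix} Du_1 \\ Du_2\end{pmatrix}^{T} \begin{pmatrix} M_{11} & M_{12}\\ M_{12} & M_{22}\end{pmatrix}\begin{pmatrix} Du_1 \\ Du_2\end{pmatrix}.
\]
The last term is positive semidefinite because $M$ is convex on $\mathbb{R}^2$, by (a).

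Everything therefore reduces to showing $M_1 D^2u_1 + M_2 D^2u_2>0$. Monotonicity in (a) gives $M_1,M_2\geqslant 0$. Differentiating (d) in $a$ at $a=0$ gives $M_1+M_2\equiv 1$. Hence at every point at least one of $M_1,M_2$ is at least $\tfrac12$. We then get
\[
M_1 D^2u_1 + M_2 D^2u_2 \geqslant \min\{D^2u_1, D^2u_2\}\cdot(M_1+M_2)\geqslant \min\{\lambda_{\min}(D^2u_1),\lambda_{\min}(D^2u_2)\}\, I > 0.
\]
The bound is in fact quantitative: the lowest eigenvalue of the Hessian of $M_\eta(u_1,u_2)$ is at least the smaller of the lowest eigenvalues of $D^2u_1$ and $D^2u_2$. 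This is what is actually needed for the gluing, including in the hyperbolic-convex setting, where one applies the statement to $u_i^*=v_i\sqrt{1-|\xi|^2}$ in $B_1^\xi$.

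For functions that are only strictly convex, without assuming $C^2$ or positive Hessians, I will argue along segments. Take $x\neq y$ and $\theta\in(0,1)$, and set $z=\theta x+(1-\theta)y$. Strict convexity gives $u_i(z)<\theta u_i(x)+(1-\theta)u_i(y)$ for $i=1,2$. Since $M$ is nondecreasing in each variable and convex,
\[
M(u_1(z),u_2(z)) \leqslant M\bigl(\theta(u_1(x),u_2(x))+(1-\theta)(u_1(y),u_2(y))\bigr) \leqslant \theta M(u_1(x),u_2(x)) + (1-\theta)M(u_1(y),u_2(y)).
\]
To make the first inequality strict, shift both arguments down by $\delta=\min_i\bigl(\theta u_i(x)+(1-\theta)u_i(y)-u_i(z)\bigr)>0$. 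Monotonicity, followed by (d), then yields strict inequality with a gap of $\delta$. This avoids needing strict monotonicity of $M$.

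The main subtlety is this strictness step, and it is resolved by the translation property (d). In the smooth case the only nontrivial point is the identity $M_1+M_2=1$, which again comes from (d). I expect no further obstacles.
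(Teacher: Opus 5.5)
Your proposal is correct. Its second half (the segment argument) is exactly the paper's proof: shift both arguments down by $\delta=\min_i(\cdots)>0$, use monotonicity, then the translation property (d), then convexity of $M_\eta$. The paper proves only this segment-sense strict convexity.

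Your first half is a genuinely different, differential route that the paper does not take. You use three ingredients: smoothness of $M_\eta$, convexity of $M_\eta$ to discard the gradient term, and the identity $\partial_1 M_\eta+\partial_2 M_\eta\equiv 1$ obtained by differentiating (d). Together they give the pointwise bound
\[
D^2\bigl[M_\eta(u_1,u_2)\bigr]\geqslant \bigl(\partial_1 M_\eta\,\lambda_{\min}(D^2u_1)+\partial_2 M_\eta\,\lambda_{\min}(D^2u_2)\bigr) I\geqslant \min_i \lambda_{\min}(D^2u_i)\, I .
\]
This buys more than the paper's argument, because the segment inequality alone does not force a positive definite Hessian (think of $\xi_1^4$). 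In Lemma \ref{13.3}, the glued function $u_2^*$ must satisfy $D^2u_2^*>0$ for $\tilde v_0=u_2^*/w^*$ to be admissible, i.e.\ for $\Lambda_{\tilde v_0}>0$. Your Hessian version supplies exactly this, with a quantitative lower bound. The paper's version, in turn, is the one that needs no $C^2$ hypothesis on $u_1,u_2$.

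Two cosmetic points. First, $\min\{D^2u_1,D^2u_2\}$ is not defined for matrices, so write the middle expression as in the display above. Second, the remark that one of $\partial_i M_\eta$ is at least $\tfrac12$ is never used.
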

\begin{proof}
    Let $z = (1-\lambda) x + \lambda y$ for some $\lambda \in (0, 1)$. Since $u_1, u_2$ are strictly convex, we have
    \[
    u_1(z) < (1-\lambda) u_1(x) + \lambda u_1(y) \eqqcolon A,
    \]
    and
    \[
    u_2(z) < (1-\lambda) u_2(x) + \lambda u_2(y) \eqqcolon B.
    \]
    Define
    \[
    \delta \coloneqq \min \left\{ A - u_1(z), B - u_2(z) \right\} > 0,
    \]
    and 
    \[
    w(x) \coloneqq M_{\eta}(u_1(x), u_2(x)).
    \]
    Then since $M_\eta$ is nondecreasing, 
    \[
    M_{\eta}(u_1(z), u_2(z)) \leqslant M_{\eta}(A-\delta, B-\delta) = M_{\eta}(A, B)-\delta.
    \]
    by property $(d)$ of Lemma \ref{gluing lemma}. Hence
    \begin{align*}
        w(z) \leqslant M_{\eta}(A, B)-\delta \leqslant (1-\lambda) w(x) + \lambda w(y) - \delta < (1-\lambda) w(x) + \lambda w(y),
    \end{align*}
    which completes the proof.
\end{proof}

Let $u^*_0(\xi) = v_0(P^{-1}(\xi))w^*(\xi)$ defined in $B_1^{\xi}$ as before. Then $u^*_0$ is negative and strictly convex. 

\begin{lemma}\label{13.3}
    Given any smooth strictly convex function $h<0$ defined in some compact strictly convex set $K \subset \subset B_1$. For convenience, after subtracting a linear function, we may assume that the minimum point of $h$ lies in the interior of $K$. Then there exists some nonnegative constant $c_*(h) \geqslant 0$, such that if $c \geqslant c_*$, $h - c$ can be glued with $u_0^*$ to obtain a strictly convex function in $B_1$ satisfying \eqref{strict subsolution}.
\end{lemma}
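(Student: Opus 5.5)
The plan is to glue in the $\xi$-picture by setting $w\coloneqq M_\eta(h-c,u_0^*)$ on $K$ and $w\coloneqq u_0^*$ on $B_1\setminus K$. Here $h-c$ is extended to a strictly convex function on a neighborhood of $K$, with $M_\eta$ from Lemma \ref{gluing lemma}. For this to be well defined and smooth, $h-c$ must lie below $u_0^*-2\eta$ on a collar near $\partial K$. Then property (c) of Lemma \ref{gluing lemma} gives $w=u_0^*$ there, and the two definitions match. Strict convexity of $w$ on $K$ follows from the Proposition on $M_\eta$ of strictly convex functions, and strict convexity outside $K$ is that of $u_0^*$. We also have $w<0$: on $K$ we choose $c$ and $\eta$ so that $\max\{h-c,u_0^*\}+\eta<0$, which is possible since $u_0^*\le \max_K u_0^*<0$ on $K$. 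Outside $K$, $w=u_0^*<0$. Since $w$ coincides with $u_0^*$ off a compact set, the function $w/w^*$ on $\mathbb{H}^n$ coincides with $v_0$ near infinity. It therefore still has boundary value $\varphi$ and still satisfies \eqref{strict subsolution} outside the compact set $P^{-1}(K)$. This is exactly what is required, because \eqref{strict subsolution} only needs to hold outside some compact set, and we may take that set to contain $P^{-1}(K)$.

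The key point is how $c$ is chosen. This step carries the real content: $h$ may sit far above $u_0^*$ near $\partial K$, yet after the shift by $c$ the region where $h-c$ dominates must be compactly inside $K$. Subtracting a large constant alone would make $h-c<u_0^*$ everywhere, so that the glued function is just $u_0^*$ (trivially valid but uninteresting). To actually prescribe $h$ on a large compact piece, I would first replace $h$ by $h+\lambda(|\xi-\xi_0|^2-\rho^2)$, with $\xi_0$ the interior minimum point, or use the normalization in the statement. The function then grows near $\partial K$ relative to its interior values, and the minimum point lies inside. I then choose $c_*$ as follows. Set
\[
m\coloneqq \min_{\partial K}\bigl(u_0^*-h\bigr).
\]
For $c\ge c_*\coloneqq\max\{0,\,-m+2\eta+1,\,\max_K h+\eta+1\}$ we get $h-c\le u_0^*-2\eta-1$ on $\partial K$. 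By continuity the same holds, with $-1$ replaced by $0$, on a collar neighborhood of $\partial K$, which yields the matching above. The condition $\max_K h + \eta + 1 \le c$ is what guarantees negativity.

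The remaining issue is that the statement says the glued function satisfies \eqref{strict subsolution}. I read this as holding outside a compact set, in accordance with Definition \ref{Cond}, and this follows automatically since $w=u_0^*$ there. If instead one wanted it on the region where $h-c$ is used, one would need a quantitative lower bound on $F(\Lambda)$ of $(h-c)/w^*$. That bound comes from the fact that shifting down by $c$ makes $v=(h-c)/w^*$ more negative. Concretely, $\Lambda_{-c/w^*}=c\,x_{n+1}\,g$ contributes positively, while $v/\binom{n}{k}^{1/k}$ decreases by only $c\,x_{n+1}/\binom{n}{k}^{1/k}$. Using the concavity and homogeneity of $F$ exactly as in the computation for $\tilde{\underline v}$ in Section \ref{Approximate Dirichlet problem}, the two $c$-terms cancel and leave $F(\Lambda_{h/w^*})+h/(w^*\binom{n}{k}^{1/k})$. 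I expect this to be the main obstacle: showing that the regularized maximum preserves the subsolution inequality in the transition zone. For that I would invoke the convexity and monotonicity of $M_\eta$, which make $M_\eta$ of two subsolutions a viscosity (indeed smooth) subsolution, and the concavity of $F$, enlarging $c$ if necessary.
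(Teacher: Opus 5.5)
There is a genuine gap. You treat $u_0^*$ as fixed and use only the vertical shift $c$. With a fixed background this cannot produce a gluing in which $h-c$ actually survives.

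\begin{itemize}
\item \emph{Your gluing degenerates.} With your choice $c\geqslant \max_{\partial K}(h-u_0^*)+2\eta+1$, property (c) of Lemma \ref{gluing lemma} gives $M_\eta(h-c,u_0^*)=u_0^*$ on all of $K$ whenever $h-u_0^*$ attains its maximum over $K$ on $\partial K$. It gives the same for every $c\geqslant\max_K(h-u_0^*)+2\eta$ in any case. So the output is just $u_0^*$, and the lemma becomes vacuous.
\item \emph{What is actually needed.} The subsequent Proposition needs a glued function that coincides with $h-c$ on a large compact set; this is how $N_0$ is extended. It needs this for every $c\geqslant c_*$, with $c_*$ depending only on $h$. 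Your $c_*$ depends on $u_0^*$, hence on $\varphi$ and $\mu$.
\item \emph{No $c_*$ can work while $u_0^*$ is fixed.} Suppose $W$ is convex on $B_1$, $W=u_0^*$ near $\partial B_1$, and $W(\xi)=h(\xi)-c$ for some $\xi\in K$. Then $h(\xi)-c\geqslant u_0^*(\zeta)+Du_0^*(\zeta)\cdot(\xi-\zeta)$ for every $\zeta$ near $\partial B_1$. This fails once $c$ is large.
\item \emph{Your remedy goes the wrong way.} Replacing $h$ by $h+\lambda(|\xi-\xi_0|^2-\rho^2)$ alters the prescribed data. It also moves things in the wrong direction: you need $h-u_0^*$ to be larger somewhere inside $K$ than on $\partial K$, but adding a bowl to $h$ raises it on $\partial K$ and lowers it inside.
\end{itemize}

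The missing idea is that $u_0^*=(v+\mu g)w^*$ carries a free parameter $\mu$, which is chosen after $c$. Since $g<0$, taking $\mu$ large pushes $u_0^*$ down uniformly on compact subsets of $B_1$, while $u_0^*\to0$ at $\partial B_1$ for each fixed $\mu$. The paper accordingly glues outside $K$, in two steps.

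First, it extends $h-c$ to a negative strictly convex function on all of $B_1$. Let $d$ be the signed distance to $\partial K$, and let $f$ be convex and increasing with $f(-\delta)\leqslant\min_{\{d=-\delta\}}h-2\eta$ and $f(0)\geqslant\max_{\partial K}h+2\eta$. The extension is $h-c$ on $\{d\leqslant-\delta\}$, $M_\eta(h,f(d))-c$ on $\{-\delta\leqslant d\leqslant0\}$, and $f(d)-c$ beyond. The constant $c_*$ (e.g.\ $f(l)+4\eta$ when $f(l)\geqslant0$) is chosen so that, after the shift by any $c\geqslant c_*$, this extension stays below $-2\eta$ up to $\partial B_1$. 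This is why $c_*$ depends only on $h$.

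Second, for the given $c$, it takes $\mu$ so large that $u_0^*\leqslant f(l_1)-c-2\eta$ on $\{d=l_1\}$. It then takes $r$ so close to $1$ that $u_0^*\geqslant f(d)-c+2\eta$ on $\partial B_r$, and glues with $M_\eta$ in between. The result equals $h-c$ on $\{d\leqslant-\delta\}$ and $u_0^*$ near $\partial B_1$. Hence \eqref{strict subsolution} holds near infinity, which, as you correctly say, is all that Definition \ref{Cond} asks.

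Your side computation should be dropped. We have $-c/w^*=-c\,x_{n+1}$ and $\nabla^2x_{n+1}=x_{n+1}\bar g$ (with $\bar g$ the hyperbolic metric), so $\Lambda_{-cx_{n+1}}=0$. The downward shift therefore leaves $\Lambda$ unchanged and lowers $F(\Lambda_v)+v/\binom{n}{k}^{1/k}$, so there is no cancellation.
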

\begin{proof}
    First, let $d$ denote the signed distance function from the boundary of $K$, then $d$ is a convex function. For $\delta>0$ small, define the hypersurface $\left\{ d = -\delta \right\}$ by $K_1$, the boundary of $K$ by $K_2$. We now consider the ring-shaped area $\Omega$ enclosed by $K_1$ and $K_2$. 
    Let
    \[
    A_1 \coloneqq \min_{K_1} h, \quad A_2 \coloneqq \max_{K_2} h.
    \]
    Then choose a smooth strictly convex increasing function $f$ defined on $[-\delta, l]$ such that
    \[
    f(-\delta) \leqslant A_1 - 2\eta, \quad f(0) \geqslant A_2 + 2\eta, 
    % \quad f(l) < 0,
    \]
    for some small positive constant $\eta>0$, where $l$ is the maximal existence time, i.e. $l = \max\left\{ d(\xi, K) | \xi \in \partial B_1 \right\}$. For example, $f$ can be chosen as
    \[ 
    f(t) \coloneqq A_2 + 2\eta + \left(\frac{A_2 - A_1 + 4\eta}{\delta} + \varepsilon\delta\right)t + \varepsilon t^2, \quad t \in [-\delta,l], 
    \]
    where $0< \varepsilon < \frac{A_2 - A_1 + 4\eta}{\delta^2}$. However, $f(l)$ may be positive. If $f(l) < 0$, choose $c_* = 0$ and $\eta$ such that $f(l) + 2\eta < 0$; otherwise $f(l)\geqslant 0$, let $c_* = f(l) + 4\eta$.

    For our convenience, in the following, we will use $h, f(d)$ to denote the functions $h-c, f(d)-c$ translating downward by $c$, where $c \geqslant c_*$ is a constant.
    
    By direct calculation, we have 
    \begin{align*}
        D^2 f(d) = f^{\prime}(d) D^2 d + f^{\prime \prime}(d) D d \otimes D d > 0.
    \end{align*}
    Here $D d \otimes D d$ is positive and $D^2 d$ is bounded below by the second fundamental form at the corresponding projection point, which is also positive by strict convexity.
    
    Since $f(d)$ is strictly convex, by Lemma \ref{gluing lemma}, if we set
    \[
    {u}^*_1(\xi) = 
    \begin{cases}
    h(\xi), & \xi \in K \setminus \Omega, \\
    M_{\eta}(h(\xi), f(d(\xi))), & \xi \in \Omega, \\
    f(d(\xi)), & \xi \in \left\{ 0 \leqslant d \leqslant l \right\}.
    \end{cases}
    \]
    then ${u}^*_1$ is a negative smooth strictly convex function.

    Now we want to glue ${u}^*_1$ with ${u}^*_0$ to obtain the desired function. Since $g$ is increasing and negative, we can choose $\mu$ sufficiently large, such that 
    \[
    f(l_1) \geqslant u^*_0(\xi) + 2\eta, \quad \text{on } P_1 = \left\{ d = l_1 \right\},
    \]
    for some $0 < l_1 < l$. Then since $u^*_0(\xi) = v_0(P^{-1}(\xi))w^*(\xi)$ and $v_0$ is uniformly bounded, $u^*_0$ converges to $0$ uniformly. Thus we can choose $r$ close to $1$ such that 
    \[
    u^*_0(\xi) \geqslant f(d)(\xi) + 2\eta, \quad \text{on } P_2 = \partial B_r.
    \]
    With the above conditions, set $\tilde{\Omega}$ be the region enclosed by $P_1$ and $P_2$. By using Lemma \ref{gluing lemma}, let
    \[
    {u}^*_2(\xi) = 
    \begin{cases}
    {u}^*_1(\xi), & \xi \in \left\{ d \leqslant l_1 \right\}, \\
    M_{\eta}({u}^*_1(\xi), {u}^*_0(\xi)), & \xi \in \tilde{\Omega}, \\
    {u}^*_0(\xi), & \xi \in B_1 \setminus B_r.
    \end{cases}
    \]
    This is the desired function.
\end{proof}

Then setting $\tilde{v}_0 = \frac{{u}^*_2}{w^*}$, we obtain the desired function in hyperboloid. We conclude the main results of this section in the following: 

\begin{proposition}
Suppose $N_0=\{(x,h_0(x))\in \mathbb{R}^{n,1}; x\in \Omega \}$ is a piece of graphic spacelike strictly convex hypersurface and $Dh_0(\Omega) \subset \subset B_1$ is a strictly convex bounded open subset. There exits some constant  $c_* = c_*(h^*_0)$  as in Lemma \ref{13.3} only depending on $h^*_0$, where $h^*_0$ is the Legendre transform of $h_0$. Then for any $c \geqslant c_*$, we translate $N_0$ upward by $c$, which we still denote by $N_0$. Let $\varphi \in C^0(\mathbb{S}^{n-1})$ and $\varphi<0$. If $k = 1$, $N_0$ can be extended to an entire graphic spacelike strictly convex hypersurface $M_0$ satisfying all requirements of the initial hypersurface in Theorem \ref{inverse Hessian curvature flow}  and its support function has  boundary value  $\varphi $ on $\mathbb{H}^n(\infty)$.
For $2 \leqslant k \leqslant n$, if we further require $\varphi \equiv -1$,  such extension $M_0$ of $N_0$ also exists. 
\end{proposition}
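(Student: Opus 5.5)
The construction follows Lemma \ref{13.3}, starting from $v_0 = v + \mu g$ as in \eqref{v_0}. We take $h = h_0^*$, the Legendre transform of $h_0$, defined on $K = \overline{Dh_0(\Omega)}$, which is a compact strictly convex subset of $B_1$. The function $h_0^*$ is smooth and strictly convex, and after shrinking $K$ slightly we may assume it is smooth up to $\partial K$. Translating $N_0$ upward by $c$ changes $h_0^*$ to $h_0^* - c$. For $c \geqslant c_*$, Lemma \ref{13.3} then gives a negative, smooth, strictly convex function $u_2^*$ on $B_1$. It equals $h_0^*-c$ on $K\setminus\Omega$ and equals $u_0^*$ on $B_1\setminus B_r$. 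Set $\tilde v_0 = u_2^*/w^*$. By Appendix \ref{Recover hypersurface from its support function}, $\tilde v_0$ is the support function of an entire spacelike strictly convex graph $M_0$, whose Legendre transform is $u_2^*$. Since $u_2^* = h_0^*-c$ on the interior part of $K$, the graph $M_0$ contains the translated piece $N_0$ (possibly after the harmless shrinking of $\Omega$). Outside $U_r$ we have $\tilde v_0 = v_0$. So the boundary value at infinity is $\varphi$, and $\tilde v_0$ satisfies the strict subsolution inequality \eqref{strict subsolution} on the complement of the compact set $\overline{U}_r$, because $v_0 = v + \mu g$ is there an entire strict subsolution by concavity and Lemma \ref{smooth function g}.

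What remains is the lower bound \eqref{sigmak lower bound}. On the compact set $\overline{U}_r$ it holds automatically, since $\tilde v_0$ is smooth and strictly hyperbolic convex there. The problem is near infinity, where $\tilde v_0 = v + \mu g$. We need $\sigma_k(\kappa) \geqslant c_0$ there, which is equivalent to the curvature radii $\lambda[\Lambda_{v+\mu g}]$ satisfying $\sigma_n/\sigma_{n-k} \leqslant C$. By \eqref{1.4} and \eqref{sigmak selfshrinker}, $F(\Lambda_v) = -v/\binom{n}{k}^{1/k}$ is bounded, and so is $F(\Lambda_g)$. However, $F$ being bounded does not bound $\sigma_n/\sigma_{n-k}$ of the sum from above unless the two matrices are comparable, and this is where the two cases split.

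\emph{Case $k=1$.} Here $F = \sigma_n/\sigma_{n-1} = 1/\sigma_1(\Lambda^{-1})$. For positive matrices, $\Lambda_{v+\mu g} \geqslant \Lambda_g$, and this quantity is monotone, so $\sigma_1(\Lambda_{v+\mu g}^{-1}) \geqslant$ the corresponding value for $\Lambda_v$ alone, i.e. $\sigma_1(\Lambda_{v+\mu g}^{-1}) \geqslant \sigma_1(\Lambda_v^{-1})$. Geometrically, $\sigma_1(\kappa) = \sigma_1(\Lambda^{-1})$, so $\sigma_1(\kappa[M_0]) \geqslant \sigma_1(\kappa[\text{self-shrinker}]) = n\,(-v)^{-1} \geqslant n/\sup|\varphi|$-type positive constant. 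This gives \eqref{sigmak lower bound} directly.

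\emph{Case $2\leqslant k\leqslant n$ with $\varphi\equiv-1$.} By uniqueness in Theorem \ref{self-shrinker existence}, $v\equiv -1$, the hyperboloid. Then $\Lambda_v = I$ and $\Lambda_g$ is explicit from Lemma \ref{smooth function g}: its eigenvalues are $(a+1)(\cosh\rho)^{-a}$ with multiplicity $n-1$ and $(\cosh\rho)^{-a}(1-a^2+(a+a^2)\operatorname{sech}^2\rho)$. Both are bounded above, so the curvature radii of $M_0$ near infinity are bounded. Hence every $\kappa_i$ is bounded below by a positive constant, and $\sigma_k(\kappa) \geqslant c_0$.

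The main obstacle is this last step for general $\varphi$ and $k\geqslant2$. There the curvature radii of the general self-shrinker $v$ need not be bounded, which is why the proposition restricts to $\varphi\equiv-1$. The remaining care is the routine verification that the two gluings preserve smoothness and that $N_0$ is literally contained in $M_0$.
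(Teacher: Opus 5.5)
Your reduction matches the paper's: glue via Lemma \ref{13.3}, take the strict subsolution inequality and the boundary value $\varphi$ from $v_0=v+\mu g$ near infinity, and reduce \eqref{sigmak lower bound} to an upper bound on $F(\Lambda_{v_0})$ near infinity. Your case $2\leqslant k\leqslant n$, $\varphi\equiv-1$, is also correct. The case $k=1$, however, rests on an inequality that goes the wrong way. Since $\Lambda_{v+\mu g}=\Lambda_v+\mu\Lambda_g>\Lambda_v$ and inversion is order-reversing on positive definite matrices, $\Lambda_{v+\mu g}^{-1}<\Lambda_v^{-1}$, and therefore
\[
\sigma_1(\kappa[M_0])=\operatorname{tr}\Lambda_{v+\mu g}^{-1}<\operatorname{tr}\Lambda_v^{-1}=\frac{n}{-v}.
\]
This is an upper bound on $\sigma_1(\kappa)$, of the same type as the subsolution condition, not the lower bound \eqref{sigmak lower bound}. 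Adding a positive definite radii tensor always flattens the hypersurface, so no monotonicity argument can give the lower bound. You have to show quantitatively that the radii cannot grow too much.

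The missing ingredient is a quantitative upper bound for $F$ of the sum, and this is where $k=1$ is used. The paper applies concavity at $A=\Lambda_v$ with $B=\mu\Lambda_g$:
\[
F(A+B)\leqslant F(A)+F^{ij}(A)B_{ij}\leqslant F(A)+\lambda_{\max}(B)\sum_i F^{ii}(A).
\]
It combines this with $\sum_i F^{ii}=\frac{F}{k}\bigl(\frac{\sigma_{n-1}}{\sigma_n}-(k+1)\frac{\sigma_{n-k-1}}{\sigma_{n-k}}\bigr)\leqslant\frac{F}{k}\frac{\sigma_{n-1}}{\sigma_n}$, which equals $1$ when $k=1$. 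Since $F(\Lambda_v)=-v/n$ is bounded and the eigenvalues of $\Lambda_g$ are bounded (they tend to $0$), you get $F(\Lambda_{v_0})\leqslant C$ near infinity, i.e. $\sigma_1(\kappa)=1/F\geqslant 1/C$. An equivalent elementary route uses $\operatorname{tr}\bigl((A+B)^{-1}\bigr)\geqslant 1/\lambda_{\min}(A+B)\geqslant 1/\bigl(\lambda_{\min}(A)+\lambda_{\max}(B)\bigr)$ together with $\lambda_{\min}(A)\leqslant nF(A)=-v$. For $k\geqslant2$ the factor $\frac{F}{k}\frac{\sigma_{n-1}}{\sigma_n}$ is not controlled, which is the actual reason for the restriction to $\varphi\equiv-1$.
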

\begin{proof}
% For $k=n$, the existence of extension is a corollary of Lemma \ref{13.3}. 
% For $k\neq n$,
By Lemma \ref{13.3}, we only need to bound $F$ from above near infinity. Since $F = \left( \frac{\sigma_n}{\sigma_{n-k}} \right)^{\frac{1}{k}}$, we have
\[
F^{ii} = \frac{F}{k}\left( \frac{1}{\lambda_i} - \frac{\sigma_{n-k-1}(\lambda|i)}{\sigma_{n-k}} \right).
\]
Take summation, we have
\begin{align*}
    \sum_i F^{ii} &= \frac{F}{k}\left( \frac{\sigma_{n-1}}{\sigma_n} - \left( k+1 \right)\frac{\sigma_{n-k-1}}{\sigma_{n-k}} \right) \leqslant \frac{F}{k} \frac{\sigma_{n-1}}{\sigma_n} \\
    &\overset{k=1}{=} 1.
\end{align*}
Then by \eqref{v_0}, we let $A = \Lambda_{v}$, $B = \Lambda_{\mu g}$. By concavity of $F$, we have
\begin{align*}
    F(A+B) &\leqslant F(A) + F^{ii}(A)B_{ii} \\
    &\overset{k=1}{\leqslant} C,
\end{align*} 
since $\lambda(B) \to 0$ as $\rho \to +\infty$. Thus, for $k=1$, we have proved the existence.  

For $2\leqslant k\leqslant n$, since $\varphi \equiv -1$, i.e. the unique solution $v$ of  \eqref{sigmak selfshrinker} is $v \equiv -1$. Therefore $\Lambda_{v}$ is the standard metric of $\mathbb{H}^n$. Thus,  
$F(\Lambda_v+\Lambda_{\mu g})$ is also bounded, which finishes our proof. 
\end{proof}

\begin{remark}
The upward translation of $N_0$ is necessary, since we require the initial hypersurface $M_0$ satisfies
\[
u_0(x) \rightarrow |x| \quad \text{as } |x| \rightarrow \infty.
\]
Consequently, if $N_0$ lies in the below of the light cone, then the spacelike condition gives that any strictly convex extension  should  also lies in the below of the light cone at infinity, which contradicts to the above asymptotic condition.    

\end{remark}

\section{A counterexample}

In the last section, we prove Theorem \ref{Counterexample}. For a support function \(v\) defined on  \(\H^n\), recall the definition of principal radii tensor:  
\[ \Lambda_v = \nabla^2 v - v\bar{g}, \]
where \(\bar{g}\) is the standard metric on \(\H^n\) and \(\nabla\) is the Levi-Civita connection of \(\bar{g}\). 
Let \(w^*(\xi) = \sqrt{1-|\xi|^2}\) and \(u^*(\xi) = v(P^{-1}(\xi))\cdot w^*(\xi)\). Define 
\[ b_{ij} = w^*\cdot \gamma^*_{ik}(\partial_{\xi_i \xi_j}u^*) \gamma^*_{kj}, \]
where \(\gamma^*_{ij} = \delta_{ij} - \frac{\xi_i\xi_j}{1+w^*}\). The matrix \((\gamma^*_{ij})\) is the square root of the matrix \((g_{ij}) = \delta_{ij} - \xi_i \xi_j\). 
\par It is known that the eigenvalues of the matrix \((b_{ij})\) are also the eigenvalues of $\Lambda_v$. We first consider the following eigenvalue equation in \(B_1\): 
\begin{equation}\label{eigenequ}
    \sigma_1(b_{ij}[u^*])(\xi) = \frac{\lambda u^*}{w^*}(\xi), \quad \text{in }\xi\in B_1,
\end{equation}
for \(-n \leqslant \lambda \leqslant 0\).

\subsection{The eigenvalue equation on \texorpdfstring{\(B_1\)}{B1}}\label{The eigenvalue equation on B1}

In this section we investigate \eqref{eigenequ}. By definitions of \(b_{ij}\) and \(w^*(\xi)\), the above equation can be written as  
\begin{equation}\label{eq:eigenvalue-equation-u*}
    \sqrt{1-|\xi|^2}\left(\Delta u^*(\xi) - \sum_{i,j=1}^{n}\xi_i \xi_j \partial_{ij}u^*(\xi)\right) = \frac{\lambda u^*(\xi)}{\sqrt{1-|\xi|^2}}, \quad \xi\in B_1 \quad(-n \leqslant \lambda \leqslant 0).
\end{equation}
To obtain a special solution to \eqref{eq:eigenvalue-equation-u*}, we consider the following form 
\begin{equation}\label{eq:expected-form}
    u^*(\xi) = a(r)(n\cos^2\theta - 1),
\end{equation}
where \(r = |\xi|\), and \(\theta\) is the angle between \(\xi\) and the \(\xi_1\)-axis. Denote \(Y(\theta) = n\cos^2\theta - 1\), then it is easy to check \(\Delta_{S^{n-1}}Y = -2n Y\). By the expression of Laplacian operator in the polar coordinate, we have 
\[\begin{aligned} 
    \Delta u^* &= \partial_{rr}u^* + \frac{n-1}{r}\partial_{r}u^* + \frac{1}{r^2}\Delta_{S^{n-1}}u^* \\
    &= \left[a''(r) + \frac{n-1}{r}a'(r) - \frac{2n}{r^2}a(r)\right]Y(\theta).
\end{aligned} \]
Then equation \eqref{eq:eigenvalue-equation-u*} reduces to 
\begin{equation}\label{eq:reduced-eigenvalue-equation-a}
    \mathcal{L}(a) := (1-r^2)a''(r) + \frac{n-1}{r}a'(r) - \left( \frac{2n}{r^2} + \frac{\lambda}{1-r^2} \right)a(r) = 0, \quad r\in (0,1).
\end{equation}
Assume that \(a(r) = r^2(1-r^2)^\beta \phi(r^2)\) for some constant \(\beta\), and some function \(\phi = \phi(s)\) to be determined later. To simplify the calculation, let \(s = r^2\) and \(f(s) = s(1-s)^\beta \phi(s)\). Then  
\begin{equation}\label{eq:chain-rule}
    a'(r) = 2rf'(s), \quad a''(r) = 2f'(s) + 4s f''(s). 
\end{equation} 
Substitute \eqref{eq:chain-rule} into \eqref{eq:reduced-eigenvalue-equation-a}, we get 
\begin{equation}\label{eq:La-1}
\begin{aligned}
    \mathcal{L}(a) &= (1-s)\left[2f'(s) + 4sf''(s)\right] + \frac{n-1}{r}\left[2r f'(s)\right] - \left( \frac{2n}{s} + \frac{\lambda}{1-s} \right)f(s) \\
    &= 4s(1-s)f''(s) + 2(n-s)f'(s) - \left( \frac{2n}{s} + \frac{\lambda}{1-s} \right)f(s).    
\end{aligned}
\end{equation} 
Since \(f(s) = s(1-s)^\beta \phi(s)\), we have 
\begin{equation}\label{eq:derivatives-f}
    \begin{aligned}
        f'(s) &= f\cdot\left(\frac{1}{s} - \frac{\beta}{1-s} + \frac{\phi'}{\phi}\right), \\
        f''(s) &= f'\cdot\left(\frac{1}{s} - \frac{\beta}{1-s} + \frac{\phi'}{\phi}\right) + f\left(-\frac{1}{s^2} - \frac{\beta}{(1-s)^2} + \frac{\phi''}{\phi} - \frac{(\phi')^2}{\phi^2}\right).
    \end{aligned}
\end{equation}
Substitute \eqref{eq:derivatives-f} into \eqref{eq:La-1}, we have 
\begin{equation}\label{eq:La-2}
    \begin{aligned}
        \mathcal{L}(a) &= 4s(1-s)^{\beta} \Big\{ s(1-s) \phi''(s) + \frac{1}{2}\left[ n + 4 - (4\beta+5)s \right] \phi'(s) \\
        &\quad + \frac{2(2\beta+1)(\beta+1)s - (2\beta n + 8\beta + \lambda + 2)}{4(1-s)} \phi(s) \Big\}.
    \end{aligned}
\end{equation}
We choose \(\beta\) such that the coefficient of \(\phi(s)\) reduces to a constant. To be specific, we choose \(\beta\) satisfying 
\begin{equation*}
    2(2\beta+1)(\beta+1) - (2\beta n + 8\beta + \lambda + 2) = 0,
\end{equation*}
which simplifies to 
\begin{equation}\label{eq:eq-beta}
    4\beta^2 - 2(n+1)\beta - \lambda = 0.
\end{equation}
We choose the smaller positive root of \eqref{eq:eq-beta} 
\begin{equation}\label{eq:choice-beta}
    \beta = \frac{(n+1) - \sqrt{(n+1)^2 + 4\lambda}}{4} \in [0,\frac{1}{2}],
\end{equation}
since $\lambda \in [-n, 0]$. 
Then \eqref{eq:La-2} reduces to 
\begin{equation}\label{eq:La-3}
    \begin{aligned}
        \mathcal{L}(a) &= 4s(1-s)^{\beta} \Big\{ s(1-s) \phi''(s) + \frac{1}{2}\left[ n + 4 - (4\beta+5)s \right] \phi'(s) \\
        &\quad - \frac{1}{2}(2\beta+1)(\beta+1)\phi(s) \Big\}.
    \end{aligned}
\end{equation}
Comparing \eqref{eq:La-3} with the standard hypergeometric equation 
\begin{equation}
    s(1-s)\phi'' + \big[c - (a+b+1)s\big]\phi' - ab\phi = 0,
\end{equation}
we get 
\begin{equation}
    a = \beta + 1, \quad b=\beta + \frac{1}{2}, \quad c = \frac{n+4}{2}.
\end{equation}
With the choice \eqref{eq:choice-beta}, we have \(\phi(s) = {}_2F_1\left(\beta + 1, \beta + \frac{1}{2}; \frac{n+4}{2} ; s\right)\). Here 
\[ {}_2F_1(a, b; c; z) = \sum_{k=0}^\infty \frac{(a)_k (b)_k}{(c)_k k!} z^k , \quad (a)_k := \begin{cases}
    1 ,\quad k=0 \\ a(a+1)\cdots(a+k-1),\quad k\geqslant 1 
\end{cases}  \]
is the Gauss hypergeometric function. Then a solution to \eqref{eq:reduced-eigenvalue-equation-a} is given by 
\begin{equation}\label{eq:choice-of-a}
    a(r) = r^2(1-r^2)^\beta {}_2F_1\left(\beta + 1, \beta + \frac{1}{2}; \frac{n+4}{2} ; r^2\right) \quad r\in (0,1),
\end{equation}
where \(\beta\) is a positive constant given by \eqref{eq:choice-beta}.
% \begin{remark}
%     This solution, however, tends to \(+\infty\) as \(r\to 1^-\). This fact differs from that when \(\lambda \leq 0\).
% \end{remark}
\par Since \(c-a-b = \frac{n+1}{2} - 2\beta > 0\), from the theory of hypergeometric functions we know that \(\lim_{s\to 1^-}{}_2F_1(a,b;c;s)\) is finite. This shows \(\lim_{r\to 1^-}a(r) = 0\). Specifically, by Gauss's hypergeometric formula:
\begin{equation}
    \begin{aligned}
        \lim_{s\to 1^-}{}_2F_1(a,b;c;s) &= \frac{\Gamma(c)\Gamma(c-a-b)}{\Gamma(c-a)\Gamma(c-b)} \\
        &= \frac{\Gamma(\frac{n+4}{2})\Gamma(\frac{n+1}{2} - 2\beta)}{\Gamma(\frac{n+2}{2}-\beta)\Gamma(\frac{n+3}{2}-\beta)} < +\infty.
    \end{aligned}
\end{equation}

It is clear that with the radial function \eqref{eq:choice-of-a} and the choice \eqref{eq:choice-beta}, the expected solution \(u^*(\xi)\) \eqref{eq:expected-form} satisfies  
\begin{equation}\label{minus}
    (b_{ij}[u^*])(0) = (D^2 u^*)(0) = \begin{pmatrix}
        2(n-1) &   &   &  \\
               & -2&   &  \\
               &   & \ddots&  \\
               &   &   & -2
    \end{pmatrix},
\end{equation}
which means \(\xi=0\) is a saddle point of \(b_{ij}[u^*](\xi)\).

\subsection{Proof of Theorem \ref{Counterexample}}

\par (1) We let \(\varepsilon > 0\) be small and consider
\begin{equation}\label{eq:v} 
v(y,t) = -e^{-nt} + \varepsilon \hat{v}(y),\quad y\in \H^n, t\in(0,+\infty),
\end{equation}
where \(\hat{v}(y)\) satisfies the equation 
\begin{equation}\label{eq:eigenvalue-eq-v} 
    \sigma_1(\Lambda_{\hat{v}}) = 0,\quad \text{in } \H^n. 
\end{equation}
By the above subsection, we can give a specific $\hat{v}$, which is the solution of \eqref{eigenequ} for $\lambda=0$.  
Then \(v(y,t)\) is a solution to 
\[ \pd{v}{t} = \sigma_1(\Lambda_v),  \]
which is the dual equation of \eqref{IQCF}. Namely, we can construct a family of entire spacelike strictly convex hypersurfaces $\mathcal{M}_t$ satisfying \eqref{IQCF} such that the support function of $\mathcal{M}_t$ is $v$.

By  \eqref{minus},  \(\Lambda_{\hat{v}}(y)\) has a negative eigenvalue \(\lambda_i = -2\) at some point \(y_0\), then \(\Lambda_v(y_0)\) will have an eigenvalue \(\lambda_i = e^{-nt}-2\varepsilon \). It will become \(0\) at $T=-\frac{\log 2\varepsilon}{n}$ if we take \(\varepsilon < 1/2\).

In the next, we will prove $v$ is strictly hyperbolic convex near the infinity for any $t\leqslant T$. In our case, $\hat{v}$ gives $\beta=0$ and the function in \eqref{eq:choice-of-a} becomes
\[
a(r)=r^2H(r^2) \text{ and } \hat{u}^*(\xi)=a(r)Y(\theta),\]
where
\[
H(s):={}_2F_1\left(1,\frac12;\frac{n+4}{2};s\right). 
\]
Therefore, in the polar coordinate, $\{\partial_r,\partial_\theta,\cdots\}$, the matrix $b$ has the block form
\[
b=
\begin{pmatrix}
b_{rr}&b_{r\theta}&0\\
b_{r\theta}&b_{\theta\theta}&0\\
0&0&b_T I_{n-2}
\end{pmatrix},
\]
where
\[
% b_{rr}=w^3\left(2H+10r^2H'+4r^4H''\right)(n\cos^2\theta-1),
b_{rr}=2w^*\left[H-(n-1)r^2H'\right](n\cos^2\theta-1),
\]
\[
b_{r\theta}=-2n(w^*)^2\sin\theta\cos\theta\left(H+2r^2H'\right),
\]
\[
b_{\theta\theta}=2w^*\left[H(n\sin^2\theta-1)+r^2H'(n\cos^2\theta-1)\right],
\]
\[
b_T=2w^*\left[-H+r^2H'(n\cos^2\theta-1)\right].
\]
Consequently, the matrix $b$ can be factored as
\[
b=w^*\mathcal B.
\]
By Gauss' formula for the hypergeometric function, both $H$ and $H'$ extend continuously to $s=1$. More precisely,
\[
H(1)=\frac{n+2}{n+1},
\quad
H'(1)=\frac{n+2}{(n-1)(n+1)}.
\]
Hence there exists a constant $C>0$ such that
\[
\|\mathcal B\|\leqslant C
\]
for $r$ sufficiently close to $1$. Since $w^*=\sqrt{1-r^2}\to0$ as $r\to1^-$, we obtain
\[
\|b\|=w^*\|\mathcal B\|\leqslant C\sqrt{1-r^2}\to 0.
\]
Thus all the eigenvalues of the principal radii matrix $b$ converge to zero at the boundary of the Klein ball. 
Therefore, $\lambda_v$ is strictly hyperbolic convex for $t\leqslant T$  near the infinity. If $\hat{v}$ has another saddle point $y_1$ except $y_0$ and some eigenvalue of $\Lambda_{\hat{v}}$ is less than $-2$, we only need  take a smaller $\varepsilon$. 

At last, let \(U^*(\xi, t) = v(P^{-1}(\xi), t)\cdot w^*(\xi)\), we will show that the Legendre transform of $U^*$ is defined on $\mathbb{R}^n$ for any $t$, i.e.
\[
D U^*(B_1) = \mathbb{R}^n.
\]
% Let 
% \[
% e_r=\partial_r,\quad e_\theta=\frac1r\partial_\theta.
% \]
By direct calculation, we have
\[
D \hat{u}^*=2r\left(H+r^2H'\right)Y(\theta)e_r-2nrH\sin\theta\cos\theta e_\theta.
\]
Hence
\[
\sup_{\xi\in B_1}|D\hat{u}^*(\xi)|<\infty.
\]
However, since
\[
D U^*= e^{-nt}\frac{\xi}{\sqrt{1-r^2}} + \varepsilon D \hat{u}^*.
\]
We have for $t \leqslant T$, for any $B_R \subset \mathbb{R}^n$, there exists some $r_0 < 1$, such that 
\[
D U^*(B_{r_0}) \supset B_R.
\]
% \[
% D U^* = e^{-nt}\frac{\xi}{\sqrt{1-r^2}} + O(1),
% \]
% and
% \[
% \left| D U^* \right| \to \infty, \quad \text{as } r \to 1^-.
% \]
Therefore we have $D U^*(B_1) = \mathbb{R}^n$. Hence Theorem \ref{Counterexample} (1) is proved.

\begin{remark}
    Notice that in our construction, $\hat{v} = \frac{\hat{u}^*}{w^*}$ is unbounded at infinity. Indeed, for any solution of  \eqref{eq:eigenvalue-eq-v}, if we  require it is bounded on $\mathbb{H}^n(\infty)$, it should be identically zero. We can prove this  for arbitrary eigenvalue equation on $\mathbb{H}^n$:
    \[
    \Delta v = \lambda v,
    \]
    for constant $\lambda>0$. Its solution must be identically zero by applying Omori-Yau's generalized maximum principle if $v \in C^2(\mathbb{H}^n) \cap C^0(\overline{\mathbb{H}}{}^n)$.  
\end{remark}

%\subsection{The endpoint case \texorpdfstring{$\lambda=-n$}{lambda=-n}}
(2) The dual equation of \eqref{QS} is
\begin{equation}
    \sigma_1(\Lambda[v]) = -n v,\quad \text{in } \H^n, 
\end{equation}
where $v$ is the support function.  It can be rewritten as \eqref{eigenequ} for $\lambda=-n$. As before, we let
$$v=-1-\alpha \hat{v},$$
where $\hat{v}$ is constructed in the previous subsection.  
In this case, $\hat{v}$ gives $\beta=\frac{1}{2}$ and the function in \eqref{eq:choice-of-a} becomes 
\begin{equation}
    a(r) = r^2(1-r^2)^\frac{1}{2} H(r^2),
\end{equation}
where
\[
H(s):={}_2F_1\left(\frac{3}{2}, 1; \frac{n+4}{2}; s\right).
\]
Similarly, the matrix $b$ has the block form
\[
b=
\begin{pmatrix}
b_{rr}&b_{r\theta}&0\\
b_{r\theta}&b_{\theta\theta}&0\\
0&0&b_T I_{n-2}
\end{pmatrix},
\]
where
\[
b_{rr}=\left[(2-3r^2)H-2(n-1)r^2(1-r^2)H'\right](n\cos^2\theta-1),
\]
\[
b_{r\theta}=-2nw^*\sin\theta\cos\theta\left[(1-2r^2)H+2r^2(1-r^2)H'\right],
\]
\[
b_{\theta\theta}=-r^2H(n\cos^2\theta-1) + 2(1-r^2)\left[H(n\sin^2\theta-1)+r^2H'(n\cos^2\theta-1)\right],
\]
\[
b_T=-r^2H(n\cos^2\theta-1) + 2(1-r^2)\left[-H+r^2H'(n\cos^2\theta-1)\right].
\]
Since we have
\[
H'(s) = \frac{3}{n+4} {}_2F_1\left(\frac{5}{2}, 2; \frac{n+6}{2}; s\right).
\]
For any $n \geqslant 2$, as $r\to1^-$, we have
\[
(1-r^2)H' \to 0. 
\]
Hence we have
\[
b \to -H(1)(n\cos^2\theta-1)I \quad \text{as } r\to1^-,
\]
where $H(1) = \frac{n+2}{n-1}$ and $I$ is the identity matrix. Thus the eigenvalues of $b$ are uniformly bounded in $B_1$. The hyperbolic Hessian of $v$ is 
\[
I - \alpha b.
\]
At the boundary, we require
\[
1 + \alpha H(1)(n\cos^2\theta-1) > 0,
\]
At origin, we require
\[
1 - 2\alpha (n-1) \leqslant 0.
\]
If $n \geqslant 3$, both conditions hold for some $\alpha$, for example, $\alpha = \frac{1}{2(n-1)}$. 
Moreover, we can see that $\hat{v} = H(1)Y(\theta)$ on $\mathbb{H}^n(\infty)$, thus $\hat{v}$ is bounded. 

Same as in (1), we also let \(U^*(\xi, t) = v(P^{-1}(\xi), t)\cdot w^*(\xi)\), and we will show that 
\[
D U^*(B_1) = \mathbb{R}^n.
\]
By direct calculation, we have
\[
D \hat{u}^*=\frac{r}{w^*}\left[(2-3r^2)H+2r^2(1-r^2)H'\right]Y(\theta)e_r-2nrw^*H\sin\theta\cos\theta e_\theta.
\]
Hence as $r \to 1^-$, 
\begin{align*}
    D U^*(r, \theta) &= \frac{r}{w^*}e_r - \alpha D \hat{u}^* \\
    &\to \frac{1}{w^*}\left[1 + \alpha H(1)Y(\theta) \right] e_r + o(\frac{1}{w^*}).
\end{align*}
By the choice of $\alpha$, we have
\[
1 + \alpha H(1)Y(\theta) > 0.
\]
Therefore
\[
\left| D U^* \right| \to \infty, \quad \text{as } r \to 1^-.
\]
From this we know the set $D U^*(B_1)$ is both open and closed, thus $D U^*(B_1) = \mathbb{R}^n$. Hence Theorem \ref{Counterexample} (2) is proved.

\appendix

\section{Proof of Lemma \ref{Gauss selfshrinker Pogorelov}}\label{Proof of Gauss selfshrinker Pogorelov}

\begin{proof}
    Take test function
    \begin{align*}
        W &= \left( -\varepsilon x_{n+1}(y) - v(y) \right)^{\alpha} \lambda_1 e^{\frac{\beta}{2} |\nabla v|^2},
    \end{align*}
    where $\alpha, \beta$ are two positive constants to be determined later.

    Suppose $W$ achieves its maximum value at some point $y_0 \in U_r$. Choose a local orthonormal frame $\{\tau_1, \cdots, \tau_n\}$ around $y_0$ such that $\Lambda_{ij} = \lambda_i\delta_{ij}$ and $\lambda_1 \geqslant \lambda_2 \geqslant \cdots \geqslant \lambda_n$. Suppose $\lambda_1$ has multiplicity $m$ at $y_0$. Differentiating $W$ twice, at $y_0$, we have
    \begin{align}\label{Gauss1}
        0 = (\log W)_i =& \alpha \frac{\varepsilon (x_{n+1})_i + v_i}{\varepsilon x_{n+1} + v} + \frac{(\lambda_1)_i}{\lambda_1} + \beta v_iv_{ii} \\
        0 \geqslant (\log W)_{ii} 
        =& \alpha \left( \frac{\varepsilon x_{n+1} + v_{ii}}{\varepsilon x_{n+1} + v} - \frac{(\varepsilon (x_{n+1})_{i} + v_{i})^2}{(\varepsilon x_{n+1} + v)^2}  \right)
        \notag \\
        &+ \frac{(\lambda_1)_{ii}}{\lambda_1} - \frac{(\lambda_1)_{i}^2}{\lambda_1^2} 
        + \beta \left( v_{ii}^2 + \sum_k v_kv_{kii} \right) 
        \notag \\
        =& \alpha \left( 1 + \frac{\lambda_i}{\varepsilon x_{n+1} + v} \right) - \frac{1}{\alpha}\left( \frac{(\lambda_1)_i}{\lambda_1} + \beta v_{i}v_{ii} \right)^2 
        \notag \\
        &+ \frac{(\lambda_1)_{ii}}{\lambda_1} - \frac{(\lambda_1)_{i}^2}{\lambda_1^2} 
        + \beta \left( v_{ii}^2 + \sum_k v_kv_{kii} \right) 
        \notag \\
        \geqslant& 
        \frac{\alpha \lambda_i}{\varepsilon x_{n+1} + v} 
        - \left( \frac{C\beta}{\alpha^2}\frac{(\lambda_1)_{i}^2}{\lambda_1^2} + \frac{\beta}{2}v_{ii}^2 \right) - \frac{\beta^2}{\alpha}v_{i}^2v_{ii}^2 
        \notag \\
        &+ \frac{(\lambda_1)_{ii}}{\lambda_1} - \left( 1+\frac{1}{\alpha} \right)\frac{(\lambda_1)_{i}^2}{\lambda_1^2} 
        + \beta \left( v_{ii}^2 + \sum_k v_kv_{kii} \right). 
    \end{align}
    By Lemma 5 in \cite{brendle2017asymptotic}, we have
    \begin{align}
        \delta_{kl} \left( \lambda_1 \right)_i &= \Lambda_{kl,i} \quad 1 \leqslant k,l \leqslant m, \label{lambda 1st order} \\
        \left( \lambda_1 \right)_{ii} &\geqslant \Lambda_{11,ii} + 2 \sum_{p > m} \frac{\Lambda^2_{1p,i}}{\lambda_1 - \lambda_p}. 
    \end{align}
    Plugging into \eqref{Gauss1}, we have
    \begin{align}\label{Gauss2}
        0 \geqslant& 
        \frac{\Lambda_{11,ii}}{\lambda_1} + 2 \sum_{p > m} \frac{\Lambda^2_{1p,i}}{\lambda_1(\lambda_1 - \lambda_p)} - \left( 1+\frac{1}{\alpha} \right)\frac{\Lambda_{11,i}^2}{\lambda_1^2} 
        \notag \\
        &+ \frac{\alpha \lambda_i}{\varepsilon x_{n+1} + v} 
        - \left( \frac{C\beta}{\alpha^2}\frac{\Lambda_{11,i}^2}{\lambda_1^2} + \frac{\beta}{2}v_{ii}^2 \right) - \frac{\beta^2}{\alpha}v_{i}^2v_{ii}^2 
        + \beta \left( v_{ii}^2 + \sum_k v_kv_{kii} \right).
    \end{align}
    Contracting with $\sigma_n^{ii}$, we have
    \begin{align}
        0 \geqslant& 
        \frac{\left( f(v) \right)_{11} - \sigma_n^{pq,rs} \Lambda_{pq,1}\Lambda_{rs,1}}{\lambda_1} + \frac{\sigma_n^{ii}(\lambda_i - \lambda_1)}{\lambda_1} + 2 \sum_{p > m} \frac{\sigma_n^{ii}\Lambda^2_{1p,i}}{\lambda_1(\lambda_1 - \lambda_p)} 
        \notag \\
        &- \left( 1+\frac{1}{\alpha}+\frac{C\beta}{\alpha^2} \right)\sigma_n^{ii} \frac{\Lambda_{11,i}^2}{\lambda_1^2} + \frac{\alpha n \sigma_n}{\varepsilon x_{n+1} + v} 
        - \frac{\beta^2}{\alpha} \sigma_n^{ii}v_{i}^2v_{ii}^2 
        \notag \\
        &+ \frac{\beta}{2} \sigma_n^{ii}v_{ii}^2 
        + \beta v_i \left( f(v) \right)_i 
        + \beta \sigma_n^{ii} v_i^2,
    \end{align}
    where by differentiating equation $\sigma_n \left[v_{ij}-v\delta_{ij}\right] = (-v)^n \eqqcolon f(v)$ twice, we have used 
    \begin{align}
        \sigma_n^{ii}\Lambda_{ii,k} = \sigma_n^{ii}\Lambda_{ki,i} = \left( f(v) \right)_k,
    \end{align}
    and 
    \begin{align}
        \sigma_n^{ii}\Lambda_{ii,11} &= \left( f(v) \right)_{11} - \sigma_n^{pq,rs} \Lambda_{pq,1}\Lambda_{rs,1}.
    \end{align}
    We also use the fact that
    \begin{align}
        \Lambda_{11,ii} = \Lambda_{ii,11} + \Lambda_{ii} - \Lambda_{11}.
    \end{align}
    For $f$, we have
    \begin{align}
        \left( f(v) \right)_k &= n(-v)^{n-1}(-v_k), \\
        \left( f(v) \right)_{11} &= n(n-1)(-v)^{n-2}v_1^2 + n (-v)^{n-1}(-v_{11}) \geqslant -C\lambda_1.
    \end{align}
    Hence
    \begin{align}
        0 \geqslant& 
        \frac{-\sigma_n^{pp,qq} \Lambda_{pp,1}\Lambda_{qq,1} - \sum_{p \neq q} \sigma_n^{pq,qp} \Lambda_{pq,1}^2}{\lambda_1} 
        - \sigma_{n-1}
        + 2 \sum_{p > m} \frac{\sigma_n^{ii}\Lambda^2_{1p,i}}{\lambda_1(\lambda_1 - \lambda_p)} 
        \notag \\
        &- \left( 1+\frac{1}{\alpha}+\frac{C\beta}{\alpha^2} \right)\sigma_n^{ii} \frac{\Lambda_{11,i}^2}{\lambda_1^2} + \frac{\alpha n \sigma_n}{\varepsilon x_{n+1} + v} 
        - \frac{\beta^2}{\alpha} \sigma_n^{ii}v_{i}^2v_{ii}^2 
        \notag \\
        &+ \frac{\beta}{2} \sigma_n^{ii}v_{ii}^2 
        % + \beta \sigma_n^{ii} v_i^2 
        -C \beta.
    \end{align}
    Same as the calculation of Lemma 4.1 in \cite{lu2025interior}, we have
    \begin{align*}
        -\sum_{p \neq q} \frac{\sigma_n^{pq,qp}\Lambda_{pq,1}^2}{\lambda_1} \geqslant -2 \sum_{i>m} \frac{\sigma_n^{i1,1i} \Lambda_{i1,1}^2}{\lambda_1} = 2 \sum_{i>m} \frac{(\sigma_n^{ii} - \sigma_n^{11}) \Lambda_{i1,1}^2}{\lambda_1(\lambda_1 - \lambda_i)}. 
    \end{align*}
    and
    \begin{align*}
        2 \sum_{i} \sum_{p > m} \frac{\sigma_n^{ii} \Lambda^2_{1p,i}}{\lambda_1 \left( \lambda_1 - \lambda_p \right)} &\geqslant 2 \sum_{p > m} \frac{\sigma_n^{11} \Lambda^2_{1p,1}}{\lambda_1 \left( \lambda_1 - \lambda_p \right)}.
    \end{align*}
    Therefore we have
    \begin{align}\label{Gauss3}
        0 \geqslant& 
        \frac{-\sigma_n^{pp,qq} \Lambda_{pp,1}\Lambda_{qq,1}}{\lambda_1} 
        - \sigma_{n-1}
        + 2 \sum_{i > m} \frac{\sigma_n^{ii}\Lambda^2_{i1,1}}{\lambda_1^2} 
       - \left( 1+\frac{1}{\alpha}+\frac{C\beta}{\alpha^2} \right)\sigma_n^{ii} \frac{\Lambda_{11,i}^2}{\lambda_1^2}  \notag \\
        &+ \frac{\alpha n \sigma_n}{\varepsilon x_{n+1} + v} 
        - \frac{\beta^2}{\alpha} \sigma_n^{ii}v_{i}^2v_{ii}^2 
        + \frac{\beta}{2} \sigma_n^{ii}v_{ii}^2  
        % + \beta \sigma_n^{ii} v_i^2 
        -C \beta.
    \end{align}
    Set $\left\{ \xi_i = \Lambda_{ii,1} \right\}$. Then, by the concavity inequality for $\sigma_n$ operator, we obtain
    \[
    -\sum_{i \neq j} \sigma_n^{ii,jj} \xi_i \xi_j + \frac{2}{\sigma_n} \left( \sum_i \sigma_n^{ii}\xi_i \right)^2 \geqslant \left( 1 + \frac{1}{n+1} \right) \frac{\sigma_n^{11}\xi_1^2}{\lambda_1}.
    \]
    Plugging into \eqref{Gauss3}, we have
    \begin{align}\label{Gauss4}
        0 \geqslant& 
        \left( 1 + \frac{1}{n+1} \right) \frac{\sigma_n^{11}\Lambda^2_{11,1}}{\lambda_1^2} 
        - \sigma_{n-1}
        + 2 \sum_{i > m} \frac{\sigma_n^{ii}\Lambda^2_{i1,1}}{\lambda_1^2} 
        \notag \\
        &- \left( 1+\frac{1}{\alpha}+\frac{C\beta}{\alpha^2} \right)\sigma_n^{ii} \frac{\Lambda_{11,i}^2}{\lambda_1^2} + \frac{\alpha n \sigma_n}{\varepsilon x_{n+1} + v} 
        - \frac{\beta^2}{\alpha} \sigma_n^{ii}v_{i}^2v_{ii}^2 
        + \frac{\beta}{2} \sigma_n^{ii}v_{ii}^2  
                -C \beta.
    \end{align}
    By \eqref{lambda 1st order}, we know $\Lambda_{1i,1} = 0$ for $1<i\leqslant m$, hence we can rewrite \eqref{Gauss4} as
    \begin{align}\label{Gauss5}
        0 \geqslant& 
        \left( \frac{1}{n+1} - \frac{1}{\alpha} - \frac{C\beta}{\alpha^2} \right) \frac{\sigma_n^{11}\Lambda^2_{11,1}}{\lambda_1^2} 
        + \left( 1 - \frac{1}{\alpha} - \frac{C\beta}{\alpha^2} \right) \sum_{i > m} \frac{\sigma_n^{ii}\Lambda^2_{i1,1}}{\lambda_1^2} 
        \notag \\
        &- \sigma_{n-1} + \frac{\alpha n \sigma_n}{\varepsilon x_{n+1} + v} 
        - \frac{\beta^2}{\alpha} \sigma_n^{ii}v_{i}^2v_{ii}^2 
        + \frac{\beta}{2} \sigma_n^{ii}v_{ii}^2 
        % + \beta \sigma_n^{ii} v_i^2 
        -C \beta.
    \end{align}
    If we assume $\left( -\varepsilon x_{n+1} - v \right)^{\alpha} \lambda_1$ is sufficiently large, we have
    \begin{align*}
        \frac{\alpha n \sigma_n}{\varepsilon x_{n+1} + v} \geqslant -C \lambda_1.
    \end{align*}
    Moreover, since $v_{ii} = v + \lambda_i$, we have
    \begin{align*}
        \sigma_n^{ii}v_{ii}^2 = \sigma_n^{ii}\left( v + \lambda_i \right)^2 \geqslant c_0 \sigma_{n-1} + c_0 \lambda_1 - C.
    \end{align*}
    Plugging into \eqref{Gauss5}, we have
    \begin{align}
        0 \geqslant& 
        \left( \frac{1}{n+1} - \frac{1}{\alpha} - \frac{C\beta}{\alpha^2} \right) \frac{\sigma_n^{11}\Lambda^2_{11,1}}{\lambda_1^2} 
        + \left( 1 - \frac{1}{\alpha} - \frac{C\beta}{\alpha^2} \right) \sum_{i > m} \frac{\sigma_n^{ii}\Lambda^2_{i1,1}}{\lambda_1^2} 
        \notag \\
        &+ \left( \frac{c_0}{4}\beta - 1 \right) \sigma_{n-1} 
        + \left( \frac{\beta}{4} - C \frac{\beta^2}{\alpha} \right)\sigma_n^{ii}v_{ii}^2 + \left( \frac{c_0}{4}\beta - C \right) \lambda_1 -C \beta.
    \end{align}
    Then if we choose $\alpha = \beta^2$ sufficiently large, we obtain
    \begin{align}
        C\beta \geqslant \left( \frac{c_0}{4}\beta - C \right) \lambda_1.
    \end{align}
\end{proof}

\section{Recover hypersurface from its support function}\label{Recover hypersurface from its support function}

% \subsection{Legendre transform}

Given a hyperbolic convex function $v$ defined on hyperboloid $\mathbb{H}^n$, where
\[
\mathbb{H}^n = \{X \in \mathbb{R}^{n,1} \mid \langle X, X \rangle = -1\}.
\]
Then define $u^*$ on $B_1$ by $u^*(\xi) = \sqrt{1 - |\xi|^2} v(y(\xi))$ where $y(\xi) = \frac{(\xi, 1)}{\sqrt{1 - |\xi|^2}}$. Moreover, let $u$ be the Legendre transform of $u^*$. Let $u = \mathscr{P}[v]$ by this procedure. Indeed
\begin{align*}
    u(x) \coloneqq \sup_{\xi \in B_1} \left\{ \langle x, \xi \rangle - \sqrt{1 - |\xi |^2} v(y(\xi)) \right\}.
\end{align*}

\begin{lemma}
    For $v$ solves \eqref{sigmak selfshrinker}, let $u^*=\sqrt{1 - |\xi |^2} v(y(\xi))$, then we have
    \[
    Du^*(B_1) = \mathbb{R}^n
    \]
    i.e. Legendre transform of $u^*$ is defined on the whole space $\mathbb{R}^n$.
\end{lemma}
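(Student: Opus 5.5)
We have to show that the gradient image $Du^*(B_1)$ is all of $\mathbb{R}^n$. Since $v$ is smooth and hyperbolic convex, $u^*$ is smooth and strictly convex on $B_1$, so $Du^*$ is a local diffeomorphism and an injective map. Hence $Du^*(B_1)$ is open in $\mathbb{R}^n$. Because $\mathbb{R}^n$ is connected, it suffices to show that $Du^*(B_1)$ is also closed. Equivalently, $Du^*$ should be proper: $|Du^*(\xi)|\to\infty$ as $|\xi|\to 1$. This is the same open–closed argument used at the end of the counterexample section.

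To get properness, I would first record the boundary behaviour of $u^*$. The boundary value $\varphi$ is continuous and bounded, so $v$ is bounded between two negative constants, $-c_2\le v\le -c_1$; this follows from the comparison principle Lemma \ref{Comparison principle} against the constant solutions, or from the barriers $\underline v\le v\le\overline v$. Therefore
\[
-c_2\sqrt{1-|\xi|^2}\le u^*(\xi)\le -c_1\sqrt{1-|\xi|^2},
\]
so $u^*$ extends continuously by $0$ to $\partial B_1$. Its graph is squeezed between two (rescaled) lower hemispheres, and both of these have gradients blowing up like $|\xi|/\sqrt{1-|\xi|^2}$.

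Next I would turn this into a gradient bound through convexity. Fix $\xi\in B_1$ near the boundary, let $\theta=\xi/|\xi|$, and compare with the boundary point $\theta$. Convexity gives
\[
0=u^*(\theta)\ge u^*(\xi)+Du^*(\xi)\cdot(\theta-\xi).
\]
Hence
\[
|Du^*(\xi)|\,(1-|\xi|)\ge Du^*(\xi)\cdot(\theta-\xi)\ge -u^*(\xi)\ge c_1\sqrt{1-|\xi|^2}.
\]
This yields
\[
|Du^*(\xi)|\ge c_1\sqrt{\frac{1+|\xi|}{1-|\xi|}}\to\infty
\]
uniformly as $|\xi|\to1$. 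The supporting-hyperplane inequality is justified because $u^*$ is convex on $B_1$ and continuous up to $\overline{B}_1$, which we apply through a limit along the segment toward $\theta$.

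Closedness then follows. Take $p_j=Du^*(\xi_j)\to p$. The bound forces the $\xi_j$ to stay in a compact subset $\overline B_{r}$ of $B_1$. Passing to a subsequence with $\xi_j\to\xi_\infty\in B_1$, continuity of $Du^*$ gives $p=Du^*(\xi_\infty)$. So $Du^*(B_1)=\mathbb{R}^n$, and the Legendre transform $u$ is an entire function. The main point to check carefully is the uniform negative upper bound $v\le -c_1<0$ near infinity. This is exactly where the hypothesis $\sup\varphi<0$ enters, and it is the step I would verify first, using $v\le\overline v\le\sup\varphi$ from the harmonic upper barrier.
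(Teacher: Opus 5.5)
Your proof is correct. It uses the same key input as the paper, namely the comparison $v\le \sup\varphi<0$ with the constant solution, which gives $u^*\le a\sqrt{1-|\xi|^2}$ with $a=\sup\varphi$. Where it differs is in how you conclude from that bound.

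The paper notes that $\tilde u^*=a\sqrt{1-|\xi|^2}$ is the dual of the hyperboloid $\sqrt{a^2+|x|^2}$, so $D\tilde u^*(B_1)=\mathbb{R}^n$. It then applies the gradient-image monotonicity for convex functions with equal boundary values: $u^*\le\tilde u^*$ in $B_1$ and $u^*=\tilde u^*=0$ on $\partial B_1$ imply $Du^*(B_1)\supseteq D\tilde u^*(B_1)$. This is the same device used in Lemma \ref{C1estimates for u*}.

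You instead prove the needed blow-up directly. The supporting-hyperplane inequality at the radial boundary point gives the explicit bound $|Du^*(\xi)|\ge c_1\sqrt{(1+|\xi|)/(1-|\xi|)}$. Openness of the gradient map (from $D^2u^*>0$, which follows from $\Lambda_v>0$) combined with this properness then gives surjectivity.

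The paper's route is a two-line reduction once the inclusion lemma is accepted, and it makes the geometry transparent: the solution's dual lies below the hyperboloid's dual. Your route is self-contained and quantitative. It needs no knowledge of a comparison function's gradient image, and it does not use the lower bound $v\ge -c_2$; only $u^*\le 0$ and $v\le -c_1$ enter. The cost is the extra open--closed step, and in effect your argument proves the inclusion lemma specialized to this setting.
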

\begin{proof}
    Let $a \coloneqq \sup \varphi < 0$, then $\tilde{v} \equiv a$ is a special solution to \eqref{sigmak selfshrinker} with constant boundary value. By the maximum principle, we have
    \[
    \tilde{v} \geqslant v \quad \text{in } \mathbb{H}^n
    \]
    Similarly let $\tilde{u}^*=\sqrt{1 - |\xi |^2} \tilde{v}(y(\xi))$, we have
    \[
    \tilde{u}^* \geqslant u^* \quad \text{in } B_1,
    \]
    and
    \[
    \tilde{u}^* = u^* = 0 \quad \text{on } \partial B_1.
    \]
    Thus we have
    \[
    Du^*(B_1) \supseteq D\tilde{u}^*(B_1). 
    \]
    However, $\tilde{v}$ corresponds to the hyperboloid
    \[
    \tilde{u}(x) = \sqrt{a^2+|x|^2}.
    \]
    Hence $D\tilde{u}^*(B_1) = \mathbb{R}^n$, which concludes the proof.
\end{proof}
Therefore when $v$ is the solution to Dirichlet problem \eqref{sigmak selfshrinker}, $u = \mathscr{P}[v]$ is defined on $\mathbb{R}^n$, and \[
\mathcal{M} = \{ X = (x,u(x)) \mid x \in \mathbb{R}^n \}
\]
is the desired entire spacelike strictly convex hypersurface.

\bibliographystyle{abbrv} 
\bibliography{references} 

\end{document}